\renewcommand{\cite}{\citet}
\newcommand{\bbZ}{{\Bbb Z}}
\newcommand{\bbR}{{\Bbb R}}
\newcommand{\bbN}{{\Bbb N}}
\newcommand{\bbC}{{\Bbb C}}
\newcommand{\bbE}{{\Bbb E}}
\newcommand{\vecoper}{\textnormal{vec}}
\renewcommand{\cite}{\citeyear}
\begin{document}

\title{Two-step wavelet-based estimation for mixed Gaussian fractional processes
\thanks{The first author was partially supported by grant ANR-16-CE33-0020 MultiFracs. The second author was partially supported by the prime award no.\ W911NF-14-1-0475 from the Biomathematics subdivision of the Army Research Office, USA. The second author's long term visits to ENS de Lyon were supported by the school.}
\thanks{{\em AMS Subject classification}. Primary: 62M10, 60G18, 42C40.}
\thanks{{\em Keywords and phrases}: fractional stochastic process, multivariate, operator self-similarity, demixing, wavelets.}}

\author{Patrice Abry \\ Laboratoire de Physique, \\
Universit\'{e} de Lyon, \\ ENS de Lyon,\\
Universit\'{e} Claude Bernard,\\
CNRS, F-69342 Lyon  \and   Gustavo Didier \\ Mathematics Department\\ Tulane University
\and   Hui Li\\ Mathematics Department\\ Tulane University}

\bibliographystyle{agsm}

\maketitle

\begin{abstract}
A mixed Gaussian fractional process $\{Y(t)\}_{t \in \bbR} = \{PX(t)\}_{t \in \bbR}$ is a multivariate stochastic process obtained by pre-multiplying a vector of independent, Gaussian fractional process entries $X$ by a nonsingular matrix $P$. It is interpreted that $Y$ is observable, while $X$ is a hidden process occurring in an (unknown) system of coordinates $P$. Mixed processes naturally arise as approximations to solutions of physically relevant classes of multivariate fractional SDEs under aggregation. We propose a semiparametric two-step wavelet-based method for estimating both the demixing matrix $P^{-1}$ and the memory parameters of $X$. The asymptotic normality of the estimators is established both in continuous and discrete time. Monte Carlo experiments show that the finite sample estimation performance is comparable to that of parametric methods, while being very computationally efficient. As applications, we model a bivariate time series of annual tree ring width measurements, and establish the asymptotic normality of the eigenstructure of sample wavelet matrices.
\end{abstract}


\section{Introduction}

Numerous data sets from a wide range of applications in science, technology and engineering have been analyzed by means of fractional processes or models. Examples include natural systems (hydrodynamic turbulence, Mandelbrot \cite{Mandelbrot1974}; geophysics, Foufoula-Georgiou and Kumar \cite{Foufoula94}; heart rate variability, Ivanov et al.\ \cite{ivanov1999}; infraslow -- i.e., below 1Hz -- brain activity, Ciuciu et al.\ \cite{He2010:CIUCIU:2014:A}) and artificial systems (e.g., Internet traffic, Taqqu et al.\ \cite{taqqu97}, Fontugne et al.\ \cite{fontugne:abry:fukuda:veitch:cho:borgnat:wendt:2017}). Self-similar processes form a subclass of fractional processes that has been widely studied and used in applications. A univariate stochastic processes $Z = \{Z(t)\}_{t \in \bbR}$ is called self-similar when it satisfies the scaling relation
\begin{equation}\label{e:Z(ct)=c^HZ(t)}
\{Z(ct)\}_{t \in \bbR} \stackrel{{\mathcal L}}= \{c^{H}Z(t)\}_{t \in \bbR}, \quad c > 0,
\end{equation}
for some Hurst exponent $H > 0$, where $\stackrel{{\mathcal L}}=$ denotes the equality of finite dimensional distributions. In particular, fractional Brownian motion (fBm) is the only Gaussian, self-similar, stationary increment stochastic process (e.g., Embrechts and Maejima \cite{embrechts:maejima:2002}, Taqqu \cite{taqqu:2003}). The probability theory and statistical methodology for univariate self-similar and related processes is now voluminous (e.g., Mandelbrot and Van Ness \cite{mandelbrot:vanness:1968}, Taqqu \cite{taqqu:1975,taqqu:1979}, Dobrushin and Major \cite{dobrushin:major:1979}, Granger and Joyeux \cite{granger:joyeux:1980}, Hosking \cite{hosking:1981}, Fox and Taqqu \cite{fox:taqqu:1986}, Dahlhaus \cite{dahlhaus:1989}, Beran \cite{beran:1994}, Robinson \cite{robinson:1995-gaussian,robinson:1995-logperiodogram_regression}, Beran et al.\ \cite{beran:feng:ghosh:kulik:2013}, Bardet and Tudor \cite{bardet:tudor:2014}, Clausel et al.\ \cite{clausel:roueff:taqqu:tudor:2014:waveletestimation}, Pipiras and Taqqu \cite{pipiras:taqqu:2017}, to name a few).

In modern applications, however, data sets are often multivariate, since several natural and artificial systems are monitored by a large
number of sensors. Accordingly, the literature on multivariate fractional processes has been expanding at a fast pace. The contributions include Hosoya \cite{hosoya:1996,hosoya:1997}, Lobato \cite{lobato:1997}, Marinucci and Robinson \cite{marinucci:robinson:2000}, Shimotsu \cite{shimotsu:2007}, Becker-Kern and Pap \cite{becker-kern:pap:2008}, Robinson \cite{robinson:2008}, Hualde and Robinson \cite{hualde:robinson:2010}, Nielsen \cite{nielsen:2011}, Sela and Hurvich \cite{sela:hurvich:2012} and Kechagias and Pipiras \cite{kechagias:pipiras:2015,kechagias:pipiras:2015:ident}, in the time and Fourier domains, and Wendt et al.\ \cite{WENDT:2009:C}, Amblard and Coeurjolly \cite{amblard:coeurjolly:2011}, Amblard et al.\ \cite{amblard:coeurjolly:lavancier:philippe:2012}, Coeurjolly et al.\ \cite{coeurjolly:amblard:achard:2013}, Achard and Gannaz \cite{achard:gannaz:2016}, Frecon et al.\ \cite{frecon:didier:pustelnik:abry:2016}, Abry and Didier \cite{abry:didier:2017}, in the wavelet domain (see also Marinucci and Robinson \cite{marinucci:robinson:2001}, Robinson and Yajima \cite{robinson:yajima:2002}, Nielsen and Frederiksen \cite{nielsen:frederiksen:2011}, Shimotsu \cite{shimotsu:2012} on the related fractional cointegration literature in econometrics).

In this paper, we propose a new semiparametric statistical method for a subclass of multivariate fractional processes, i.e., those of the form
\begin{equation}\label{eq:mix}
\{Y(t)\}_{t\in \mathbb{R}} = \{PX(t)\}_{t \in  \mathbb{R}},
\end{equation}
where $P$ is a nonsingular matrix and
\begin{equation}\label{eq:Xt}
\{X(t)\}_{t\in \mathbb{R}}=\{(X_{1}(t),\hdots,X_{n}(t))^T\}_{t\in \mathbb{R}}
\end{equation}
is a vector of independent Gaussian fractional processes. The process $Y = \{Y(t)\}_{t \in \bbR}$ is assumed observable. On the other hand, $X = \{X(t)\}_{t \in \bbR}$ can be interpreted either as a hidden process whose components get scrambled by a mixing matrix parameter $P$, or as one occuring in a different system of coordinates (see Remark \ref{r:nonsquare_P} on nonsquare matrices $P$). One key statistical challenge is to retrieve the fractional information (e.g., on Hurst exponents or memory parameters) contained in $X$. If, for example, $X$ is a vector of (independent) fBm entries
\begin{equation}\label{e:X=indep_fBm}
\{X(t)\}_{t\in \mathbb{R}}=\{(B_{h_1}(t),\hdots,B_{h_n}(t))^T\}_{t\in \mathbb{R}}, \quad 0 < h_1 \leq \hdots \leq h_n < 1,
\end{equation}
where $h_i$, $i = 1,\hdots,n$, denote the individual Hurst exponents, then the univariate-like statistical analysis of each entry of $Y$ will often generate estimates that are undetermined convex combinations of Hurst exponents or, at large scales, estimates of the \textit{largest} Hurst exponent (c.f.\ Abry and Didier \cite{abry:didier:2017}, Introduction).

It has been shown (Tsai et al.\ \cite{tsai:rachinger:chan:2017}) that processes of the form \eqref{eq:mix} naturally arise as approximations to solutions of physically relevant classes of multivariate fractional SDEs under aggregation (this is recapped in Section \ref{s:aggregation}). In addition, it is well known that many real data sets -- e.g., tree ring widths, economic output, river flows, or rainfall -- are obtained through aggregation over a certain time interval, which points to the usefulness of the model \eqref{eq:mix}. Multivariate fractional processes of the form \eqref{eq:mix} are also closely related to the so-named operator self-similar (o.s.s.) random processes and fields (Laha and Rohatgi \cite{laha:rohatgi:1981}, Hudson and Mason \cite{hudson:mason:1982}), a topic that has attracted much attention recently (e.g., Maejima and Mason \cite{maejima:mason:1994}, Mason and Xiao \cite{mason:xiao:2002}, Bierm\'{e} et al.\ \cite{bierme:meerschaert:scheffler:2007}, Xiao \cite{xiao:2009}, Guo et al.\ \cite{guo:lim:meerschaert:2009}, Didier and Pipiras \cite{didier:pipiras:2011,didier:pipiras:2012}, Clausel and Vedel \cite{clausel:vedel:2011,clausel:vedel:2013}, Li and Xiao \cite{li:xiao:2011}, Dogan et al.\ \cite{dogan:vandam:liu:meerschaert:butler:bohling:benson:hyndman:2014}, Puplinskait{\.e} and Surgailis \cite{puplinskaite:surgailis:2015}, Didier et al.\ \cite{didier:meerschaert:pipiras:2017exponents,didier:meerschaert:pipiras:2017symmetries}). In the context of o.s.s.\ and related processes, the estimation of the matrix $P$ is itself of great interest, since it makes up the system of coordinates of the Hurst matrix (see Example \ref{ex:o.s.s.}).

 The class \eqref{eq:mix} further provides an extension to the framework of fractional processes of the so-named mixed processes from the blind source separation literature in signal processing, the latter being well-established in traditional settings such as that of ARMA-like signals (e.g., Belouchrani et al.\ \cite{belouchrani:abed-meraim:cardoso:moulines:1997}, Cardoso \cite{cardoso:1998}, Pham and Cardoso \cite{pham:cardoso:2001}, Moreau \cite{Moreau:2001}, Yeredor \cite{yeredor:2002}, Parra and Sajda \cite{Parra:Sajda:2003}, Stone \cite{stone:2004}, Ziehe et al.\ \cite{ziehe:2004}, Choi et al.\ \cite{choi:2005},  O'Grady et al.\ \cite{Ogrady:Pearlmutter:Rickard:2005}, Fevotte and Godsill \cite{Fevotte:Godsill:2006}, Li et al.\ \cite{Li:Adali:Wang:Calhoun:2009}, Common and Jutten \cite{comon:jutten:2010}). 

In the preliminary study Didier et al.\ \cite{didier:helgason:abry:2015}, presented without proofs, the hidden process $X$ is given by \eqref{e:X=indep_fBm} and a demixing estimator is proposed for $P$ that draws upon the diagonalization of sample covariance matrices. In this paper, we consider the broad framework where each (independent) entry of $X$ in \eqref{eq:Xt} is a continuous time fractional process with stationary increments of some order, possibly zero (i.e., $X$ is stationary). In addition, it is \textit{not} assumed that, entrywise, $X$ is exactly self-similar as in \eqref{e:Z(ct)=c^HZ(t)} (see \eqref{eq:Xhi}, \eqref{eq:XhiN=0} and \eqref{eq:assumptionA3} and the discussion in Example \ref{ex:o.s.s.}). We construct a semiparametric two-step wavelet-based method for the estimation of the demixing matrix $P^{-1}$ and the individual memory parameters $d_1,\hdots,d_n$ that can be summed up as follows.
\begin{enumerate}
  \item [$(S1)$] \textbf{demixing step (change of coordinates)}: generate an estimator $\widehat{P^{-1}}$ by jointly diagonalizing two wavelet variance matrices (i.e., $W(2^j)$ at two different octaves $j$; see \eqref{e:W(j)}) of the mixed process $Y$;
  \item [$(S2)$] \textbf{memory parameter estimation step}: estimate $d_1,\hdots,d_n$ by applying univariate wavelet regression to each entry of the demixed process $\widehat{X}=\widehat{P^{-1}}Y$ (Veitch and Abry \cite{veitch:abry:1999}, Bardet \cite{bardet:2002}, Moulines et al.\ \cite{moulines:roueff:taqqu:2007:Fractals,moulines:roueff:taqqu:2007:JTSA,moulines:roueff:taqqu:2008}).
\end{enumerate}
The use of a wavelet framework has the benefit of computational efficiency (Daubechies \cite{daubechies:1992}, Mallat \cite{mallat:1999}), while being a natural choice for stochastic systems with stationary increments of arbitrary order. In fact, for a large enough number of vanishing moments $N_{\psi}$ (see \eqref{e:N_psi}), wavelet coefficients $\{D(2^j,k) \}_{k \in \bbZ} \in \bbR^n$ are stationary in the shift parameter $k$ at every octave $j$ (see \eqref{e:N_psi}, \eqref{eq:wavetransofY} and Remark \ref{r:NPsi_large_enough}). In addition, basing step $(S1)$ on wavelet variance matrices of $Y$ ensures that the demixing estimator $\widehat{P^{-1}}$ is consistent and asymptotically normal (Theorem \ref{t:demixing}). The latter property does not generally hold for estimators based on sample covariance matrices; indeed, it is in part a consequence of the quasi-decorrelation property of the wavelet transform (Flandrin \cite{flandrin:1992}, Wornell and Oppenheim \cite{wornell:oppenheim:1992}, Masry \cite{masry:1993}, Bardet and Tudor \cite{bardet:tudor:2010}, Clausel et al.\ \cite{clausel:roueff:taqqu:tudor:2014:quadraticvariation}). 
The estimator of the vector of Hurst parameters generated at step $(S2)$ is also consistent and jointly asymptotically normal (Theorem \ref{t:hurstestimator}). With a view toward hypothesis testing, the consistency and asymptotic normality of the estimators generated at both steps $(S1)$ and $(S2)$ are shown to hold under mild assumptions even in the presence of equal Hurst parameters (Corollary \ref{c:equalparameter}). Moreover, under the more realistic assumption that $Y$ in \eqref{eq:mix} is observed in discrete time, the asymptotic properties of the proposed estimators do not qualitatively change (Theorems \ref{t:Phatinv_asymp_norm_discrete_time}, \ref{t:hurstestimator_dis} and Corollary \ref{c:equalparameter_dis}).

We conducted broad Monte Carlo experiments for instances where $X$ is made up of independent fractional Brownian motion components. In dimension 4, the results show that the performance of the proposed two-step estimation method is similar to that for univariate estimators of Hurst parameters over finite samples. Moreover, notwithstanding its semiparametric and hence more general nature, the method's performance is comparable to that of fully parametric Whittle-type maximum likelihood estimation in terms of mean squared error, while bearing the advantage of being computationally very fast. In addition, an application of the two-step method to a bivariate data set from bristlecone pine tree rings from California shows that the latter can be reasonably modeled by means of the mixed form \eqref{eq:mix}.

It should be noted that the two-step nature of the estimation method makes it rather flexible. Although step $(S2)$, as proposed, involves applying entrywise a univariate wavelet estimator, in principle the wavelet-based demixing technique in step $(S1)$ can be combined with any other univariate method such as Whittle, local Whittle or spectral log-regression estimation (see, for instance, Bardet et al.\ \cite{bardet:lang:oppenheim:phillipe:stoev:taqqu:2003}).

This paper is organized as follows. In Section \ref{s:preliminaries}, we lay out the notation, assumptions and theoretical background of the paper. 
Section \ref{s:continuous_time} contains the main mathematical results of the paper, including the properties of wavelet analysis, assuming measurements in continuous time.
In particular, in Sections \ref{sc:demixing_estimator} and \ref{sc:hurstestimator}, we construct steps ($S1$) and ($S2$) of the two-step estimation method, respectively. In Section \ref{s:discrete_time}, we extend the two-step estimation method to the context of discrete time measurements. Section \ref{s:MC} contains all Monte Carlo studies. In Section \ref{s:application}, we provide two applications. We analyze and model the aforementioned tree ring data set, and establish the asymptotic normality of the eigenstructure of the sample wavelet variance matrix at fixed scales, which is of independent interest. All proofs can be found in the Appendix, together with auxiliary results.

\section{Preliminaries}\label{s:preliminaries}

The dimension of the mixed process $Y$ is denoted by $n \geq 2$ throughout the paper.

We shall use the following matrix notation. $M(m,n,\bbR)$ is the vector space of all $m\times n$ real-valued matrices, whereas $M(n,\bbR)$ is a shorthand for $M(n,n,\bbR)$. $GL(n,\bbR)$ is the general linear group (invertible matrices), $O(n)$ is the orthogonal group of matrices $O$ such that $OO^{*} =
I = O^{*}O$, where $^*$ represents the matrix adjoint and $^T$ is reserved for vector transpose. ${\mathcal S}(n,\bbR)$, ${\mathcal S}_{\geq 0}(n,\bbR)$ and ${\mathcal S}_{>0}(n,\bbR)$ are, respectively, the space of symmetric, the cone of symmetric positive semidefinite and the cone of symmetric positive definite matrices. The symbol ${\mathbf 0}$ represents a vector or matrix of zeroes. A block-diagonal matrix with main diagonal blocks ${\mathcal P}_1,\hdots,{\mathcal P}_n$ or $m$ times repeated diagonal block ${\mathcal P}$ is represented by
\begin{equation}\label{e:block_diagonal_matrices}
\textnormal{diag}({\mathcal P}_1,\hdots,{\mathcal P}_n), \quad  \textnormal{diag}_m({\mathcal P}),
\end{equation}
respectively. The symbol $\|\cdot\|$ represents a generic matrix or vector norm. The $l_p$ entrywise norm of the matrix $A$ is denoted by
\begin{equation}\label{e:|A|p}
\|A\|_{l_p} = \|( a_{i_1,i_2})_{\stackrel{i_1=1,\hdots,m}{i_2=1,\hdots,n} }\|_{l_p} =\Big(\sum^{m}_{i_1=1}\sum^{n}_{i_2=1} |a_{i_1,i_2}|^p \Big)^{1/p}.
\end{equation}
The Fourier transform of any function $f\in L^2(\mathbb{R})$ is defined by $$\widehat{f}(x)=\int_\bbR f(t)e^{-\textbf{i}xt}dt.$$
For $S = (s_{i_1,i_2})_{i_1,i_2=1,\dots,n}\in M(n,\mathbb{R})$, let
$$
\textnormal{vec}_{{\mathcal S}}(S)=(s_{11},s_{21},\dots,s_{n1},s_{22},s_{32},\dots,s_{n2},\dots,s_{nn}),
$$
\begin{equation}\label{e:vec_definitions}
\textnormal{vec}_{{\mathcal D}}(S)=(s_{11},s_{22},\hdots,s_{nn}), \quad \textnormal{vec}(S)=(s_{11},\hdots,s_{n1},s_{12},\dots,s_{n2},\hdots,s_{nn}).
\end{equation}
In other words, the operator $\textnormal{vec}_{{\mathcal S}}(\cdot)$ vectorizes the lower triangular entries of $S$, $\textnormal{vec}_{{\mathcal D}}(\cdot)$ vectorizes the diagonal entries of $S$, and $\textnormal{vec}(\cdot)$ vectorizes all the entries of $S$. Note that the expressions in \eqref{e:vec_definitions} are defined as row vectors; this will make the notation simpler in several statements. When establishing bounds, $C$ denotes a positive constant whose value can change from one inequality to the next.

\subsection{Aggregation and mixed processes}\label{s:aggregation}

Recent work (Chan and Tsai \cite{chan:tsai:2010}, Tsai et al.\ \cite{tsai:rachinger:chan:2017}) has established the connection between aggregation and the emergence of mixed processes. We sketch the basic idea for the reader's convenience. A natural multivariate extension of Langevin-type dynamics is given by the SDE
\begin{equation}\label{eq:lagegvin}
dY(t)=\Phi Y(t)dt+\Sigma d B_{\mathbf{h}}(t), \quad t \geq 0, \quad -\Phi, \Sigma \in {\mathcal S}_{>0}(n,\bbR),
\end{equation}
where $B_{\mathbf{h}}(t)=(B_{h_1}(t),\hdots,B_{h_n}(t))^T$ is a vector of independent fBm entries $\{B_{h_i}(t)\}_{t \geq 0}$ with Hurst parameters
\begin{equation}\label{e:0<hi<1,i=1,...,n}
0 < h_i < 1, \quad i=1,\hdots,n.
\end{equation}
The solution of \eqref{eq:lagegvin} can be written a.s.\ as
\begin{equation}\label{eq:solution}
Y(t)=e^{\Phi t}Y(0)+\int_0^te^{\Phi(t-u)}\Sigma dB_{\mathbf{h}}(u), \quad t \geq 0,
\end{equation}
which generalizes the univariate fractional Ornstein-Uhlenbeck process (Cheridito et al.\ \cite{cheridito:kawaguchi:maejima:2003}, Prakasa Rao \cite{prakasarao:2010}). Consider the case where the continuous time process $\{Y(t)\}_{t \geq 0}$ defined by \eqref{eq:solution} is digitalized by aggregation over interval $\triangle$, i.e.,
$$
Y_z^{\triangle}=\int_{(z-1)\triangle}^{z\triangle}Y(u)du,\quad z\in \bbN \cup \{0\}.
$$
Then, as $\triangle\rightarrow\infty$,
\begin{equation}\label{e:conv_to_mixed_process}
-\textnormal{diag}(\triangle^{-h_1}, \hdots, \triangle^{-h_n} )\Sigma^{-1}\Phi Y_z^{\triangle}\overset{\mathcal{L}}{\rightarrow}
(B_{h_1}(z)-B_{h_1}(z-1), \hdots, B_{h_n}(z)-B_{h_n}(z-1))^T,
\end{equation}
where $\overset{\mathcal{L}}{\rightarrow}$ denotes convergence of the finite dimensional distributions. Therefore, for large $\triangle$, the aggregate process $Y_z^{\triangle}$ can be approximated by the mixed process
\begin{equation}\label{eq:limitmix}
\widetilde{Y}_z := PX_z,\quad z\in \bbN \cup \{0\}.
\end{equation}
Recall that fractional Gaussian noise (fGn) is the increment process of fBm. In \eqref{eq:limitmix}, $X_z$ is a vector of $n$ independent fGn entries with Hurst parameters \eqref{e:0<hi<1,i=1,...,n} and $P=-\Phi^{-1}\Sigma \textnormal{diag}(\triangle^{h_1}, \hdots, \triangle^{h_n})$. Note that the process \eqref{eq:limitmix} is a particular case of \eqref{eq:mix}, with the latter restricted to discrete time.

\subsection{Assumptions}

Unless otherwise stated, we will make the following assumptions on $Y$ throughout the paper. Assumptions ($A$1), ($A$2) and ($A$3) describe, respectively, the covariance structure of the hidden process $X$, the conditions on the mixing matrix $P$ and the regularity properties of high frequency components.

\medskip

\noindent {\sc Assumption ($A$1)}: the observed process has the mixed form \eqref{eq:mix}, where $X_i$, $i = 1,\hdots, n$, in \eqref{eq:Xt} is either a $N_i$-th ($N_i\geq1$) order (covariance) stationary process with harmonizable representation
\begin{equation}\label{eq:Xhi}
\{X_{i}(t)\}_{t \in \bbR}= \Big\{\int_\mathbb{R}\frac{e^{\textbf{i} tx}-\sum_{l=0}^{N_i-1}\frac{1}{l!}(\textbf{i}tx)^l}{(\textbf{i}x)^{N_i}}|x|^{-(d_i-N_i)}g_i(x)\widetilde{B}(dx)\Big\}_{t \in \bbR}, \quad N_{i}-1/2\leq d_i<N_i+1/2,
\end{equation}
or a (covariance) stationary process (i.e., $N_i=0$) with harmonizable representation
\begin{equation}\label{eq:XhiN=0}
\{X_{i}(t)\}_{t \in \bbR}= \Big\{\int_\mathbb{R}e^{\textbf{i}tx}\frac{e^{\textbf{i} x}-1}{\textbf{i}x}|x|^{-d_i}g_i(x)\widetilde{B}(dx)\Big\}_{t \in \bbR},\quad -1/2\leq d_i<1/2.
\end{equation}

By convention, the so-named memory parameters are ordered as
\begin{equation}\label{e:eigen-assumption}
-1/2 \leq d_1 < d_2 < \hdots < d_n.
\end{equation}
In \eqref{eq:Xhi} and \eqref{eq:XhiN=0}, $\widetilde{B}(dx)$ is a Gaussian random measure satisfying $\widetilde{B}(-dx) = \overline{\widetilde{B}(dx)}$ and $\bbE |\widetilde{B}(dx)|^2 = dx$.

\medskip

\noindent {\sc Assumption ($A$2)}:
\begin{equation}\label{e:multivariate_OFBM_h1_hn_real}
P \in GL(n,\mathbb{R}), \quad \| {\mathbf p}_{\cdot l}\| = 1, \quad p_{ll} \geq 0, \quad l = 1,\hdots,n.
\end{equation}

\medskip
\noindent {\sc Assumption ($A$3)}: the $\bbC$-valued functions $g_i(x)$ in \eqref{eq:Xhi} and \eqref{eq:XhiN=0} are bounded and satisfy
\begin{equation}\label{eq:assumptionA3}
||g_i(x)|^2-|g_i(0)|^2|<L|x|^{\beta}, \quad L>0, \quad i=1,\hdots,n,
\end{equation}
for any {$x\in(-\delta,\delta)$} for some small $\delta>0$. In \eqref{eq:assumptionA3}, $\beta\in(1,2]$ and satisfies
\begin{equation}\label{eq:beta}
\beta+1<2d_1+2\alpha
\end{equation}
for some
\begin{equation}\label{eq:alpha}
\alpha>1.
\end{equation}

\medskip

\begin{example}\label{ex:o.s.s.}
If the high frequency functions $g_i(x)$ are constant and $N_i-1/2<d_i<N_i+1/2$, $i=1\hdots,n$, then the observed process $Y$ satisfies the so-named operator self-similarity property. In other words, $\{Y(ct)\}_{t\in \mathbb{R}}\overset{{\mathcal L}}=\{c^HY(t)\}_{t\in \mathbb{R}}$, $c>0$, where $H=P\textnormal{diag}(h_1,\hdots,h_n)P^{-1}$ is the Hurst matrix with Hurst eigenvalues
\begin{equation}\label{e:di=hi-1/2}
h_i=d_i- \frac{1}{2}, \quad i=1\hdots,n,
\end{equation}
and $c^H$ is defined by the matrix exponential
$$
\exp \{ \log c \hspace{1mm} H\} = \sum^{\infty}_{k=0}\frac{(\log c \hspace{1mm} H)^k}{k!}.
$$
If, in addition, $N_i = 1$, $i = 1,\hdots,n$, then $Y$ is an operator fractional Brownian motion, namely, a Gaussian, operator self-similar, stationary increment process (Mason and Xiao \cite{mason:xiao:2002}, Didier and Pipiras \cite{didier:pipiras:2011,didier:pipiras:2012}).
\end{example}

\begin{example}
The framework provided by assumptions ($A$1--3) is quite general. For example, one arbitrary entry $X_i$, $i = 1,\hdots,n$, of the hidden process $X$ can be a fBm, a fGn, or a fractional Ornstein-Uhlenbeck process. These processes are associated, respectively, with the high frequency function instances $g_i(x) \equiv C$ ($N_i=1$), $g_i(x) \equiv C$ ($N_i=0$), and $g_i(x)=\frac{\textbf{i}x}{e^{\textbf{i}x}-1}\frac{C}{\lambda+\textbf{i}x}$ ($N_i=0$) for some $\lambda>0$. The instance $N_i = 0$ and $g_i(x)=\frac{C \textbf{i}x}{e^{\textbf{i}x}-1}|x|^{d}(1-e^{-\textbf{i}x})^{-d}1_{[-\pi,\pi)}$ corresponds, in discrete time, to FARIMA($0,d,0$) (e.g., Taqqu \cite{taqqu:2003}).
%
\end{example}
\begin{remark}\label{e:di<-1/2}
For a fixed $i$, in the boundary cases $d_i =  N_i-1/2$ the finiteness of second moments in \eqref{eq:Xhi} and \eqref{eq:XhiN=0} implies that the high frequency function $g_i(x)$ must decay fast enough as $x \rightarrow \infty$ so as to make up for the lack of integrability of the power law. On the range $d_i < - 1/2$, see Remark \ref{r:d<-1/2} in Section \ref{s:discrete_asymptotic_theory}.
\end{remark}
\begin{remark}
Note that the assumption \eqref{e:eigen-assumption} that memory parameters are pairwise distinct is lifted in Section \ref{s:blocks}.
\end{remark}

\begin{remark}
In \eqref{e:eigen-assumption}, one incurs no loss of generality by assuming that the memory parameters are disposed in ascending order. This fact can be easily illustrated in dimension $n = 2$. Suppose that the mixed process has the form $Y(t) = P (X_{d_2}(t), X_{d_1}(t))^T$, where $X_{d_i}(t)$, $i = 1,2$, are independent fractional processes defined in \eqref{eq:Xhi} or \eqref{eq:XhiN=0} with parameters $d_1 < d_2$. Let
$$
R = \left(\begin{array}{cc}
0 & 1\\
1 & 0
\end{array}\right).
$$
Then, $Y(t) = P R \hspace{0.5mm}(X_{d_1}(t), X_{d_2}(t))^T$, whence $PR$ can be treated as the mixing matrix with unit vector columns.
\end{remark}

\begin{remark}\label{r:nonsquare_P}
Mathematically speaking, it is natural to ask how useful it is to consider the model \eqref{eq:mix} with a full rank matrix $P \in M(m,n,\bbR)$, where $m \neq n$. However, both cases $m > n$ and $m < n$ fall outside the scope of this paper. When $m > n$, the observed process $Y$ is improper, namely, its finite dimensional distributions are contained in a proper subspace of $\bbR^n$ for some $t \neq 0$ (even if, in addition, the high frequency functions $g_i$, $i = 1,\hdots,n$, are constant, $Y$ cannot be operator self-similar: see Example \ref{ex:o.s.s.} or Hudson and Mason \cite{hudson:mason:1982}). When $m < n$, the spectral densities involved are potentially much more complicated, with added power laws. Either situation calls for the construction of particular methods.
\end{remark}

\begin{remark}
Assumption ($A$3) is typical in a semiparametric estimation setting (e.g., Robinson \cite{robinson:1995-gaussian} and Moulines et al.\ \cite{moulines:roueff:taqqu:2007:JTSA,moulines:roueff:taqqu:2008}). Note that larger values of $\beta$ correspond to greater smoothness of the functions $g_i$, $i=1,\hdots,n$, around the origin.
\end{remark}

In Section \ref{s:continuous_time}, we will implicitly make the following assumptions on the underlying wavelet basis, hence they will be omitted from statements.

\medskip

\noindent {\sc Assumption $(W1)$}: $\psi \in L^1(\bbR)$ is a wavelet function, namely,
\begin{equation}\label{e:N_psi}
\int_{\bbR} \psi^2(t)dt = 1 , \quad \int_{\bbR} t^{q}\psi(t)dt = 0, \quad q = 0,1,\hdots, N_{\psi}-1, \quad N_{\psi} \geq N_n+1,
\end{equation}
for some number $N_{\psi}$ of vanishing moments, where $N_n$ is as in \eqref{eq:Xhi} or \eqref{eq:XhiN=0}.

\medskip

\noindent {\sc Assumption $(W2)$}:
\begin{equation}\label{e:supp_psi=compact}
\textnormal{$\textnormal{supp}(\psi)$ is a compact interval}.
\end{equation}
\noindent {\sc Assumption $(W3)$}: for $\alpha$ as in \eqref{eq:alpha},
\begin{equation}\label{e:psihat_is_slower_than_a_power_function}
\sup_{x \in \bbR} |\widehat{\psi}(x)| (1 + |x|)^{\alpha} < \infty.
\end{equation}

\medskip

\noindent Under \eqref{e:N_psi}, \eqref{e:supp_psi=compact} and \eqref{e:psihat_is_slower_than_a_power_function}, $\psi$ is continuous, $\widehat{\psi}(x)$ is everywhere differentiable and its first $N_{\psi}-1$ derivatives are zero at $x = 0$ (see Mallat \cite{mallat:1999}, Theorem 6.1 and the proof of Theorem 7.4). The condition ($W1$) is equivalent to asserting that the first $N_{\psi}-1$ derivatives of $\widehat{\psi}$ vanish at the origin. This implies, using a Taylor expansion, that
\begin{equation}\label{eq:W1}
|\widehat{\psi}^{(l)}(x)|=O(|x|^{N_{\psi}-l}),\quad l=0,1\hdots,N_{\psi}, \quad x\rightarrow0.
\end{equation}

\begin{example}
If $\psi$ is a Daubechies wavelet with $N_{\psi}$ vanishing moments, $\textnormal{supp}(\psi) = [0,2N_{\psi} -1]$ (see Mallat \cite{mallat:1999}, Proposition 7.4).
\end{example}
\begin{remark}\label{r:NPsi_large_enough}
Assumption $(W1)$ requires using a number of vanishing moments $N_{\psi}$ larger than the unknown integration order $N_n$. In practice, though, the latter parameter is rarely greater than 2, so the requirement is easily met even for low values of $N_{\psi}$.
\end{remark}

\begin{remark}
Section \ref{s:discrete_time}, on measurements in discrete time, requires a slightly different set of assumptions on the wavelet basis (see Section \ref{s:discrete_notation_assumptions}).
\end{remark}

\section{Wavelet-based estimation: continuous time}\label{s:continuous_time}

In Section \ref{sc:samplewavelet}, we establish basic as well as the asymptotic properties of the wavelet transform of the process $Y$ at fixed scales. Sections \ref{sc:demixing_estimator} and \ref{sc:hurstestimator} contain the main mathematical results of the paper. In the former and in the latter, respectively, the demixing step $(S1)$ and the post-demixing Hurst parameter estimation step $(S2)$ are laid out in full detail, and their asymptotic properties are shown. Note that $(S1)$ only involves wavelet analysis at fixed scales, while $(S2)$ generally requires taking a coarse scale limit $a(\nu)2^j \rightarrow \infty$, due to the lack of exact self-similarity in \eqref{eq:Xhi} and \eqref{eq:XhiN=0}. Recall that, throughout this section, we are implicitly assuming that conditions ($W1$--3) hold.

\subsection{Wavelet analysis at fixed scales: properties and asymptotic theory}\label{sc:samplewavelet}

For a wavelet function $\psi\in L^2(\mathbb{R})$ with a number $N_{\psi}$ of vanishing moments, the vector wavelet transform of $Y$ is naturally defined as
\begin{equation}\label{eq:wavetransofY}
\mathbb{R}^n\ni D(2^j,k)=\int_\mathbb{R}2^{-j/2}\psi(2^{-j}t-k)Y(t)dt,\quad j\in \mathbb{N}\cup\{0\},\quad k\in \mathbb{Z},
\end{equation}
provided the integral in \eqref{eq:wavetransofY} exists in an appropriate sense. It will be convenient to make the change of variable $z=2^{-j}t-k$, and reexpress
$$
D(2^j,k)=2^{j/2}\int_{\mathbb{R}}\psi(z)Y(2^jz+2^jk)dz.
$$
The wavelet domain process $\{D(2^j,k)\}_{k\in \mathbb{Z}}$ is stationary in $k$ (Proposition \ref{p:wave}). The wavelet spectrum (variance) at scale $j$ is the positive definite matrix
\begin{equation}\label{e:EW(2j)}
{\Bbb E}D(2^j,
k)D(2^j,k)^* = {\Bbb E}D(2^j,0)D(2^j,0)^* =: {\Bbb E} W(2^j),
\end{equation}
and its natural estimator, the sample wavelet variance, is the random matrix
\begin{equation}\label{e:W(j)}
W(2^j) = \frac{1}{K_j} \sum^{K_j}_{k=1}D(2^j,k)D(2^j,k)^*, \quad K_j = \frac{\nu}{2^j}, \quad j = j_1,\hdots,j_m,
\end{equation}
for a total of
\begin{equation}\label{e:nu_wavelet_data_points}
\textnormal{$\nu$ available (wavelet) data points}.
\end{equation}

The next proposition describes some properties of the wavelet coefficients \eqref{eq:wavetransofY} as well as the general form of the wavelet spectrum \eqref{e:EW(2j)}.
\begin{proposition}\label{p:wave}
Under the assumptions ($A1-2$), let $D(2^j,k)$ and $\mathbb{E}W(2^j,k)$ be as in \eqref{eq:wavetransofY} and \eqref{e:EW(2j)}, respectively. Then,
\begin{itemize}
\item [($P1$)] the wavelet transform \eqref{eq:wavetransofY} is well-defined in the mean square sense, and $\mathbb{E}D(2^j,k)=0$;
\item [($P2$)] (stationarity for a fixed scale) $\{D(2^j,k+h)\}_{k\in \mathbb{Z}}\overset{d}{=}\{D(2^j,k)\}_{k\in \mathbb{Z}}$, $h\in \mathbb{Z}$;
%
\item [($P3$)] the wavelet spectrum \eqref{e:EW(2j)} can be expressed as
\begin{equation}\label{eq:EW(2j)}
\mathbb{E}W(2^j)=2^{jD}\bigg\{\int_\mathbb{R}|\widehat{\psi}(x)|^2|x|^{-D}G\bigg(\frac{x}{2^j}\bigg)|x|^{-D^*}dx\hspace{1mm}\bigg\}2^{jD^*}.
\end{equation}
In \eqref{eq:EW(2j)},
\begin{equation}\label{eq:G}
G(x)=P\textnormal{diag}(|g^*_1(x)|^2,\hdots,g^*_n(x)|^2)P^{*},
\end{equation}
\begin{equation}
D=P\textnormal{diag}(d_1,\hdots,d_n)P^{-1},
\end{equation}
where, in \eqref{eq:G},
$$
g_i^*(x)=\left\{\begin{array}{cc}
g_i(x)\frac{\sin(x/2)}{x/2}, & d_i<1/2;\\
g_i(x), &  d_i\geq1/2,
\end{array}\right. \quad i=1\hdots,n;
$$
\item [($P4$)] the wavelet spectrum has full rank, namely, $\textnormal{det}\hspace{1mm}\mathbb{E}W(2^j)\neq0$, $j\in \mathbb{N}$;
\end{itemize}
\end{proposition}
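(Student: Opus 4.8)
The plan is to base all four parts on a single harmonizable (spectral-domain) representation of the wavelet coefficient. First I would substitute the representations \eqref{eq:Xhi}--\eqref{eq:XhiN=0} of the entries $X_i$ into $Y=PX$ and carry out the deterministic inner transform of the scaled, shifted wavelet. Writing $A(x)=\textnormal{diag}(a_1(x),\dots,a_n(x))$ for the diagonal matrix collecting the spectral amplitudes $a_i(x)$ of the $X_i$ (so that $X(t)=\int_{\bbR}e^{\textbf{i}tx}A(x)\widetilde B(dx)$ modulo the polynomial corrections, with $\widetilde B$ the vector of the $n$ independent scalar random measures), the crucial observation is that the polynomial term $\sum_{l=0}^{N_i-1}\frac{1}{l!}(\textbf{i}tx)^l$ in \eqref{eq:Xhi} is annihilated by the wavelet: since $\psi$ has $N_\psi\ge N_n+1>N_i$ vanishing moments, $\int_{\bbR}2^{-j/2}\psi(2^{-j}t-k)t^q\,dt=0$ for every $q\le N_i-1$ (expand $(z+k)^q$ after the change of variable $z=2^{-j}t-k$). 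Using $\widehat\psi(\xi)=\int\psi(z)e^{-\textbf{i}\xi z}dz$ and the reality of $\psi$, the inner integral equals $2^{j/2}e^{\textbf{i}2^jkx}\overline{\widehat\psi(2^jx)}$, and a stochastic Fubini argument gives
\begin{equation*}
D(2^j,k)=2^{j/2}\int_{\bbR}e^{\textbf{i}2^jkx}\,\overline{\widehat\psi(2^jx)}\,PA(x)\,\widetilde B(dx).
\end{equation*}

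For ($P1$), mean-square existence reduces, by the isometry $\bbE|\int f\,\widetilde B|^2=\int|f|^2$ and independence across components, to finiteness of $\bbE\|D(2^j,k)\|^2=2^j\int_{\bbR}|\widehat\psi(2^jx)|^2\,\textnormal{tr}\big(PA(x)A(x)^*P^*\big)\,dx$. Each diagonal entry of $A(x)A(x)^*$ equals $|x|^{-2d_i}$ times a bounded factor near the origin, so there the integrand is $O(|x|^{2N_\psi-2d_i})$ by \eqref{eq:W1}; because $d_i<N_n+1/2\le N_\psi-1/2$ the exponent exceeds $-1$ and the integral converges at $0$. At infinity \eqref{e:psihat_is_slower_than_a_power_function} gives $|\widehat\psi(2^jx)|^2=O(|x|^{-2\alpha})$ with $\alpha>1$, which together with the boundedness of the $g_i$ dominates the at-most-polynomially-growing amplitudes. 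Hence the integral is finite, $D(2^j,k)$ is well defined in mean square, and $\bbE D(2^j,k)=0$ because $\widetilde B$ is centered.

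Parts ($P2$) and ($P3$) are computations on the same representation. Since $D(2^j,\cdot)$ is a real, mean-zero Gaussian sequence, strict stationarity in $k$ follows once its covariance depends only on the lag; by the isometry,
\begin{equation*}
\bbE D(2^j,k)D(2^j,k')^*=2^j\int_{\bbR}e^{\textbf{i}2^j(k-k')x}|\widehat\psi(2^jx)|^2\,PA(x)A(x)^*P^*\,dx,
\end{equation*}
which depends on $k-k'$ only, giving ($P2$). Setting $k=k'$ produces $\bbE W(2^j)$. To reach \eqref{eq:EW(2j)} I would use $|e^{\textbf{i}x}-1|^2/x^2=(\sin(x/2)/(x/2))^2$ to write $A(x)A(x)^*=|x|^{-D_{\mathrm{d}}}\textnormal{diag}(|g_1^*(x)|^2,\dots,|g_n^*(x)|^2)|x|^{-D_{\mathrm{d}}}$ with $D_{\mathrm{d}}=\textnormal{diag}(d_1,\dots,d_n)$, the two cases $d_i<1/2$ ($N_i=0$) and $d_i\ge1/2$ ($N_i\ge1$) reproducing exactly the definition of $g_i^*$. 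Conjugating by $P$ and invoking $D=PD_{\mathrm{d}}P^{-1}$ (whence $P|x|^{-D_{\mathrm{d}}}=|x|^{-D}P$ and $|x|^{-D_{\mathrm{d}}}P^*=P^*|x|^{-D^*}$) turns $PA(x)A(x)^*P^*$ into $|x|^{-D}G(x)|x|^{-D^*}$, with $G$ as in \eqref{eq:G}. The substitution $y=2^jx$, together with $|y/2^j|^{-D}=2^{jD}|y|^{-D}$ and the fact that functions of $D$ commute and factor out of the integral, then yields \eqref{eq:EW(2j)}.

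For ($P4$) I would factor $\bbE W(2^j)=PMP^*$ with $M=2^j\int_{\bbR}|\widehat\psi(2^jx)|^2A(x)A(x)^*\,dx$ diagonal and positive semidefinite, so that $\det\bbE W(2^j)=|\det P|^2\prod_{i=1}^nM_{ii}$; as $P$ is nonsingular it suffices that each $M_{ii}=2^j\int_{\bbR}|\widehat\psi(2^jx)|^2|a_i(x)|^2\,dx>0$. Since ($W2$) forces $\psi$ to have compact support while ($W1$) gives $\psi\in L^1$, the Paley--Wiener theorem makes $\widehat\psi$ the restriction of a nonzero entire function, hence real-analytic with $\{x:\widehat\psi(2^jx)=0\}$ of measure zero; combined with $|a_i(x)|^2=|x|^{-2d_i}|g_i^*(x)|^2>0$ on a set of positive measure (the entry $X_i$ being nondegenerate, i.e.\ $g_i\not\equiv0$), the nonnegative integrand defining $M_{ii}$ is strictly positive on a set of positive measure, forcing $M_{ii}>0$. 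I expect the main obstacle to lie in the first step — the rigorous stochastic Fubini interchange and the simultaneous near-origin/at-infinity integrability bookkeeping that also underlies ($P1$) — while the a.e.\ nonvanishing of $\widehat\psi$ via analyticity is the one genuinely nonroutine ingredient required to close ($P4$).
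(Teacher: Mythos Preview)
Your proposal is correct and follows the natural route. The paper does not include a standalone proof of this proposition; it treats the result as a direct computation, with the key covariance identity
\[
\mathbb{E}D(2^j,k)D(2^{j'},k')^*=2^{(j+j')/2}\int_\mathbb{R}\overline{\widehat{\psi}(2^jx)}\widehat{\psi}(2^{j'}x)\,e^{\textbf{i}(2^{j}k-2^{j'}k')x}|x|^{-D}G(x)|x|^{-D^*}\,dx
\]
surfacing only later, inside the proof of Proposition~\ref{p:4th_moments_wavecoef}. Your derivation of the harmonizable representation of $D(2^j,k)$ itself (rather than working purely at the second-moment level) is equivalent and arguably cleaner, and your handling of the polynomial annihilation via $N_\psi\ge N_n+1$, the near-origin/at-infinity integrability, and the conjugation $P|x|^{-D_{\mathrm d}}=|x|^{-D}P$ are all as expected. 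For ($P4$), your Paley--Wiener/real-analyticity argument is slightly more explicit than what the paper needs: the paper simply reads positivity of the diagonal entries of $\mathcal{E}(2^j)$ in \eqref{e:EW(2j)=PEP*} from the fact that the integrands are nonnegative and not identically zero, which is the same observation in different clothing.
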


By a standard calculation, the wavelet variance \eqref{eq:EW(2j)} can be recast as
\begin{equation}\label{e:EW(2j)=PEP*}
\mathbb{E}W(2^j)=P\mathcal{E}(2^j)^{1/2}\textnormal{diag}(2^{2jd_1},\hdots,2^{2jd_n})\mathcal{E}(2^j)^{1/2}P^*,
\end{equation}
where
\begin{equation}\label{eq:e}
\mathcal{E}(2^j)=\textnormal{diag}\bigg(\int_\mathbb{R}|\widehat{\psi}(y)|^2|y|^{-2d_1}\bigg|g_1^*\bigg(\frac{y}{2^j}\bigg)\bigg|^2dy,\hdots,\int_\mathbb{R}
|\widehat{\psi}(y)|^2|y|^{-2d_n}\bigg|g^*_n\bigg(\frac{y}{2^j}\bigg)\bigg|^2dy\bigg).
\end{equation}

The following theorem establishes the asymptotic distribution of the vectorized sample wavelet spectrum at a fixed set of octaves.
\begin{theorem}\label{t:distributionOfW}
Suppose $Y = \{Y(t)\}_{t \in \mathbb{R}}$ satisfies the assumptions ($A$1 -- 3). Let $j_1<\hdots<j_m$ be a fixed set of octaves. Then,
\begin{equation}\label{e:asymptotic_normality_wavecoef_fixed_scales}
\Big(\sqrt{K_j}(\textnormal{vec}_{{\mathcal S}} (W(2^j)- {\Bbb E}W(2^j) ) \Big)^T_{j = j_1 , \hdots, j_m} \stackrel{d}\rightarrow {\mathcal N}_{\frac{n(n+1)}{2} \times m}(\mathbf{0},F), \quad \nu \rightarrow \infty
\end{equation}
(see \eqref{e:vec_definitions} on the notation $\textnormal{vec}_{{\mathcal S}}$). In \eqref{e:asymptotic_normality_wavecoef_fixed_scales}, the matrix $F \in {\mathcal S}(\frac{n(n+1)}{2}m,\mathbb{R})$ has the form $F = (G_{jj'})_{j,j'=1,\hdots,m}$, where each block $G_{jj'} \in M(n(n+1)/2,\mathbb{R})$ is described in Proposition \ref{p:4th_moments_wavecoef}.
\end{theorem}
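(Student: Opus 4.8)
The plan is to recognize the left-hand side of \eqref{e:asymptotic_normality_wavecoef_fixed_scales} as a vector of normalized sample means of centered products of jointly Gaussian random variables, and then to invoke a central limit theorem for quadratic functionals of a stationary Gaussian sequence. Since $Y = PX$ is a fixed linear image of the Gaussian vector process $X$, and each wavelet coefficient $D(2^j,k)$ in \eqref{eq:wavetransofY} is a mean-square-well-defined continuous linear functional of $Y$ (Proposition \ref{p:wave}($P1$)), the whole collection $\{D(2^j,k): j = j_1,\ldots,j_m,\, k \in \bbZ\}$ is jointly Gaussian with mean zero. Each entry of $W(2^j) - \mathbb{E}W(2^j)$ then has the form $K_j^{-1}\sum_{k=1}^{K_j}\big(D_a(2^j,k)D_b(2^j,k) - \mathbb{E}[D_a(2^j,k)D_b(2^j,k)]\big)$, a centered sample average of products of two Gaussian variables. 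Stacking these over the triangular index set (via $\textnormal{vec}_{\mathcal S}$) and over the octaves $j_1,\ldots,j_m$ reduces the theorem to a multivariate CLT for such functionals.

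First I would establish the summability of the wavelet-coefficient covariances across shifts and scales, i.e., the quasi-decorrelation property. Concretely, I would derive an integral formula for the cross-covariance $\mathbb{E}[D(2^j,k)D(2^{j'},k')^*]$ from the harmonizable representation in assumption ($A$1), expressing it in terms of $\widehat{\psi}$, the power laws $|x|^{-d_i}$, and the high-frequency functions $g_i$, with an oscillatory factor $e^{\textbf{i}(2^{j}k - 2^{j'}k')x}$. The decay of $\widehat{\psi}(x)(1+|x|)^{\alpha}$ guaranteed by ($W3$) with $\alpha > 1$, combined with the vanishing-moment behavior \eqref{eq:W1} near the origin, should yield a bound $\norm{\mathbb{E}[D(2^j,k)D(2^{j'},k')^*]} \leq C(1 + |2^{j}k - 2^{j'}k'|)^{-\gamma}$ with $\gamma > 1$, so that these covariances are summable in the lag. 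This is where the interplay between $N_{\psi}$, the constraint \eqref{eq:beta}, and the condition \eqref{eq:alpha} is essential.

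Given summability, the limiting covariance $F$ is computed via the Isserlis (Wick) formula: for jointly Gaussian mean-zero variables, $\Cov(D_a D_b, D_c D_d) = \mathbb{E}[D_a D_c]\,\mathbb{E}[D_b D_d] + \mathbb{E}[D_a D_d]\,\mathbb{E}[D_b D_c]$. Summing these pairwise products over all lags and identifying the resulting double integrals yields the blocks $G_{jj'}$ of $F$ recorded in Proposition \ref{p:4th_moments_wavecoef}; these sums converge precisely because of the covariance decay established above. To obtain asymptotic normality itself, I would use the method of cumulants: by the diagram (Wick) formula, the joint cumulant of order $r \geq 3$ of the sums $S_{K_j} = \sum_{k=1}^{K_j}(D_aD_b - \mathbb{E}[D_aD_b])$ is a sum over connected diagrams of products of covariances, giving $\textnormal{cum}_r(S_{K_j}) = O(K_j)$ by summability, so that after normalization $\textnormal{cum}_r(S_{K_j}/\sqrt{K_j}) = O(K_j^{1-r/2}) \to 0$, while the second-order cumulants converge to $F$. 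Since a Gaussian law is determined by its cumulants, \eqref{e:asymptotic_normality_wavecoef_fixed_scales} follows. (Equivalently, a multivariate Breuer--Major theorem applies directly, as the products are Hermite-rank-two functionals of a summable-covariance Gaussian sequence.)

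The main obstacle is the second step: proving the covariance decay bound with exponent $\gamma > 1$ uniformly over the fixed octaves, and in particular handling the cross-scale terms $j \neq j'$ and the boundary regimes $d_i = N_i - 1/2$. The careful estimate of the oscillatory integral defining the cross-covariance --- splitting into contributions near the origin (where the vanishing moments \eqref{eq:W1} tame the power-law singularity) and in the tails (where ($W3$) controls the decay, with repeated integration by parts converting smoothness into lag decay) --- is the technical heart of the argument.
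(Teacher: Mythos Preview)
Your overall strategy---reducing to a CLT for quadratic functionals of a stationary Gaussian array---is sound, but the route you take differs from the paper's, and your claimed intermediate step is the weak point.

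The paper does \emph{not} establish a pointwise lag-decay bound of the form $\norm{\mathbb{E}[D(2^j,k)D(2^{j'},k')^*]}\le C(1+|2^jk-2^{j'}k'|)^{-\gamma}$ with $\gamma>1$, nor does it use a cumulant expansion. Instead it applies the Cram\'er--Wold device and the quadratic-form CLT of Moulines et al.\ (Lemma~\ref{le:moulines1}), which requires only two ingredients: (i) convergence of the variance, supplied by Proposition~\ref{p:4th_moments_wavecoef}; and (ii) the spectral-radius condition $\rho(\sqrt{\nu}D_\nu)\rho(\Gamma_\nu)\to 0$, which is obtained by showing that the wavelet-coefficient sequence at each fixed octave has a \emph{bounded} spectral density (Lemma~\ref{lemma:moulines5}). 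For (i), only $\ell^2$-summability of the cross-covariances $\Phi_z$ is needed, and this is obtained in one line via Parseval's identity rather than by any pointwise integration-by-parts estimate.

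Your proposed pointwise bound with $\gamma>1$ is likely not attainable under the stated hypotheses: assumption (A3) gives only $|\,|g_i(x)|^2-|g_i(0)|^2\,|\le L|x|^\beta$ with $\beta\in(1,2]$ near zero, and (W3) gives only $|\widehat\psi(x)|\le C(1+|x|)^{-\alpha}$ with $\alpha>1$. Neither supplies the higher regularity that repeated integration by parts would need to produce arbitrary polynomial lag decay; in particular the functions $g_i$ need not be differentiable away from zero. Your parenthetical Breuer--Major remark is the viable alternative: products $D_aD_b-\mathbb{E}[D_aD_b]$ have Hermite rank $2$, so $\ell^2$-summability of the covariances suffices, and that \emph{is} available by Parseval (exactly as in the proof of Proposition~\ref{p:4th_moments_wavecoef}). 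If you pursue your route, drop the pointwise-decay claim and argue $\ell^2$-summability directly; otherwise the paper's spectral-radius approach is cleaner because it bypasses cumulant bookkeeping entirely.
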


\subsection{Wavelet-based demixing (step $(S1)$)}\label{sc:demixing_estimator}

The joint diagonalization of two matrices is a well-known problem. For the case of symmetric matrices, its description and full characterization can be stated as follows (see Theorem 4.5.17, (b), in Horn and Johnson \cite{horn:johnson:1985}). Suppose $C_0$ and $C_1$ are symmetric and $C_0$ is nonsingular. Then, there are a nonsingular $S \in M(n,\bbR)$ and complex diagonal matrices $\Lambda_0$ and $\Lambda_1$ such that
\begin{equation}\label{e:joint_diag}
C_0 = S \Lambda_0 S^{*}, \quad C_1 = S\Lambda_1S^*,
\end{equation}
if and only if the matrix $C^{-1}_0 C_1$ is diagonalizable (in its Jordan form). In light of this, we can cast a joint diagonalization algorithm in the form of pseudocode.
{\small
\begin{center}
\begin{tabular}{|l|}
\hline
\multicolumn{1}{|c|}{\textbf{Pseudocode for exact joint diagonalization (EJD)}}\\
\\
\hline
\textbf{Input}: $C_0$, $C_1$ are symmetric matrices and the former is positive definite;\\
\\
\textbf{Step 1}: set $W=C_0^{-1/2}$ so that $C^{-1}_0 = W^* W$;\\
\\
\textbf{Step 2}: compute $Q \in O(n)$ in the spectral decomposition $W C_1 W^* = Q^* D_1 Q$;\\
\\
\textbf{Step 3}: compute the demixing matrix $B := QW$;\\
\\
\textbf{Step 4}: stop and exit.\\
\hline
\end{tabular}
\end{center}
}

\vspace{2mm}
\begin{example}
In view of \eqref{e:EW(2j)=PEP*}, it is clear that $C_0 = {\Bbb E}W(2^{J_1})$, $C_1 = {\Bbb E}W(2^{J_2})$, $J_1 < J_2$, can be jointly diagonalized, where the underlying process is defined in \eqref{eq:mix} under the assumptions ($A$1-2). In addition, $$
C^{-1}_0 C_1 = (P^*)^{-1}\bigg(\textnormal{diag}(2^{2 (J_2 - J_1) \hspace{0.5mm}d_1},\hdots, 2^{2 (J_2 - J_1) \hspace{0.5mm}d_n})\mathcal{E}(2^{J_1})^{-1}\mathcal{E}(2^{J_2})\bigg) P^*.
$$ This expression constitutes a diagonal Jordan decomposition, whence \eqref{e:joint_diag} holds.

\end{example}

\begin{remark}
\textbf{Steps 1--4} of the EJD algorithm should not be confused with steps $(S1)$ and $(S2)$ of the proposed wavelet-based estimation method).
\end{remark}

The proposed wavelet-based estimator $\widehat{B}_{\nu}$ of a demixing matrix is defined next.
\begin{definition}\label{def:estimator}
($(S1)$ \textbf{demixing step, continuous time}) Consider two octaves $0 \leq J_1 < J_2$ for which
 \begin{equation}\label{eq:eigcondition}
\textnormal{diag}(2^{2(J_2-J_1)\hspace{0.5mm}d_1},\hdots,2^{2(J_2-J_1)\hspace{0.5mm}d_n})\mathcal{E}(2^{J_1})^{-1}\mathcal{E}(2^{J_2}) \quad \textnormal{has pairwise distinct diagonal entries.}
\end{equation}
For $\nu \in \bbN$,
the wavelet-based demixing estimator $\widehat{B}_{\nu}$ is the output of the EJD algorithm when setting
\begin{equation}\label{e:RY(h)}
C_0 =  W(2^{J_1}) \textnormal{ and } C_1 = W(2^{J_2}).
\end{equation}
\end{definition}

\medskip
In Theorem \ref{t:demixing}, stated next, we establish the consistency and asymptotic normality of the estimator put forward in Definition \ref{def:estimator}. The result involves characterizing the set of solutions provided by the EJD algorithm. In view of \eqref{e:EW(2j)=PEP*}, this relies on reexpressing
$$
(C_0=) \hspace{2mm}{\Bbb E}W(2^{J_1}) = P {\mathcal E(2^{J_1})}^{1/2} \textnormal{diag}(2^{2J_1 \hspace{0.5mm}d_1},
2^{2 J_1\hspace{0.5mm}d_2},\hdots, 2^{2 J_1\hspace{0.5mm}d_n}) {\mathcal E(2^{J_1})}^{1/2}P^* =:  RR^*,
$$
\begin{equation}\label{e:C1}
(C_1=)\hspace{2mm}{\Bbb E}W(2^{J_2}) = P {\mathcal E(2^{J_2})}^{1/2} \textnormal{diag}(2^{2J_2 \hspace{0.5mm}d_1},
2^{2 J_2\hspace{0.5mm}d_2},\hdots, 2^{2 J_2\hspace{0.5mm}d_n}) {\mathcal E(2^{J_2})}^{1/2}P^* =:  R\Lambda R^*,
\end{equation}
where
\begin{equation}\label{e:R_and_Lambda}
R := P{\mathcal E(2^{J_1})}^{1/2}\textnormal{diag}(2^{J_1d_1},\hdots,2^{J_1d_n}),\quad \Lambda := \textnormal{diag}(2^{2(J_2-J_1)\hspace{0.5mm}d_1},\hdots,2^{2(J_2-J_1)\hspace{0.5mm}d_n})\mathcal{E}(2^{J_1})^{-1}\mathcal{E}(2^{J_2}),
\end{equation}
and then making use of the matrix polar decomposition of $R$. Then, consistency and asymptotic normality stem from obtaining the behavior of the sample counterparts $W(2^{J_1})$ and $W(2^{J_2})$ vis-\`{a}-vis \eqref{e:C1} by means of Proposition \ref{p:4th_moments_wavecoef} and Theorem \ref{t:eigen}, plus the Delta method when developing limits in distribution.
\begin{theorem}\label{t:demixing}
For $j \in \bbN$, let ${\mathcal E}(2^j)$ be as in \eqref{eq:e}. Also let
\begin{equation}\label{e:set_I}
{\mathcal I} = \{\Pi \in M(n,\bbR): \Pi \textnormal{ has the form }\textnormal{diag}(\pm 1,\hdots,\pm 1)\}.
\end{equation}
\begin{itemize}
\item [($i$)] Then,
\begin{equation}\label{e:mixing_set}
{\mathcal M}_{\textnormal{EJD}} = \{ \Pi \hspace{0.5mm}\textnormal{diag}(2^{-J_1d_1}, \hdots, 2^{-J_1d_n}){\mathcal E}(2^{J_1})^{-1/2} P^{-1}, \Pi \in {\mathcal I}\}
\end{equation}
is the set of matrix solutions produced by the EJD algorithm when setting
\begin{equation}\label{e:C0=EW(2j1)_C1=EW(2j2)}
C_0 = {\Bbb E} W(2^{J_1}) \textnormal{ and } C_1 = {\Bbb E}W(2^{J_2});
\end{equation}
\item [($ii$)] in addition, assume condition \eqref{eq:eigcondition} holds. For some estimator sequence $\{\widehat{B}_{\nu}\}_{\nu \in \bbN}$ and some matrix $\Pi \in {\mathcal I}$,
    \begin{equation}\label{e:Bnu_converges}
    \widehat{B}_{\nu}\stackrel{P}\rightarrow \Pi\hspace{0.5mm} \textnormal{diag}( 2^{-J_1 d_1}, \hdots, 2^{-J_1 d_n}){\mathcal E}(2^{J_1})^{-1/2} P^{-1}, \quad \nu \rightarrow \infty;
    \end{equation}
\item [($iii$)] 
 an estimator sequence $\{\widehat{B}_{\nu}\}_{\nu \in \bbN}$ as described in ($ii$) satisfies
\begin{equation}\label{e:demixing_weak_limit}
\sqrt{\nu}(\vecoper (\widehat{B}_{\nu} - \Pi \hspace{0.5mm}\textnormal{diag}( 2^{-J_1d_1}, \hdots, 2^{-J_1d_n}){\mathcal E}(2^{J_1})^{-1/2} P^{-1}))^T \stackrel{d}\rightarrow {\mathcal N}(\mathbf{0},\Sigma_F(J_1,J_2))
\end{equation}
    for some matrix $\Pi \in {\mathcal I}$, where the covariance matrix $\Sigma_F(J_1,J_2)$ is a function of $F$, and $F$ is defined in Theorem \ref{t:distributionOfW}, with $m=2$.
\end{itemize}
\end{theorem}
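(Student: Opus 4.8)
The plan is to regard the EJD algorithm as a deterministic map $B = \phi(C_0,C_1)$ and to analyze it in three stages: evaluate $\phi$ at the population matrices to obtain part ($i$); transfer convergence of the sample inputs through $\phi$ by the continuous mapping theorem for part ($ii$); and differentiate $\phi$ and apply the Delta method for part ($iii$).

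For part ($i$), I would feed the exact matrices $C_0 = \mathbb{E}W(2^{J_1}) = RR^*$ and $C_1 = \mathbb{E}W(2^{J_2}) = R\Lambda R^*$ from \eqref{e:C1} through Steps 1--3. The key device is the left polar decomposition $R = (RR^*)^{1/2}O$ with $O \in O(n)$, which gives $W = C_0^{-1/2} = (RR^*)^{-1/2}$ and $WR = O$ in Step 1. Consequently $WC_1W^* = O\Lambda O^*$ in Step 2; since condition \eqref{eq:eigcondition} makes $\Lambda$ diagonal with pairwise distinct entries, its spectral decomposition is unique up to the signs of the eigenvectors (the ordering of eigenvalues being fixed by convention), so any orthogonal factor returned has the form $Q = \Pi O^*$ for some $\Pi \in \mathcal{I}$ (see \eqref{e:set_I}). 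Step 3 then yields $B = QW = \Pi O^*(RR^*)^{-1/2} = \Pi R^{-1}$, using $O^*(RR^*)^{-1/2} = R^*(RR^*)^{-1} = R^{-1}$, and substituting $R^{-1} = \textnormal{diag}(2^{-J_1 d_1},\hdots,2^{-J_1 d_n})\mathcal{E}(2^{J_1})^{-1/2}P^{-1}$ from \eqref{e:R_and_Lambda} produces exactly the set $\mathcal{M}_{\textnormal{EJD}}$ in \eqref{e:mixing_set}.

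For part ($ii$), I would first observe that Theorem \ref{t:distributionOfW} (with $m=2$) yields $\sqrt{K_{J_i}}$-rate asymptotic normality and hence $W(2^{J_i}) \overset{P}{\rightarrow} \mathbb{E}W(2^{J_i})$, $i=1,2$. The map $\phi$ is continuous on the relevant domain: the inverse square root is smooth on $\mathcal{S}_{>0}(n,\bbR)$, and the eigenvector step is continuous in a neighborhood of $O\Lambda O^*$ precisely because \eqref{eq:eigcondition} guarantees simple eigenvalues, so with probability tending to one the sample pencil $W(2^{J_1})^{-1/2}W(2^{J_2})W(2^{J_1})^{-1/2}$ also has simple eigenvalues. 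The continuous mapping theorem then forces $\widehat{B}_\nu$ to converge to one element of $\mathcal{M}_{\textnormal{EJD}}$; the sign branch $\Pi$ is not controllable but is selected consistently, which accounts for the ``for some $\Pi$'' in \eqref{e:Bnu_converges}.

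For part ($iii$), I would apply the Delta method to $\phi$ at $(\mathbb{E}W(2^{J_1}),\mathbb{E}W(2^{J_2}))$. Setting $\mathbf{Z}_\nu = \sqrt{\nu}\big(\textnormal{vec}_{{\mathcal S}}(W(2^{J_1}) - \mathbb{E}W(2^{J_1})), \textnormal{vec}_{{\mathcal S}}(W(2^{J_2}) - \mathbb{E}W(2^{J_2}))\big)^T$ and using $K_{J_i} = \nu/2^{J_i}$, Theorem \ref{t:distributionOfW} gives $\mathbf{Z}_\nu \overset{d}{\rightarrow} \mathcal{N}(\mathbf{0},\widetilde F)$, where $\widetilde F$ is $F$ with its $(i,i')$ block multiplied by $2^{(J_i+J_{i'})/2}$; this deterministic rescaling is absorbed into the final $\Sigma_F(J_1,J_2)$. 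The inverse-square-root and matrix-product steps contribute smooth, explicitly computable Jacobian factors, so the only delicate ingredient is the derivative of the eigenvector map in Step 2. I would supply this from the eigenstructure perturbation result Theorem \ref{t:eigen}, which provides the first-order expansion of the eigenvectors of the sample pencil around $O\Lambda O^*$ under the simple-eigenvalue condition. Chaining these Jacobians and invoking the Delta method then yields \eqref{e:demixing_weak_limit} with $\Sigma_F(J_1,J_2)$ a quadratic form in $\widetilde F$. The main obstacle is this eigenvector differentiation, since the eigenvector map is smooth only away from eigenvalue collisions and its derivative involves the spectral gaps of $\Lambda$; this is exactly what \eqref{eq:eigcondition} and Theorem \ref{t:eigen} are designed to control, leaving the remaining work to careful bookkeeping of the composite Jacobian and the sign branch.
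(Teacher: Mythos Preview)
Your proposal is correct and follows essentially the same route as the paper: the polar decomposition of $R$ for part ($i$), consistency of the sample inputs plus continuity of the eigenstructure under simple eigenvalues for part ($ii$), and the Delta method with Theorem \ref{t:eigen} supplying the eigenvector Jacobian for part ($iii$). The paper carries out part ($iii$) by writing out the first-order Taylor expansion of $\widehat{C}_0^{-1/2}$ and of $\widehat{W}\widehat{C}_1\widehat{W}^*$ explicitly rather than packaging everything as a single composite Jacobian, but the underlying argument is the same.
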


\begin{remark}\label{r:pairwise_distinct_eigenvalue_entries}
Note that, for $J_1<J_2$, since $\mathcal{E}(2^{J_1})^{-1}\mathcal{E}(2^{J_2})\rightarrow I$ as $J_1,J_2\rightarrow\infty$, then, under \eqref{e:eigen-assumption}, condition \eqref{eq:eigcondition} always holds for large enough $J_1,J_2$.
\end{remark}

\begin{remark}
As shown in the proof of Theorem \ref{t:demixing}, in \eqref{e:Bnu_converges} and \eqref{e:demixing_weak_limit}, the factor $\Pi \in {\mathcal I}$ stems from the spectral decomposition $WC_1W^* = Q^* D_1Q$ in \textbf{Step 2} of the EJD algorithm. A convenient choice is a matrix $\Pi$ such that the main diagonal entries of $Q \in O(n) $ are all positive.
\end{remark}

\begin{remark}\label{r:non-ident}
By \eqref{e:Bnu_converges}, any sequence $\widehat{B}^{-1}_\nu$ has a limit in probability of the form $B^{-1} := P \textnormal{diag}(\beta_1,\hdots,\beta_n)$, $|\beta_{i}| \neq 0$, $i = 1,\hdots,n$, i.e., involving a non-identifiability factor post-multiplying the mixing matrix $P$. However, note that $D= P \textnormal{diag}(d_1,\hdots,d_n)P^{-1} = B^{-1}\textnormal{diag}(d_1,\hdots,d_n)B$, i.e., the columns of $B^{-1}$ consist of (non-unit) eigenvectors of the memory matrix $D$. Consequently, $\widehat{D} := \widehat{B}^{-1}_\nu \textnormal{diag}(\widehat{d}_1,\hdots,\widehat{d}_n)\widehat{B}_\nu$ is a natural estimator of the latter, where $\widehat{d}_1,\hdots,\widehat{d}_n$ are univariate (e.g., wavelet-based) estimators of the individual Hurst exponents obtained from the demixed process.

Nevertheless, producing a direct estimator of $P$ is straightforward. Just normalize each column of the matrix estimator $\widehat{B}^{-1}_\nu$ and multiply it by $-1$ if necessary as to arrive at a matrix $\widehat{P}$ with positive diagonal entries (cf.\ \eqref{e:multivariate_OFBM_h1_hn_real}). This procedure is used in Section \ref{s:MC}.
\end{remark}

\begin{remark}
More precisely, the covariance matrix in the limit \eqref{e:demixing_weak_limit} can be written as $\Sigma_F(J_1,J_2) = A_3\Sigma_2A_3^*$, where $\Sigma_2$ and $A_3$ are given by expressions \eqref{e:Sigma_2} and \eqref{e:A3}, respectively. It is clear that the expression for $\Sigma_F(J_1,J_2)$ is quite intricate, and the construction of theoretical confidence intervals is a matter for future investigation (cf.\ Wendt et al.\ \cite{wendt:didier:combrexelle:abry:2017}).
\end{remark}


\subsection{Wavelet-based estimation of memory parameters after demixing/changing the coordinates (step $(S2)$)}\label{sc:hurstestimator}


Throughout this section, a scaling factor $a(\nu)$ is assumed to be a dyadic sequence such that
\begin{equation}\label{eq:scalea}
\frac{a(\nu)}{\nu}+\frac{\nu}{a(\nu)^{1+2\beta}} \rightarrow 0,\quad \nu\rightarrow\infty
\end{equation}
where $\beta$ satisfies \eqref{eq:beta} (see Remark \ref{r:scale} below on the choice of $a(\nu)$ in practice).

We start off with the output of step $(S1)$ of the proposed two-step method (Section \ref{sc:demixing_estimator}). Let $\widehat{B}_{\nu}$ be the demixing matrix described in \eqref{e:Bnu_converges}. Then, the demixed process is defined by
\begin{equation}\label{eq:dimixedX}
\widehat{X}(t):=\widehat{B}_{\nu}Y(t), \quad t \in \bbR,
\end{equation}
of which only $\nu$ (wavelet) data points are available (c.f.\ \eqref{e:nu_wavelet_data_points}). For $j \in \bbN$, let
\begin{equation}\label{e:WXhat_EWX}
W_{{\widehat{X}}}(a(\nu)2^j), \quad \mathbb{E}W_{{X}}(a(\nu)2^j),
\end{equation}
be the sample wavelet variance of $\widehat{X}$ and the wavelet variance of the hidden process $X$, respectively. Proposition \ref{p:xhattox} in the Appendix establishes the asymptotic normality of $W_{{\widehat{X}}}(a(\nu)2^j)$ when centered at $\mathbb{E}W_{{X}}(a(\nu)2^j)$. So, we are now in a position to define an estimator for the vector of memory parameters $\textbf{d}^T = (d_1,\hdots,d_n) $ of the hidden process $X$.

\begin{definition}
($(S2)$ \textbf{Memory parameter estimation step, continuous time}) Let
\begin{equation}\label{eq:sigmahat}
W_{{\widehat{X}}}(\cdot)_{ii'} , \quad \mathbb{E}W_{{X}}(\cdot)_{ii'} , \quad i,i' = 1,\hdots,n,
\end{equation}
be the $(i,i')$-th entries of the matrices $W_{\widehat{X}}(\cdot)$ and $\mathbb{E}W_{{X}}(\cdot)$, respectively. Consider the regression weight vectors
\begin{equation}\label{eq:weightvector}
\mathbf{w}^i=(w_1^i,\hdots,w_m^i)^T,
\end{equation}
where
\begin{equation}\label{eq:weight}
\sum_{l=1}^mw_{l}^i=0, \quad 2\sum_{l=1}^mj_lw_{l}^i=1, \quad i=1,\hdots,n.
\end{equation}
The wavelet-based estimator of the memory parameters $d_1,\hdots,d_n$ in \eqref{e:eigen-assumption} is obtained by regressing the main diagonal terms $W_{X}(a(\nu)2^j)_{ii}$ on the scale indices $a(\nu)2^j$, $j = j_1, \hdots, j_m$, i.e.,
\begin{equation}\label{eq:hurstestimator}
\widehat{\mathbf{d}} = \left(
  \begin{array}{c}
    \widehat{d}_1 \\
    \vdots \\
    \widehat{d}_n \\
  \end{array}
\right):=\left(
           \begin{array}{c}
             \sum_{l=1}^{m}w_l^1\log_2(W_{{\widehat{X}}}(a(\nu)2^{j_l})_{11}) \\
             \vdots \\
             \sum_{l=1}^{m}w_l^n\log_2(W_{{\widehat{X}}}(a(\nu)2^{j_l})_{nn}) \\
           \end{array}
         \right).
\end{equation}
\end{definition}
The asymptotic distribution of the estimator $\widehat{\mathbf{d}}$ is provided in the following theorem.
\begin{theorem}\label{t:hurstestimator}
Let $\widehat{\mathbf{d}}^T = (\widehat{d}_1,\hdots,\widehat{d}_n)$ be the estimator defined by \eqref{eq:hurstestimator}. Then,
\begin{equation}\label{eq:convOfH1}
\sqrt{\frac{\nu}{a(\nu)}}\hspace{1mm}\bigg[\left(
                                                                                         \begin{array}{c}
                                                                                           \widehat{d}_1 \\
                                                                                           \vdots \\
                                                                                          \widehat{ d}_n \\
                                                                                         \end{array}
                                                                                       \right)-\left(
                                                                                         \begin{array}{c}
                                                                                           d_1 \\
                                                                                           \vdots \\
                                                                                           d_n \\
                                                                                         \end{array}
                                                                                       \right)
\bigg]\overset{d}\rightarrow \mathcal{N}(0,\mathcal{W}), \quad \nu \rightarrow \infty.
\end{equation}
In \eqref{eq:convOfH1},
$$
\mathcal{W}=\textnormal{diag}((\mathbf{w}^1)^TV(h_1)\mathbf{w}^1,\hdots,(\mathbf{w}^n)^TV(h_n)\mathbf{w}^n),
$$
the weight vectors $\mathbf{w}^i$, $i=1,\hdots,n$ satisfy \eqref{eq:weight}, and the matrix $V(h)=\{V_{k_1,k_2}(h)\}_{k_1,k_2=1,\hdots,m}$ is defined entrywise by
\begin{equation}\label{eq:V}
V_{k_1,k_2}(d)=\frac{4\pi b_{j_{k_1},j_{k_2}}^{4d-1}}{2^{2(j_{k_1}+j_{k_2})d}K^2(d)}\int_\mathbb{R} x^{-4d}\Big|\widehat{\psi}\Big(\frac{2^{j_{k_1}}x}{b_{j_{k_1},j_{k_2}}}\Big)\Big|^2
\Big|\widehat{\psi}\Big(\frac{2^{j_{k_2}}x}{b_{j_{k_1},j_{k_2}}}\Big)\Big|^2dx,
\end{equation}
where $K(d)=\int_\mathbb{R}|\widehat{\psi}(x)|^2|x|^{-2d}dx$ and $b_{j_{k_1},j_{k_2}}=\gcd(2^{j_{k_1}},2^{j_{k_2}})$.
\end{theorem}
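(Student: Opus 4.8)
The plan is to exploit the delta method applied to the logarithm of the diagonal entries of the sample wavelet variance of the demixed process $\widehat{X}$, combined with the asymptotic normality of those entries (furnished by Proposition \ref{p:xhattox}, which centers $W_{\widehat{X}}(a(\nu)2^j)$ at $\mathbb{E}W_{X}(a(\nu)2^j)$). First I would write, for each $i$, the estimator $\widehat{d}_i = \sum_{l=1}^m w_l^i \log_2 W_{\widehat{X}}(a(\nu)2^{j_l})_{ii}$ and decompose the error as a deterministic bias plus a stochastic fluctuation. The bias is controlled by the scaling behavior of $\mathbb{E}W_X(a(\nu)2^j)_{ii}$: since each $X_i$ is only approximately self-similar (because of the high-frequency function $g_i$), one has $\mathbb{E}W_X(a(\nu)2^j)_{ii} = C_i\, (a(\nu)2^j)^{2d_i}(1+o(1))$ as $a(\nu)2^j \to \infty$, with the error term governed by Assumption ($A$3) and the exponent $\beta$ in \eqref{eq:beta}. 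Taking $\log_2$ and applying the weight constraints \eqref{eq:weight} — namely $\sum_l w_l^i = 0$ and $2\sum_l j_l w_l^i = 1$ — the leading constant $\log_2 C_i$ and the linear-in-$j_l$ term combine to recover exactly $d_i$, while the residual bias is $O(a(\nu)^{-\beta})$, which is asymptotically negligible relative to the $\sqrt{\nu/a(\nu)}$ normalization precisely because of the rate condition \eqref{eq:scalea}.

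Next I would handle the stochastic term. Writing $\log_2 W_{\widehat{X}}(a(\nu)2^{j_l})_{ii} = \log_2 \mathbb{E}W_X(a(\nu)2^{j_l})_{ii} + \log_2\!\big(1 + \tfrac{W_{\widehat{X}}-\mathbb{E}W_X}{\mathbb{E}W_X}\big)_{ii}$ and linearizing the logarithm, the fluctuation is, to first order, $\tfrac{1}{\ln 2}\,(\mathbb{E}W_X(a(\nu)2^{j_l})_{ii})^{-1}(W_{\widehat{X}}-\mathbb{E}W_X)_{ii}$. I would invoke Proposition \ref{p:xhattox} to conclude that the vector $\big(\sqrt{\nu/a(\nu)}\,(W_{\widehat{X}}(a(\nu)2^{j_l})-\mathbb{E}W_X(a(\nu)2^{j_l}))_{ii}\big)_{l,i}$ is jointly asymptotically normal, and then apply the delta method with the Jacobian determined by the weights $w_l^i$ and the normalizing factors $(\mathbb{E}W_X)_{ii}$. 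The diagonal (across $i$) structure of the limiting covariance $\mathcal{W} = \textnormal{diag}((\mathbf{w}^i)^T V(h_i)\mathbf{w}^i)$ must then come out of the fact that, at the coarse-scale limit, the off-diagonal covariances between the fluctuations of distinct diagonal entries $(\cdot)_{ii}$ and $(\cdot)_{i'i'}$ vanish. This is a consequence of the (asymptotic) independence of the demixed components: the estimator $\widehat{B}_\nu$ demixes $Y$ into approximately independent coordinates, so in the limit the joint fourth-order wavelet structure factorizes across components.

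The remaining computation is to identify the scalar variance $(\mathbf{w}^i)^T V(h_i)\mathbf{w}^i$ with the claimed entrywise formula \eqref{eq:V}. I would compute the asymptotic covariance of $\sqrt{\nu/a(\nu)}\,(W_{\widehat{X}}(a(\nu)2^{j_{k_1}})_{ii}, W_{\widehat{X}}(a(\nu)2^{j_{k_2}})_{ii})$ by passing to the spectral (Fourier) domain, using the harmonizable representations \eqref{eq:Xhi}–\eqref{eq:XhiN=0} and the Gaussianity of $X$ to express fourth moments via products of second moments; after normalization by $(\mathbb{E}W_X)_{ii}$ at each scale, the leading term is an integral of the product $|\widehat{\psi}(2^{j_{k_1}}x/b)|^2|\widehat{\psi}(2^{j_{k_2}}x/b)|^2$ against the power law $x^{-4d}$, with $b = \gcd(2^{j_{k_1}},2^{j_{k_2}})$ arising from the common coarse-scale refinement of the two octaves. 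The factors $b^{4d-1}$, $2^{-2(j_{k_1}+j_{k_2})d}$ and $K^2(d)^{-1}$ track, respectively, the change of variables in the integral, the scale normalization, and the squared normalizing constants $(\mathbb{E}W_X)_{ii}$. I expect the main obstacle to be the rigorous justification of the coarse-scale limit: one must show that the $o(1)$ errors from Assumption ($A$3) in both the bias and the variance are uniformly controlled as $a(\nu)2^j \to \infty$, that the linearization of $\log_2$ is valid (which requires that the relative fluctuation $(W_{\widehat{X}}-\mathbb{E}W_X)/\mathbb{E}W_X$ is $o_P(1)$, ensured again by \eqref{eq:scalea}), and — most delicately — that replacing $W_X$ by $W_{\widehat{X}}$ (i.e., using the \emph{estimated} demixing matrix $\widehat{B}_\nu$ rather than the true one) does not alter the limiting distribution, which is exactly the content that Proposition \ref{p:xhattox} must be shown to supply.
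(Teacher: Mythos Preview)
Your proposal is correct and follows essentially the same route as the paper: invoke Proposition \ref{p:xhattox} for the joint asymptotic normality of the diagonal entries of $W_{\widehat{X}}(a(\nu)2^{j_l})$, replace the centering by the exact power law $|g_i(0)|^2K(d_i)(a(\nu)2^{j_l})^{2d_i}$ using the $O(a(\nu)^{-\beta})$ bias bound (Proposition \ref{p:sigma-2jh}) together with \eqref{eq:scalea}, and then apply the delta method to $f(\mathbf{x})=(\sum_l w_l^i\log_2 x_{il})_i$, the weight constraints \eqref{eq:weight} killing the constants. One small slip: Proposition \ref{p:xhattox} actually centers at $\mathfrak{D}\,\mathbb{E}W_X\,\mathfrak{D}$ rather than $\mathbb{E}W_X$, but since $\mathfrak{D}$ is diagonal the extra factor $\log_2\mathfrak{D}(i,i)^2$ is absorbed into your constant $C_i$ and annihilated by $\sum_l w_l^i=0$, so the argument goes through unchanged.
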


\begin{remark}
Theorem \ref{t:hurstestimator} shows that the individual memory estimators $\widehat{d}_1,\hdots,\widehat{d}_n$ are asymptotically independent. In fact, the joint asymptotic distribution of $\mathbf{\widehat{d}}$, estimated from the demixed process $\widehat{X}$, is equal to that of the joint entrywise wavelet-based estimators of $d_1,\hdots,d_n$ obtained from the hidden process $X$ (see Remark \ref{r:demix=orignal}). In other words, asymptotically, the demixing step ($S1$) washes out the effect of the mixing matrix $P$ on the estimation procedure.
\end{remark}

\begin{remark}\label{r:scale}
In practice, the choice of $a(\nu)$ involves a statistical compromise. A large value of $a(\nu)$ with respect to $\nu$ implies a relatively small bias, but also a relatively large variance. Simulation results suggest the ratio $\nu/a(\nu)2^j$ should be no less than $2^3$.
\end{remark}

\begin{remark}\label{r:removing_condition_d1<...<dn}
Removing the condition \eqref{e:eigen-assumption} can alter the limits \eqref{eq:convOfH1}. For example, suppose there are two blocks of equal memory parameters  $$
d_1 = \hdots = d_{n_1} <d_{n_1+1}=\hdots d_n, \quad n_1,n-n_1 \geq 2 ,
$$
and the high frequency functions $g_i(x)$ are identically constant for $i=1,\hdots,n$. Then, in \textbf{Step 2} of the EJD algorithm, $\widehat{W}\widehat{C}_1 \widehat{W}^*\overset{P}\rightarrow \textnormal{diag}(2^{2d_1\hspace{0.5mm}(J_2-J_1)}I_{n_1},2^{2d_n\hspace{0.5mm}(J_2-J_1)}I_{n-n_1})$. Thus, the eigenvectors of $\widehat{W}\widehat{C}_1\widehat{W}^*$ do not have a limit in probability. In this case, the demixed process takes the form $\widehat{X}(t)=\widehat{A}{\mathfrak{D}} X(t)$ (see expression \eqref{e:matrix_D} for the definition of the matrix ${\mathfrak{D}}$), where the random matrix $\widehat{A}=\left(
                                                                                          \begin{array}{cc}
                                                                                            \widehat{A_1} & \widehat{A_2} \\
                                                                                            \widehat{A_3} & \widehat{A_4} \\
                                                                                          \end{array}
                                                                                        \right)
$ satisfies
$$
\widehat{A_1}\in O(n_1),\quad \widehat{A_4}\in O(n-n_1),
$$
$$
\quad M(n_1,n-n_1,{\mathbb{R}})\ni \widehat{A_2} = O_P(1/\sqrt{\nu}),\quad M(n-n_1,n_1,{\mathbb{R}})\ni \widehat{A_3} = O_P(1/\sqrt{\nu}),
$$
and $\widehat{A_1}$ and $\widehat{A_4}$ do not have a limit in probability. Therefore, we can write

$$
\widehat{X}(t)=\left(
              \begin{array}{cc}
                \widehat{A_1}\mathcal{X}_1(t)
                 +\widehat{A_2}\mathcal{X}_2(t) \\
                \widehat{A_3}\mathcal{X}_1 (t)
                 + \widehat{A_4}\mathcal{X}_2(t) \\
              \end{array}
            \right)=\left(
              \begin{array}{cc}
                \widehat{A_1}\mathcal{X}_1(t)\\
                 \widehat{A_4}\mathcal{X}_2(t) \\
              \end{array}
            \right) + \hspace{1mm}o_P(1),
$$
where
$$
\mathcal{X}_1(t)=\left(
      \begin{array}{c}
        \mathfrak{D}(1,1)X_1(t) \\
        \vdots \\
        \mathfrak{D}(n_1,n_1)X_{n_1}(t) \\
      \end{array}
    \right),\quad \mathcal{X}_2(t)=\left(
      \begin{array}{c}
        \mathfrak{D}(n_1+1,n_1+1)X_{n_1+1}(t) \\
        \vdots \\
        \mathfrak{D}(n,n)X_{n}(t) \\
      \end{array}
    \right),
$$
 and $\mathfrak{D}(i,i)$ is the $i$-th diagonal entry of $\mathfrak{D}$, $i=1,\hdots,n$. Thus, each entry of the processes $\widehat{A_1}\mathcal{X}_1(t)$ and $\widehat{A_4}\mathcal{X}_2(t)$ has memory parameter $d_1$ and $d_n$, respectively. Even though we cannot retrieve the mixing matrix, we can still estimate the memory parameters and obtain an asymptotically normal distribution. However, corresponding to each block of parameters, the estimators among each set  $\widehat{d}_1,\hdots,\widehat{d}_{n_1}$ and $\widehat{d}_{n_1+1},\hdots,\widehat{d}_{n}$ are asymptotically \textit{dependent} (though independent across sets).
\end{remark}

\subsection{On the case of blocks of equal memory parameters}\label{s:blocks}

With a view toward hypothesis testing, we also consider the case where some, or all, memory parameters $d_1,\hdots,d_n$ are equal. In light of Remark \ref{r:removing_condition_d1<...<dn}, we will need make some change to our assumptions. However, to attain consistency and asymptotic normality in steps $(S1)$ and $(S2)$, it suffices to add minor constraints on the high frequency functions $g_i(x)$, $i=1,\hdots,n$, and hence replace ($A$1) and ($A$3) with the following assumptions.\\

\noindent {\sc Assumption ($A1'$)}: the observed process $Y$ has the mixed form \eqref{eq:mix}, where each component $X_i$, $i = 1,\hdots, n$, of the hidden process in \eqref{eq:Xt} has the form \eqref{eq:Xhi} or \eqref{eq:XhiN=0}, and the memory parameters can be ordered as
$$
-1/2<d_1=\hdots=d_{n_1}<d_{n_1+1}=\hdots=d_{n_2}<\hdots<d_{n_p+1}=\hdots=d_n.
$$

\noindent {\sc Assumption ($A3'$)}: In addition to satisfying ($A$3), the high frequency functions $g_i(x)$, $i=1,\hdots,n$, are such that the
 matrix $\textnormal{diag}(2^{2(J_2-J_1)\hspace{0.5mm}d_1},\hdots,2^{2(J_2-J_1)\hspace{0.5mm}d_n})\mathcal{E}(2^{J_1})^{-1}\mathcal{E}(2^{J_2})$ has pairwise distinct diagonal entries.
\begin{corollary}\label{c:equalparameter}
Suppose the mixed process $Y$ satisfies assumptions ($A1'$), ($A$2) and ($A3'$). Then, the conclusions of Theorem \ref{t:distributionOfW}, Theorem \ref{t:demixing} and Theorem \ref{t:hurstestimator} hold.
\end{corollary}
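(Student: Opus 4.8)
The plan is to show that the corollary reduces to the three theorems already established, by verifying that their proofs invoke the strict ordering \eqref{e:eigen-assumption} of the memory parameters only through consequences that remain valid under ($A1'$), ($A$2) and ($A3'$). I note at the outset that ($A3'$) subsumes ($A$3), and that ($A1'$) furnishes exactly the harmonizable representations \eqref{eq:Xhi} and \eqref{eq:XhiN=0} used in the original arguments; the sole relaxation is that some memory parameters may now coincide in blocks.

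First I would dispatch Theorem \ref{t:distributionOfW}. Its proof, built on the fourth-moment computations of Proposition \ref{p:4th_moments_wavecoef} together with a central limit theorem for quadratic functionals of the underlying Gaussian field, never appeals to the distinctness of $d_1,\hdots,d_n$. The wavelet spectrum \eqref{eq:EW(2j)} and its factorization \eqref{e:EW(2j)=PEP*} are well defined for any ordering of the $d_i$, and the only regularity input is ($A$3), which ($A3'$) retains. Hence \eqref{e:asymptotic_normality_wavecoef_fixed_scales} carries over verbatim.

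Next, for Theorem \ref{t:demixing} the key observation is that the ordering \eqref{e:eigen-assumption} was used solely to guarantee, for large enough $J_1,J_2$, the eigencondition \eqref{eq:eigcondition} (cf.\ Remark \ref{r:pairwise_distinct_eigenvalue_entries}); under ($A3'$) this condition is imposed directly. The remaining parts of the argument, namely the identification of the solution set \eqref{e:mixing_set} via the polar decomposition of $R$ in \eqref{e:R_and_Lambda}, the consistency \eqref{e:Bnu_converges}, and the weak limit \eqref{e:demixing_weak_limit} obtained by eigenperturbation analysis of \textbf{Step 2} of the EJD algorithm combined with the Delta method, require only that the diagonal entries of $\Lambda$ in \eqref{e:R_and_Lambda} (equivalently, the eigenvalues of $C_0^{-1}C_1$) be pairwise distinct, which is precisely \eqref{eq:eigcondition}. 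Consequently the eigenprojections appearing in \textbf{Step 2} are locally analytic in the matrix entries, so the Delta method applies and the conclusions of Theorem \ref{t:demixing} hold. Finally, Theorem \ref{t:hurstestimator} follows because, by the foregoing, $\widehat{B}_{\nu}$ is consistent and asymptotically normal; hence each coordinate of $\widehat{X}=\widehat{B}_{\nu}Y$ is asymptotically a scalar multiple of a single hidden component $X_i$ carrying the well-defined parameter $d_i$, and the entrywise regression \eqref{eq:hurstestimator} yields \eqref{eq:convOfH1}. The limiting covariance $\mathcal{W}$ remains diagonal even when some $d_i$ coincide, since the \emph{hidden} components $X_i$ are independent by ($A1'$) irrespective of their memory parameters.

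The main obstacle is this middle step: one must confirm that the perturbation analysis underpinning \eqref{e:demixing_weak_limit} depends on the separation of the eigenvalues of $C_0^{-1}C_1$, and not on the separation of the $d_i$ themselves. This is exactly what distinguishes the present setting from the degenerate scenario of Remark \ref{r:removing_condition_d1<...<dn}, where constant high-frequency functions together with equal $d_i$ collapse the eigenvalues within a block, destroy the limit of the eigenvectors, and render the within-block estimators asymptotically dependent. Assumption ($A3'$) is precisely the hypothesis that excludes this collapse; once the distinctness of the diagonal entries of $\Lambda$ is secured, every estimate in the original proofs goes through without modification.
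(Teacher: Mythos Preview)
Your proposal is correct and follows essentially the same approach as the paper: the paper's proof simply notes that Proposition \ref{p:wave} and Theorem \ref{t:distributionOfW} hold under ($A1'$), ($A$2), ($A3'$), that condition \eqref{eq:eigcondition} is supplied directly by ($A3'$), and that the proofs of the three theorems then go through unchanged. Your write-up is more detailed, in particular in isolating that the strict ordering \eqref{e:eigen-assumption} enters the argument for Theorem \ref{t:demixing} only via \eqref{eq:eigcondition}, but the logic is identical.
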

\section{Wavelet-based estimation: discrete time}\label{s:discrete_time}

In practice, only observations in discrete time are available, which renders the computation of the theoretical wavelet coefficients $D(2^j,k)$ impossible. In this section, we study the asymptotic performance of the two-step wavelet-based methodology under the assumption that only $\nu$ wavelet data points from a discrete time sample
\begin{equation}\label{eq:discreteY}
\{Y(k)\}_{k\in \mathbb{Z}}
\end{equation}
of \eqref{eq:mix} are available (c.f.\ \eqref{e:nu_wavelet_data_points}). In Section \ref{s:discrete_notation_assumptions}, we lay out the notation and assumptions. In Section \ref{s:discrete_asymptotic_theory}, we develop the asymptotic distribution of the two-step wavelet-based method estimators.

\subsection{Notation and assumptions}\label{s:discrete_notation_assumptions}
Throughout this section, we suppose the wavelet approximation coefficients stem from Mallat's pyramidal algorithm, under a multiresolution analysis of $L^2(\mathbb{R})$ (MRA; see Mallat \cite{mallat:1999}, chapter 7). Accordingly, we need to replace ($W2$) with the following more restrictive condition.\\

\noindent {\sc Assumption ($W2'$)}: the scaling and wavelet functions $\varphi\in L^2(\bbR)$ and $\psi\in L^2(\bbR)$, respectively, are compactly supported, integrable and
$
\widehat{\varphi}(0)=1.
$
\medskip

We also add the following condition.\\

\noindent {\sc Assumption ($W4$)}: the function
$$
\sum_{k\in \mathbb{Z}}k^m\varphi(\cdot-k)
$$
is a polynomial of degree $m$ for all $m=0,\hdots,N_{\psi}-1$.

\medskip

\begin{remark}
The Daubechies scaling and wavelet functions satisfy ($W1$), ($W2'$) and ($W3$-4) (Moulines et al.\ \cite{moulines:roueff:taqqu:2008}, page 1927).
\end{remark}
Throughout this section, we assume that the conditions ($W1$), ($W2'$) and ($W3-4$) hold. In particular, conditions ($W1$) and ($W4$) imply that
\begin{equation}\label{eq:vanish}
\int_\mathbb{R} \psi(2^{-j}t)\sum_{l\in \bbZ}\varphi(t+l)l^mdt=0, \quad j\geq0,\quad m=0,\hdots,N_\psi-1.
\end{equation}

\subsection{Asymptotic theory for the two-step wavelet-based method (steps $(S1)$ and $(S2)$)}\label{s:discrete_asymptotic_theory}
Given \eqref{eq:discreteY}, we initialize the algorithm with the vector-valued sequence
$$
\mathbb{R}^n\ni \widetilde{a}_{0,k}:= Y(k),\quad k\in \mathbb{Z},
$$
also called the approximation coefficients at scale $2^0=1$. At coarser scales $2^j$, Mallat's algorithm is characterized by the iterative procedure
$$
\widetilde{a}_{j+1,k}=\sum_{k'\in \mathbb{Z}}h_{k'-2k}\widetilde{a}_{j,k'},\quad \widetilde{d}_{j+1,k}=\sum_{k'\in \mathbb{Z}}g_{k'-2k}\widetilde{a}_{j,k'},
\quad j\in \mathbb{N},\quad k\in \mathbb{Z},
$$
where the filter sequences $\{h_k\}_{k\in \mathbb{Z}}$, $\{g_k\}_{k\in \mathbb{Z}}$ are called low- and high-pass MRA filters, respectively. Due to ($W2'$), only a finite number of filter terms is non-zero, which is convenient for computational purposes (Daubechies \cite{daubechies:1992}). The normalized wavelet coefficients are defined by
\begin{equation}\label{eq:Dtilde}
\mathbb{R}^n\ni \widetilde{D}(2^j,k):=2^{-j/2}\widetilde{d}_{j,k}.
\end{equation}
Let
\begin{equation}\label{eq:wavevar}
{\Bbb E}\widetilde{W}(2^j)={\Bbb E}\widetilde{D}(2^j,0)\widetilde{D}(2^j,0)^*, \quad \widetilde{W}(2^j)=\frac{1}{K_j}\sum_{k=1}^{K_j}\widetilde{D}(2^j,k)\widetilde{D}(2^j,k)^*
\end{equation}
be the wavelet variance matrix and its sample counterpart, respectively, where $K_j$ is as in \eqref{e:W(j)}. The following theorem is the discrete time analogue of Theorem \ref{t:distributionOfW} and establishes the asymptotic distribution of the wavelet variance matrices at fixed octaves.
\begin{theorem}\label{t:distributionOfW_dis}
Let $\{Y(k)\}_{k\in \mathbb{Z}}$ be the sequence \eqref{eq:discreteY}. Let $j_1<\hdots<j_m$ be a fixed set of octaves. Then,
\begin{equation}\label{e:asymptotic_normality_wavecoef_fixed_scales_dis}
\Big(\sqrt{K_j}(\textnormal{vec}_{{\mathcal S}} (\widetilde{W}(2^j)- {\Bbb E}\widetilde{W}(2^j) )) \Big)^T_{j = j_1 , \hdots, j_m} \stackrel{d}\rightarrow {\mathcal N}_{\frac{n(n+1)}{2} \times m}(\mathbf{0},\widetilde{F}),
\end{equation}
as $\nu \rightarrow \infty$ (see \eqref{e:vec_definitions} on the notation $\textnormal{vec}_{{\mathcal S}}$). In \eqref{e:asymptotic_normality_wavecoef_fixed_scales_dis}, the matrix $\widetilde{F }\in {\mathcal S}(\frac{n(n+1)}{2}m,\mathbb{R})$ has the form $\widetilde{F} = (\widetilde{G}_{jj'})_{j,j'=1,\hdots,m}$, where each block $\widetilde{G}_{jj'} \in M(n(n+1)/2,\mathbb{R})$ is described in Proposition \ref{p:4th_moments_wavecoef}.
\end{theorem}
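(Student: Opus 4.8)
The plan is to follow the same line of argument as in the proof of Theorem \ref{t:distributionOfW}, replacing the continuous-time wavelet coefficients $D(2^j,k)$ by the Mallat coefficients $\widetilde D(2^j,k)$ of \eqref{eq:Dtilde} and, correspondingly, the continuous wavelet cross-spectral density by its discrete (aliased) counterpart. First I would exploit linearity: since Mallat's pyramidal algorithm is a fixed linear filtering operation applied to $\{Y(k)\}=\{PX(k)\}$ and $P$ does not depend on the scale, one has $\widetilde D(2^j,k)=P\,\widetilde D_X(2^j,k)$, where $\widetilde D_X$ collects the entrywise discrete wavelet coefficients of the independent components $X_1,\dots,X_n$. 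Hence $\widetilde W(2^j)=P\,\widetilde W_X(2^j)P^*$, and it suffices to establish the joint asymptotic normality of the vectorized $\widetilde W_X(2^j)$, $j=j_1,\dots,j_m$, and then transport it through the fixed linear map $S\mapsto PSP^*$ and the $\textnormal{vec}_{\mathcal S}$ operator by the Delta method, exactly as in the passage from \eqref{eq:EW(2j)} to \eqref{e:EW(2j)=PEP*}.

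Next I would obtain a harmonizable representation for $\widetilde D_X(2^j,k)$. Because the filters are finitely supported (assumption $(W2')$), each $\widetilde d_{j,k}$ is a finite linear combination of the $Y(\ell)$, and substituting the spectral representations \eqref{eq:Xhi}--\eqref{eq:XhiN=0} of the independent components yields $\widetilde D_X(2^j,k)$ as a stochastic integral against $\widetilde B$ with a kernel built from the iterated low-pass/high-pass MRA transfer functions. From this one reads off that, for each fixed $j$, $\{\widetilde D(2^j,k)\}_{k\in\mathbb Z}$ is a centered, stationary (in $k$) Gaussian $\mathbb R^n$-valued sequence, and that the joint sequence over $j=j_1,\dots,j_m$ is jointly stationary with a cross-spectral density matrix $\widetilde f_{j,j'}(\lambda)$ on $[-\pi,\pi]$ obtained by folding (periodizing) the products $|\widehat\psi(\cdot)|^2|x|^{-2d_i}|g_i^*|^2$ against the iterated scaling-function transfer function. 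The crucial point is to show that $\widetilde f_{j,j'}$ is bounded and integrable: the $N_\psi$ vanishing moments, together with the discrete moment identity \eqref{eq:vanish} coming from $(W4)$ and the regularity of the $g_i$ from $(A3)$, tame the singularity $|x|^{-2d_i}$ near the origin (here $N_\psi\ge N_n+1>d_i$ is used), so that each aliased term is integrable and the fold converges.

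With integrable cross-spectral densities in hand, the CLT step is identical to the one underlying Theorem \ref{t:distributionOfW}. The entries of $\widetilde W_X(2^j)$ are sample (cross-)covariances $K_j^{-1}\sum_{k=1}^{K_j}\widetilde D_a(2^j,k)\widetilde D_b(2^j,k)$, i.e.\ quadratic functionals of a multivariate stationary Gaussian sequence. I would apply the Cram\'er--Wold device to reduce the joint convergence across octaves and matrix entries to a scalar central limit theorem for an arbitrary linear combination, and then invoke the same central limit theorem for quadratic forms of stationary Gaussian sequences used for the continuous-time result (the auxiliary fourth-order moment / cumulant computation being exactly that of Proposition \ref{p:4th_moments_wavecoef} with the discrete spectral density substituted). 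This delivers \eqref{e:asymptotic_normality_wavecoef_fixed_scales_dis} with the block covariance $\widetilde F=(\widetilde G_{jj'})$, where each $\widetilde G_{jj'}$ is given by the quadratic-form variance formula of Proposition \ref{p:4th_moments_wavecoef} evaluated at $\widetilde f_{j,j'}$.

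I expect the main obstacle to be the spectral-domain analysis of the second paragraph, namely controlling the aliasing introduced by Mallat's algorithm. Unlike a negligible-perturbation argument, one cannot simply reduce to Theorem \ref{t:distributionOfW}, since the discrete sampling genuinely changes the limiting covariance ($\widetilde F\ne F$ in general). The delicate work is the explicit bookkeeping of the iterated low-pass filter through the scaling equation to identify the effective transfer function, and the use of the discrete vanishing-moment identity \eqref{eq:vanish} to cancel the leading singular contributions so that the periodized (folded) spectral density is integrable uniformly near $\lambda=0$. Once this integrability and the precise form of $\widetilde f_{j,j'}$ are secured, the remaining CLT and Delta-method steps are routine transcriptions of the continuous-time argument.
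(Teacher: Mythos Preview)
Your proposal is correct and follows essentially the same route as the paper. The paper does not spell out the proof of Theorem~\ref{t:distributionOfW_dis} separately; it supplies the two ingredients---Proposition~\ref{p:covdis} (the explicit cross-covariance $\int_{\mathbb R} H_j(x)\overline{H_{j'}(x)}e^{\mathbf{i}x(2^jk-2^{j'}k')}|x|^{-D}G(x)|x|^{-D^*}\,dx$, with $H_j$ as in \eqref{eq:Hj}) and Proposition~\ref{p:4th_moments_wavecoef_dis} (the discrete analogue of Proposition~\ref{p:4th_moments_wavecoef}, showing summability of $\|\widetilde\Phi_z\|^2$)---and then relies on the reader to transcribe the Cram\'er--Wold/quadratic-form CLT argument of Theorem~\ref{t:distributionOfW} verbatim, with $\widehat\psi(2^j\cdot)$ replaced by $H_j$ and the bounds $|H_j(x)|=O(|x|^{N_\psi})$ near $0$, $|H_j(x)|\le C$ globally (from Moulines et al.) standing in for \eqref{eq:W1} and \eqref{e:psihat_is_slower_than_a_power_function}.

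Two minor stylistic differences are worth noting. First, the paper does \emph{not} pre-reduce via $\widetilde D(2^j,k)=P\widetilde D_X(2^j,k)$; it works directly with the components of $Y$ and bounds the spectral density of each $\{\xi_s(2^j,k)\}_k$ as in \eqref{eq:spectral}, with $|H_j|^2$ in place of $|\widehat\psi|^2$. Your reduction is a legitimate simplification but not needed. Second, the paper encodes the entire Mallat cascade through the single kernel $H_j$ of \eqref{eq:Hj} rather than tracking iterated low-/high-pass transfer functions; these are equivalent, and the vanishing-moment cancellation you point to is precisely \eqref{eq:vanish}, which is what makes $|H_j(x)|=O(|x|^{N_\psi})$ hold. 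Your identification of the spectral integrability near the origin as the only nontrivial step is accurate.
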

Note that $\mathbb{E}\widetilde{W}(2^j)$ can be recast as
\begin{equation}\label{eq:PlambdaP}
\mathbb{E}\widetilde{W}(2^j)=P\widetilde{\Lambda}_jP^*,
\end{equation}
where
\begin{equation}\label{eq:lambda}
\widetilde{\Lambda}_j=\textnormal{diag}\bigg(\int_\mathbb{R}|H_j(x)|^2|x|^{-2d_1}|g^*_1(x)|^2dx,\hdots,\int_\mathbb{R}|H_j(x)|^2|x|^{-2d_n}|g^*_n(x)|^2dx\bigg)
\end{equation}
(see Proposition \ref{p:covdis} in the Appendix). As in continuous time, expression \eqref{eq:PlambdaP} indicates that an estimator $\widetilde{B}_{\nu}$ of $P^{-1}$ can be generated by jointly diagonalizing $\widetilde{W}(2^{J_1})$ and $\widetilde{W}(2^{J_2})$, for $J_1\neq J_2$.

\begin{definition}\label{def:estimator_discrete_time}
($(S1)$ \textbf{demixing step, discrete time}) Consider two octaves $0\leq J_1<J_2$ for which
\begin{equation}\label{e:Lambda-tilde_J2*Lambda-tilde^(-1)_J1 }
\widetilde{\Lambda}_{J_2}\widetilde{\Lambda}_{J_1}^{-1} \textnormal{ has pairwise distinct diagonal entries}.
\end{equation}
For $\nu \in \bbN$, the wavelet-based demixing estimator $\widetilde{B}_{\nu}$ is the output of the EJD algorithm when setting
\begin{equation}\label{e:RY(h)_discrete}
C_0=\widetilde{W}(2^{J_1})\quad\textnormal{and}\quad C_1=\widetilde{W}(2^{J_2}).
\end{equation}
\end{definition}

As a consequence of Theorem \ref{t:distributionOfW_dis} and by following the same argument as in the proof of Theorem \ref{t:demixing}, we obtain the limiting distribution of $\widetilde{B}_{\nu}$.

\begin{theorem}\label{t:Phatinv_asymp_norm_discrete_time}
Assume condition \eqref{e:Lambda-tilde_J2*Lambda-tilde^(-1)_J1 } holds. Then,
\begin{equation}\label{eq:demixest_dis}
\sqrt{\nu}(\textnormal{vec} (\widetilde{B}_{\nu} - \Pi \hspace{0.5mm}\widetilde{\Lambda}_{J_1}^{-1/2} P^{-1}))^T \stackrel{d}\rightarrow {\mathcal N}(\mathbf{0},\Sigma_{\widetilde{F}}(J_1,J_2)),
\end{equation}
where $\widetilde{\Lambda}_{J_1}$ is defined by \eqref{eq:lambda}, for some matrix
$$\Pi \in  \{\Pi \in M(n,\mathbb{R}): \Pi \textnormal{ has the form }\textnormal{diag}(\pm 1,\hdots,\pm 1)\}.$$
In \eqref{eq:demixest_dis}, the covariance matrix $\Sigma_{\widetilde{F}}(J_1,J_2)$ is a function of $\widetilde{F}$, and $\widetilde{F}$ is defined in Theorem \ref{t:distributionOfW_dis} with $m=2$.
\end{theorem}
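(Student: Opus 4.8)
The plan is to reproduce, in discrete time, the argument behind Theorem \ref{t:demixing}, the only structural input being that $\mathbb{E}\widetilde{W}(2^j)$ factors as $P\widetilde{\Lambda}_jP^*$ with $\widetilde{\Lambda}_j$ diagonal (expression \eqref{eq:PlambdaP} and \eqref{eq:lambda}), exactly as $\mathbb{E}W(2^j)$ does through \eqref{e:EW(2j)=PEP*}. First I would set, at the population level, $C_0 = \mathbb{E}\widetilde{W}(2^{J_1}) = \widetilde{R}\widetilde{R}^*$ and $C_1 = \mathbb{E}\widetilde{W}(2^{J_2}) = \widetilde{R}\,\widetilde{\Lambda}\,\widetilde{R}^*$, where $\widetilde{R} := P\widetilde{\Lambda}_{J_1}^{1/2}$ and the diagonal matrix $\widetilde{\Lambda} := \widetilde{\Lambda}_{J_1}^{-1/2}\widetilde{\Lambda}_{J_2}\widetilde{\Lambda}_{J_1}^{-1/2}$ shares its diagonal entries with $\widetilde{\Lambda}_{J_2}\widetilde{\Lambda}_{J_1}^{-1}$. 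Since $C_0^{-1}C_1 = (P^*)^{-1}\widetilde{\Lambda}_{J_1}^{-1}\widetilde{\Lambda}_{J_2}P^*$ is a diagonal Jordan decomposition, condition \eqref{e:Lambda-tilde_J2*Lambda-tilde^(-1)_J1 } makes its eigenvalues simple. Writing the polar decomposition $\widetilde{R} = UH$ with $U \in O(n)$ and $H \in \mathcal{S}_{>0}(n,\bbR)$, one sees that the whitening $W = C_0^{-1/2}$ satisfies $W\widetilde{R} = U$, so that $WC_1W^* = U\widetilde{\Lambda}U^*$ is already a spectral decomposition. Thus \textbf{Step 2} of the EJD algorithm returns $Q = \Pi U^*$ (simplicity of the spectrum, together with the fixed eigenvalue ordering, removing every ambiguity except the signed factor $\Pi \in \mathcal{I}$ of \eqref{e:set_I}), and \textbf{Step 3} gives $B = QW = \Pi\,\widetilde{R}^{-1} = \Pi\,\widetilde{\Lambda}_{J_1}^{-1/2}P^{-1}$. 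This is the discrete analogue of Theorem \ref{t:demixing}(i) and identifies the centering in \eqref{eq:demixest_dis}.

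For the distributional input, Theorem \ref{t:distributionOfW_dis} with $m=2$ provides the joint asymptotic normality of $\sqrt{K_{J_1}}\,\textnormal{vec}_{{\mathcal S}}(\widetilde{W}(2^{J_1}) - \mathbb{E}\widetilde{W}(2^{J_1}))$ and $\sqrt{K_{J_2}}\,\textnormal{vec}_{{\mathcal S}}(\widetilde{W}(2^{J_2}) - \mathbb{E}\widetilde{W}(2^{J_2}))$, with limiting covariance $\widetilde{F}$ assembled from Proposition \ref{p:4th_moments_wavecoef}. Because $K_j = \nu/2^j$, the normalization $\sqrt{\nu}$ differs from $\sqrt{K_j}$ only by the deterministic factor $2^{j/2}$; absorbing these factors rescales $\widetilde{F}$ block-by-block but preserves joint normality of $\sqrt{\nu}\,\textnormal{vec}_{{\mathcal S}}(\widetilde{W}(2^{J_i}) - \mathbb{E}\widetilde{W}(2^{J_i}))$, $i=1,2$.

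It then remains to push this Gaussian limit through the EJD map $(C_0,C_1) \mapsto B = Q\,C_0^{-1/2}$ by the Delta method. The map factors into the inverse symmetric square root $C_0 \mapsto C_0^{-1/2}$, the symmetric eigendecomposition $WC_1W^* \mapsto Q$, and matrix products, all evaluated near the population point $(\mathbb{E}\widetilde{W}(2^{J_1}),\mathbb{E}\widetilde{W}(2^{J_2}))$. Under \eqref{e:Lambda-tilde_J2*Lambda-tilde^(-1)_J1 } each piece is differentiable: the square root is smooth on $\mathcal{S}_{>0}(n,\bbR)$, and simplicity of the spectrum of $WC_1W^*$ renders its eigenprojections, hence $Q$ under a fixed sign convention on its columns, differentiable by analytic perturbation theory. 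Composing the Jacobian of this map with the rescaled limit law yields \eqref{eq:demixest_dis}, with $\Sigma_{\widetilde{F}}(J_1,J_2)$ appearing as a quadratic form in the rescaled $\widetilde{F}$, in perfect parallel with the $A_3\Sigma_2A_3^*$ form obtained in the continuous case.

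I expect the only genuinely delicate point to be, as in the continuous setting, the perturbation analysis of the symmetric eigenproblem in \textbf{Step 2}: one must verify that the sign convention pinning down $Q$ is consistent between the sample and population levels, so that $\widetilde{B}_\nu$ concentrates around a single $\Pi\,\widetilde{\Lambda}_{J_1}^{-1/2}P^{-1}$ rather than oscillating among admissible sign choices, and that the associated eigenvector map is Fr\'{e}chet differentiable with the correct Jacobian. Since this is structurally identical to the argument already carried out for Theorem \ref{t:demixing}, the sole change being the replacement of $\mathcal{E}(2^{J_i})^{1/2}\textnormal{diag}(2^{J_i d_l})$ by $\widetilde{\Lambda}_{J_i}^{1/2}$, the remaining computations are routine, and I would simply invoke Proposition \ref{p:4th_moments_wavecoef} together with the eigenstructure perturbation result behind Theorem \ref{t:eigen}.
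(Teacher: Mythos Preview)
Your proposal is correct and follows essentially the same approach as the paper: the paper's own proof is a one-line observation that the result follows from Theorem~\ref{t:distributionOfW_dis} by repeating the argument of Theorem~\ref{t:demixing}, with $\widetilde{\Lambda}_{J_i}^{1/2}$ playing the role of $\mathcal{E}(2^{J_i})^{1/2}\textnormal{diag}(2^{J_i d_l})$. Your use of the polar decomposition $\widetilde{R}=UH$ (orthogonal factor on the left) is a harmless variant of the paper's $R=\mathcal{P}O$ convention and leads to the same identification of $B=\Pi\widetilde{\Lambda}_{J_1}^{-1/2}P^{-1}$.
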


\begin{remark}\label{r:pairwise_distinct_eigenvalue_entries_discrete}
As in continuous time (see Remark \ref{r:pairwise_distinct_eigenvalue_entries}), the condition \eqref{e:Lambda-tilde_J2*Lambda-tilde^(-1)_J1 } is not restrictive (see Proposition \ref{p:diff_lambda_dis}).
\end{remark}

Let
\begin{equation}\label{eq:dimixedX_dis}
\widetilde{X}(k):=\widetilde{B}_{\nu}Y(k), \quad k \in \bbZ,
\end{equation}
be the demixed process, of which $\nu$ (wavelet) data points are available (see \eqref{eq:discreteY}). As with its continuous time counterpart $W_{\widehat{X}}(a(\nu)2^j)$ (see \eqref{e:WXhat_EWX}), the sample wavelet variance $\widetilde{W}_{{\widetilde{X}}}(a(\nu)2^j)$ is asymptotically normal when centered at the matrix $\widetilde{\mathfrak{D}}\mathbb{E}\widetilde{W}_{{X}}(a(\nu)2^j)$ (see Proposition \ref{p:xhattox_dis} in the Appendix, and also expression \eqref{e:Dtilde} for the definition of $\widetilde{\mathfrak{D}}$). We are now in a position to define the estimators of the memory parameters ${\mathbf d}^T = (d_1,\hdots,d_n)$.
\begin{definition}
($(S2)$ \textbf{Memory parameter estimation step, discrete time}) For $i,i' = 1,\hdots,n$, let $W_{\widehat{X}}(a(\nu)2^j)_{ii'}$ be the $(i,i')$-th entry of the sample wavelet variance $\widetilde{W}_{{\widetilde{X}}}(a(\nu)2^j)$. The wavelet-based estimator of the memory parameters $d_1,\hdots,d_n$ in \eqref{e:eigen-assumption} is obtained by regressing the terms $W_{\widehat{X}}(a(\nu)2^j)_{ii}$ on the scale indices $a(\nu)2^j$, $j = j_1, \hdots, j_m$, i.e.,
\begin{equation}\label{eq:hurstestimator_dis}
\widetilde{{\mathbf d}} = \left(
  \begin{array}{c}
    {\widetilde{d}}_1 \\
    \vdots \\
    \widetilde{d}_n \\
  \end{array}
\right):=\left(
           \begin{array}{c}
             \sum_{l=1}^{m}w_l^1\log_2(\widetilde{W}_{{\widetilde{X}}}(a(\nu)2^{j_l})_{11}) \\
             \vdots \\
             \sum_{l=1}^{m}w_l^n\log_2(\widetilde{W}_{{\widetilde{X}}}(a(\nu)2^{j_l})_{nn}) \\
           \end{array}
         \right),
\end{equation}
where the weight vectors $\mathbf{w}^i=(w^i_1,\hdots,w^i_m)^T$, $i=1,\hdots,n$, satisfy \eqref{eq:weight}.
\end{definition}
In the following theorem, the asymptotic normality of the estimator $\widetilde{{\mathbf d}} $ is established.

\begin{theorem}\label{t:hurstestimator_dis}
Let $\widetilde{{\mathbf d}}^T = (\widetilde{d}_1,\hdots,\widetilde{d}_n)$ be the estimator defined by \eqref{eq:hurstestimator_dis}. Suppose the scaling factor $a(\nu)$ satisfies
$$
\frac{a(\nu)}{\nu}+\frac{\nu}{a(\nu)^{1+2\beta_*}}\rightarrow0,\quad \nu\rightarrow\infty,
$$
where
\begin{equation}\label{e:beta*}
\beta_*= \left\{\begin{array}{cc}
\min\{\beta,2d_1+2\},  & -1/2<d_1<1/2;\\
\min\{\beta,2d_1\},   & d_1\geq1/2.
\end{array}\right.
\end{equation}
Then,
\begin{equation}\label{eq:convOfH1_dis}
\sqrt{\frac{\nu}{a(\nu)}}\bigg[\left(
                                                                                         \begin{array}{c}
                                                                                           \widetilde{d}_1 \\
                                                                                           \vdots \\
                                                                                          \widetilde{d}_n \\
                                                                                         \end{array}
                                                                                       \right)-\left(
                                                                                         \begin{array}{c}
                                                                                           d_1 \\
                                                                                           \vdots \\
                                                                                           d_n \\
                                                                                         \end{array}
                                                                                       \right)
\bigg]\overset{d}\rightarrow \mathcal{N}(0,\widetilde{\mathcal{W}}), \quad \nu \rightarrow \infty,
\end{equation}
where
$$
\widetilde{\mathcal{W}}=\textnormal{diag}((\mathbf{w}^1)^T\widetilde{V}(d_1)\mathbf{w}^1,\hdots,(\mathbf{w}^n)^T\widetilde{V}(d_n)\mathbf{w}^n),
$$
the weight vectors $\mathbf{w}^i$, $i=1,\hdots,n$ satisfy \eqref{eq:weight}, $\widetilde{V}(h)$ is a $m\times m$ matrix whose $(l,l')$-th entry is
$$
\widetilde{V}_{l,l'}(d)=\frac{4\pi2^{2d|j_{l}-j_{l'}|2^{\min(j_l,j_{l'})}}}{K(d)}\int_{|x|<\pi}|D_{|j_l-j_{l'}|}(x;d)|^2dx, \quad l,l'=1,\hdots,m,
$$
$D_{|j_l-j_{l'}|}(x,d)$ is defined in \eqref{eq:D}, and $K(d)=\int_\mathbb{R}|\widehat{\psi}(x)|^2|x|^{-2d}dx$.
\end{theorem}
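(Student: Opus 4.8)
The plan is to mirror the proof of the continuous-time Theorem \ref{t:hurstestimator}, transferring each step to the discrete-time setting while tracking the two new effects that distinguish the discrete case: the diagonal matrix $\widetilde{\mathfrak{D}}$ that appears in the centering of $\widetilde{W}_{\widetilde{X}}(a(\nu)2^j)$, and the more restrictive bias/variance trade-off encoded in $\beta_*$ of \eqref{e:beta*}. First I would invoke Proposition \ref{p:xhattox_dis} to replace the sample wavelet variance $\widetilde{W}_{\widetilde{X}}(a(\nu)2^j)$ by the wavelet variance $\widetilde{\mathfrak{D}}\,\mathbb{E}\widetilde{W}_X(a(\nu)2^j)$ of the hidden process, so that at the diagonal entries $\widetilde{W}_{\widetilde{X}}(a(\nu)2^j)_{ii}$ the leading behavior is governed by a single coordinate $X_i$ with memory parameter $d_i$, up to a multiplicative constant coming from $\widetilde{\mathfrak{D}}$. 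The crucial observation is that this multiplicative factor is a fixed positive scalar independent of the octave index $j$, so that when it enters the logarithm in \eqref{eq:hurstestimator_dis} it becomes an additive constant that is annihilated by the weight condition $\sum_l w_l^i = 0$ in \eqref{eq:weight}. Thus the demixing/coordinate-change does not affect the asymptotic distribution of $\widetilde{\mathbf d}$, exactly as in the continuous-time case (cf.\ Remark \ref{r:demix=orignal}).

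Next I would establish the scaling law for $\mathbb{E}\widetilde{W}_X(a(\nu)2^j)_{ii}$. Using the representation \eqref{eq:PlambdaP}--\eqref{eq:lambda} applied to the demixed coordinate, the diagonal entry should satisfy $\mathbb{E}\widetilde{W}_X(a(\nu)2^j)_{ii} = c_i\,(a(\nu)2^j)^{2d_i}(1 + o(1))$ as $a(\nu)2^j \to \infty$, where the error term is controlled by the regularity Assumption ($A$3) through $\beta$ together with the discretization error, which contributes the additional constraint $2d_1+2$ (for $-1/2<d_1<1/2$) or $2d_1$ (for $d_1\geq 1/2$) visible in \eqref{e:beta*}. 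The appearance of $\beta_*$ rather than $\beta$ is precisely the discrete-time aliasing effect: the finite MRA filters approximate the ideal wavelet only up to a power-law remainder whose exponent depends on $d_1$, so the coarse-scale asymptotics require a bias term of order $(a(\nu)2^j)^{-\beta_*}$ to be negligible. Substituting the scaling law into the log-regression \eqref{eq:hurstestimator_dis} and using the second weight condition $2\sum_l j_l w_l^i = 1$ then yields $\widetilde d_i = d_i + (\text{bias}) + (\text{fluctuation})$, with the bias vanishing under the stated condition $\nu/a(\nu)^{1+2\beta_*}\to 0$.

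Finally I would handle the fluctuation term by a Delta-method argument. Writing $\log_2 \widetilde{W}_{\widetilde{X}}(a(\nu)2^{j_l})_{ii}$ as a smooth function of the normalized entries $\widetilde{W}_{\widetilde{X}}(a(\nu)2^{j_l})_{ii}/\mathbb{E}\widetilde{W}_X(a(\nu)2^{j_l})_{ii}$, a first-order Taylor expansion reduces the asymptotic distribution of $\widetilde{\mathbf d}$ to that of the vector of centered, normalized diagonal sample wavelet variances across the octaves $j_1,\ldots,j_m$. The joint asymptotic normality of these quantities with rate $\sqrt{\nu/a(\nu)}$ comes from Proposition \ref{p:xhattox_dis}, and the explicit covariance structure is computed from the fourth-order (Gaussian) moments of the discrete wavelet coefficients, yielding the entries $\widetilde V_{l,l'}(d_i)$. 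The asymptotic independence across coordinates $i$ (so that $\widetilde{\mathcal W}$ is block-diagonal, indeed diagonal at the level of the $\mathbf{w}^i$-contractions) follows from the quasi-decorrelation of the wavelet transform together with the independence of the hidden coordinates. The main obstacle I anticipate is the careful control of the discrete-time covariance computation: one must show that the cross-octave covariance of $\widetilde{D}(2^{j_l},k)\widetilde{D}(2^{j_l},k)^*$ and $\widetilde{D}(2^{j_{l'}},k')\widetilde{D}(2^{j_{l'}},k')^*$ converges, after normalization, to the integral $\widetilde V_{l,l'}(d)$ involving $D_{|j_l-j_{l'}|}(x;d)$ and the greatest-common-divisor structure $2^{\min(j_l,j_{l'})}$ of the sampling lattices, and that the discretization remainder does not disturb this limit — this is where the exponent $\beta_*$ must be tracked with genuine care rather than being routine.
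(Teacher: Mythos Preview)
Your proposal is correct and follows essentially the same route as the paper: mirror the continuous-time proof of Theorem \ref{t:hurstestimator}, use Proposition \ref{p:xhattox_dis} in place of Proposition \ref{p:xhattox}, and replace the bias bound of Proposition \ref{p:sigma-2jh} by its discrete-time counterpart (Theorem 1 in Moulines et al.\ \cite{moulines:roueff:taqqu:2007:JTSA}), which is exactly where the exponent $\beta_*$ and the aliasing correction you describe enter. One small slip: the centering in Proposition \ref{p:xhattox_dis} is $\widetilde{\mathfrak{D}}\,\mathbb{E}\widetilde{W}_X(a(\nu)2^j)\,\widetilde{\mathfrak{D}}$, so the diagonal entry picks up $\widetilde{\mathfrak{D}}(i,i)^2$ rather than $\widetilde{\mathfrak{D}}(i,i)$, but your argument that this $j$-independent constant is killed by $\sum_l w_l^i=0$ goes through unchanged.
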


The next result is the discrete time analogue of Corollary \ref{c:equalparameter}, i.e., for the case where some, or all, memory parameters are equal. Note that the assumptions on the process $Y$ do not change from continuous to discrete time.
\begin{corollary}\label{c:equalparameter_dis}
Suppose the underlying mixed process $Y(k)$ satisfies ($A1'$), (A2) and ($A3'$). Then, the conclusions of Theorem \ref{t:distributionOfW_dis}, Theorem \ref{t:Phatinv_asymp_norm_discrete_time} and Theorem \ref{t:hurstestimator_dis} hold.
\end{corollary}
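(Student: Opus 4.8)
The plan is to show that each of the three discrete-time conclusions survives the replacement of the strict ordering \eqref{e:eigen-assumption} by the block structure in $(A1')$, exactly as in the continuous-time Corollary \ref{c:equalparameter}. The guiding observation is that the ordering \eqref{e:eigen-assumption} enters the proofs of Theorems \ref{t:distributionOfW_dis}, \ref{t:Phatinv_asymp_norm_discrete_time} and \ref{t:hurstestimator_dis} in exactly one place, namely in guaranteeing that the diagonal matrix jointly diagonalizing the two wavelet variance matrices has pairwise distinct entries; once that distinctness is supplied by $(A3')$, all three proofs go through verbatim, with the continuous-time objects $W$, $\mathcal{E}$, $F$ replaced by their discrete-time analogues $\widetilde{W}$, $\widetilde{\Lambda}$, $\widetilde{F}$. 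I would therefore treat the three theorems in turn.

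For Theorem \ref{t:distributionOfW_dis}, I would note that its proof is a central limit theorem for the vectorized sample wavelet variance, obtained from the fourth-moment structure of the (Gaussian) wavelet coefficients. Neither the ordering nor the distinctness of $d_1,\dots,d_n$ is ever used: the argument depends only on the harmonizable form \eqref{eq:Xhi}--\eqref{eq:XhiN=0} of the entries of $X$ and on the regularity bound \eqref{eq:assumptionA3}. Since $(A1')$ retains that harmonizable form and $(A3')$ contains $(A3)$, the conclusion \eqref{e:asymptotic_normality_wavecoef_fixed_scales_dis} and the description of $\widetilde{F}$ follow unchanged.

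For Theorem \ref{t:Phatinv_asymp_norm_discrete_time}, the distinctness is essential, since the EJD algorithm recovers $P^{-1}$ (up to the factor $\Pi\widetilde{\Lambda}_{J_1}^{-1/2}$) only when the eigenvalues produced in Step 2 are simple, so that the associated eigenprojections are differentiable and the Delta method applies. The role of $(A3')$ is precisely to restore the condition \eqref{e:Lambda-tilde_J2*Lambda-tilde^(-1)_J1 }: even inside a block $d_i=\dots=d_{i'}$, the diagonal entries of $\widetilde{\Lambda}_{J_2}\widetilde{\Lambda}_{J_1}^{-1}$ are kept apart by the distinct high-frequency functions $g_i$. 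Given this, the perturbation-theoretic argument underlying the proof of Theorem \ref{t:Phatinv_asymp_norm_discrete_time} (which itself mirrors that of Theorem \ref{t:demixing}) applies without change, yielding the convergence of $\widetilde{B}_\nu$ and the limit \eqref{eq:demixest_dis}. The one genuinely new point to check --- and the main obstacle --- is that $(A3')$, stated through the continuous-time matrix $\mathcal{E}(2^{J_1})^{-1}\mathcal{E}(2^{J_2})$, indeed forces \eqref{e:Lambda-tilde_J2*Lambda-tilde^(-1)_J1 }. I would settle this by relating $\widetilde{\Lambda}_j$ in \eqref{eq:lambda} to $\mathcal{E}(2^j)$ in \eqref{eq:e}: both are diagonal, with $i$-th entry an integral of $|x|^{-2d_i}|g_i^{*}(x)|^2$ against a wavelet-type kernel, so the within-block separation that $(A3')$ attributes to the $g_i$ is inherited by the ratios $\widetilde{\Lambda}_{J_2}\widetilde{\Lambda}_{J_1}^{-1}$; Proposition \ref{p:diff_lambda_dis} and Remark \ref{r:pairwise_distinct_eigenvalue_entries_discrete} then confirm that the distinct-entry requirement is met for the admissible octaves $J_1<J_2$.

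Finally, for Theorem \ref{t:hurstestimator_dis}, once $\widetilde{B}_\nu$ converges to $\Pi\widetilde{\Lambda}_{J_1}^{-1/2}P^{-1}$ the demixed process $\widetilde{X}=\widetilde{B}_\nu Y$ decouples entrywise up to the harmless non-identifiability factor, so that applying univariate wavelet regression to each diagonal term $\widetilde{W}_{\widetilde{X}}(a(\nu)2^{j})_{ii}$ reproduces the estimator of $d_i$ built from the hidden process. The asymptotic normality \eqref{eq:convOfH1_dis}, and the diagonal form of $\widetilde{\mathcal{W}}$, then follow exactly as in the proof of Theorem \ref{t:hurstestimator_dis}, since $(A1')$ and $(A3')$ do not alter the entrywise regression or its limiting variance (the weights still obey \eqref{eq:weight}). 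Collecting the three parts completes the proof.
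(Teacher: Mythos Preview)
Your proposal is correct and follows essentially the same route as the paper. The paper does not give a separate proof for Corollary~\ref{c:equalparameter_dis}; its intended argument is the one-line reduction used for the continuous-time Corollary~\ref{c:equalparameter}, and your three-part breakdown is a more explicit version of exactly that.

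One small caveat: your appeal to Proposition~\ref{p:diff_lambda_dis} to secure the discrete-time distinctness condition \eqref{e:Lambda-tilde_J2*Lambda-tilde^(-1)_J1 } under $(A1')$ does not quite work, because the proof of that proposition invokes the strict ordering \eqref{e:eigen-assumption}, which is precisely what $(A1')$ drops. The paper does not verify this implication either; it simply declares (just before Corollary~\ref{c:equalparameter_dis}) that ``the assumptions on the process $Y$ do not change from continuous to discrete time,'' in effect reading $(A3')$ as directly supplying the required pairwise distinctness of the diagonal of $\widetilde{\Lambda}_{J_2}\widetilde{\Lambda}_{J_1}^{-1}$. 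Your heuristic that the within-block separation of the $g_i$ is inherited by the discrete-time ratios is the right intuition, but Proposition~\ref{p:diff_lambda_dis} is not the lemma that delivers it.
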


\begin{remark}\label{r:d<-1/2}
 All results in continuous time hold if we allow the components $X_i(t)$ to be stationary with memory parameter $d_i < - 1/2$. In discrete time, all results hold if we assume that $\textnormal{supp }g_i = [-\pi,\pi)$, since in this case the second and the third terms, respectively, in expressions \eqref{eq:fstar} and \eqref{e:triplesummation} are identically zero.
\end{remark}
\section{Monte Carlo studies}\label{s:MC}

\subsection{Performance over finite samples}

We studied the performance of the two-step wavelet-based method over finite samples assuming the hidden process $X$ is made up of 4 independent fractional Brownian motion components observed in discrete time. For notational simplicity, denote $X := B_{{\mathbf h}}$, $Y := B_H$ (see Example \ref{ex:o.s.s.}). Recall that, in this case, the relation \eqref{e:di=hi-1/2} holds between the memory parameters and the individual Hurst exponents. We simulated $R=500$ sample paths of with sizes ranging from $n=2^{10}$ to $2^{20}$ (results are reported for the smallest and largest sample size only) with individual Hurst parameters ${\mathbf h} = \textnormal{diag}(0.2,0.4,0.6,0.8)$ and mixing matrix
\begin{equation}\label{e:mixing_P_sims}
P=\left(
      \begin{array}{cccc}
 0.6834  & -0.7142    &0.6960  & -0.1165\\
   -0.0096   & 0.4539  & -0.0908   & 0.7740\\
     0.4771   &-0.2345  &  0.3359   &-0.4243\\
     0.5525   &-0.4784   &-0.6281    &0.4553\\
      \end{array}
    \right)
\end{equation}
(see also Remark \ref{r:robustness_wrt_P} on the choice of $P$). The entrywise Hurst exponents are denoted by $h_{X,i}$, $h_{Y,i}$, $i = 1,\hdots,n$, whereas $h_{\widetilde{X},i}$, $i = 1,\hdots,n$, denotes the Hurst exponents of the demixed sequence $\widetilde{X}=\widehat{P^{-1}}Y$ for normalized demixing matrix estimates $\widehat{P^{-1}}$.

The results consist of comparisons of the Monte Carlo log-averages of the sample wavelet variance $\langle\log_2 \widetilde{W}_{X}(2^{j})_{ii}\rangle$, $\langle\log_2 \widetilde{W}_{Y}(2^{j})_{ii}\rangle$ and $\langle\log_2 \widetilde{W}_{\widetilde{X}}(2^{j})_{ii}\rangle$ ($\langle\cdot\rangle$ denotes for Monte Carlo average) for each of the $n=4$ components for the sample sizes $2^{20}$ and $2^{10}$ (Figures \ref{OFBMwaveS_2-20} and \ref{OFBMwaveS_2-10}); boxplots for $\widehat{h}_{X,i}-h_i$, $\widehat{h}_{Y,i}-h_i$ and $\widehat{h}_{\widetilde{X},i}-h_i$, $i = 1,2,3,4$ (Figures \ref{OFBMboxhS_2-20} and \ref{OFBMboxhS_2-10}); and boxplots for the $16$ entries of $\widehat{P^{-1}}P-I$ (Figures \ref{OFBMboxPS_2-20} and \ref{OFBMboxPS_2-10}). Following the procedure described in Remark \ref{r:non-ident}, the columns of $\widehat{P}$ were adjusted as to eliminate the non-identifiability factor. In all cases, the sample wavelet variance matrices were computed based on Daubechies wavelet filters with $N_{\psi} = 2 $ vanishing moments. Using a different wavelet with $N_{\psi}\geq 2$ yields similar conclusions.

In Figures \ref{OFBMwaveS_2-20} and \ref{OFBMwaveS_2-10}, as expected for the mixed data $Y$ all components of $\langle\log_2 \widetilde{W}_{Y}(2^{j})_{ii}\rangle$ display patent departures from the original data $\langle\log_2 \widetilde{W}_{X}(2^{j})_{ii}\rangle$. After demixing, all components of $\langle\log_2 \widetilde{W}_{\widetilde{X}}(2^{j})_{ii}\rangle$ remarkably superimpose those of $\langle\log_2 \widetilde{W}_{X}(2^{j})_{ii}\rangle$, with the possible exception of a few coarse scales for $h = 0.2$ and 0.4. In addition, the boxplots in Figures \ref{OFBMboxhS_2-20} and \ref{OFBMboxhS_2-10} show that the Monte Carlo distributions for ${\widehat{h}_{\widetilde{X},i}}-h_i$ resemble those of ${\widehat{h}_{X,i}}-h_i$, which illustrates the successful demixing of $Y$. Figures \ref{OFBMboxPS_2-20} and \ref{OFBMboxPS_2-10} further indicate that $\widehat{P^{-1}}$ is very well estimated with negligible biases. In all comparisons, as expected the observed estimator properties improve significantly when passing from the relatively small sample size $2^{10}$ to the large sample size $2^{20}$, hence reflecting the asymptotic statement of Theorem \ref{t:demixing}, ($iii$). In addition, simulation results not displayed also show that the standard deviation of the estimates decreases with the sample size according to the scaling ratio $C/\sqrt{\nu}$ for some $C > 0$, as anticipated.

\begin{remark}
Theorem \ref{t:Phatinv_asymp_norm_discrete_time} leaves open the question of how to optimally choose the octaves $J_1 < J_2$. For multiple choices of wavelet octaves, namely, $J_1 = 1$ (which involves the largest number of sum terms in \eqref{e:W(j)}) and $J_2 = 2,\hdots,6$, Table \ref{t:choice_scales_2-10} shows the performance of the individual Hurst exponents' estimators in terms of Monte Carlo bias, standard deviation and (square root) mean squared error. For sample sizes $2^{20}$ and $2^{10}$, the results indicate that for low values of the Hurst exponents, the use of two widely separated wavelet octaves produces better results in terms of mean squared error, whereas for large values of the Hurst exponents the choice of octaves has little impact on the estimation.
\end{remark}

\begin{table}[!hbp]
\begin{center}
\begin{tabular}{cccccccccc}\hline
$h$ & $J_1,J_2$ &$\widehat{h}$ & bias & sd & $\sqrt{\textnormal{MSE}}$ &$\widehat{h}$ & bias & sd & $\sqrt{\textnormal{MSE}}$\\
    &           & ($2^{20}$)     &      &    &                            &  ($2^{10}$)   &      &    &                          \\
\hline
    0.20
& 1,2 & 0.25&0.05&0.04&0.06 & 0.31&0.11&0.10&0.14\\
& 1,3 & 0.22&0.02&0.03&0.04 & 0.25&0.05&0.08&0.10\\
& 1,4 & 0.22&0.02&0.03&0.03  & 0.24&0.04&0.08&0.09\\
& 1,5 & 0.21&0.01&0.02&0.03 & 0.23&0.03&0.08&0.09\\
& 1,6 & 0.21&0.01&0.02&0.03  & 0.22&0.02&0.08&0.08\\
    0.40
& 1,2 & 0.40&-0.00&0.02&0.02 & 0.45&0.05&0.08&0.10\\
& 1,3 & 0.40&-0.00&0.01&0.02 &0.41&0.01&0.07&0.07\\
& 1,4 & 0.39&-0.01&0.01&0.02  &0.40&0.00&0.07&0.07\\
& 1,5 &0.40&-0.00&0.01&0.01 &0.40&0.00&0.07&0.07\\
& 1,6 &0.39&-0.01&0.01&0.01 &0.40&-0.00&0.07&0.07\\
    0.60
& 1,2 &0.59&-0.01&0.01&0.02 &0.60&-0.00&0.07&0.07\\
& 1,3 &0.59&-0.01&0.01&0.02 &0.58&-0.02&0.07&0.07\\
& 1,4 &0.59&-0.01&0.01&0.02 &0.58&-0.02&0.07&0.07\\
& 1,5 &0.59&-0.01&0.01&0.02 &0.58&-0.02&0.07&0.07\\
& 1,6 &0.59&-0.01&0.01&0.02 &0.58&-0.02&0.07&0.07\\
    0.80
& 1,2 &0.79&-0.01&0.01&0.02 &0.76&-0.04&0.07&0.08\\
& 1,3 &0.79&-0.01&0.01&0.02 &0.77&-0.03&0.07&0.07\\
& 1,4 &0.79&-0.01&0.01&0.02 &0.77&-0.03&0.07&0.07\\
& 1,5 &0.79&-0.01&0.01&0.02 &0.77&-0.03&0.07&0.07\\
& 1,6 &0.79&-0.01&0.01&0.02 &0.77&-0.03&0.07&0.07\\
\hline
\end{tabular}
\caption{\label{t:choice_scales_2-10} \textbf{Choice of scales} 1,000 Monte Carlo runs, sample sizes $2^{20}$ and $2^{10}$, ${\mathbf h}=(0.2,0.4,0.6,0.8)$.  }
\end{center}
\end{table}

\begin{remark}\label{r:robustness_wrt_P}
Simulation studies not included show that the choice of the mixing matrix \eqref{e:mixing_P_sims} does not substantially affect the finite sample results.
Moreover, the demixing estimator is very robust with respect to the condition number of the mixing matrix $P$. The distributions of the estimated scalar Hurst eigenvalues after demixing are barely affected for condition numbers of the order of at least $10^{5}$.
\end{remark}


\begin{figure}[hbp]
\begin{center}
  \includegraphics[width=9cm]{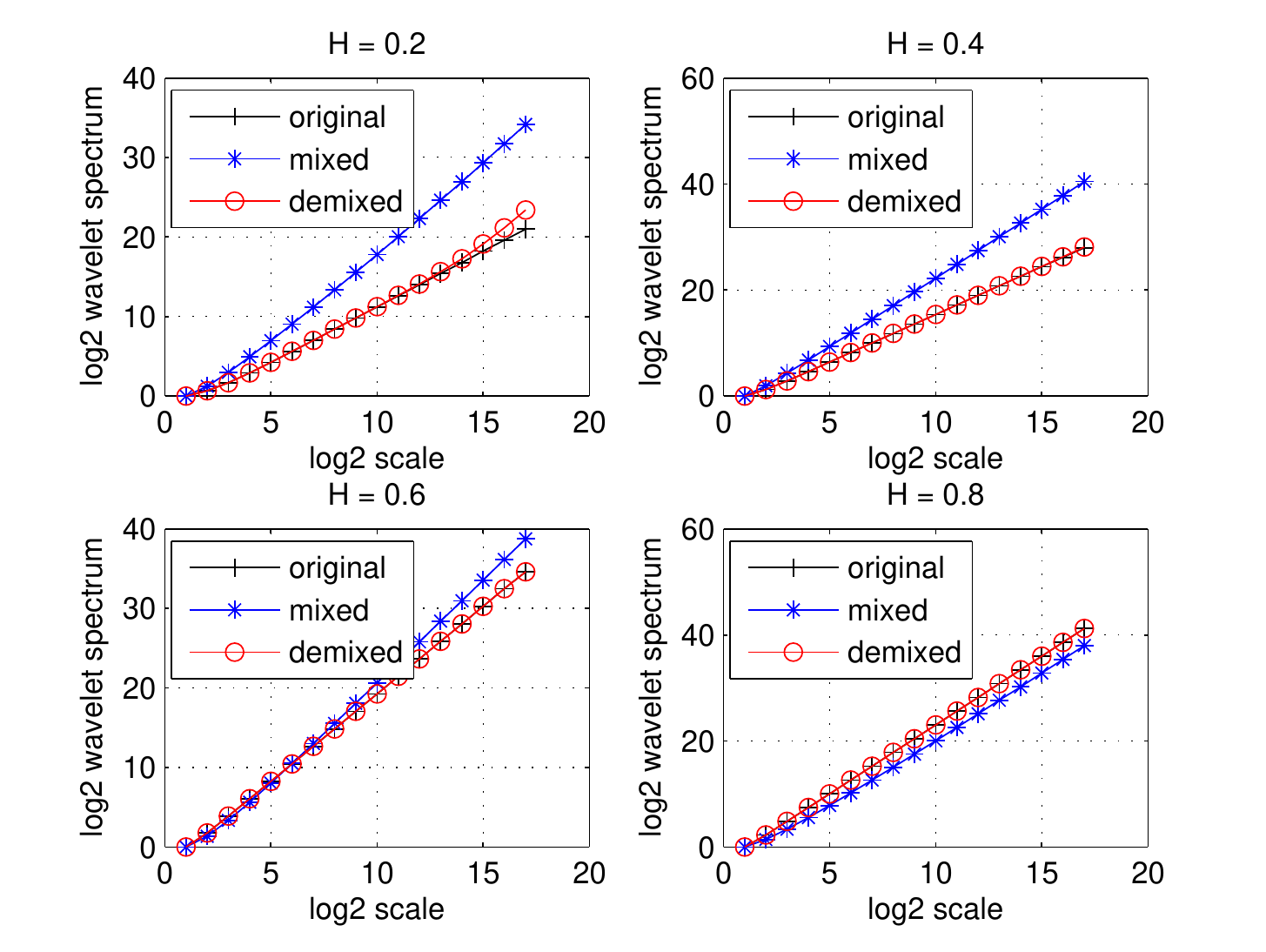}\\
  \caption{\textbf{Scaling} $\log W_{\cdot,\cdot}(2^j)$ vs.\ $j$ for each of the $n=4$ components based on the wavelet variance scales $2^1$ and $2^2$. The plots were produced by means of 500 Monte Carlo runs of sample size $2^{20}$, with parameter values ${\mathbf h}=(0.2,0.4,0.6,0.8)$ and $N_{\psi}=2$.}\label{OFBMwaveS_2-20}
\end{center}
\end{figure}

\begin{figure}[hbp]
\begin{center}
  \includegraphics[width=10cm]{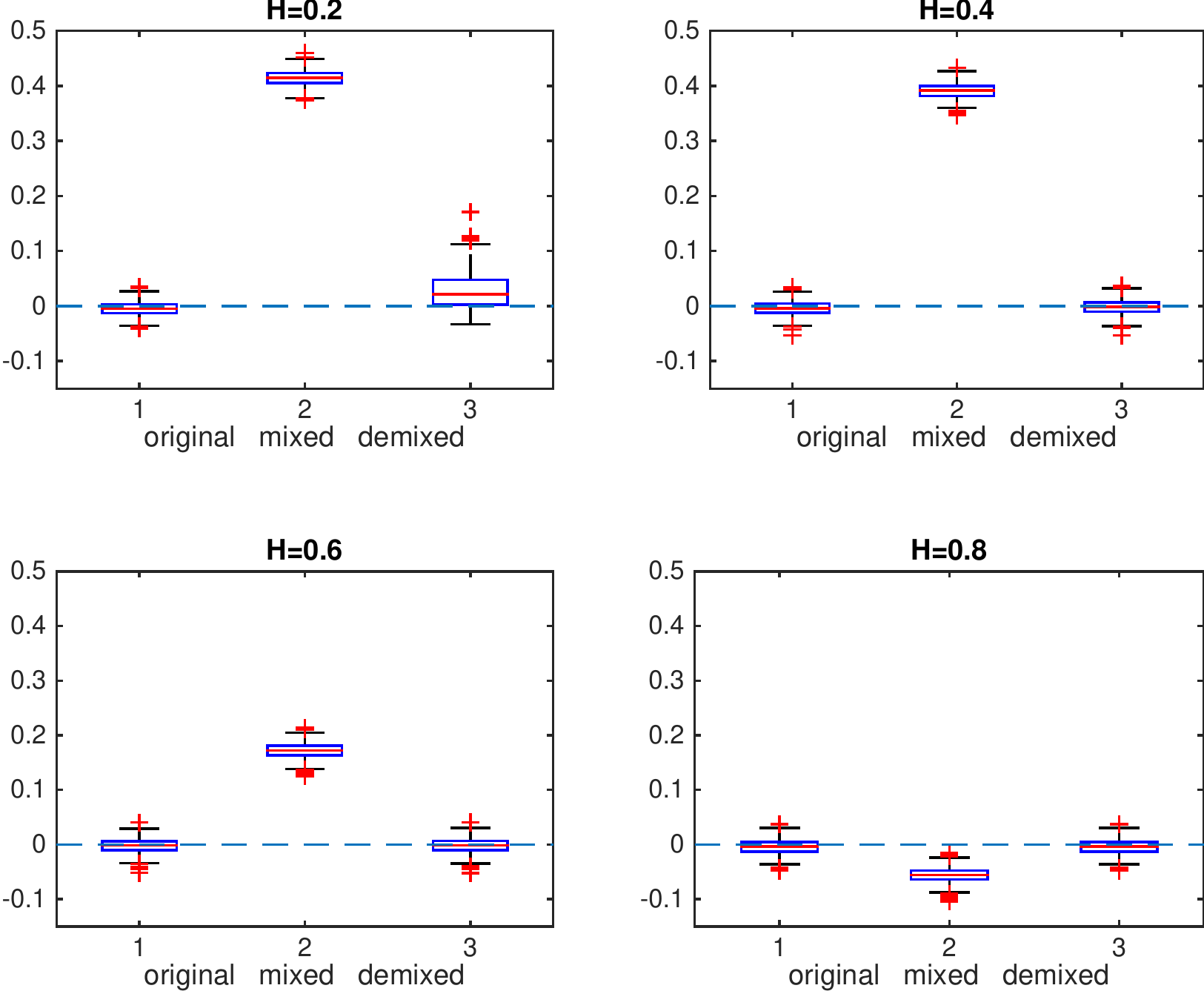}\\
  \caption{\textbf{Boxplots} based on the wavelet variance scales $2^1$ and $2^2$ for $i = 1,2,3,4$, $\widehat{h}_{X,i}-h_{i}$ (hidden, left), $\widehat{h}_{Y,i}-h_{i}$ (mixed, middle) and $\widehat{h}_{\widetilde{X},i}-h_{i}$ (demixed, right), for each of the $n=4$ components, sorted by ascending order in terms of $h$. The plots were produced by means of 500 Monte Carlo runs of sample size $2^{20}$, with parameter values ${\mathbf h}=(0.2,0.4,0.6,0.8)$ and $N_{\psi}=2$.}\label{OFBMboxhS_2-20}
\end{center}
\end{figure}
\begin{figure}[hbp]
\begin{center}
  \includegraphics[width=9cm]{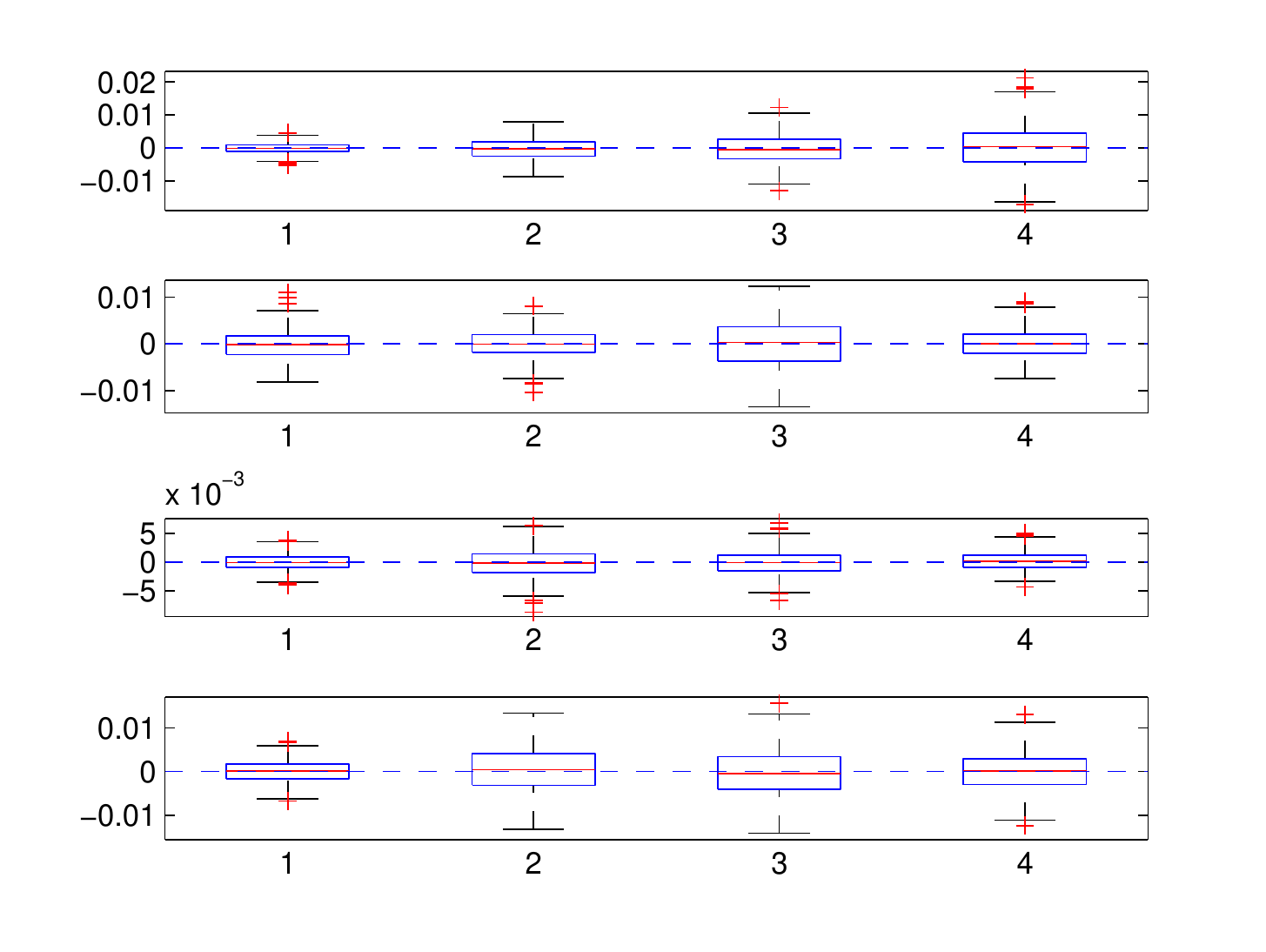}\\
  \caption{\textbf{Boxplots} based on the wavelet variance scales $2^1$ and $2^2$ for the 16 entries of $\widehat{P^{-1}}P-I$. The $(i_1,i_2)$-th boxplot denotes the $(i_1,i_2)$-th entry of $\widehat{P^{-1}}P-I$. The plots were produced by means of 500 Monte Carlo runs of sample size $2^{20}$, with parameter values ${\mathbf h}=(0.2,0.4,0.6,0.8)$ and $N_{\psi}=2$.}\label{OFBMboxPS_2-20}
\end{center}
\end{figure}

\begin{figure}[hbp]
\begin{center}
  \includegraphics[width=9cm]{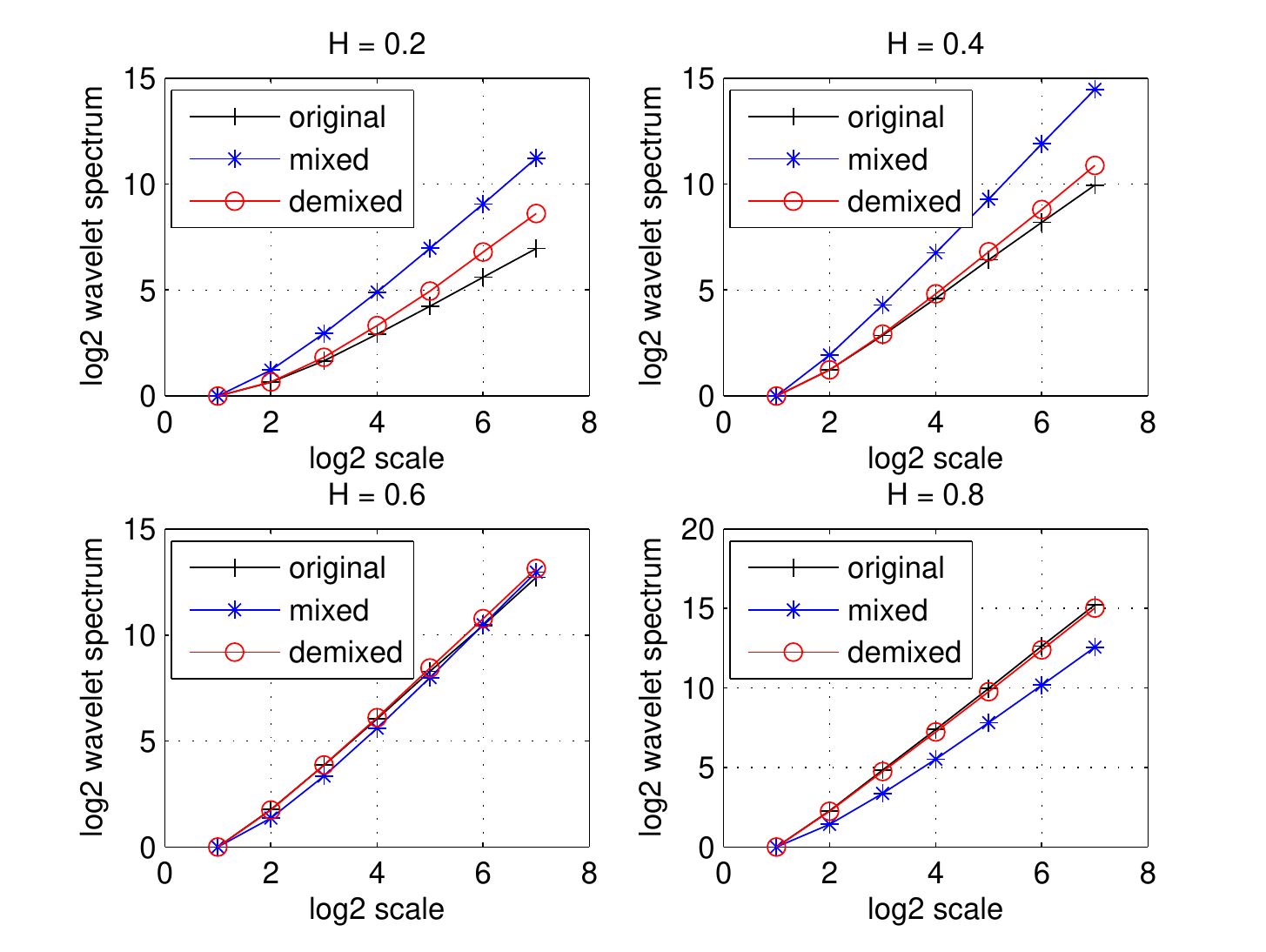}\\
  \caption{$\log W_{\cdot,\cdot}(2^j)$ vs.\ $j$ for each of the $n=4$ components based on the wavelet variance scales $2^1$ and $2^2$. The plots were produced by means of 500 Monte Carlo runs of sample size $2^{10}$, with parameter values ${\mathbf h}=(0.2,0.4,0.6,0.8)$ and $N_{\psi}=2$.}\label{OFBMwaveS_2-10}
\end{center}
\end{figure}

\begin{figure}[hbp]
\begin{center}
  \includegraphics[width=10cm]{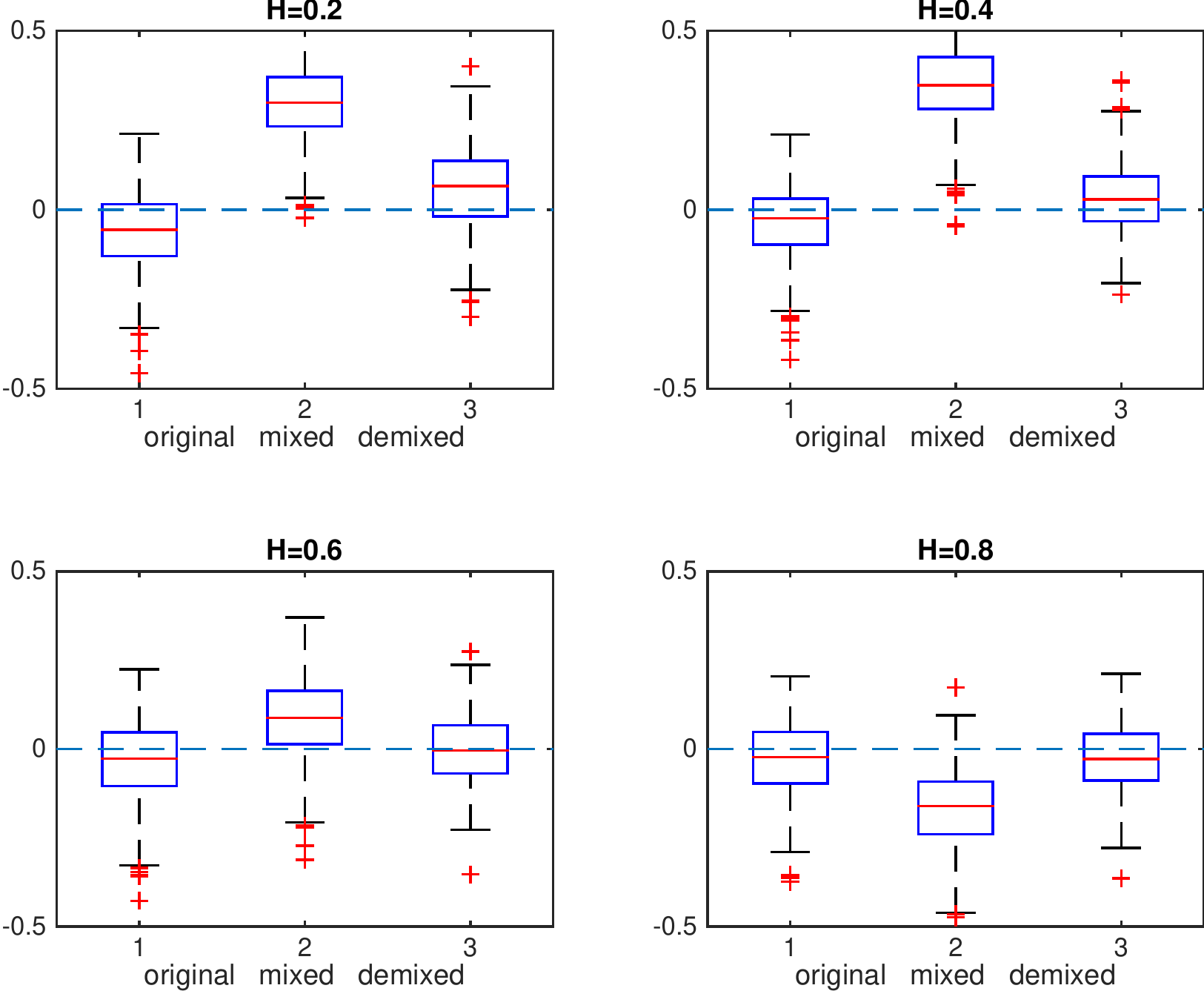}\\
  \caption{\textbf{Boxplots} based on the wavelet variance scales $2^1$ and $2^2$ for $i = 1,2,3,4$, $\widehat{h}_{X,i}-h_{i}$ (hidden, left), $\widehat{h}_{Y,i}-h_{i}$ (mixed, middle) and $\widehat{h}_{\widetilde{X},i}-h_{i}$ (demixed, right), for each of the $n=4$ components, sorted by ascending order in terms of $h$. The plots were produced by means of 500 Monte Carlo runs of sample size $2^{10}$, with parameter values ${\mathbf h}=(0.2,0.4,0.6,0.8)$ and $N_{\psi}=2$.}\label{OFBMboxhS_2-10}
\end{center}
\end{figure}
\begin{figure}[hbp]
\begin{center}
  \includegraphics[width=9cm]{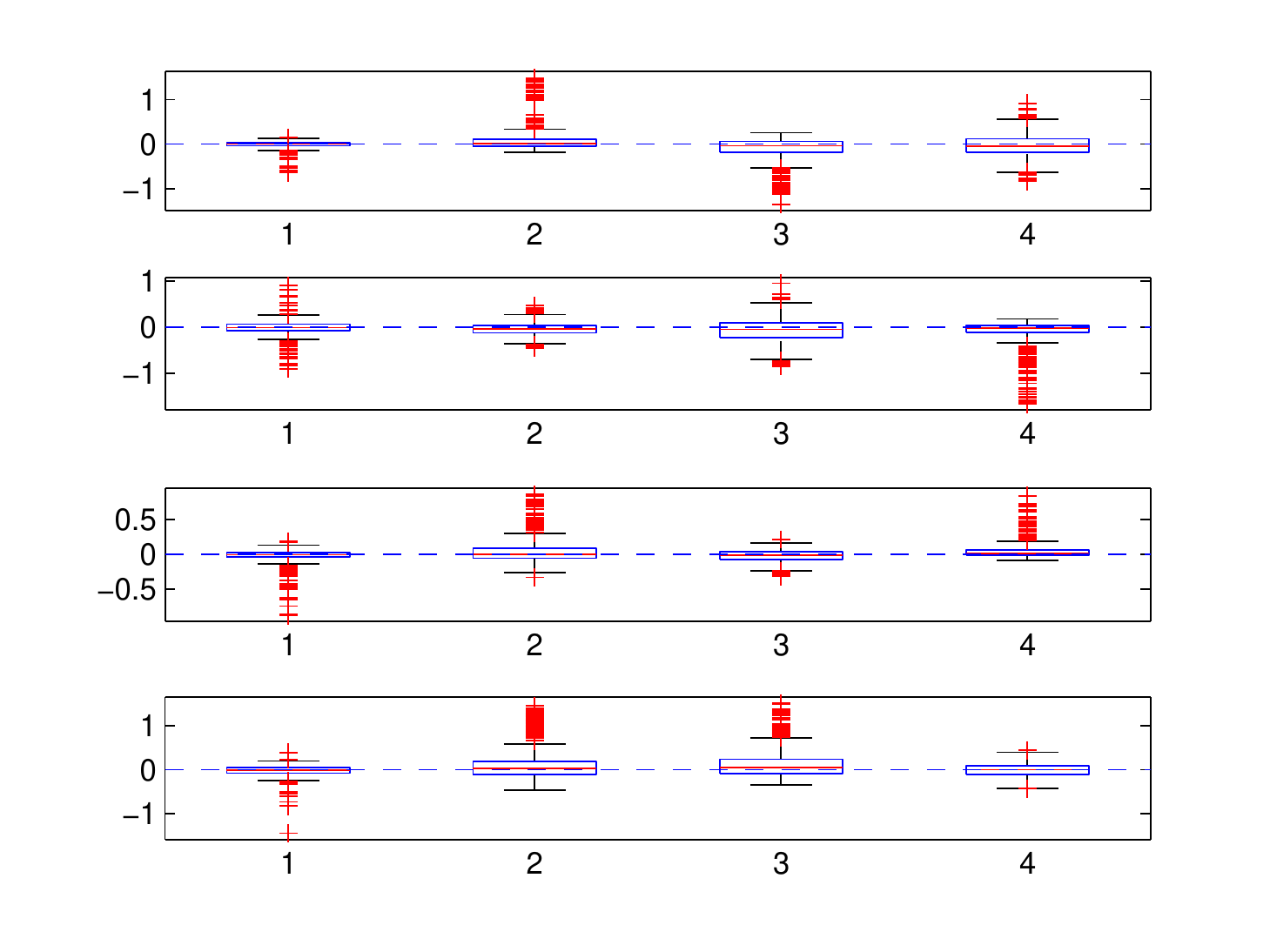}\\
  \caption{\textbf{Boxplots} based on the wavelet variance scales $2^1$ and $2^2$ for the 16 entries of $\widehat{P^{-1}}P-I$. The $(i_1,i_2)$-th boxplot denotes the $(i_1,i_2)$-th entry of $\widehat{P^{-1}}P-I$. The plots were produced by means of 500 Monte Carlo runs of sample size $2^{10}$, with parameter values ${\mathbf h}=(0.2,0.4,0.6,0.8)$ and $N_{\psi}=2$.}\label{OFBMboxPS_2-10}
\end{center}
\end{figure}

\subsection{Two-step wavelet-based and maximum likelihood estimation: a comparative study}\label{s:wavelet_vs_ML}


Due to its wide applicability and well-known asymptotic properties, maximum likelihood estimation is a natural choice and the associated methodology in a multivariate framework has been constructed by several authors (see references in the Introduction). In this section, we conduct Monte Carlo experiments to compare the statistical and computational finite sample performances of two-step wavelet-based and maximum likelihood (ML) estimation. For the sake of illustration, we opt for Whittle-type estimation for fitting a mixed bivariate operator fractional Gaussian noise. This involves reexpressing the likelihood function in the Fourier domain and using some approximations. For the reader's convenience, we provide a brief description of the method; for more details see, for instance, Hosoya \cite{hosoya:1996,hosoya:1997}, Robinson \cite{robinson:2008} and Tsai et al.\ \cite{tsai:rachinger:chan:2017}.

In \eqref{eq:mix}, suppose $X$ is a vector of two independent fractional Gaussian noise entries with Hurst parameters $h_i$, $i=1,2$.
Then, the (negative) Whittle log-likelihood function of $Y$ can be approximated by
$$
l(h_1,h_2,A)=2T\log|\det A|+\sum_{i=1}^T\bigg\{\log|2(1-\cos x_i)\det\big(\widetilde{G}(x_i;h_1,h_2)\big)|\bigg\}
$$
\begin{equation}\label{eq:likelihood}
+\sum_{i=1}^T\textnormal{tr}\bigg[(A^*)^{-1}\{2(1-\cos x_i)\widetilde{G}(x_i;h_1,h_2)\}^{-1}A^{-1}I_Y(x_i)\bigg],
\end{equation}
where $A :=P\textnormal{diag}(e(h_1),e(h_2))$, $e(h_i):=\{\Gamma(2h_i+1)\sin(\pi h_i)/2\pi\}^{1/2}$,  $\widetilde{G}(x;h_1,h_2):=\textnormal{diag}(\widetilde{R}(x,h_1),\widetilde{R}(x,h_2))$, $\widetilde{R}(x,h_i):=\frac{1}{4\pi h_i}\{(2\pi M-x)^{-2h_i}+(2\pi M+x)^{-2h_i}\}+\sum_{k=-M}^M|x+2k\pi|^{-2h_i-1}$ for some large integer $M$, $T:=[(\nu-1)/2]$, $I_Y(x):=J_Y(x)J_Y(x)^*/(2\pi \nu)$, $J_Y(x):=\sum_{t=1}^{\nu}Y_t\exp({\mathbf i}tx)$, and $x_i=2\pi i/\nu$ are the Fourier frequencies. The (Whittle) ML estimator is defined by
\begin{equation}\label{eq:mlest}
\widehat{\theta}:=\textnormal{argmin}_{\theta}l(\theta).
\end{equation}
In \eqref{eq:mlest}, $l(\cdot)$ is given by \eqref{eq:likelihood}, and we write $\widehat{\theta}=(\widehat{h}_1,\widehat{h}_2,\widehat{P})$.
The estimator \eqref{eq:mlest} was implemented in Matlab using the function \texttt{fminsearch.m} to minimize $l(h_1,h_2,A)$ with respect to the unknown parameters $h_1$, $h_2$ and $A$.

For the simulation study, we picked the parameter values
\begin{equation}\label{e:MC_parameters}
(h_1, h_2)=(0.3,0.9),\quad P=\left(
                               \begin{array}{cc}
                                 0.78 &  0.62 \\
                                 0.62 & 0.78 \\
                               \end{array}
                             \right).
\end{equation}
Monte Carlo averages for the two-step wavelet-based and ML estimators for the parameters $h_1$, $h_2$ and $P$ are reported in Table \ref{table:MC_performance_wave_vs_ML}.

\begin{table}[!hbp]
\begin{center}
\begin{tabular}{ccccc}\hline
method &parameter& bias & sd &$\sqrt{\textnormal{MSE}}$\\
\hline
ML&$h_1$& 0.1479 &0.1705& 0.2257\\
&$h_2$& -0.0358 &0.0761& 0.0841\\
&$p_{1,1}$&-1.1076&1.2362&1.6598\\
&$p_{1,2}$&4.5516&1.2604&4.7229\\
&$p_{2,1}$&4.6128&1.2244&4.7725\\
&$p_{2,2}$&-1.1042&1.1772&1.6140\\
\hline
two-step wavelet method&$h_1$& 0.0522 &0.0954 &0.1088\\
&$h_2$&-0.1125 &0.0919& 0.1452\\
&$p_{1,1}$&-0.0207&0.2592&0.2600\\
&$p_{1,2}$&0.0182&0.3841&0.3845\\
&$p_{2,1}$&0.0196&0.2462&0.2469\\
&$p_{2,2}$&-0.0170&0.3686&0.3690\\ \hline
\end{tabular}
\caption{\label{table:MC_performance_wave_vs_ML}Biases, standard deviations and (square root) mean squared errors over 100 replications with sample size $\nu=2^{10}$ from the two-step wavelet-based and ML methods for the parameters $h_1,h_2$ and $P=(p_{ij})_{i,j=1,2}$ as in \eqref{e:MC_parameters}.  }
\end{center}
\end{table}

The simulation study shows that the semiparametric two-step wavelet-based and the parametric Whittle-type ML methods display comparable finite sample performances as measured by Monte Carlo bias, standard deviation and $\sqrt{\textnormal{MSE}}$. In fact, the former method estimates $h_1$ and $P$ slightly more accurately, whereas the latter does better with $h_2$. However, the two-step wavelet-based method is far more computationally efficient. In fact, the ML estimator requires minimizing \eqref{eq:likelihood} with respect to $n+n^2$ unknown parameters, which can be numerically very difficult in higher dimension $n$. As shown in Table \ref{t:time}, the computational time per realization of ML grows rapidly as a function of the path size $\nu$, and the ratio between computational times for the two methods grows exponentially fast. Furthermore, our computational studies indicate that the minimization procedure required by ML is somewhat sensitive to the initial guess.

In all fairness, the computational performance of ML can be surely improved by replacing the all-purpose \texttt{fminsearch.m} with a special optimization algorithm. Nevertheless, this computational study illustrates the fact that the potential numerical hurdles in the construction of viable maximum likelihood estimation for mixed fractional processes are significantly more stringent than those for the proposed two-step wavelet-based method. In addition, the computational robustness of the latter with respect to the sample path size is striking.
\begin{table}[!hbp]
\begin{center}
\begin{tabular}{crrr}\hline
 &\multicolumn{2}{c}{time in seconds (per realization)} & time ratio \\
 sample path size& ML& two-step wavelet& (ML/two-step wavelet)\\
\hline
$2^8$&2.5&0.0035&720\\
$2^{10}$&22.0&0.0050&4400\\
$2^{12}$&216.0&0.0100&21600\\
$2^{14}$&2495.0&0.0120&213870\\
\hline
\end{tabular}
\caption{Computational performance: Whittle-type ML and two-step wavelet based methods, dimension $n=2$. }
\label{t:time}
\end{center}
\end{table}

\section{Applications}\label{s:application}

We now provide two applications of the method constructed above.

In Section \ref{s:data}, we illustrate the two-step wavelet-based method by fitting a bivariate series of annual tree ring measurements from bristlecone pine trees in California. The data can be found in the Time Series Data Library, which is available on the website DataMarket (\texttt{https://datamarket.com/data/list/?q=provider:tsdl}). The so-named White Mountain and Methuselah pine tree data sets are provided by C. W. Ferguson, E.\ Schulman and H. C. Fritts, and by D.\ A. Graybill, respectively. In Section \ref{s:eigenstructure_asympt}, we draw upon the results in Section \ref{sc:samplewavelet} to establish the asymptotic normality of the eigenstructure of the sample wavelet variance matrix at fixed scales. This is of independent interest because sample wavelet variance matrices do not generally follow a Wishart distribution. This results from the presence of residual correlation after the application of the wavelet transform.

\subsection{Modeling tree ring data}\label{s:data}

Many tree ring data sets exhibit long range dependence properties (Tsai and Chan \cite{tsai:chan:2005}). Annual tree ring width measurements can be modeled as aggregates of the underlying continuous time growth rate process over time intervals between two consecutive sampling time points. Assuming reasonable physical models, the latter, in turn, can be approximated by a mixed fractional process, as explained in Section \ref{s:aggregation}.
Although the full data set covers the period 5142 BC -- 1962 AD, we focus instead on the subperiod 4141 BC -- 1962 AD, since preliminary wavelet-based analysis revealed stationarity in the latter. The time series are displayed in Figure~\ref{fig:tree ring2}, top plots.

Data analysis is conducted both in the time and wavelet domains. We examine the data by means of sample autocorrelation and cross-correlation functions (ACFs and CCFs, respectively), main diagonal wavelet scaling plots $ \log_2 \widetilde{W}(2^j)_{11} $ and $ \log_2 \widetilde{W}(2^j)_{22} $ (see \eqref{eq:wavevar}) as functions of $\log_2 2^j = j$, as well as the so-named sample wavelet coherence function $\widehat{w}_{12}(2^j)$, $j= j_1,\hdots,j_m$. The latter is a wavelet version of the CCF and can also be used to check the cross-correlation in bivariate data. For each $j$, the associated term is defined by
$$
\widehat{w}_{12}(2^j) =\widetilde{W}(2^j)_{12} \bigg/
\sqrt{\widetilde{W}(2^j)_{11} \widetilde{W}(2^j)_{22} }
$$
(see Whitcher et al.\ \cite{whitcher:guttorp:percival:2000}).

\begin{figure}[hbp]
\begin{center}
  \includegraphics[width=14cm]{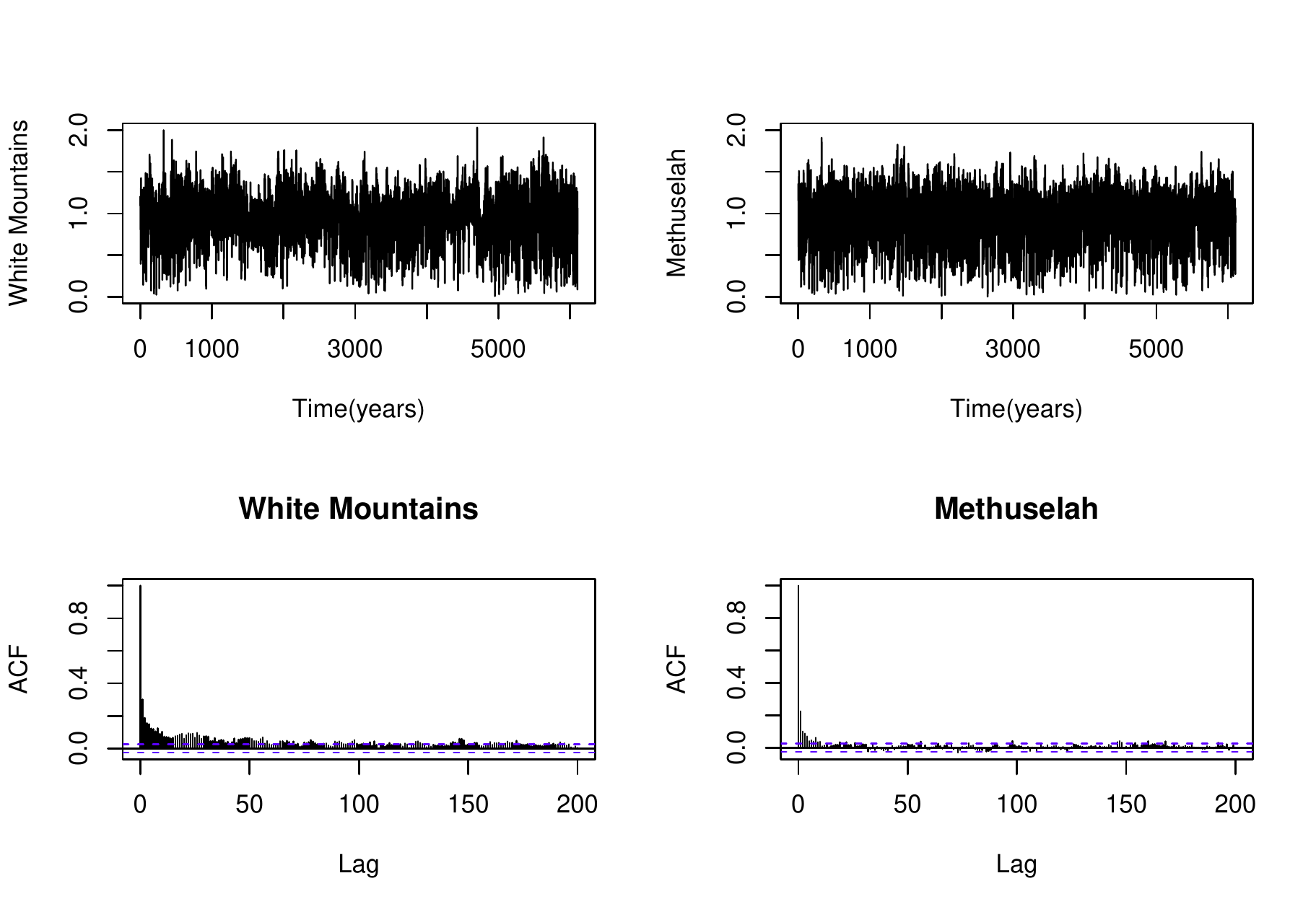}\\
\caption{Upper: Time series plots of tree ring measurements; Lower: Sample autocorrelations of tree ring measurements.}\label{fig:tree ring2}
\end{center}
\end{figure}

\begin{figure}[hbp]
\begin{center}
  \includegraphics[width=8cm]{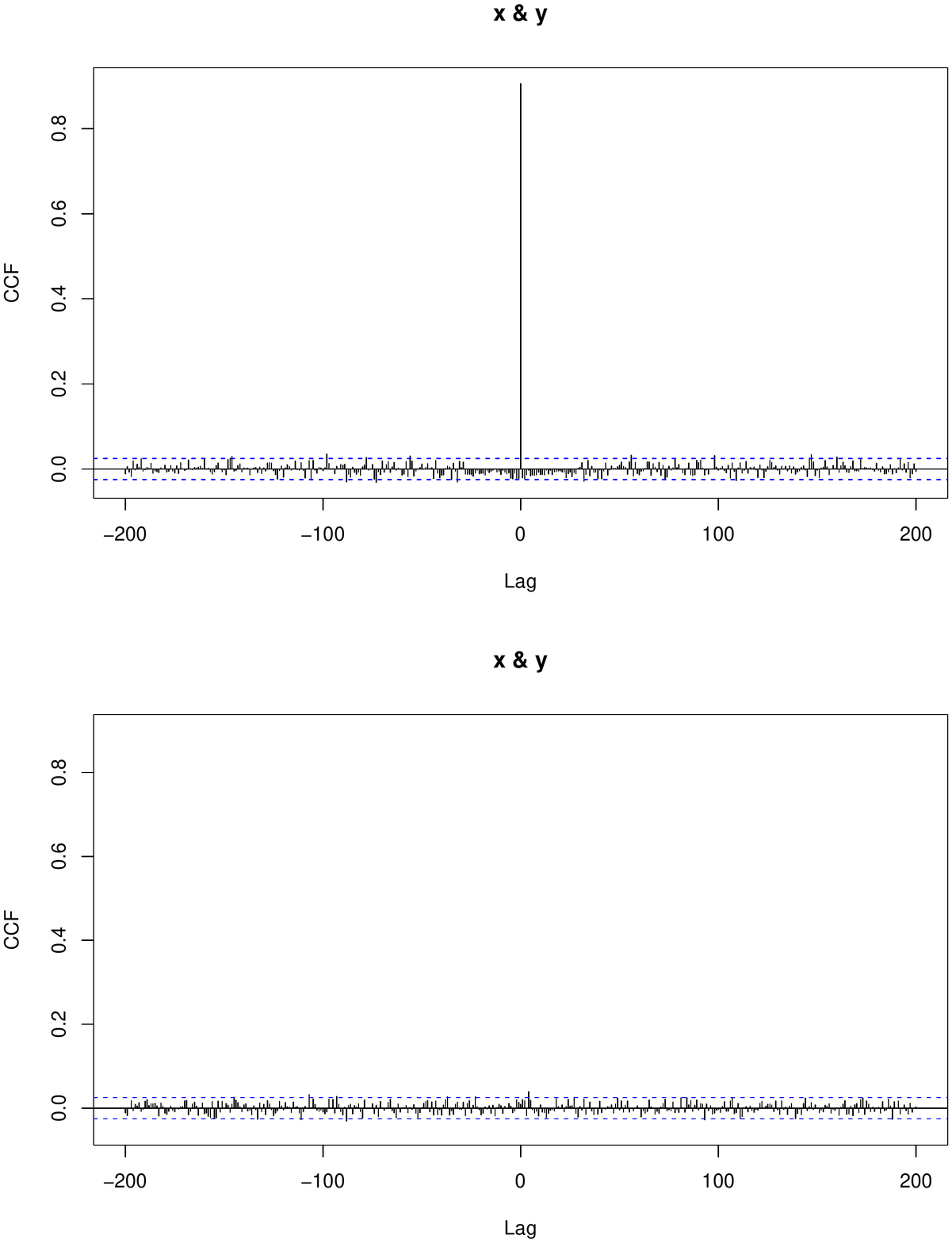}\\
\caption{Upper: Sample cross-correlation between tree ring measurements, after pre-whitening. Lower: Sample cross-correlation of the demixed data, after pre-whitening. The dashed lines correspond to the threshold $\pm1.96/\sqrt{\nu}$ at 5\% significant level.}\label{fig:ccf1}
\end{center}
\end{figure}

Because it is well known that spurious cross-correlation may occur as a result of the presence of fractional memory in each time series, it is pivotal to pre-whiten the data (e.g., Cryer and Chan \cite{cryer:chan:2008}, Section 11.3). The corresponding sample ACFs, shown on the lower panel in Figure \ref{fig:tree ring2}, suggest that the time series have long memory. This is confirmed by wavelet analysis, as displayed in Figure~\ref{fig:eigest} (left plot). Indeed, both $\log_2 \widetilde{W}(2^j)_{11}$ and $\log_2 \widetilde{W}(2^j)_{22}$ suggest scaling behavior with Hurst parameters that clearly depart from $1/2$, i.e., long memory. Moreover, the fact that both curves resemble each other (namely, close Hurst parameter values) can be explained as the preponderance of one of the two underlying scaling laws (see the discussion in the Introduction). The upper panel in Figure \ref{fig:ccf1} displays the sample cross-correlation (for pre-whitened data). It reveals that the sequences are contemporaneously strongly correlated but not cross-correlated at any nonzero lag values. This is confirmed by the wavelet coherence function (Figure~\ref{fig:eigest}, right plot), which shows significant and nearly constant correlation across all scales.

The demixing step $(S1)$ of the proposed wavelet-based method yields the following estimated demixing matrix
$$
\widehat{P^{-1}}=   \left(
                      \begin{array}{cc}
                       0.9112 &    -0.7827 \\
                        0.1467  & 1.1922 \\
                      \end{array}
                    \right).
$$
Demixed ring tree time series are computed by applying $\widehat{P^{-1}}$ to the original data. Inspection of the sample cross-correlation function for the demixed tree ring data (after pre-whitening) reveals that the proposed wavelet-based method successfully decorrelated the data (lower panel in Figure \ref{fig:ccf1}). This is further confirmed by the wavelet coherence function (Figure~\ref{fig:eigest}, right plot), which evidences near zero correlations at all scales but a few of the coarsest. In addition, both functions $\log_2 \widetilde{W} (2^j)_{11}$ and $\log_2 \widetilde{W} (2^j)_{22}$ (for demixed data) still display scaling behavior. However, the Hurst exponents seem quite distinct and bounded away from $1/2$. This is confirmed by the proposed estimation method. After demixing, the memory parameter estimation step $(S2)$ yields the parameter estimates $\widehat{h}_1= 0.65$, $\widehat{h}_2=0.93$ (using scales ($j_1,j_2$)=(3,7)), and $\widehat{h}_1= 0.65$, $\widehat{h}_2=0.96$ (using scales ($j_1,j_2$)=(3,9)) (recall that, in this case, the relation between the Hurst and memory parameters $h$ and $d$, respectively, is given by \eqref{e:di=hi-1/2}). In other words, there is little sensitivity of the parameter estimates to the choice of octave range.
Table \ref{t:h1=h2} further reports a Monte Carlo study of the sample mean and sample standard deviation of $\widehat{h_2-h_1}$ for the case $h_1=h_2=h$.
The difference between the estimated Hurst parameters for the demixed tree ring data is $\widehat{h}_2-\widehat{h}_1=0.96-0.65=0.31>1.645 \times \textnormal{sd}(\widehat{h_2-h_1})$, which lies far outside the confidence interval. In other words, there is evidence for the hypothesis $h_1 < h_2$ in the demixed ring tree data. Note that this could not have been detected had we skipped step ($S1$), i.e., if Hurst exponent estimation had been conducted directly on the original data.


\begin{table}[!hbp]
\begin{center}
\begin{tabular}{ccccc}\hline
true $h$&parameter& mean & sd \\
\hline
$h$=0.7&$\widehat{h}_1$&0.6985  & 0.0183\\
&$\widehat{h}_2$&0.7229  &  0.0176\\
&$\widehat{h_2-h_1}$&0.0244& 0.0185\\
\hline
$h$=0.8&$\widehat{h}_1$&0.7957  & 0.0191\\
&$\widehat{h}_2$&0.8229  &  0.0195\\
&$\widehat{h_2-h_1}$&0.0272& 0.0202\\
\hline
\end{tabular}
\caption{\textbf{wavelet estimation}: ($j_1,j_2$)=(3,9), sample size=6000, number of Monte Carlo runs=1000. }
\label{t:h1=h2}
\end{center}
\end{table}

\begin{figure}[hbp]
\begin{center}
\centerline{  \includegraphics[width=8cm]{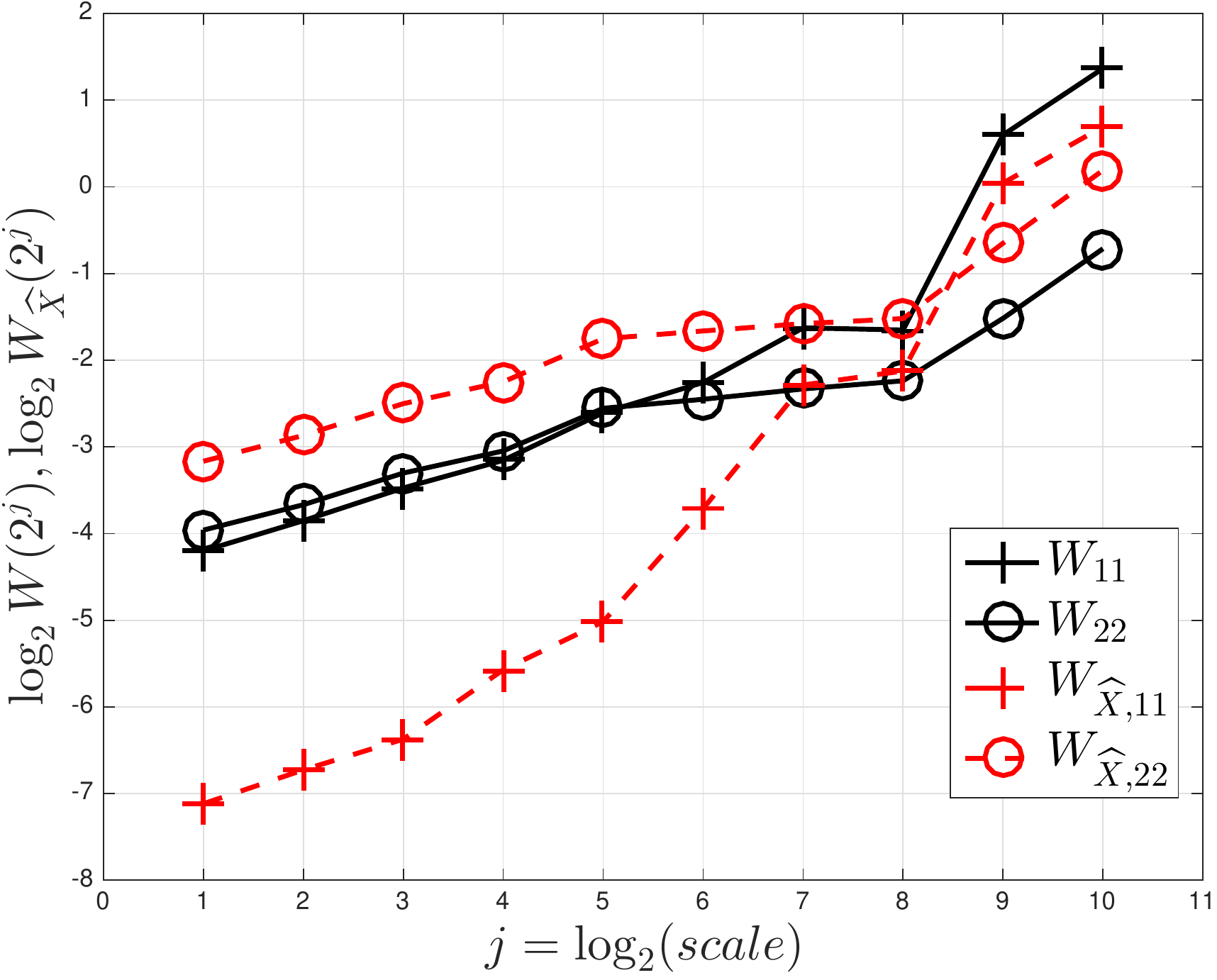}  \includegraphics[width=8cm]{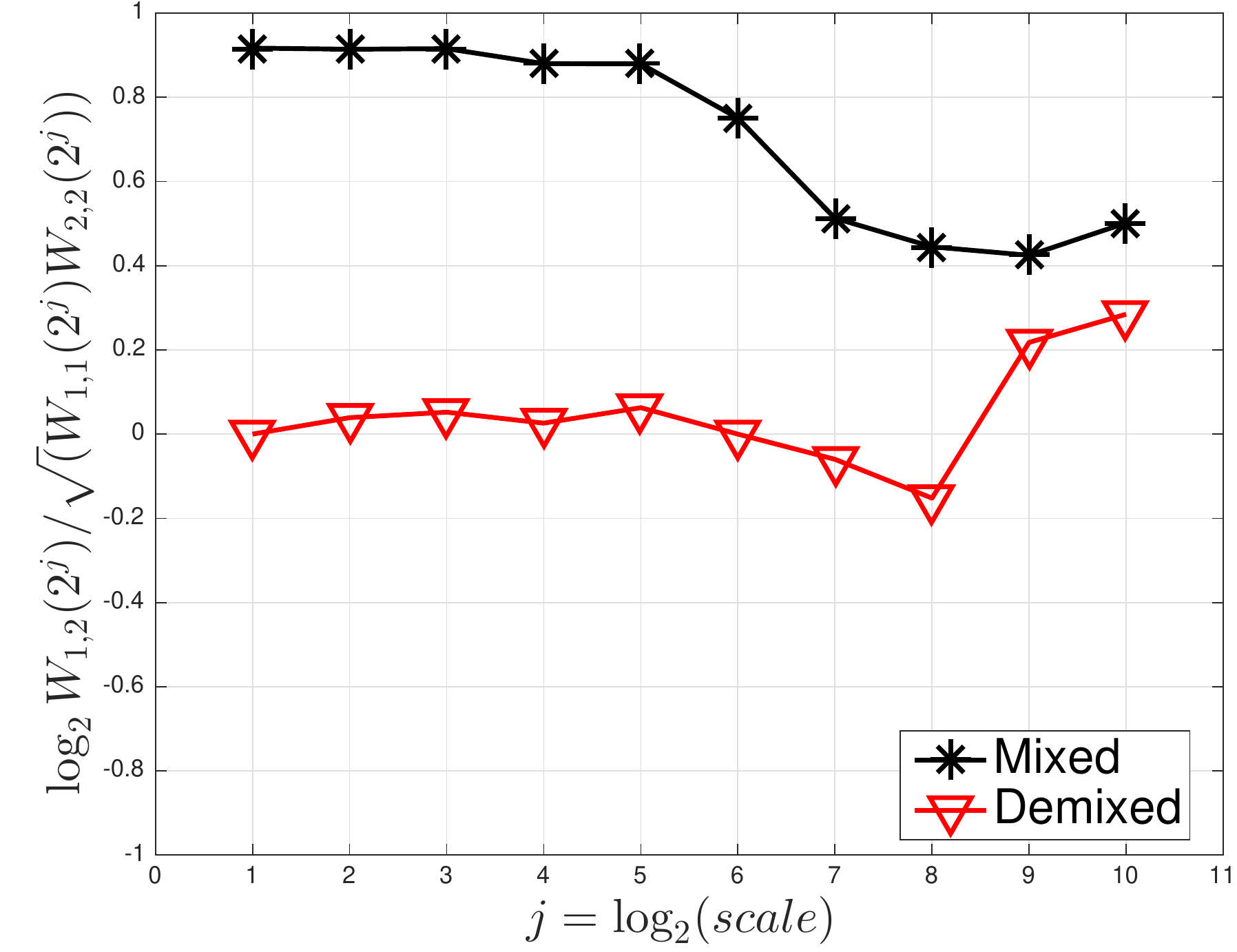}}
\caption{\textbf{Left: $\log_2 \widetilde{W}(2^j)_{ii}$ (wavelet variances) versus $j$ for bivariate tree ring data.} Before the demixing step ($S1$) (black), both functions $\log_2 \widetilde{W} (2^j)_{11} $ and $\log_2 \widetilde{W} (2^j)_{22}$ show scaling behavior with similar Hurst parameter values clearly departing from $1/2$. This confirms the presence of long memory. After the demixing step ($S1$), the functions $\log_2 \widetilde{W} (2^j)_{11}$ and $\log_2 \widetilde{W} (2^j)_{22}$ still display scaling behavior, yet with quite distinct Hurst exponents, and clearly departing from $1/2$.
\textbf{Right: wavelet coherence function.} Before the demixing step ($S1$) (black), the wavelet coherence function shows significant (and nearly equivalent) correlations across all scales. After the demixing step ($S1$) (red), it shows nearly zero correlation at all scales, which is evidence of successful demixing.
}\label{fig:eigest}
\end{center}
\end{figure}

\subsection{Asymptotic theory for the eigenstructure of sample wavelet variance matrices}\label{s:eigenstructure_asympt}

In order to state Theorem \ref{t:weaklimit_eigenvec_eigenvalues_wavelet_transf} below, consider the matrix spectral decompositions
\begin{equation}\label{e:W(2j),EW(2j)_spectral}
W(2^j)=\widehat{O}_j L_j\widehat{O}^*_j,\quad {\Bbb E}W(2^j)=O_j{\Lambda}_j O^{*}_j,\quad \widehat{O}_j, O_j \in O(n),
\end{equation}
where  $L_j := \textnormal{diag}(l_{j,1}, \hdots, l_{j,n})$, $\Lambda_{j} := \textnormal{diag}(\lambda_{j,1}, \hdots,\lambda_{j,n})$, $\widehat{O}_j$, $O_j$ have columns $\widehat{{\mathbf o}}_{j, \cdot i}$, $\textbf{o}_{j, \cdot i}$, respectively, for $i = 1,\hdots, n$, and
\begin{equation}\label{e:W(2j),EW(2j)_eigenvalues}
l_{j,1}\leq\hdots\leq l_{j,n},\quad \lambda_{j,1}\leq\hdots\leq\lambda_{j,n}, \quad \widehat{\textbf{o}}_{j,1i}\geq 0, \quad \textbf{o}_{j,1i}\geq0, \quad i=1,\hdots,n, \quad j=j_1,\hdots,j_m.
\end{equation}
In other words, the eigenvalues appearing on the main diagonal entries of $L_j$ and $\Lambda_j$ are ordered from smallest to largest, and the entries on the first row of $O_j$ and $\widehat{O}_j$ are all nonnegative, which makes these orthogonal matrices identifiable. Following Magnus and Neudecker \cite{magnus:neudecker:1980}, p.\ 427, we recall the definition of the so-named duplication matrix $\textbf{D} \in M(n^2,\frac{1}{2}n(n+1),\bbR)$. It consists of the (unique) operator $\textbf{D}$ that performs the transformation
\begin{equation}\label{e:duplication}
\textbf{D}(\textnormal{vec}_{{\mathcal S}}(A))^T=(\vecoper(A+A^*-\textnormal{dg}(A)))^T, \quad A = (a_{i_1 i_2})_{i_1,i_2=1,\hdots,n}\in M(n,\bbR),
\end{equation}
where $\textnormal{dg}(A):= \textnormal{diag}(a_{11},\hdots,a_{nn})$. Moreover, for $S \in {\mathcal S}(n,\bbR)$ with ordered eigenvalues $\lambda_1<\hdots< \lambda_n$ and their respective normalized eigenvectors $\textbf{o}_{\cdot 1}, \hdots,\textbf{o}_{\cdot n}$, we further define the operator
\begin{equation}\label{e:jacobi}
{\mathcal J}(S)=\left(
      \begin{array}{c}
        (\mathbf{o}^T_{\cdot1} \otimes\mathbf{o}_{\cdot1}^T)\textbf{D} \\
        \vdots\\
         (\mathbf{o}_{\cdot n}^T\otimes\mathbf{o}_{\cdot n}^T)\textbf{D}\\
        (\mathbf{o}^T_{\cdot1}\otimes(\lambda_1 I_n-S)^+)\textbf{D}\\
        \vdots \\
       (\mathbf{o}^T_{\cdot n}\otimes(\lambda_n I_n-S)^+)\textbf{D}\\
      \end{array}
    \right)_{(n+n^2)\times n(n+1)/2},
\end{equation}
where we can apply the relation
\begin{equation}\label{e:computeDuplication}
\vecoper(A)\textbf{D}=\textnormal{vec}_{{\mathcal S}}(A+A^*-\textnormal{dg}(A))
\end{equation}
(see Lemma 3.7, ($i$), in Magnus and Neudecker \cite{magnus:neudecker:1980}). The proof of Theorem \ref{t:weaklimit_eigenvec_eigenvalues_wavelet_transf} relies on Proposition \ref{p:4th_moments_wavecoef}, Theorem \ref{t:eigen} (on the weak convergence of eigenvalues and eigenvectors) and the Delta method.

\begin{theorem}\label{t:weaklimit_eigenvec_eigenvalues_wavelet_transf}
Let $\{W(2^j)\}_{j=j_1,\hdots,j_m}$ be a set of sample wavelet variance matrices (see \eqref{e:W(j)}). Suppose
\begin{equation}\label{e:EW(2j)_pairwise_distinct_eigenvalues}
{\Bbb E}W(2^j)\textnormal{ has pairwise distinct eigenvalues}, \quad j = j_1,\hdots,j_m,
\end{equation}
and let $F$ be as in \eqref{e:asymptotic_normality_wavecoef_fixed_scales}. Let the matrices $L_j$, $\Lambda_j$, $\widehat{O}_j$, $O_j$ be as in \eqref{e:W(2j),EW(2j)_spectral}. Then,
\begin{equation}\label{e:weaklimit_eigenvec_eigenvalues_wavelet_transf}
\Big( \sqrt{K_j}\textnormal{vec}_{{\mathcal D}}(L_j-\Lambda_j),\sqrt{K_j}\textnormal{vec}(\widehat{O}_j-O_j) \Big)^T_{j=j_1,\hdots,j_m} \stackrel{d}\rightarrow {\mathcal N}_{n(n+1)m}(\mathbf{0},JFJ^*), \quad \nu \rightarrow \infty,
\end{equation}
where $J=\textnormal{diag}(J_1,\hdots,J_m)$ and $J_i$, $i = 1,\hdots,m$, is given by ${\mathcal J}(S)$ in \eqref{e:jacobi} with $S := {\Bbb E}W(2^{j_i})$.
\end{theorem}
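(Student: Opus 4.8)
The plan is to obtain \eqref{e:weaklimit_eigenvec_eigenvalues_wavelet_transf} by applying the Delta method to the spectral-decomposition map, starting from the joint central limit theorem for the half-vectorized sample wavelet variances established in Theorem \ref{t:distributionOfW}. Concretely, Theorem \ref{t:distributionOfW} (whose limiting covariance $F$ is assembled from the fourth-order wavelet moments of Proposition \ref{p:4th_moments_wavecoef}) already gives
$$
\Big(\sqrt{K_j}\,\textnormal{vec}_{{\mathcal S}}(W(2^j)-{\Bbb E}W(2^j))\Big)^T_{j=j_1,\hdots,j_m}\stackrel{d}\rightarrow {\mathcal N}_{\frac{n(n+1)}{2}m}(\mathbf{0},F),
$$
so it suffices to propagate this limit through the map carrying each symmetric matrix to its ordered eigenvalues and sign-normalized eigenvector matrix, block by block in $j$.

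For a fixed octave $j$, I would view the spectral decomposition as a map $\Phi_j \colon {\mathcal S}(n,\bbR)\to \bbR^{n}\times\bbR^{n^2}$ sending $S$ to $(\textnormal{vec}_{{\mathcal D}}(\Lambda(S)),\vecoper(O(S)))$, where the eigenvalues are listed in increasing order and the eigenvector matrix is pinned down by the nonnegativity of its first row (cf.\ \eqref{e:W(2j),EW(2j)_eigenvalues}). Under the distinct-eigenvalue hypothesis \eqref{e:EW(2j)_pairwise_distinct_eigenvalues}, $\Phi_j$ is well defined and continuously differentiable in a neighborhood of ${\Bbb E}W(2^j)$; this local regularity, together with the fact that the sample eigenvalues stay distinct and correctly ordered with probability tending to one, is exactly the content of Theorem \ref{t:eigen}, which I would invoke here. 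Its first-order expansion yields, for a simple eigenpair $(\lambda_i,\textbf{o}_{\cdot i})$ of $S={\Bbb E}W(2^j)$, the differentials
$$
d\lambda_i=(\textbf{o}_{\cdot i}^T\otimes \textbf{o}_{\cdot i}^T)\,\vecoper(dS),\qquad d\textbf{o}_{\cdot i}=(\textbf{o}_{\cdot i}^T\otimes(\lambda_i I_n-S)^+)\,\vecoper(dS),
$$
in the spirit of the matrix differential calculus of Magnus and Neudecker \cite{magnus:neudecker:1980}; the unit-norm constraint $\textbf{o}_{\cdot i}^T d\textbf{o}_{\cdot i}=0$ is what makes the (singular) pseudoinverse $(\lambda_i I_n-S)^+$ the correct operator in the eigenvector differential.

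Since the admissible perturbations $dS$ are symmetric, I would then convert the full vectorization $\vecoper(dS)$ into the half-vectorization $\textnormal{vec}_{{\mathcal S}}(dS)$ appearing in Theorem \ref{t:distributionOfW} by inserting the duplication matrix $\textbf{D}$ through \eqref{e:computeDuplication}. Stacking the $n$ eigenvalue differentials followed by the $n$ eigenvector differentials reproduces precisely the Jacobian operator ${\mathcal J}(S)$ of \eqref{e:jacobi}, so that the derivative of $\Phi_j$ at ${\Bbb E}W(2^{j_i})$ equals $J_i={\mathcal J}({\Bbb E}W(2^{j_i}))$. As the maps $\Phi_{j_1},\hdots,\Phi_{j_m}$ operate on disjoint coordinate blocks while the convergence above is joint across octaves, the Delta method with the block-diagonal derivative $J=\textnormal{diag}(J_1,\hdots,J_m)$ delivers \eqref{e:weaklimit_eigenvec_eigenvalues_wavelet_transf} with limiting covariance $JFJ^*$.

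The main obstacle is the differentiability and single-valuedness of the eigenvector branch near ${\Bbb E}W(2^j)$: one must ensure that the ordering and sign normalization genuinely select a smooth local section of the eigendecomposition, and that the sample eigenstructure converges to the correctly ordered, sign-fixed population eigenstructure rather than to a permuted or sign-reversed counterpart. This is precisely why the argument is routed through Theorem \ref{t:eigen}; once that weak-convergence-of-eigenstructure result is available, the computation of ${\mathcal J}(S)$ and the invocation of the Delta method are standard matrix calculus and bookkeeping.
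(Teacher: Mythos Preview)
Your proposal is correct and follows essentially the same route as the paper: both start from the joint CLT of Theorem \ref{t:distributionOfW}, invoke Theorem \ref{t:eigen} to guarantee that the spectral-decomposition map is smooth near each ${\Bbb E}W(2^{j_i})$ with Jacobian ${\mathcal J}({\Bbb E}W(2^{j_i}))$ as in \eqref{e:jacobi}, and then apply the Delta method with the block-diagonal derivative $J=\textnormal{diag}(J_1,\hdots,J_m)$. Your write-up is in fact slightly more explicit than the paper's about the role of the duplication matrix and the sign/ordering normalization, but the argument is the same.
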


\begin{remark}
Note that the conclusion in Theorem \ref{t:weaklimit_eigenvec_eigenvalues_wavelet_transf} also holds when replacing $W(2^{j})$ by $\widetilde{W}(2^{j})$.
\end{remark}

\begin{remark}\label{r:repeated_eigenvalues}
The conclusions of Theorem \ref{t:weaklimit_eigenvec_eigenvalues_wavelet_transf} may not hold when the condition \eqref{e:EW(2j)_pairwise_distinct_eigenvalues} is not in place. Proposition \ref{p:h1=...=hn} and Example \ref{ex:weak_limit_equal_eigenvalues} in Appendix \ref{s:auxiliary_results} illustrate this fact in a particular case.
\end{remark}

\appendix

\section{Asymptotic theory for the wavelet variance of univariate Gaussian fractional processes}

In this section, we establish the asymptotic normality of the wavelet variance of univariate Gaussian fractional processes (n.b.: the framework of Moulines et al.\ \cite{moulines:roueff:taqqu:2007:Fractals,moulines:roueff:taqqu:2007:JTSA,moulines:roueff:taqqu:2008} is for discrete time processes). Throughout the section, we assume the underlying wavelet function $\psi\in L^2(\mathbb{R})$ satisfies the conditions ($W1$--3), the underlying process $\{X(t)\}_{t\in \mathbb{R}}$ has the form \eqref{eq:Xhi} or \eqref{eq:XhiN=0}, and satisfies assumption ($A$3). The main result, Theorem \ref{t:varianceuni}, is used in the proof of Proposition \ref{p:xhattox}.


The wavelet transform of the univariate process $X$ is defined by
$$
d(2^j,k)=\int_{\mathbb{R}}2^{-j/2}\psi(2^{-j}t-k)X(t)dt,\quad j\in \mathbb{N}\cup\{0\},\quad k\in \mathbb{Z}.
$$
The wavelet variance at octave $j$ and its natural estimator, the sample wavelet variance, are denoted by, respectively,
\begin{equation}\label{eq:sigmauni}
\mathbb{E}w(2^j) := \mathbb{E}d^2(2^j,0),
\end{equation}
and
\begin{equation}\label{eq:sigmahatuni}
w(2^j) := \frac{1}{K_j}\sum_{k=0}^{K_j}d^2(2^j,k),\quad K_j=\frac{\nu}{2^j},\quad j=j_1,\hdots,j_m.
\end{equation}
Let $\nu$ be the total number of available (wavelet) data points. Throughout this section, we take a sequence of scaling factor $\{a(\nu)\}_{\nu \in \bbN}$ satisfying \eqref{eq:scalea}.

The following lemma will be used in the subsequent proposition.
\begin{lemma}\label{l:bound}
For any two fixed octaves $j,j'\in \bbN$,
\begin{equation}\label{e:wavelet_int_shrinks}
\lim_{\nu\rightarrow\infty}a(\nu)^{3-4d}\int_\mathbb{R}|x|^{-4d}|\widehat{\psi}(a(\nu)2^{j'}x)|^2|\widehat{\psi}(a(\nu)2^jx)|^2 \hspace{1mm}| |g^*(x)|^2-|g(0)|^2 |^2dx=0,
\end{equation}
where
$$
g^*(x)= \left\{\begin{array}{cc}
g(x)\frac{\sin(x/2)}{x/2}, & d<1/2;\\
g(x), & d\geq1/2.
\end{array}\right.
$$
\end{lemma}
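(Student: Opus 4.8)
The plan is to remove the growing prefactor by a change of variables and then split the integral at the scale where the local smoothness bound from (A3) ceases to apply. First I would substitute $y = a(\nu)x$. Since $|x|^{-4d} = a(\nu)^{4d}|y|^{-4d}$ and $dx = a(\nu)^{-1}dy$, the quantity in \eqref{e:wavelet_int_shrinks} becomes $a(\nu)^{2}\,\tilde{I}(\nu)$, where
\[
\tilde{I}(\nu) = \int_\mathbb{R} |y|^{-4d}\,|\widehat{\psi}(2^{j'}y)|^2\,|\widehat{\psi}(2^{j}y)|^2\,\big||g^*(y/a(\nu))|^2 - |g(0)|^2\big|^2\,dy .
\]
In both branches of the definition of $g^*$ one has $g^*(0) = g(0)$, since $r(x) := \frac{\sin(x/2)}{x/2} \to 1$ as $x \to 0$; hence the integrand tends to $0$ pointwise, and the whole difficulty is to show that $\tilde{I}(\nu)$ decays faster than $a(\nu)^{-2}$.

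Second, I would record two bounds on $h(x) := \big||g^*(x)|^2 - |g(0)|^2\big|$. Globally $h$ is bounded, because $g$ is bounded by (A3) and $|r(x)| \le 1$. Locally, for $|x| < \delta$ (shrinking $\delta \le 1$ if necessary), I would write $|g^*(x)|^2 - |g(0)|^2 = (|g(x)|^2 - |g(0)|^2)\,r(x)^2 + |g(0)|^2\,(r(x)^2 - 1)$: the first term is $O(|x|^\beta)$ by \eqref{eq:assumptionA3}, while $r(x)^2 - 1 = O(x^2)$, and since $\beta \le 2$ and $|x| \le 1$ this gives $h(x) \le C|x|^\beta$. (For $d \ge 1/2$ the factor $r$ is absent and $h(x) \le L|x|^\beta$ is immediate.)

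Third, I would split $\tilde{I}(\nu) = \tilde{I}_1(\nu) + \tilde{I}_2(\nu)$ according to $|y| < \delta a(\nu)$ and $|y| \ge \delta a(\nu)$. On the first region $|y/a(\nu)| < \delta$, so $h(y/a(\nu))^2 \le C\,a(\nu)^{-2\beta}|y|^{2\beta}$ and
\[
a(\nu)^2\,\tilde{I}_1(\nu) \le C\,a(\nu)^{2-2\beta}\int_\mathbb{R} |y|^{2\beta-4d}\,|\widehat{\psi}(2^{j'}y)|^2\,|\widehat{\psi}(2^{j}y)|^2\,dy .
\]
This integral is finite: near the origin the wavelet factors are $O(|y|^{4N_\psi})$ by \eqref{eq:W1}, and at infinity they are $O(|y|^{-4\alpha})$ by \eqref{e:psihat_is_slower_than_a_power_function}, both integrable since $\psi$ has enough vanishing moments (giving $2\beta-4d+4N_\psi>-1$ near $0$) and $4d+4\alpha > 2\beta+1$ at infinity. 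As $\beta > 1$, the factor $a(\nu)^{2-2\beta} \to 0$. On the second region I would use the global bound on $h$ together with $|\widehat{\psi}(2^{j'}y)|^2|\widehat{\psi}(2^{j}y)|^2 \le C|y|^{-4\alpha}$, obtaining $a(\nu)^2\,\tilde{I}_2(\nu) \le C\,a(\nu)^2(\delta a(\nu))^{1-4d-4\alpha} = C'\,a(\nu)^{3-4d-4\alpha}$.

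The conceptual crux — and the only point where the precise form of the hypotheses is used — is showing that the tail exponent $3-4d-4\alpha$ is negative. Here I would invoke \eqref{eq:beta}: doubling $\beta+1 < 2d_1 + 2\alpha$ and using $d \ge d_1$ gives $4d+4\alpha \ge 4d_1+4\alpha > 2\beta+2$, whence $3-4d-4\alpha < 1-2\beta < -1 < 0$ (as $\beta > 1$). This same inequality $4d+4\alpha > 2\beta+2 > 2\beta+1$ also secures the integrability invoked for $\tilde{I}_1$. Consequently both $a(\nu)^{2-2\beta}$ and $a(\nu)^{3-4d-4\alpha}$ tend to $0$, so $a(\nu)^2\tilde{I}(\nu) \to 0$, which is \eqref{e:wavelet_int_shrinks}. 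The remaining steps (the sinc bookkeeping and checking the two integrability exponents) are routine; the genuine work is the linkage, through \eqref{eq:beta}, between the smoothness budget $\beta$ and the decay budget $\alpha$ needed to overcome the $a(\nu)^2$ growth on both sides of the split.
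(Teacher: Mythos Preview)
Your proof is correct and follows essentially the same approach as the paper: split the integral at the scale $|x|=\delta$ (equivalently $|y|=\delta a(\nu)$ after your change of variables), use the local bound $||g^*(x)|^2-|g(0)|^2|\le C|x|^\beta$ on the inner piece to extract the factor $a(\nu)^{2-2\beta}$, and use the wavelet decay $|\widehat\psi|\le C(1+|\cdot|)^{-\alpha}$ on the outer piece to extract $a(\nu)^{3-4d-4\alpha}$. The only differences are cosmetic: you change variables first and split afterwards, and you spell out the sinc bookkeeping and the verification that $3-4d-4\alpha<0$ via \eqref{eq:beta}, which the paper leaves implicit.
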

\begin{proof} By assumption ($A$3),
\begin{equation}\label{eq:betastar1}
||g^*(x)|^2-|g(0)|^2|<C|x|^{\beta},\quad  |x|<\delta.
\end{equation}
We can break up the integral on the left-hand side of \eqref{e:wavelet_int_shrinks} into
$$
a^{-4d+3}\int_{|x|<\delta}|x|^{-4d}|{\widehat{\psi}(a(\nu)2^jx)}|^2|\widehat{\psi}(a(\nu)2^{j'}x)|^2||g^*(x)|^2-|g(0)|^2|^2dx
$$
\begin{equation}\label{e:int_|x|<delta+int_|x|>=delta}
+a^{-4d+3}\int_{|x|\geq\delta}|x|^{-4d}|{\widehat{\psi}(a(\nu)2^jx)}|^2|\widehat{\psi}(a(\nu)2^{j'}x)|^2||g^*(x)|^2-|g(0)|^2|^2dx.
\end{equation}
We first consider the integration domain $|x|<\delta$. By \eqref{eq:betastar1} and a change of variable, the first term in the sum \eqref{e:int_|x|<delta+int_|x|>=delta} is bounded by
$$
Ca(\nu)^{-4d+3}\int_{|x|<\delta}|x|^{-4d}|{\widehat{\psi}(a(\nu)2^jx)}|^2|\widehat{\psi}(a(\nu)2^{j'}x)|^2|x|^{2\beta}dx
$$
$$
=Ca(\nu)^{2-2\beta}\int_{|x|<a(\nu)\delta}|x|^{-4d+2\beta}|{\widehat{\psi}(2^jx)}|^2|\widehat{\psi}(2^{j'}x)|^2dx
$$
$$
\leq Ca(\nu)^{2-2\beta}\int_\mathbb{R}|x|^{-4d+2\beta}|{\widehat{\psi}(2^jx)}|^2|\widehat{\psi}(2^{j'}x)|^2dx.
$$
However, \eqref{eq:beta}, \eqref{e:psihat_is_slower_than_a_power_function} and \eqref{eq:W1} imply that $\int_\mathbb{R}|x|^{-4d+2\beta}|{\widehat{\psi}(2^jx)}|^2|\widehat{\psi}(2^{j'}x)|^2dx<\infty$, and $a(\nu)^{2-2\beta}\rightarrow0$ as $\nu\rightarrow\infty$. So,
$$
a(\nu)^{3-4d}\int_{|x|<\delta}|x|^{-4d}|{\widehat{\psi}(a(\nu)2^jx)}|^2|\widehat{\psi}(a(\nu)2^{j'}x)|^2||g^*(x)|^2-|g(0)|^2|^2dx\rightarrow 0,
$$
as $\nu\rightarrow\infty$. On the other hand, turning to the integration domain $|x| \geq \delta$, \eqref{e:psihat_is_slower_than_a_power_function} implies that the second term in the sum \eqref{e:int_|x|<delta+int_|x|>=delta} is bounded by
$$
Ca(\nu)^{3-4d-4\alpha}\int_{|x|\geq\delta}|x|^{-4d-4\alpha}dx\rightarrow 0,
$$
as $\nu\rightarrow\infty$. This shows \eqref{e:wavelet_int_shrinks}.
 $\Box$\\
\end{proof}
\begin{proposition}For $j,j' \in \bbN$, let $w(a(\nu)2^j)$ be as in \eqref{eq:sigmahatuni}. Then,
$$
a(\nu)^{-4d}\frac{\nu}{a(\nu)}\textnormal{Cov}(w(a(\nu)2^j),w(a(\nu)2^{j'}))
$$
\begin{equation}\label{eq:55star}
\rightarrow4\pi b^{4d-1}2^{j+j'}|g(0)|^4\int_\mathbb{R} |x|^{-2d} \Big|\widehat{\psi}\Big(\frac{2^jx}{b}\Big)\Big|^2
\Big|\widehat{\psi}\Big(\frac{2^{j'}x}{b}\Big)\Big|^2 dx, \quad \nu\rightarrow\infty,
\end{equation}
where $b=\textnormal{gcd}(2^j,2^{j'})$.
\end{proposition}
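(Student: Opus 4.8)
The plan is to reduce the covariance of the two quadratic statistics $w(a(\nu)2^j)$ and $w(a(\nu)2^{j'})$ to a double sum of squared cross-covariances of the underlying Gaussian wavelet coefficients, and then to extract the leading order as $\nu\to\infty$ by combining Lemma \ref{l:bound} with a Poisson summation argument governed by $b=\gcd(2^j,2^{j'})$. Concretely, since the $d(2^j,k)$ are centered and jointly Gaussian, the product-moment (Isserlis) identity $\textnormal{Cov}(U^2,V^2)=2(\mathbb{E}UV)^2$ gives
$$\textnormal{Cov}(w(a(\nu)2^j),w(a(\nu)2^{j'}))=\frac{2}{K_jK_{j'}}\sum_{k=0}^{K_j}\sum_{k'=0}^{K_{j'}}C(k,k')^2,\qquad C(k,k'):=\mathbb{E}[d(a(\nu)2^j,k)d(a(\nu)2^{j'},k')].$$
Using the harmonizable representations \eqref{eq:Xhi}--\eqref{eq:XhiN=0} and the fact that the spectral density relevant to the wavelet transform is $|x|^{-2d}|g^*(x)|^2$ (cf.\ \eqref{eq:e}), each cross-covariance is an oscillatory integral; after the substitution $u=a(\nu)x$ it takes the scale-free form
$$C(k,k')=2^{(j+j')/2}a(\nu)^{2d}\int_{\mathbb{R}}e^{\textbf{i}(2^jk-2^{j'}k')u}\,\overline{\widehat{\psi}(2^ju)}\,\widehat{\psi}(2^{j'}u)\,|u|^{-2d}\,|g^*(u/a(\nu))|^2\,du.$$

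Next I would freeze the high-frequency factor by replacing $|g^*(u/a(\nu))|^2$ with $|g(0)|^2$. This is precisely what Lemma \ref{l:bound} licenses: the contribution of $|g^*(u/a(\nu))|^2-|g(0)|^2$, once weighted by the normalization $a(\nu)^{-4d}\nu/a(\nu)$, is shown to vanish via assumption $(A3)$, the bounds \eqref{e:psihat_is_slower_than_a_power_function} and \eqref{eq:W1}, and the rate condition \eqref{eq:scalea} together with \eqref{eq:beta}. After this replacement one has $C(k,k')\approx 2^{(j+j')/2}a(\nu)^{2d}|g(0)|^2\,I(2^jk-2^{j'}k')$, where the kernel $I(m):=\int_{\mathbb{R}}e^{\textbf{i}mu}\,\overline{\widehat{\psi}(2^ju)}\,\widehat{\psi}(2^{j'}u)\,|u|^{-2d}\,du$ no longer depends on $\nu$.

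Finally I would evaluate the double sum through its lattice structure. The exponent $2^jk-2^{j'}k'$ runs over $b\mathbb{Z}$, and for each value $b\ell$ the number of admissible pairs $(k,k')$ is a triangular (Fej\'er) weight of the form $K_{\min}(1-|\ell|/K_{\max})_+$; letting $\nu\to\infty$ replaces this weight by $1$ and collapses the double sum to $K_{\min}\sum_{\ell\in\mathbb{Z}}I(b\ell)^2$. Keeping track of the prefactors through $K_jK_{j'}/K_{\min}=K_{\max}=\nu/(a(\nu)b)$ yields $a(\nu)^{-4d}(\nu/a(\nu))\textnormal{Cov}\to 2b\,2^{j+j'}|g(0)|^4\sum_\ell I(b\ell)^2$. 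A Poisson summation identity then gives $\sum_\ell I(b\ell)^2=\frac{2\pi}{b}\int_{\mathbb{R}}|\widehat{\psi}(2^ju)|^2|\widehat{\psi}(2^{j'}u)|^2|u|^{-4d}\,du$, and the change of variables $x=bu$ produces the factor $b^{4d-1}$ and the stated integral (the power $|x|^{-4d}$ arising from squaring the spectral density, consistent with the matrix $V$ in \eqref{eq:V}).

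The hard part will be the rigorous passage from the discrete sum to the integral. One must justify the dominated convergence that turns the triangular weights into $1$ (using summability of $I(b\ell)^2$, itself a decorrelation estimate following from \eqref{eq:W1} and \eqref{e:psihat_is_slower_than_a_power_function}), and one must control, uniformly in the shifts $k,k'$, the error incurred by the $g^*\to g(0)$ replacement before summing over the $O(K_jK_{j'})$ pairs. Both steps lean on Lemma \ref{l:bound}, assumption $(A3)$, and the scale condition \eqref{eq:scalea}, which balance the blow-up $a(\nu)^{2d}$ against the rate $\nu/a(\nu)$.
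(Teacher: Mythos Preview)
Your proposal is correct and follows essentially the same route as the paper: Isserlis reduction to squared cross-covariances, the spectral integral representation with the change of variable removing the $a(\nu)$ dependence, replacement of $|g^*|^2$ by $|g(0)|^2$ via Lemma \ref{l:bound} combined with Cauchy--Schwarz/Parseval on the reindexed sum over $b\mathbb{Z}$, and the final Parseval evaluation of $\sum_\ell I(b\ell)^2$. You also correctly note in passing that the exponent in the limiting integral should be $|x|^{-4d}$ (as in the paper's own derivation and in \eqref{eq:V}), not the $|x|^{-2d}$ appearing in the displayed statement.
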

\begin{proof}
The main argument is similar to the proof of Proposition 3.1 in Wendt et al.\ \cite{wendt:didier:combrexelle:abry:2017}, so we just outline the main steps for the reader's convenience.

It suffices to consider the subsequence $\nu=a(\nu)2^{j+j'}\nu_*$. By \eqref{eq:EW(2j)}, the left-hand side of \eqref{eq:55star} can be reexpressed as
$$
a(\nu)^{-4d}\frac{1}{\nu_*}\sum_{k=1}^{2^{j'}\nu_*}\sum_{k'=1}^{2^{j}\nu_*}\textnormal{Cov}(d^2(a(\nu)2^j,k),d^2(a(\nu)2^{j'},k'))
$$
$$
=2a(\nu)^{-4d}\frac{1}{\nu_*}\sum_{k=1}^{2^{j'}\nu_*}\sum_{k'=1}^{2^{j}\nu_*}\bigg(\mathbb{E}d(a(\nu)2^j,k)d(a(\nu)2^{j'},k')\bigg)^2
$$
$$
=2a(\nu)^{-4d+2}2^{j+j'}\frac{1}{\nu_*}\sum_{k=1}^{2^{j'}\nu_*}\sum_{k'=1}^{2^{j}\nu_*}\bigg(\int_\mathbb{R} e^{\textbf{i}a(\nu)(2^jk-2^{j'}k)x}|x|^{-2d}|g^*(x)|^2\overline{\widehat{\psi}(a(\nu)2^jx)}
\widehat{\psi}(a(\nu)2^{j'}x)dx\bigg)^2
$$
$$
=2a(\nu)^{-4d+2}2^{j+j'}\frac{1}{\nu_*}\sum_{k=1}^{2^{j'}\nu_*}\sum_{k'=1}^{2^{j}\nu_*}\bigg\{\bigg(\int_\mathbb{R} e^{\textbf{i}a(\nu)(2^jk-2^{j'}k)x}|x|^{-2d}|g^*(x)|^2\overline{\widehat{\psi}(a(\nu)2^jx)}
\widehat{\psi}(a(\nu)2^{j'}x)dx\bigg)^2
$$
$$
-\bigg(\int_\mathbb{R} e^{\textbf{i}a(\nu)(2^jk-2^{j'}k)x}|x|^{-2d}|g(0)|^2\overline{\widehat{\psi}(a(\nu)2^jx)}
\widehat{\psi}(a(\nu)2^{j'}x)dx\bigg)^2\bigg\}
$$
\begin{equation}\label{e:a(nu)^(-4d)(1/v*)double_sum_Cov}
+2a(\nu)^{-4d+2}2^{j+j'}\frac{1}{\nu_*}\sum_{k=1}^{2^{j'}\nu_*}\sum_{k'=1}^{2^{j}\nu_*}\bigg(\int_\mathbb{R} e^{\textbf{i}a(\nu)(2^jk-2^{j'}k)x}|x|^{-2d}|g(0)|^2\overline{\widehat{\psi}(a(\nu)2^jx)}
\widehat{\psi}(a(\nu)2^{j'}x)dx\bigg)^2,
\end{equation}
where the first equality is a consequence of the Isserlis theorem.
We now show that
$$
\bigg|a(\nu)^{-4d+2}\frac{1}{\nu_*}\sum_{k=1}^{2^{j'}\nu_*}\sum_{k'=1}^{2^{j}\nu_*}\bigg\{\bigg(\int_\mathbb{R} e^{\textbf{i}a(\nu)(2^jk-2^{j'}k)x}|x|^{-2d}|g^*(x)|^2\overline{\widehat{\psi}(a(\nu)2^jx)}
\widehat{\psi}(a(\nu)2^{j'}x)dx\bigg)^2
$$
\begin{equation}\label{eq:sum}
-\bigg(\int_\mathbb{R} e^{\textbf{i}a(\nu)(2^jk-2^{j'}k)x}|x|^{-2d}|g(0)|^2\overline{\widehat{\psi}(a(\nu)2^jx)}
\widehat{\psi}(a(\nu)2^{j'}x)dx\bigg)^2\bigg\}\bigg|\rightarrow0,\quad \nu\rightarrow\infty.
\end{equation}
The summation in \eqref{eq:sum} can be reexpressed as (for the details, see the proof of Proposition 3.1, $(iv)$ in Wendt et al.\ \cite{wendt:didier:combrexelle:abry:2017})
$$
\bigg|a(\nu)^{-4d+2}\sum_{r\in \Pi(\nu_*)}\frac{\xi_r(\nu_*)}{\nu_*}\bigg\{\bigg(\int_\mathbb{R} e^{\textbf{i}a(\nu)rx}|x|^{-2d}|g^*(x)|^2\overline{\widehat{\psi}(a(\nu)2^jx)}
\widehat{\psi}(a(\nu)2^{j'}x)dx\bigg)^2
$$
\begin{equation}\label{eq:Theta}
-\bigg(\int_\mathbb{R} e^{\textbf{i}a(\nu)rx}|x|^{-2d}|g(0)|^2\overline{\widehat{\psi}(a(\nu)2^jx)}
\widehat{\psi}(a(\nu)2^{j'}x)dx\bigg)^2\bigg\}\bigg|=:\Theta_1.
\end{equation}
In \eqref{eq:Theta}, $\Pi(\nu_*)=\gcd(a(\nu)2^j,a(\nu)2^{j'})\mathbb{Z}\cap B_{jj'}(\nu_*)$, $B_{jj'}(\nu_*)$ is the range for $r$ such that the pairs $(k,k')$ satisfying $2^jk-2^{j'}k'=\gcd(2^j,2^{j'})w$ for some $w\in \mathbb{Z}$ in the region
$$
1\leq k\leq 2^{j'}\nu_*,\quad 1\leq k'\leq 2^{j}\nu_*,
$$
and
\begin{equation}\label{eq:xi}
\frac{\xi_r(\nu_*)}{\nu_*}\rightarrow \gcd(2^j,2^{j'}),\quad \nu\rightarrow\infty.
\end{equation}
By Parseval's theorem, the sequences
$$\bigg\{\bigg(\int_\mathbb{R} e^{\textbf{i}a(\nu)rx}|x|^{-2d}|g^*(x)|^2\overline{\widehat{\psi}(a(\nu)2^jx)}
\widehat{\psi}(a(\nu)2^{j'}x)dx\bigg)^2\bigg\}_{r\in \mathbb{Z}}$$
 and
$$\bigg\{\bigg(\int_\mathbb{R} e^{\textbf{i}a(\nu)rx}|x|^{-2d}|g(0)|^2\overline{\widehat{\psi}(a(\nu)2^jx)}
\widehat{\psi}(a(\nu)2^{j'}x)dx\bigg)^2\bigg\}_{r\in \mathbb{Z}}$$
are summable. Moreover, by \eqref{eq:xi}, for large enough $\nu$,
$$
\Theta_1<(\gcd(2^j,2^{j'})+1)a(\nu)^{-4d+2}\bigg|\sum_{r\in \Pi(\nu_*)}\bigg\{\bigg(\int_\mathbb{R} e^{\textbf{i}a(\nu)rx}|x|^{-2d}|g^*(x)|^2\overline{\widehat{\psi}(a(\nu)2^jx)}
\widehat{\psi}(a(\nu)2^{j'}x)dx\bigg)^2
$$
$$
-\bigg(\int_\mathbb{R} e^{\textbf{i}a(\nu)rx}|x|^{-2d}|g(0)|^2\overline{\widehat{\psi}(a(\nu)2^jx)}
\widehat{\psi}(a(\nu)2^{j'}x)dx\bigg)^2\bigg\}\bigg|.
$$
$$
=(\gcd(2^j,2^{j'})+1)a(\nu)^{-4d+2}\bigg| \sum_{r\in \Pi(\nu_*)}\bigg\{\bigg(\int_\mathbb{R} e^{\textbf{i}a(\nu)rx}|x|^{-2d}(|g^*(x)|^2+|g(0)|^2)\overline{\widehat{\psi}(a(\nu)2^jx)}
\widehat{\psi}(a(\nu)2^{j'}x)dx\bigg)
$$
$$
\cdot\bigg(\int_\mathbb{R} e^{\textbf{i}a(\nu)rx}|x|^{-2d}(|g^*(x)|^2-|g(0)|^2)\overline{\widehat{\psi}(a(\nu)2^jx)}
\widehat{\psi}(a(\nu)2^{j'}x)dx\bigg)\bigg\}\bigg|
$$
$$
\leq(\gcd(2^j,2^{j'})+1)a(\nu)^{-4d+2} \bigg\{\sum_{r\in \Pi(\nu_*)}\bigg(\int_\mathbb{R} e^{\textbf{i}a(\nu)rx}|x|^{-2d}(|g^*(x)|^2+|g(0)|^2)\overline{\widehat{\psi}(a(\nu)2^jx)}
\widehat{\psi}(a(\nu)2^{j'}x)dx\bigg)^2\bigg\}^{1/2}
$$
\begin{equation}\label{e:Theta1<bound}
\cdot\bigg\{\sum_{r\in \Pi(\nu_*)}\bigg(\int_\mathbb{R} e^{\textbf{i}a(\nu)rx}|x|^{-2d}(|g^*(x)|^2-|g(0)|^2)\overline{\widehat{\psi}(a(\nu)2^jx)}
\widehat{\psi}(a(\nu)2^{j'}x)dx\bigg)^2\bigg\}^{1/2},
\end{equation}
where the last inequality is a consequence of Cauchy-Schwarz inequality. By Parseval's theorem, the first summation term on the right-hand side of \eqref{e:Theta1<bound} is bounded by
$$
\bigg( \int_\mathbb{R}|x|^{-4d}||g^*(x)|^2+|g(0)|^2|^2|{\widehat{\psi}(a(\nu)2^jx)}|^2|\widehat{\psi}(a(\nu)2^{j'}x)|^2dx\bigg)^{1/2}
$$
$$
\leq Ca(\nu)^{2d-1/2} \bigg( \int_\mathbb{R}|x|^{-4d}|{\widehat{\psi}(2^jx)}|^2|\widehat{\psi}(2^{j'}x)|^2dx\bigg)^{1/2}
$$
$$
\leq C  a(\nu)^{2d-1/2}.
$$
Turning back to \eqref{e:Theta1<bound}, this implies that
$$
\Theta_1\leq C a(\nu)^{-2d+3/2} \bigg\{\sum_{r\in \Pi(\nu_*)}\bigg(\int_\mathbb{R} e^{\textbf{i}a(\nu)rx}|x|^{-2d}(|g^*(x)|^2-|g(0)|^2)\overline{\widehat{\psi}(a(\nu)2^jx)}
 \widehat{\psi}(a(\nu)2^{j'}x)dx\bigg)^2\bigg\}^{1/2}
$$
$$
\leq  C\bigg(a(\nu)^{-4d+3}\int_\mathbb{R}|x|^{-4d}||g^*(x)|^2-|g(0)|^2|^2|{\widehat{\psi}(a(\nu)2^jx)}|^2|\widehat{\psi}(a(\nu)2^{j'}x)|^2dx\bigg)^{1/2}\rightarrow0
$$
as $\nu\rightarrow\infty$. The last inequality is a consequence of Parseval's theorem, and the limit follows from Lemma \ref{l:bound}. 
This proves \eqref{eq:sum}, as desired. Consider the last term in the sum \eqref{e:a(nu)^(-4d)(1/v*)double_sum_Cov}. By an analogous procedure, we obtain, as $\nu \rightarrow \infty$,

$$
2a(\nu)^{-4d+2}2^{j+j'}\frac{1}{\nu_*}\sum_{k=1}^{2^{j'}\nu_*}\sum_{k'=1}^{2^{j}\nu_*}\bigg(\int_\mathbb{R} e^{\textbf{i}a(\nu)(2^jk-2^{j'}k)x}|x|^{-2d}|g(0)|^2
\overline{\widehat{\psi}(a(\nu)2^jx)}
\widehat{\psi}(a(\nu)2^{j'}x)dx\bigg)^2
$$
$$
=2^{j+j'+1}\frac{1}{\nu_*}\sum_{k=1}^{2^{j'}\nu_*}\sum_{k'=1}^{2^{j}\nu_*}\bigg(\int_\mathbb{R} e^{\textbf{i}(2^jk-2^{j'}k)x}{|x|^{-2d}}|g(0)|^2\overline{\widehat{\psi}(2^jx)}
\widehat{\psi}(2^{j'}x)dx\bigg)^2
$$
$$
\rightarrow2\gcd(2^j,2^{j'})2^{j+j'}\sum_{z=-\infty}^{\infty}\bigg(\int_\mathbb{R} e^{\textbf{i}\gcd(2^j,2^{j'})zx}|x|^{-2d}|g(0)|^2\overline{\widehat{\psi}(2^jx)}
\widehat{\psi}(2^{j'}x)dx\bigg)^2
$$
$$
=2b^{4d-1}2^{j+j'}\sum_{z=-\infty}^{\infty}\bigg(\int_\mathbb{R} e^{\textbf{i}zx}|x|^{-2d}|g(0)|^2\overline{\widehat{\psi}(2^jx/b)}
\widehat{\psi}(2^{j'}x/b)dx\bigg)^2
$$
\begin{equation}\label{eq:58star}
=4\pi b^{4d-1}2^{j+j'}|g(0)|^4\int_\mathbb{R} |x|^{-4d}|\widehat{\psi}(2^jx/b)|^2
|\widehat{\psi}(2^{j'}x/b)|^2dx,
\end{equation}
where we make a change of variable and the last equality is a consequence of Parseval's theorem. By \eqref{eq:sum} and \eqref{eq:58star}, \eqref{eq:55star} holds. $\Box$\\
\end{proof}

\begin{theorem}\label{t:varianceuni}
For a fixed set of octaves $0 <j_1<\hdots<j_m$, let $\mathbb{E}w(2^j)$ and $w(2^j)$ be as in \eqref{eq:sigmauni} and \eqref{eq:sigmahatuni}, respectively. Then,
$$
{\sqrt{\nu/a(\nu)}}\bigg(\left(
              \begin{array}{c}
                w(a2^{j_1})/a^{2d}\\
               \vdots \\
                w(a2^{j_m})/a^{2d} \\
              \end{array}
            \right)-\left(
              \begin{array}{c}
                \mathbb{E}w(a2^{j_1})/a^{2d}\\
               \vdots \\
                \mathbb{E}w(a2^{j_m})/a^{2d} \\
              \end{array}
            \right)\bigg)\overset{d}\rightarrow \mathcal{N}(\mathbf{0},W), \quad \nu\rightarrow\infty,
$$
where
$$
W_{ii'}=4\pi b_{j_ij_{i'}}^{4d-1}2^{j_i+j_{i'}}|g(0)|^4\int_\mathbb{R} x^{-4d}|\widehat{\psi}(2^jx/b_{j_ij_{i'}})|^2
|\widehat{\psi}(2^{i}x/b_{j_ij_{i'}}))|^2dx,
$$
and $b_{j_ij_{i'}}=\textnormal{gcd}(2^{j_i},2^{j_{i'}})$, $i,i'=1,\hdots,m$.
\end{theorem}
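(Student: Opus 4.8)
The plan is to derive the joint asymptotic normality asserted in Theorem \ref{t:varianceuni} by combining the Cramér–Wold device with the method of cumulants for Gaussian quadratic forms, using the preceding Proposition (equation \eqref{eq:55star}) to identify the limiting covariance. Fix a vector $\lambda = (\lambda_1,\hdots,\lambda_m)^T \in \bbR^m$ and consider the linear combination
$$
T_\nu := \sqrt{\nu/a(\nu)}\,\sum_{i=1}^m \lambda_i\, a(\nu)^{-2d}\big(w(a(\nu)2^{j_i}) - \mathbb{E}w(a(\nu)2^{j_i})\big).
$$
Since $\{X(t)\}$ is Gaussian, the entire collection $\{d(a(\nu)2^{j_i},k)\}_{i=1,\hdots,m;\,k}$ is jointly Gaussian, so $T_\nu$ is a centered quadratic form $T_\nu = Z_\nu^* A_\nu Z_\nu - \mathbb{E}[Z_\nu^* A_\nu Z_\nu]$, where $Z_\nu$ stacks the wavelet coefficients across octaves and shifts, $\Gamma_\nu := \mathbb{E}[Z_\nu Z_\nu^*]$, and $A_\nu$ is block-diagonal with $i$-th (scalar) block $\frac{\sqrt{\nu/a(\nu)}\,a(\nu)^{-2d}\lambda_i}{K_{j_i}}I_{K_{j_i}}$. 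For such a form one has the standard cumulant--trace identity $\kappa_p(T_\nu) = 2^{p-1}(p-1)!\,\textnormal{tr}\big((A_\nu\Gamma_\nu)^p\big)$, $p\ge 2$, so it suffices to control these traces.

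The case $p=2$ reproduces the variance: $\kappa_2(T_\nu) = \Var(T_\nu) = \sum_{i,i'}\lambda_i\lambda_{i'}\,\frac{\nu}{a(\nu)}a(\nu)^{-4d}\Cov\big(w(a(\nu)2^{j_i}),w(a(\nu)2^{j_{i'}})\big)$, and each summand converges to $\lambda_i\lambda_{i'}W_{j_ij_{i'}}$ by the preceding Proposition. Hence $\kappa_2(T_\nu)\to \sum_{i,i'}\lambda_i\lambda_{i'}W_{j_ij_{i'}} = \lambda^T W\lambda$. Since all cumulants of a nondegenerate Gaussian law of order $\ge 3$ vanish, by the method of cumulants it remains only to prove that $\kappa_p(T_\nu)\to 0$ for every $p\ge 3$; the Cramér--Wold device then yields the claimed multivariate normal limit with covariance $W$.

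To bound the higher traces, write $\rho_\nu^{(i,i')}$ for the normalized covariance kernel $a(\nu)^{-2d}\,\mathbb{E}[d(a(\nu)2^{j_i},k)\,d(a(\nu)2^{j_{i'}},k')]$, viewed as a (near-)Toeplitz object in the lag $2^{j_i}k-2^{j_{i'}}k'$ and given by the oscillatory integrals of the Proposition's proof; the factors $a(\nu)^{-2d}$ from $A_\nu$ and $a(\nu)^{2d}$ from $\Gamma_\nu$ cancel, so that $A_\nu\Gamma_\nu$ is expressed entirely through the $\rho_\nu^{(i,i')}$. Expanding $\textnormal{tr}((A_\nu\Gamma_\nu)^p)$ as a cyclic sum over $p$ octave-indices and $p$ shift indices, the normalization contributes a factor of order $(\nu/a(\nu))^{p/2}\prod_{s=1}^p K_{j_{i_s}}^{-1} \asymp (a(\nu)/\nu)^{p/2}$, while the cyclic sum of products of the $\rho_\nu^{(i_s,i_{s+1})}$ is $O(\nu/a(\nu))$ provided the normalized kernels are $\ell^2$-summable in the lag variable, with uniformly bounded associated spectral densities, \emph{uniformly in} $\nu$. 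Granting this, the standard Toeplitz trace estimates give $\kappa_p(T_\nu) = O\big((\nu/a(\nu))^{1-p/2}\big) \to 0$ for $p \ge 3$, as required.

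The main obstacle is precisely this uniform-in-$\nu$ summability of the normalized covariance kernels as $a(\nu)\to\infty$. It is established by the same ingredients used in Lemma \ref{l:bound} and the preceding Proposition: Parseval's identity converts each $\ell^2$ lag-sum into an integral of the form $\int_\bbR |x|^{-4d}|g^*(x)|^4\,|\widehat{\psi}(a(\nu)2^{j_i}x)|^2\,|\widehat{\psi}(a(\nu)2^{j_{i'}}x)|^2\,dx$ (after the change of variable absorbing $a(\nu)$), which one bounds uniformly by splitting $\bbR$ into a neighborhood of the origin and its complement. Near the origin the singularity $|x|^{-4d}$ is absorbed by the high-order vanishing of $\widehat{\psi}$ at $0$ from \eqref{eq:W1} (using $N_\psi \ge N+1$ in \eqref{e:N_psi}), while at infinity the decay condition $(W3)$, \eqref{e:psihat_is_slower_than_a_power_function}, ensures convergence; here only the boundedness of $g$ from assumption ($A$3) is needed, not the finer replacement $|g^*|^2\to|g(0)|^2$, since upper bounds suffice for the higher cumulants. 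The cross-octave bookkeeping, restricting the relevant lags to $\gcd(2^{j_i},2^{j_{i'}})\bbZ$ as in the reindexing by $\Pi(\nu_*)$ in the Proposition's proof, introduces no new analytic difficulty beyond this estimate. This argument parallels the cumulant computations of Moulines et al.\ \cite{moulines:roueff:taqqu:2007:JTSA,moulines:roueff:taqqu:2008} and Wendt et al.\ \cite{wendt:didier:combrexelle:abry:2017}.
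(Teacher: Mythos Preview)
Your proposal is correct and follows essentially the same route as the paper. The paper's own proof simply says it is ``a simple adaptation of the proof of Theorem \ref{t:distributionOfW}'', which proceeds by the Cram\'er--Wold device together with Lemma \ref{le:moulines1} (the Moulines et al.\ criterion $\rho(A_\nu)\rho(\Gamma_\nu)\to 0$ for asymptotic normality of Gaussian quadratic forms), the variance limit being supplied by the preceding Proposition \eqref{eq:55star}. Your argument is the same in substance: you use Cram\'er--Wold, identify $\kappa_2$ via \eqref{eq:55star}, and then control $\kappa_p$ for $p\ge 3$ through trace bounds driven by a uniformly bounded normalized spectral density---which is exactly the content of Lemma \ref{le:moulines1} unpacked. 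The only presentational difference is that you redo the cumulant estimate inline rather than citing the lemma; the key analytic input (uniform-in-$\nu$ boundedness of the spectral density of the normalized wavelet coefficients at the growing scales $a(\nu)2^{j_i}$, obtained from the boundedness of $g^*$ and the decay/vanishing of $\widehat{\psi}$) is the same as what the paper uses in \eqref{eq:spectral} and thereafter.
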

\begin{proof}
The proof can be written as a simple adaptation of the proof of Theorem \ref{t:distributionOfW}. $\Box$\\
\end{proof}

\section{Proofs and auxiliary results: Section \ref{s:continuous_time}}

As typical in the asymptotic study of averages, we need investigate the asymptotic covariance of the sample wavelet transforms $W(2^j)$.

Recall that for a zero mean, Gaussian random vector ${\mathbf Z} \in \mathbb{R}^m$, the Isserlis theorem (e.g., Vignat \cite{vignat:2012}) yields
\begin{equation}\label{e:Isserlis_theorem}
{ \mathbb{E}}(Z_1 \hdots Z_{2k}) = \sum \prod { \mathbb{E}}(Z_i Z_j), \quad { \mathbb{E}}(Z_1 \hdots Z_{2k+1}) = 0, \quad k = 1,\hdots, \lfloor m/2 \rfloor.
\end{equation}
The notation $\sum \prod$ stands for adding over all possible $k$-fold products of pairs ${ \mathbb{E}}(Z_i Z_j)$, where the indices partition the set $1,\hdots,2k$.
Proposition \ref{p:4th_moments_wavecoef} below describes the asymptotic covariance matrix for the wavelet transform of the mixed fractional process $Y$ at fixed octaves.

\begin{proposition}\label{p:4th_moments_wavecoef}
Suppose $Y = \{Y(t)\}_{t \in \mathbb{R}}$ satisfies the assumptions ($A$1 -- 3). As $\nu \rightarrow \infty$, for every pair of octaves $j$, $j'$,
\begin{itemize}
\item [$(i)$]
$$
\sqrt{K_j}\sqrt{K_{j'}}\frac{1}{K_j}\frac{1}{K_{j'}} \sum^{K_j}_{k=1}\sum^{K_{j'}}_{k'=1}
{ \mathbb{E}}D(2^j,k)D(2^{j'},k')^* \otimes { \mathbb{E}}D(2^j ,k)D(2^{j'},k')^*
$$
\begin{equation}\label{e:limiting_kron}
\rightarrow 2^{(j+j')/2} \gcd(2^{j},2^{j'}) \sum^{\infty}_{z= - \infty} \Phi_{z\hspace{0.5mm} \textnormal{gcd}(2^j,2^{j'})}
\otimes \Phi_{z \hspace{0.5mm}\textnormal{gcd}(2^j,2^{j'})},
\end{equation}
where
\begin{equation}\label{e:Phi_q}
\Phi_{z} :=\int_\mathbb{R}\overline{\widehat{\psi}(2^jx)}
\widehat{\psi}(2^{j'}x)e^{-\mathbf{i}zx}|x|^{-D}G(x)|x|^{-D^*}dx;
\end{equation}

\item [$(ii)$] there is a matrix $G_{jj'} \in M(n(n+1)/2,\mathbb{R})$, not necessarily symmetric, such that
\begin{equation}\label{e:Cov_converges_to_Gjj'_dis}
\sqrt{K_j}\sqrt{K_{j'}}\hspace{1mm}\textnormal{Cov}(\textnormal{vec}_{{\mathcal{S}}}W(2^j),\textnormal{vec}_{{\mathcal{S}}}W(2^{j'})) \rightarrow G_{jj'},
\end{equation}
where the entries of $G_{jj'}$ can be retrieved from \eqref{e:limiting_kron} by means of \eqref{e:Isserlis_theorem} (see \eqref{e:vec_definitions} on the notation $\textnormal{vec}_{{\mathcal S}}$).
\end{itemize}
\end{proposition}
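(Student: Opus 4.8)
The plan is to prove part $(i)$ first, by reducing the double sum to a single sum over integer lags, and then to deduce part $(ii)$ from part $(i)$ via the Isserlis theorem. The starting point is the cross-covariance of wavelet coefficients at two octaves. Using the harmonizable representation in $(A1)$ together with $Y=PX$, a direct calculation (as in the proof of Proposition \ref{p:wave}) shows that
$$
\mathbb{E}D(2^j,k)D(2^{j'},k')^{*}=2^{(j+j')/2}\,\Phi_{2^{j'}k'-2^{j}k},
$$
with $\Phi_z$ as in \eqref{e:Phi_q}; the key structural fact is that this cross-covariance depends on $(k,k')$ only through the lag $2^{j'}k'-2^{j}k$. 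Substituting into the left-hand side of \eqref{e:limiting_kron} and using $K_j=\nu/2^j$, the prefactor $1/\sqrt{K_jK_{j'}}$ combines with the two factors of $2^{(j+j')/2}$ to leave $2^{j+j'}/\sqrt{K_jK_{j'}}$ times $\sum_{k,k'}\Phi_{2^{j'}k'-2^{j}k}\otimes\Phi_{2^{j'}k'-2^{j}k}$.

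Next I would carry out the combinatorial reindexing. Writing $b=\gcd(2^j,2^{j'})$, every attainable lag has the form $bz$, $z\in\mathbb{Z}$, so I would count $N_z(\nu):=\#\{(k,k'):1\le k\le K_j,\ 1\le k'\le K_{j'},\ 2^{j'}k'-2^{j}k=bz\}$. Solving the corresponding linear Diophantine equation (whose coprime coefficients are $2^{j}/b$ and $2^{j'}/b$) shows that for each fixed $z$ one has $N_z(\nu)=\min(K_j,K_{j'})+O(1)$, so that $N_z(\nu)/\sqrt{K_jK_{j'}}\to 2^{-|j-j'|/2}$ while staying bounded uniformly in $z$ and $\nu$ by $\min(K_j,K_{j'})/\sqrt{K_jK_{j'}}$. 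Since the matrix-valued map $x\mapsto\overline{\widehat{\psi}(2^{j}x)}\widehat{\psi}(2^{j'}x)|x|^{-D}G(x)|x|^{-D^{*}}$ lies entrywise in $L^2(\mathbb{R})$ — the decay in $(W3)$ controls $x\to\infty$ and the vanishing moments \eqref{eq:W1} control the fractional singularity at $x=0$ — the sampled values $\{\Phi_{bz}\}_z$ are square-summable (Parseval applied to the $2\pi/b$-periodization), i.e.\ $\sum_z\|\Phi_{bz}\|^2<\infty$. I would then invoke dominated convergence over $z$, with dominating sequence $C\|\Phi_{bz}\|^2$, to pass the limit inside the sum and obtain the constant $2^{\,j+j'-|j-j'|/2}=2^{(j+j')/2+\min(j,j')}=2^{(j+j')/2}\gcd(2^j,2^{j'})$, which yields \eqref{e:limiting_kron}.

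For part $(ii)$, I would expand $\sqrt{K_jK_{j'}}\,\Cov(\textnormal{vec}_{{\mathcal S}}W(2^j),\textnormal{vec}_{{\mathcal S}}W(2^{j'}))$ entrywise as a double sum of fourth-order covariances of the form $\Cov(D_{i_1}(2^j,k)D_{i_2}(2^j,k),\,D_{i_3}(2^{j'},k')D_{i_4}(2^{j'},k'))$. Because the wavelet coefficients are jointly Gaussian with mean zero, the Isserlis theorem \eqref{e:Isserlis_theorem} reduces each such term to $\mathbb{E}(D_{i_1}D_{i_3})\mathbb{E}(D_{i_2}D_{i_4})+\mathbb{E}(D_{i_1}D_{i_4})\mathbb{E}(D_{i_2}D_{i_3})$, the mean-product term cancelling against the product of expectations. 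These are precisely the entries of $\mathbb{E}D(2^j,k)D(2^{j'},k')^{*}\otimes\mathbb{E}D(2^j,k)D(2^{j'},k')^{*}$, so summing over $k,k'$ and selecting and combining the rows and columns of the Kronecker limit from part $(i)$ that correspond to the $\textnormal{vec}_{{\mathcal S}}$ ordering produces a well-defined limit matrix $G_{jj'}$; since $\textnormal{vec}_{{\mathcal S}}$ retains only the lower-triangular entries, $G_{jj'}$ need not be symmetric, as asserted in \eqref{e:Cov_converges_to_Gjj'_dis}.

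The main obstacle I anticipate is the interchange of the $\nu\to\infty$ limit with the infinite sum over $z$ in part $(i)$: one must simultaneously handle the boundary terms in the lattice-point count $N_z(\nu)$ (which vanish for fixed $z$ but persist for $|z|$ comparable to $K_j$) and secure a summable dominating sequence uniform in $\nu$. The square-summability of $\{\Phi_{bz}\}$, together with the uniform bound $N_z(\nu)\le\min(K_j,K_{j'})$, is exactly what makes the dominated-convergence argument go through. The matrix-valued Kronecker bookkeeping and the Isserlis reduction in part $(ii)$ are, by comparison, essentially routine; the univariate analogue of the whole computation is carried out in the proof of the Proposition preceding Theorem \ref{t:varianceuni}.
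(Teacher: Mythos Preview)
Your proposal is correct and follows essentially the same route as the paper: compute the cross-covariance $\mathbb{E}D(2^j,k)D(2^{j'},k')^*$ as a function of the lag $2^{j'}k'-2^{j}k$, establish $\sum_z\|\Phi_{bz}\|^2<\infty$ via Parseval (using $(W1)$ and $(W3)$ for integrability of $\overline{\widehat{\psi}(2^jx)}\widehat{\psi}(2^{j'}x)|x|^{-2d_i}|g_i^*(x)|^2$), and then pass to the limit in the double sum. The only cosmetic difference is that the paper invokes Lemma~\ref{l:gcd} (Abry--Didier 2017, Lemma B.3) for the Ces\`aro-type limit $\frac{1}{\nu_*}\sum_{k,k'}\phi_{2^jk-2^{j'}k'}\to\gcd(2^j,2^{j'})\sum_z\phi_{z\gcd(2^j,2^{j'})}$, whereas you unpack that lemma by counting lattice points $N_z(\nu)$ directly and applying dominated convergence with the same $\ell^2$-dominating sequence; part $(ii)$ via Isserlis is handled identically.
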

\begin{proof}
The statement $(ii)$ is a direct consequence of $(i)$, so we only prove the latter. We proceed as in the proof of Proposition 3.3 $(i)$ in Abry and Didier \cite{abry:didier:2017}. It suffices to consider the subsequence $\nu=2^{j+j'}\nu_*$, $\nu_*\rightarrow\infty$. Then, $K_j=2^{j'}\nu_*$, $K_{j'}=2^j\nu_*$, and $\sqrt{K_j}\sqrt{K_{j'}}K_j^{-1}K_{j'}^{-1}=2^{-(j+j')/2}/\nu_*$. The covariance between wavelet coefficients can be expressed as
$$
\mathbb{E}D(2^j,k)D(2^{j'},k')^*=2^{(j+j')/2}\mathbb{E}\int_\mathbb{R}\int_\mathbb{R}\psi(t)\psi(t')Y(2^jt+2^jk)Y(2^{j'}t'+2^{j'}k')dtdt'
$$
$$
=2^{(j+j')/2}\int_\mathbb{R}dx\int_\mathbb{R}\int_\mathbb{R}\psi(t)\psi(t')e^{\textbf{i}(2^jt+2^jk)x}|x|^{-D}G(x)|x|^{-D^*}
\overline{e^{\textbf{i}(2^{j'}t'+2^{j'}k')x}}dtdt'
$$
$$
=2^{(j+j')/2}\int_\mathbb{R}\overline{\widehat{\psi}(2^jx)}\widehat{\psi}(2^{j'}x)e^{\textbf{i}(2^{j}k-2^{j'}k')x}|x|^{-D}G(x)|x|^{-D^*}dx,
$$
$$
=:\Phi_{2^{j}k-2^{j'}k'}.
$$
Let $\Xi_{2^{j}k-2^{j'}k'}=\Phi_{2^{j}k-2^{j'}k'}\otimes \Phi_{2^{j}k-2^{j'}k'}.$ By Theorem 1.8 in Jones and Jones \cite{jones:jones:1998}, p.10, the range of indices spanned by $2^{j}k-2^{j'}k'$ is $\mathbb{Z}\hspace{0.1mm}\textnormal{gcd}(2^j,2^{j'})$. Thus, we would like to show that
\begin{equation}\label{eq:covbound}
\sum_{z=-\infty}^{\infty}\|\Xi_{z\textnormal{gcd}(2^j,2^{j'})}\|<\infty.
\end{equation}
Note that $\|\Xi_{2^{j}k-2^{j'}k'}\|_{l_1}=\|\textnormal{vec}(\Phi_{z\textnormal{gcd}(2^j,2^{j'})})\textnormal{vec}(\Phi_{z\textnormal{gcd}(2^j,2^{j'})})^*\|_{l_1}\leq
\|\Phi_{z\textnormal{gcd}(2^j,2^{j'})}\|_{l_1}^2$. Thus, if $\sum_{z=-\infty}^{\infty}\|\Phi_{z}\|^2<\infty$, the expression \eqref{e:limiting_kron} is now a consequence of Lemma \ref{l:gcd} below.
In fact,
$$
\|\Phi_z\|^2=\bigg\|2^{(j+j')/2}P\int_\mathbb{R}e^{\textbf{i}zx}\overline{\widehat{\psi}(2^jx)}\widehat{\psi}(2^{j'}x)\textnormal{diag}(|x|^{-2d_1}|g^*_1(x)|^2,
\hdots,|x|^{-2d_n}|g^*_n(x)|^2)dxP^*\bigg\|^2
$$
$$
\leq C \|P\|^4\max_{1\leq i\leq n}\bigg|\int_\mathbb{R}e^{\textbf{i}zx}\overline{\widehat{\psi}(2^jx)}\widehat{\psi}(2^{j'}x)|x|^{-2d_i}|g^*_i(x)|^2dx\bigg|^2.
$$
For any $1\leq i\leq n$, $\overline{\widehat{\psi}(2^jx)}\widehat{\psi}(2^{j'}x)|x|^{-2d_i}|g_i(x)|^2\in L^2(\mathbb{R})$. Thus, by Parseval's theorem,
$$
\sum_{z=-\infty}^{\infty}\bigg|\int_\mathbb{R}e^{\textbf{i}zx}\overline{\widehat{\psi}(2^jx)}\widehat{\psi}(2^{j'}x)|x|^{-2d_i}|g_i(x)|^2dx\bigg|^2
$$
$$
=2\pi\int_\mathbb{R}\bigg|\overline{\widehat{\psi}(2^jx)}\widehat{\psi}(2^{j'}x)|x|^{-2d_i}|g_i(x)|^2\bigg|^2dx<\infty,
$$
this proves $\sum_{z=-\infty}^{\infty}\|\Phi_{z}\|^2<\infty$, as claimed.
$\Box$\\
\end{proof}

\noindent {\sc Proof of Theorem \ref{t:distributionOfW}}: For notational simplicity, we will restrict ourselves to the bivariate context ($n=2$). The argument for general $n$ can be worked out by a simple adaptation.

 The proof is by means of Cram\'{e}r-Wold device. Form the vector of wavelet coefficients
 $$
 V_{\nu}=(\xi_1(2^{j_1},1),\xi_2(2^{j_1},1),\hdots,\xi_1(2^{j_1},K_{j_1}),\xi_2(2^{j_1},K_{j_1});\hdots;
 $$
 $$
 \xi1(2^{j_m},1),\xi_2(2^{j_m},1),\hdots,\xi_1(2^{j_m},K_{j_m}),\xi_2(2^{j_m},K_{j_m}))^T\in \mathbb{R}^{\Upsilon(\nu)},
 $$
where $\Upsilon(\nu)=2\sum_{j=j_1}^{j_m}K_j$. Notice that $m,j_1,\hdots,j_m$ are fixed, but each $K_j$ goes to infinity with $\nu$. Let
$$
\mathbf{\alpha}=({\mathbf{\alpha}_{j_1}}\hdots,{\mathbf{\alpha}_{j_m}})^T\in \mathbb{R}^{3m}
$$
where
$$
{\mathbf{\alpha}_{j}}=(\alpha_{j,1},\alpha_{j,12},\alpha_{j,3})^T\in \mathbb{R}^3, \quad j=j_1,\hdots,j_m.
$$
Now form the block-diagonal matrix
$$
D_{\nu}=\textnormal{diag}\bigg(\underbrace{\frac{1}{K_{j_1}}\sqrt{\frac{1}{2^{j_1}}}\Omega_{j_1},\hdots,\frac{1}{K_{j_1}}\sqrt{\frac{1}{2^{j_1}}}\Omega_{j_1}}_
{K_{j_1}};
\hdots;
\underbrace{\frac{1}{K_{j_m}}\sqrt{\frac{1}{2^{j_m}}}\Omega_{j_m},\hdots,\frac{1}{K_{j_m}}\sqrt{\frac{1}{2^{j_m}}}\Omega_{j_m}}_{K_{j_m}}\bigg),
$$
where
$$\Omega_j=\left(
            \begin{array}{cc}
              \alpha_{j,1} & \alpha_{j,12}/2 \\
              \alpha_{j,12}/2 &\alpha_{j,2} \\
            \end{array}
          \right), \quad j=j_1,\hdots,j_m.
$$
Let $\Gamma({\nu})$ be the covariance matrix of $V_{\nu}$.

We would like to show $\sqrt{\nu}(V_{\nu}^*D_{\nu}V_{\nu}-\mathbb{E}V_{\nu}^*D_{\nu}V_{\nu})\overset{d}\rightarrow N(0,\sigma^2)$ for some $\sigma^2<\infty$. By Lemma \ref{le:moulines1}, we only need to prove that
\begin{itemize}
\item [(1)] $\sigma^2:=\lim_{\nu\rightarrow\infty}\textnormal{Var}(\sqrt{\nu}V_{\nu}^*D_{\nu}V_{\nu})<\infty $;
\item [(2)] $\lim_{\nu\rightarrow\infty}\rho(\sqrt{\nu}D_{\nu})\rho(\Gamma({\nu}))=0$,
\end{itemize}
where $\rho(\cdot)$ is the spectral radius of a matrix.

Statement (1) is a consequence of Proposition \ref{p:4th_moments_wavecoef}, i.e.,
$$
\textnormal{Var}(\sqrt{\nu}V^*DV)=\sum_{j=j_1}^{j_m}\sum_{j'=j_1}^{j_m}\mathbf{\alpha}_j^T\bigg\{\sqrt{\frac{\nu}{2^j}}\sqrt{\frac{\nu}{2^{j'}}}
\textnormal{Cov}(\textnormal{vec}_\mathcal{S}W(2^j),\textnormal{vec}_\mathcal{S}W(2^{j'}))\bigg\}\alpha_{j'}\rightarrow
$$
$$
\sum_{j=j_1}^{j_m}\sum_{j'=j_1}^{j_m}\alpha_j^TG_{jj'}\alpha_{j'}<\infty,\quad \nu\rightarrow\infty.
$$
To show statement (2), note that, by Lemma \ref{lemma:moulines6},
$$
\rho(\Gamma({\nu}))\leq\rho(\Gamma_1)+\hdots+\rho(\Gamma_m),
$$
where $\Gamma_i$ is the covariance matrix of $V_i:=(\xi_1(2^{j_i},1),\xi_2(2^{j_i},1),\hdots,\xi_1(2^{j_i},K_{j_i}),\xi_2(2^{j_i},K_{j_i}))^T$, $i = 1,\hdots,m$. Let $T_i$ be the permutation matrix such that
$$
T_iV_i=(\xi_1(2^{j_i},1),\xi_1(2^{j_i},2),\hdots,\xi_1(2^{j_i},K_{j_i});\xi_2(2^{j_i},1),\xi_2(2^{j_i},2),\hdots,\xi_2(2^{j_i},K_{j_i}))^T=:\widetilde{V}_i,
$$
and let
\begin{equation}\label{e:Gamma-tilde_i}
\widetilde{\Gamma}_i = \bbE \widetilde{V}_i\widetilde{V}^*_i
\end{equation}
be the covariance matrix of $\widetilde{V}_i$.
Then,
$$
\Gamma_i=\mathbb{E}V_i V_i^T=\mathbb{E}(T_i^{-1}\widetilde{V}_i \widetilde{V}_i^TT_i)=:T_i^{-1}\widetilde{\Gamma}_iT_i.
$$
Since a similarity transformation of a matrix does not change its eigenvalues, we have $\rho(\Gamma_i)=\rho(\widetilde{\Gamma}_i)$. Let $Y_s$ be the $s$-th entry of $Y$, and $\{\xi_s(2^j,k)\}_{k\in \mathbb{Z}}$ be the wavelet transform of $Y_s$ at octave $j$ and shift $k$. By Lemma \ref{lemma:moulines6} again, for the matrix in \eqref{e:Gamma-tilde_i},
$$
\rho(\widetilde{\Gamma}_i)\leq\rho(\Gamma_{i1})+\rho(\Gamma_{i2}),
$$
where $\Gamma_{is}$ is the covariance matrix of
$$
V_{is}:=(\xi_s(2^{j_i},1),\xi_s(2^{j_i},2),\hdots,\xi_s(2^{j_i},K_{j_i}))^T,\quad i=1,\hdots,m,\quad s=1,2.
$$
On the other hand, note that the covariance between $\xi_s(2^j,k)$ and $\xi_s(2^j,k')$ is given by
$$
\mathbb{E}\xi_s(2^j,k)\xi_s(2^j,k')=\sum_{l=1}^2p^2_{sl}2^{2jd_l}\int_\mathbb{R}e^{\textbf{i}(k-k')y}|\widehat{\psi}(y)|^2y^{-2d_l}\bigg|g^*_l\bigg(\frac{y}{2^j}\bigg)\bigg|^2dy.
$$
Thus,  $\{\xi_s(2^j,k)\}_{k\in \mathbb{Z}}$ is a stationary sequence for a fixed octave $j$ and its the spectral density can be expressed as
$$
f_{j,s}(y)=\sum_{l=1}^2p^2_{sl}2^{2jd_l}\sum_{w=-\infty}^{\infty}|\widehat{\psi}(y+2w\pi)|^2|y+2w\pi|^{-2d_l}\bigg|g^*_l\bigg(\frac{y+2w\pi}{2^j}\bigg)\bigg|^2
$$
\begin{equation}\label{eq:spectral}
\leq C_j\sum_{l=1}^2\sum_{w=-\infty}^{\infty}|\widehat{\psi}(y+2w\pi)|^2|y+2w\pi|^{-2d_l}, \quad -\pi<y<\pi.
\end{equation}
Fix $l=1,2$. The summation in \eqref{eq:spectral} is bounded on $(-\pi,\pi)$ by using \eqref{eq:W1} for $w=0$, and the decay of $\widehat{\psi}$ given by ($W3$) for bounding the remaining terms $\sum_{w\neq0}$. By Lemma \ref{lemma:moulines5} below, $\rho(\Gamma_{is})<\infty$, $i=1,\hdots,m$, $s=1,2$.
Thus, for some $C > 0$ that does not depend on $\nu$, $\rho(\Gamma_{\nu}) \leq C <\infty$. Since $\rho(\sqrt{\nu}D_\nu)= O(\frac{1}{\sqrt{\nu}})$, then $\lim_{\nu\rightarrow\infty}\rho(\sqrt{\nu}D_{\nu})\rho(\Gamma_{\nu})=0$. $\Box$\\

\noindent {\sc Proof of Theorem \ref{t:demixing}}: We first show ($i$). Let $C_0$, $C_1$, $R$ and $\Lambda$ be as in \eqref{e:C1} and \eqref{e:R_and_Lambda}. We now show that, under \eqref{e:C0=EW(2j1)_C1=EW(2j2)}, any solution $B$ produced by the EJD algorithm is in the set ${\mathcal M}_{\textnormal{EJD}}$. In view of \eqref{e:EW(2j)=PEP*}, consider the polar decomposition
\begin{equation}\label{e:polar}
R = {\mathcal P} O, \quad \textnormal{${\mathcal P}$ is positive definite}, \quad O \in O(n).
\end{equation}
The decomposition \eqref{e:polar} always exists for nonsingular, real matrices, and is unique. Thus,
$$
C_0 = RR^* = {\mathcal P}OO^*{\mathcal P}^* = {\mathcal P}^2,
$$
Since square roots are unique, \textbf{Step 1} yields
\begin{equation}\label{e:W=mathcalP^(-1)}
W = {\mathcal P}^{-1}.
\end{equation}
\textbf{Step 2} and \eqref{e:C1} imply that
\begin{equation}\label{e:WCW*=OLambdaO*}
W C_1 W^* = W({\mathcal P}O \Lambda O^* {\mathcal P}^*)W^* = O \Lambda O^*.
\end{equation}
By \eqref{e:eigen-assumption}, we can assume that the eigenvalues of $\Lambda$ (see \eqref{e:R_and_Lambda}) are ordered from smallest to largest, in which case the column vector ${\mathbf o}_{\cdot i}$ in $O$ is associated with the eigenvalue $\theta_i$, where
 \begin{equation}\label{eq:theta_eig}
\theta_i=2^{2d_i\hspace{0.5mm}(J_2-J_1)}\int_\bbR|\widehat{\psi}(y)|^2|y|^{-2d_i}\bigg|g_i^*\bigg(\frac{y}{2^{J_2}}\bigg)
\bigg|^2dy\bigg/\int_\bbR|\widehat{\psi}(y)|^2|y|^{-2d_i}\bigg|g_i^*\bigg(\frac{y}{2^{J_1}}\bigg)\bigg|^2dy,
\end{equation}
$i = 1,\hdots,n$.
However, in the spectral decomposition in \textbf{Step 2}, each orthogonal eigenvector is determined up to multiplication by $-1$. Thus, for ${\mathcal I}$ as in \eqref{e:set_I}, $Q^*\in  O {\mathcal I}$, and any demixing matrix $B$ produced by the EJD algorithm has the form
\begin{equation}
B = QW \in {\mathcal I}({\mathcal P}O)^{-1} ={\mathcal I}R^{-1}={\mathcal I}(P {\mathcal E(2^{J_1})}^{1/2} \textnormal{diag}(2^{J_1 d_1},\hdots,2^{J_1 d_n}))^{-1}.
\end{equation}
In other words, $B \in {\mathcal M}_{\textnormal{EJD}}$. Conversely, it is clear that any matrix in ${\mathcal M}_{\textnormal{EJD}}$ can be attained as a solution to the EJD algorithm under \eqref{e:C0=EW(2j1)_C1=EW(2j2)}. This establishes $(i)$.

To show ($ii$), consider the EJD algorithm with input matrices $\widehat{C}_0 = W(2^{J_1})$, $\widehat{C}_1 = W(2^{J_2})$ (we write $\widehat{C}_{k}$ to avoid confusion with their deterministic counterparts $C_k = {\Bbb E}W(2^{J_k})$, $k = 1,2$). By replacing all matrices in the proof of $(i)$ with their sample counterparts and following the same argument, the set of solutions to the EJD algorithm is made up of matrices of the form
$$
\widehat{B}_\nu = \Pi_\nu \widehat{O}^* \widehat{C}^{-1/2}_0, \quad \textnormal{ where } \Pi_\nu \in {\mathcal I}, \quad \widehat{C}^{-1/2}_0 \widehat{C}_1 \widehat{C}^{-1/2}_0  = \widehat{O}\widehat{\Lambda}\widehat{O}^*,
$$
for some spectral decomposition with orthogonal $\widehat{O}$ and diagonal $\widehat{\Lambda}$. Note that $\widehat{C}_{0} \stackrel{P}\rightarrow C_0 $, by Theorem \ref{t:distributionOfW}. Since the square root is unique and $C_{0}$ is invertible, then Theorem \ref{t:eigen} implies that, with probability going to 1, the inverse square root $\widehat{C}^{-1/2}_{0}$ exists. Thus, by Theorem \ref{t:distributionOfW}, and Slutsky's theorem, $\widehat{C}^{-1/2}_{0} \widehat{C}_{1}  \widehat{C}^{-1/2}_{0} \stackrel{P}\rightarrow {\mathcal P}^{-1}C_1 {\mathcal P}^{-1}$. However, ${\mathcal P}^{-1}C_1 {\mathcal P}^{-1}$ is a symmetric positive definite matrix that admits the spectral decomposition $O \Lambda O^*$ with pairwise distinct eigenvalues (see \eqref{e:W=mathcalP^(-1)} and \eqref{e:WCW*=OLambdaO*}). Then, by Theorem \ref{t:eigen}, so is $\widehat{C}^{-1/2}_{0} \widehat{C}_{1}  \widehat{C}^{-1/2}_{0}$ with probability going to 1. Therefore, Theorem \ref{t:eigen} implies that there is a spectral decomposition of $\widehat{C}^{-1/2}_{0} \widehat{C}_{1}  \widehat{C}^{-1/2}_{0}$ whose eigenvector and eigenvalue matrices $\widehat{O}$ and $\widehat{\Lambda}$, respectively, satisfy $\widehat{O} \stackrel{P}\rightarrow O$, $\widehat{\Lambda} \stackrel{P}\rightarrow \Lambda$. So,
$\widehat{B}_{\nu} = \Pi_\nu \widehat{O}^* \widehat{C}^{-1/2}_{0} \stackrel{P}\rightarrow \Pi O^* C^{-1/2}_{0} = \Pi ({\mathcal P}O)^{-1}$ for some $\Pi \in {\mathcal I}$, i.e., the sequence $\widehat{B}_{\nu}$ satisfies \eqref{e:Bnu_converges}.

We now show ($iii$). From Theorem \ref{t:distributionOfW}, $\sqrt{\nu}(\textnormal{vec}_{{\mathcal S}}(\widehat{C}_0-C_0),\textnormal{vec}_{{\mathcal S}}(\widehat{C}_1-C_1))^T\overset{d}\rightarrow \mathcal{N}(\mathbf{0},F)$, where $F\in  {\mathcal S}_{+}(n(n+1),\mathbb{R})$. Therefore, we can write
\begin{equation}\label{e:C^0=C0+oP(1)_C^1=C1+oP(1)}
\widehat{C}_0=C_0+\frac{1}{\sqrt{\nu}}Z_{1,\nu},\quad \widehat{C}_1=C_1+\frac{1}{\sqrt{\nu}}Z_{2,\nu}.
\end{equation}
for two random matrices $Z_{1,\nu}$ and $Z_{2,\nu}$ such that
\begin{equation}\label{eq:dist_1}
\sqrt{\nu}\bigg(\textnormal{vec}_{{\mathcal S}}(Z_{1,\nu}),\textnormal{vec}_{{\mathcal S}}(Z_{2,\nu})\bigg)^T\overset{d}\rightarrow \mathcal{N}(\mathbf{0},F).
\end{equation}
Since
$$
\widehat{W}=\widehat{C}_0^{-1/2}=(C_0+\frac{1}{\sqrt{\nu}}Z_{1,\nu})^{-1/2}=C_0^{-1/2}(I_n+\frac{1}{\sqrt{\nu}}C_0^{-1}Z_{1,\nu})^{-1/2}
$$
\begin{equation}\label{e:What=expansion}
=C_0^{-1/2}\bigg(I_n-\frac{1}{2}\frac{1}{\sqrt{\nu}}C_0^{-1}Z_{1,\nu}+O_P\bigg(\frac{1}{\nu}\bigg)\bigg)
=W-\frac{1}{2}\frac{1}{\sqrt{\nu}}C_0^{-3/2}Z_{1,\nu}+O_P\bigg(\frac{1}{\nu}\bigg),
\end{equation}
where the fourth equality is the Taylor expansion of a matrix function (namely, the function $(1+x)^{-1/2}$, where we replace 1 and $x$ with $I$ and a matrix $A$, respectively; see Golub and Van Loan \cite{Golub2012}, p.\ 565). Then, we arrive at
\begin{equation}\label{eq:What}
\sqrt{\nu}(\widehat{W}-W)=-\frac{1}{2}C_0^{-3/2}Z_{1,\nu}+O_P\bigg(\frac{1}{\sqrt{\nu}}\bigg).
\end{equation}
On the other hand, by \eqref{e:C^0=C0+oP(1)_C^1=C1+oP(1)} and \eqref{e:What=expansion},
$$
\widehat{W}\widehat{C}_1\widehat{W}^*=\bigg(W-\frac{1}{2}\frac{1}{\sqrt{\nu}}C_0^{-3/2}Z_{1,\nu}+O_P\bigg(\frac{1}{\nu}\bigg)\bigg)\bigg(C_1+\frac{1}{\sqrt{\nu}}Z_{2,\nu}\bigg)
\bigg(W^*-\frac{1}{2}\frac{1}{\sqrt{\nu}}Z_{1,\nu}^*(C_0^{-3/2})^*+O_P\bigg(\frac{1}{\nu}\bigg)\bigg)
$$
$$
=WC_1W^*+\frac{1}{\sqrt{\nu}}\bigg( WZ_{2,\nu}W^*-\frac{1}{2}WC_1Z_{1,\nu}^*(C_0^{-3/2})^*-\frac{1}{2}C_0^{-3/2}Z_{1,\nu}C_1W^*\bigg)+O_P\bigg(\frac{1}{\nu}\bigg),
$$
thus,
\begin{equation}\label{eq:WC1W}
\sqrt{\nu}(\widehat{W}\widehat{C}_1\widehat{W}^*-WC_1W^*)=WZ_{2,\nu}W^*-\frac{1}{2}WC_1Z_{1,\nu}^*(C_0^{-3/2})^*-\frac{1}{2}C_0^{-3/2}Z_{1,\nu}C_1W^*
+O_P\bigg(\frac{1}{\sqrt{\nu}}\bigg).
\end{equation}

As a consequence, there are matrices
$$
A_1=A_1(C_0) \in M \Big(n^2,\frac{n(n+1)}{2},\mathbb{R} \Big)
$$
and
$$
A_2 = A_2(C_0,C_1)\in M\Big(\frac{n(n+1)}{2},n+n^2,\mathbb{R} \Big)
$$
such that
\begin{equation}\label{eq:A1}
\bigg(\textnormal{vec}\bigg(-\frac{1}{2}C_0^{-3/2}Z_{1,\nu}\bigg)\bigg)^T=A_1(\textnormal{vec}_{{\mathcal S}}(Z_{1,\nu}))^T,
\end{equation}

$$
\bigg(\textnormal{vec}_{{\mathcal S}}\bigg(WZ_{2,\nu}W^*-\frac{1}{2}WC_1Z_{1,\nu}^*(C_0^{-3/2})^*-\frac{1}{2}C_0^{-3/2}Z_{1,\nu}C_1W^*\bigg)\bigg)^T
$$
\begin{equation}\label{eq:A2}
=A_2(\textnormal{vec}_{{\mathcal S}}(Z_{1,\nu}),\textnormal{vec}_{{\mathcal S}}(Z_{2,\nu}))^T.
\end{equation}
By \eqref{eq:dist_1}--\eqref{eq:A2},
\begin{equation}
\sqrt{\nu}(\textnormal{vec}(\widehat{W}-W),
\textnormal{vec}_{{\mathcal S}}(\widehat{W}\widehat{C}_1\widehat{W}^*-WC_1W^*))^T\overset{d}\rightarrow N(0,\Sigma_1),
\end{equation}
where
\begin{equation}\label{e:Sigma_1}
\Sigma_1=\left(
                                                                \begin{array}{c}
                                                                  \widetilde{A}_1 \\
                                                                  A_2 \\
                                                                \end{array}
                                                              \right)\Sigma\left(
                                                                \begin{array}{c}
                                                                  \widetilde{A}_1 \\
                                                                  A_2 \\
                                                                \end{array}
                                                              \right)^*\in  M(n^2+n(n+1)/2,\mathbb{R}),
\end{equation}
and $\widetilde{A}_1=(A_1,\mathbf{0}_{n^2\times n(n+1)/2})$.
In \textbf{Step 2} of the EJD algorithm, write out the spectral decomposition $WC_1W^*=Q^*D_1Q$ and also its estimated counterpart $\widehat{W}\widehat{C}_1\widehat{W}^*=\widehat{Q}^*\widehat{D}_1\widehat{Q}$. Recall that we need to show the asymptotic normality of the random vector
\begin{equation}\label{e:Q^W^}
\vecoper(\widehat{Q}\widehat{W}).
\end{equation}
From the ordering of eigenvalues in \eqref{e:WCW*=OLambdaO*} and expression \eqref{e:R_and_Lambda}, $WC_1W^*$ has pairwise distinct eigenvalues $\theta_1<\hdots < \theta_n$, where $\theta_i$ is defined by \eqref{eq:theta_eig}. So, by the Delta method,
\begin{equation}\label{e:sqrt(nu)(vec(What-W),vec(Q*-Q))_weak_limit}
\sqrt{\nu}(\textnormal{vec}(\widehat{W}-W),\textnormal{vec}(\widehat{Q}^*-Q^*))^T\overset{d}\rightarrow \mathcal{N}(\mathbf{0},J_Q\Sigma_1J_Q^*).
\end{equation}
In \eqref{e:sqrt(nu)(vec(What-W),vec(Q*-Q))_weak_limit},
\begin{equation}\label{e:J_Q}
J_Q=\textnormal{diag}(I_{n^2} , \mathcal{J}_q) \in M(2n^2,n^2+n(n+1)/2),
\end{equation}
and $J_q$ is given by
$$
\mathcal{J}_q=\left(
      \begin{array}{c}
        \big(\mathbf{q}_{1 \cdot }\otimes(\theta_1 I_n-WC_1W^*)^+\big)\textbf{D}\\
        \vdots \\
     \big(\mathbf{q}_{n \cdot }\otimes(\theta_n I_n-WC_1W^*)^+\big)\textbf{D}\\
      \end{array}
    \right)\in M(n^2,n(n+1)/2,\mathbb{R})
$$
(cf.\ expression \eqref{e:jacobi}), where the vector $\mathbf{q}_{i \cdot}$ denotes the $i$-th row of $Q \in O(n)$.
In view of \eqref{e:Q^W^}, we need to establish the asymptotic behavior of the matrix $\widehat{Q}$, instead of $\widehat{Q}^*$. So, let $T=(t_{i_1 i_2})_{i_1,i_2=1,\hdots,n^2}$ be the permutation operator defined by the transformation $T(\textnormal{vec}(R^*))^T=(\textnormal{vec}(R))^T$, $R \in M(n,\mathbb{R})$, i.e.,
$$
t_{i_1 i_2}=\left\{
          \begin{array}{ll}
            1, & i_1=(k-1)n+p,\hspace{0.5mm}i_2=(p-1)n+k, \hspace{2mm}k,p=1,\hdots,n; \\
            0, & \textnormal{otherwise}.
          \end{array}
        \right.
$$
Thus,
$$
\sqrt{\nu}(\textnormal{vec}(\widehat{W}-W),\textnormal{vec}(\widehat{Q}-Q))^T\overset{d}\rightarrow \mathcal{N}(\mathbf{0},\Sigma_2),
$$
where
\begin{equation}\label{e:Sigma_2}
\Sigma_2=\textnormal{diag}(I_{n^2},T)J_Q\Sigma_2J_Q^*\textnormal{diag}(I_{n^2},T)^*.
\end{equation}
We arrive at the relations
$$
\widehat{W}=W+\frac{1}{\sqrt{\nu}}Z_{3,\nu},\quad \widehat{Q}=Q+\frac{1}{\sqrt{\nu}}Z_{4,\nu},
$$
where $(\textnormal{vec}(Z_{3,\nu}),\textnormal{vec}(Z_{4,\nu}))^T\overset{d}\rightarrow N(\mathbf{0},\Sigma_2)$. Therefore,
$$
\sqrt{\nu}(\widehat{Q}\widehat{W}-QW)=QZ_{3,\nu}+Z_{4,\nu}W+O_P\bigg(\frac{1}{\sqrt{\nu}}\bigg).
$$
Therefore, for some matrix
$$
A_3 = A_3(O_0, \Lambda_0,C_1)\in M(n^2,2n^2),
$$
we can write
\begin{equation}\label{e:A3}
(\textnormal{vec}(QZ_{3,\nu}+Z_{4,\nu}W))^T=A_3(\textnormal{vec}(Z_{3,\nu}),\textnormal{vec}(Z_{4,\nu}))^T.
\end{equation}
Hence,
$$
\sqrt{\nu}(\vecoper(\widehat{B}_{\nu})-\vecoper(B))^T=\sqrt{\nu}(\textnormal{vec}(\widehat{Q}\widehat{W})-\textnormal{vec}(QW))^T\overset{d}\rightarrow \mathcal{N}(\mathbf{0},A_3\Sigma_2A_3^*),
$$
as claimed. $\Box$\\

The next proposition gives the asymptotic distribution of the main diagonal entries of the sample wavelet variance of the demixed process $\widehat{X}$. In its proof, we make use of the following lemma.
\begin{lemma}\label{r:normalizedP}
For a fixed $\Pi \in {\mathcal I}$, let
\begin{equation}\label{eq:normalizedP}
\widehat{I}_{\nu}=\widehat{B}_{\nu}(\Pi \hspace{0.5mm}\textnormal{diag}( 2^{-J_1d_1}, \hdots, 2^{-J_1d_n}){\mathcal E}(2^{J_1})^{-1/2} P^{-1})^{-1},
\end{equation}
i.e., $\widehat{B}_{\nu}$ is post-multiplied by the inverse of the limiting matrix on the right-hand side of \eqref{e:Bnu_converges}. Then,
\begin{equation}\label{eq:PhatP}
\sqrt{\nu}(\textnormal{vec}(\widehat{I}_{\nu}-I))^T\overset{d}{\rightarrow}{\mathcal N}(\mathbf{0},\Sigma(J_1,J_2)), \quad \nu \rightarrow \infty,
\end{equation}
for some positive semidefinite matrix $\Sigma(J_1,J_2)$.
\end{lemma}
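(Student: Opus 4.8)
The plan is to derive the stated limit directly from the asymptotic normality of the demixing estimator $\widehat{B}_{\nu}$ already furnished by Theorem \ref{t:demixing}, item ($iii$), by viewing $\widehat{I}_{\nu}$ as a fixed linear image of $\widehat{B}_{\nu}$. Write $B := \textnormal{diag}(2^{-J_1d_1},\hdots,2^{-J_1d_n}){\mathcal E}(2^{J_1})^{-1/2}P^{-1}$ for the limiting matrix appearing in \eqref{e:Bnu_converges}, and let $\Pi \in {\mathcal I}$ be the sign matrix fixed in the statement, namely the one for which the given sequence $\widehat{B}_{\nu}$ satisfies \eqref{e:Bnu_converges} and \eqref{e:demixing_weak_limit}. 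First I would note that $\Pi B$ is nonsingular: $P \in GL(n,\bbR)$ by ($A$2), ${\mathcal E}(2^{J_1})$ is diagonal with strictly positive entries so its square root is invertible, the remaining diagonal factor has nonzero entries, and $\Pi = \textnormal{diag}(\pm 1,\hdots,\pm 1)$ is its own inverse. Hence $\widehat{I}_{\nu}=\widehat{B}_{\nu}(\Pi B)^{-1}$ is well defined, and $\widehat{I}_{\nu}\overset{P}{\rightarrow}I$ by \eqref{e:Bnu_converges}.

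Next I would recenter the scaled quantity onto the object controlled by Theorem \ref{t:demixing}. Since $I = (\Pi B)(\Pi B)^{-1}$, we have $\widehat{I}_{\nu}-I = (\widehat{B}_{\nu}-\Pi B)(\Pi B)^{-1}$, so applying the vectorization identity $(\vecoper(AXC))^T=(C^T \otimes A)(\vecoper(X))^T$ with $A=I_n$ and $C=(\Pi B)^{-1}$ (and respecting the row-vector convention for $\vecoper$ from \eqref{e:vec_definitions}, transposing as needed) gives
\[
\sqrt{\nu}\,(\vecoper(\widehat{I}_{\nu}-I))^T = \big(((\Pi B)^{-1})^T \otimes I_n\big)\,\sqrt{\nu}\,(\vecoper(\widehat{B}_{\nu}-\Pi B))^T.
\]
By Theorem \ref{t:demixing}($iii$), the right-hand factor converges, $\sqrt{\nu}\,(\vecoper(\widehat{B}_{\nu}-\Pi B))^T \overset{d}{\rightarrow}{\mathcal N}(\mathbf{0},\Sigma_F(J_1,J_2))$. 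Because the prefactor $M := ((\Pi B)^{-1})^T \otimes I_n$ is a fixed (deterministic) matrix, the continuous mapping theorem — equivalently, closure of the Gaussian family under linear maps — yields
\[
\sqrt{\nu}\,(\vecoper(\widehat{I}_{\nu}-I))^T \overset{d}{\rightarrow} {\mathcal N}\big(\mathbf{0},\, M\,\Sigma_F(J_1,J_2)\,M^*\big), \quad \nu \rightarrow \infty.
\]
Setting $\Sigma(J_1,J_2) := M\,\Sigma_F(J_1,J_2)\,M^*$ then proves the claim; this matrix is positive semidefinite, having the sandwich form $M\Sigma_F M^*$ with $\Sigma_F$ positive semidefinite.

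Once Theorem \ref{t:demixing} is in hand this argument is essentially routine, so there is no substantive obstacle; the only points demanding care are bookkeeping ones. I would make sure the Kronecker factor, the placement of transposes, and the row-versus-column vectorization convention are kept mutually consistent, and — most importantly — that the $\Pi$ fixed in the statement is precisely the sign matrix for which $\widehat{B}_{\nu}$ converges in \eqref{e:Bnu_converges}, so that $\widehat{I}_{\nu}$ is genuinely centered at $I$ rather than at a sign-flipped identity.
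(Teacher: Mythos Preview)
Your proof is correct and follows essentially the same approach as the paper's own proof: both observe that $\widehat{I}_{\nu}-I$ is a fixed linear image of $\widehat{B}_{\nu}-\Pi B$ and then invoke Theorem \ref{t:demixing}($iii$). The paper simply asserts the existence of a matrix $T_P$ with $(\vecoper(\widehat{I}_{\nu}-I))^T = T_P(\vecoper(\widehat{B}_{\nu}-\Pi B))^T$ and appeals to the Delta method, whereas you make the identity $\widehat{I}_{\nu}-I=(\widehat{B}_{\nu}-\Pi B)(\Pi B)^{-1}$ explicit and compute $T_P=((\Pi B)^{-1})^T\otimes I_n$ via the Kronecker vectorization formula; since the map is linear, your appeal to the continuous mapping theorem is equivalent to (indeed a special case of) the paper's Delta method.
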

\begin{proof}
There exists a matrix $T_P\in M(n^2,\bbR)$ such that
$$
(\textnormal{vec}(\widehat{I}_{\nu}-I))^T=T_P(\textnormal{vec}(\widehat{B}_{\nu}-\Pi \hspace{0.5mm}\textnormal{diag}( 2^{-J_1d_1}, \hdots, 2^{-J_1d_n}){\mathcal E}(2^{J_1})^{-1/2} P^{-1}))^T,
$$
Then, by \eqref{e:demixing_weak_limit} and the Delta method, the limit in distribution \eqref{eq:PhatP} holds for $\Sigma(J_1,J_2)=T_P\Sigma_F(J_1,J_2)T_P^*$. $\Box$\\
\end{proof}

So, let $\widehat{B}_{\nu}$ be the demixing matrix described in \eqref{e:Bnu_converges}. For $\widehat{I}_{\nu}$ as in \eqref{eq:normalizedP}, let
\begin{equation}\label{e:matrix_D}
\mathfrak{D}:=\Pi \hspace{0.5mm}\textnormal{diag}( 2^{-J_1d_1}, \hdots, 2^{-J_1d_n}){\mathcal E}(2^{J_1})^{-1/2},
\end{equation}
which is a diagonal matrix. Then, the demixed process $\widehat{X}$ (see \eqref{eq:dimixedX}) can be reexpressed as
$$
\widehat{X}:=\widehat{B}_{\nu}P\mathfrak{D}^{-1}\mathfrak{D}X=\widehat{I}_{\nu}\mathfrak{D}X.
$$


\begin{proposition}\label{p:xhattox}
For $j = j_1,\hdots,j_m$, let $\widehat{X}$ be the demixed process defined by \eqref{eq:dimixedX}, let $W_{{\widehat{X}}}(a(\nu)2^j)$ be the sample wavelet variance of $\widehat{X}$, and let $\mathbb{E}W_{{X}}(a(\nu)2^j)$ be the wavelet variance of the hidden process $X$. Then,
$$
\bigg(\sqrt{\nu/a(\nu)}\textnormal{diag}(a(\nu)^{-2d_1},\hdots,a(\nu)^{-2d_n})\Big(\textnormal{vec}_{\mathcal{D}}(W_{{\widehat{X}}}(a(\nu)2^j)-
\mathfrak{D}\mathbb{E}W_{{X}}(a(\nu)2^j)\mathfrak{D})\Big)\bigg)^T_{j=j_1,\hdots,j_m}
$$
\begin{equation}\label{eq:dist_WX}
\overset{d}\rightarrow \mathcal{N}(0,\mathcal{K}\mathbf{W}\mathcal{K}^*),
\end{equation}
as $\nu\rightarrow\infty$ (see \eqref{e:vec_definitions} on the notation $\textnormal{vec}_{\mathcal{D}}$). In \eqref{eq:dist_WX},
\begin{equation}\label{eq:KK}
\mathcal{K}=\textnormal{diag}(\underbrace{\mathfrak{D}^2,\hdots,\mathfrak{D}^2}_{m}),
\end{equation}
$\mathfrak{D}$ is given by \eqref{e:matrix_D}, and
\begin{equation}\label{eq:covW}
\mathbf{W}(k_1,k_2)=\left\{
             \begin{array}{ll}
              w_{l,v,i}, & k_1=(l-1)n+i,k_2=(v-1)n+i; \\
               0, & \textnormal{otherwise},
             \end{array}
           \right.
\end{equation}
where $$w_{l,v,i}= 4\pi b_{j_lj_v}^{4d_i-1}|g_i(0)|^4\int_\mathbb{R} |x|^{-4d_i}|\widehat{\psi}(2^{j_l}x/b_{j_lj_v})|^2
|\widehat{\psi}(2^{j_v}x/b_{j_lj_v}))|^2dx,
$$
and $b_{j_lj_v}=\textnormal{gcd}(2^{j_l},2^{j_v})$, for $l,v=1,\hdots,m$, $i=1,\hdots,n$.
\end{proposition}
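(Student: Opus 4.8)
The plan is to exploit the linearity of the wavelet transform together with the representation of the demixed process established just above the statement, namely $\widehat{X} = \widehat{I}_{\nu}\mathfrak{D}X$ (see \eqref{e:matrix_D} and \eqref{eq:dimixedX}). Since the wavelet coefficients of $\widehat{X}$ are obtained by applying the fixed matrix $\widehat{I}_{\nu}\mathfrak{D}$ to those of $X$, one has the exact identity $W_{\widehat{X}}(a(\nu)2^j) = \widehat{I}_{\nu}\mathfrak{D}\hspace{0.5mm}W_{X}(a(\nu)2^j)\hspace{0.5mm}\mathfrak{D}\widehat{I}_{\nu}^{*}$, where $W_{X}(a(\nu)2^j)$ is the (matrix) sample wavelet variance of the hidden process. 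By Lemma \ref{r:normalizedP} we may write $\widehat{I}_{\nu} = I + \nu^{-1/2}Z_{\nu}$ with $Z_{\nu} = O_P(1)$, and, crucially, $\mathfrak{D}$ is diagonal while $\mathbb{E}W_{X}(a(\nu)2^j)$ is diagonal (the components $X_1,\hdots,X_n$ being independent). First I would subtract the centering matrix $\mathfrak{D}\mathbb{E}W_{X}(a(\nu)2^j)\mathfrak{D}$ and split the result into a \emph{main term} $\mathfrak{D}(W_{X}(a(\nu)2^j) - \mathbb{E}W_{X}(a(\nu)2^j))\mathfrak{D}$ and a \emph{contamination term} collecting all contributions in which $\widehat{I}_{\nu}$ departs from $I$.

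For the main term, the $(i,i)$ diagonal entry equals $\mathfrak{D}(i,i)^2(W_{X}(a(\nu)2^j)_{ii} - \mathbb{E}W_{X}(a(\nu)2^j)_{ii})$, i.e., $\mathfrak{D}(i,i)^2$ times the univariate wavelet-variance fluctuation of the single component $X_i$. Rescaling by $\sqrt{\nu/a(\nu)}\,a(\nu)^{-2d_i}$ and applying Theorem \ref{t:varianceuni} component by component yields joint asymptotic normality across octaves $j_1,\hdots,j_m$ for each fixed $i$, with limiting covariance entries $w_{l,v,i}$. Because $X_1,\hdots,X_n$ are independent, the main-term vectors corresponding to distinct components are asymptotically uncorrelated; this is what produces the block pattern of $\mathbf{W}$ in \eqref{eq:covW} (nonzero only for a common component index $i$, across octave pairs $l,v$), while the prefactors $\mathfrak{D}(i,i)^2$ assemble into $\mathcal{K} = \textnormal{diag}_m(\mathfrak{D}^2)$, giving the announced covariance $\mathcal{K}\mathbf{W}\mathcal{K}^{*}$. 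The covariance computation itself rests on the Isserlis formula \eqref{e:Isserlis_theorem}, exactly as in the proof of Theorem \ref{t:varianceuni}.

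The heart of the argument, and the step I expect to be the main obstacle, is showing that the contamination term is $o_P(1)$ after the component-dependent rescaling. It comprises a cross term of order $\nu^{-1/2}Z_{\nu}\mathfrak{D}W_{X}(a(\nu)2^j)\mathfrak{D}$ (plus its transpose) and a quadratic term of order $\nu^{-1}Z_{\nu}\mathfrak{D}W_{X}(a(\nu)2^j)\mathfrak{D}Z_{\nu}^{*}$. The two magnitudes that drive the analysis are the diagonal scale $W_{X}(a(\nu)2^j)_{kk} = O_P(a(\nu)^{2d_k})$ and the off-diagonal fluctuation $W_{X}(a(\nu)2^j)_{ki} = O_P(\sqrt{a(\nu)/\nu}\;a(\nu)^{d_k + d_i})$ for $k \neq i$; the latter order follows because $X_k \perp X_i$ forces $\mathbb{E}W_{X}(a(\nu)2^j)_{ki} = 0$, and its variance is controlled by \eqref{e:Isserlis_theorem} together with the summability of the autocovariances of the wavelet coefficients of each $X_i$ (as in Proposition \ref{p:4th_moments_wavecoef}). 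The delicate contributions are those feeding a low-memory component $i$ from a high-memory component $k$ with $d_k > d_i$; there one must verify that the resulting terms, of respective orders $a(\nu)^{d_k-d_i}\nu^{-1/2}$ and $a(\nu)^{2(d_k-d_i)}(a(\nu)\nu)^{-1/2}$, tend to zero. This is precisely where the two-sided scaling restriction \eqref{eq:scalea}, $a(\nu)/\nu \to 0$ and $\nu/a(\nu)^{1+2\beta}\to 0$, must be invoked, and reconciling it with the memory gap $d_k - d_i$ is the subtle point of the proof.

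Finally, I would assemble the pieces by the Cram\'{e}r-Wold device over the full collection of components $i = 1,\hdots,n$ and octaves $j = j_1,\hdots,j_m$: the main term converges jointly to the Gaussian law with covariance $\mathcal{K}\mathbf{W}\mathcal{K}^{*}$, the matrix $Z_{\nu}$ is $O_P(1)$ by Theorem \ref{t:demixing}, ($iii$), and Lemma \ref{r:normalizedP}, and the contamination is $o_P(1)$ by the previous step, so Slutsky's theorem delivers \eqref{eq:dist_WX}. Matching the constants $w_{l,v,i}$ obtained from Theorem \ref{t:varianceuni} with the diagonal factors $\mathfrak{D}(i,i)^2$ completes the identification of the limiting covariance.
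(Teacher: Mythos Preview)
Your overall plan --- main term via Theorem \ref{t:varianceuni} together with the independence of the $X_i$, contamination $o_P(1)$, then Slutsky --- is exactly the paper's strategy. The substantive difference is the algebraic decomposition, and this difference is precisely what handles the obstacle you flag as the ``subtle point''.

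You expand $\widehat{I}_\nu = I + \nu^{-1/2}Z_\nu$ directly, so the contamination involves $Z_\nu\mathfrak{D}W_X\mathfrak{D}$; its $(i,i)$ entry then picks up off-diagonals $W_X(a(\nu)2^j)_{ki}$, $k\neq i$, producing the $\nu^{-1/2}a(\nu)^{d_k-d_i}$ and $\nu^{-1/2}a(\nu)^{2(d_k-d_i)-1/2}$ terms whose control you (correctly) identify as delicate when $d_k>d_i$. The paper instead writes
\[
W_{\widehat{X}}-\mathfrak{D}\mathbb{E}W_X\mathfrak{D}=\widehat{I}_\nu\Big\{\mathfrak{D}(W_X-\mathbb{E}W_X)\mathfrak{D}-(\widehat{I}_\nu^{-1}-I)\mathfrak{D}\mathbb{E}W_X\mathfrak{D}-\mathfrak{D}\mathbb{E}W_X\mathfrak{D}\big((\widehat{I}_\nu^{-1})^*-I\big)-\cdots\Big\}\widehat{I}_\nu^*,
\]
expanding $\widehat{I}_\nu^{-1}$ \emph{inside} the sandwich while keeping the outer $\widehat{I}_\nu$ intact. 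Because $\mathfrak{D}\,\mathbb{E}W_X(a(\nu)2^j)\,\mathfrak{D}$ is deterministic and \emph{diagonal}, the $(i,i)$ entry of each inner contamination term involves only $(\widehat{I}_\nu^{-1}-I)_{ii}$ and $\mathbb{E}W_X(a(\nu)2^j)_{ii}$ --- no cross-memory index $k\neq i$ ever appears. After rescaling by $\sqrt{\nu/a(\nu)}\,a(\nu)^{-2d_i}$ each such term is $O_P(1/\sqrt{a(\nu)})$, with no dependence on any gap $d_k-d_i$ and without invoking the second half of \eqref{eq:scalea}. The outer factors $\widehat{I}_\nu\overset{P}\to I$ are then absorbed by a final Slutsky step.

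In short: the paper routes the contamination through the diagonal $\mathbb{E}W_X$ rather than the full sample matrix $W_X$, which dissolves the cross-memory leakage you anticipated as the main obstacle. Your route is not conceptually wrong, but it manufactures a difficulty the paper's decomposition sidesteps.
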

\begin{remark}\label{r:demix=orignal}
Intuitively, Proposition \ref{p:xhattox} says that the demixing matrix estimator $\widehat{B}_{\nu}$ yields a demixed process $\widehat{X}$ that is close to the hidden $X$ up to a non-identifiability factor $\mathfrak{D}$. In fact, the limiting distribution of
$$
\bigg(\sqrt{\nu/a(\nu)}\textnormal{diag}(a(\nu)^{-2d_1},\hdots,a(\nu)^{-2d_n})(\textnormal{vec}_{\mathcal{D}}(W_{\mathfrak{D}{X}}(a(\nu)2^j)-
\mathbb{E}W_{\mathfrak{D}{X}}(a(\nu)2^j)))^T\bigg)_{j=j_1,\hdots,j_m}
$$
is also $\mathcal{N}(0,\mathcal{K}\mathbf{W}\mathcal{K}^*)$. In particular, the main diagonal entries of the sample wavelet variance $W_{\widehat{X}}(a(\nu)2^j)$ of the demixed process $\widehat{X}$ are asymptotically independent.
\end{remark}
\noindent {\sc Proof of Proposition \ref{p:xhattox}: } Since $\widehat{X}=\widehat{I}_{\nu}\mathfrak{D}X$, then,
$$
W_{\widehat{X}}(a(\nu)2^j)=(\widehat{I}_{\nu})\mathfrak{D}W_X(a(\nu)2^j)\mathfrak{D}(\widehat{I}_{\nu})^*.
$$
Thus,
$$
W_{\widehat{X}}(a(\nu)2^j)-\mathfrak{D}\mathbb{E}W_{{X}}(a(\nu)2^j)\mathfrak{D}=(\widehat{I}_{\nu})\mathfrak{D}W_X(a(\nu)2^j)\mathfrak{D}(\widehat{I}_{\nu})^*
-\mathfrak{D}\mathbb{E}W_{{X}}(a(\nu)2^j))\mathfrak{D}
$$
$$
=(\widehat{I}_{\nu})\bigg\{\mathfrak{D}W_X(a(\nu)2^j)\mathfrak{D}-
(\widehat{I}_{\nu})^{-1}\mathfrak{D}\mathbb{E}W_{{X}}(a(\nu)2^j)\mathfrak{D}((\widehat{I}_{\nu})^{-1})^*\bigg\}(\widehat{I}_{\nu})^*
$$
$$
=(\widehat{I}_{\nu})\bigg\{\bigg[\mathfrak{D}(W_X(a(\nu)2^j)-\mathbb{E}W_{{X}}(a(\nu)2^j))\mathfrak{D}\bigg]-
\bigg[((\widehat{I}_{\nu})^{-1}-I)\mathfrak{D}\mathbb{E}W_{{X}}(a(\nu)2^j)\mathfrak{D}\bigg]
$$
$$
-\bigg[\mathfrak{D}\mathbb{E}W_{{X}}(a(\nu)2^j)\mathfrak{D}\bigg(((\widehat{I}_{\nu})^{-1})^*-I\bigg)\bigg]
$$
\begin{equation}\label{eq:eq1}
-\bigg[((\widehat{I}_{\nu})^{-1}-I)\mathfrak{D}\mathbb{E}W_{{X}}(a(\nu)2^j)\mathfrak{D}
\bigg(((\widehat{I}_{\nu})^{-1})^*-I\bigg)\bigg]\bigg\}(\widehat{I}_{\nu})^*.
\end{equation}
Recall that the operator $\textnormal{vec}_{\mathcal{D}}(W_{{X}}(a(\nu)2^j))$ picks out the main diagonal entries of the matrix $W_{{X}}(a(\nu)2^j)$, which are independent. Therefore, by Theorem \ref{t:varianceuni} for univariate processes,
$$
\bigg(\sqrt{\nu/a(\nu)}\textnormal{diag}(a(\nu)^{-2d_1},\hdots,a(\nu)^{-2d_n})(\textnormal{vec}_{\mathcal{D}}(\mathfrak{D}(W_{{X}}(a(\nu)2^j)-
\mathbb{E}W_{{X}}(a(\nu)2^j))\mathfrak{D}))^T\bigg)_{j=j_1,\hdots,j_m}
$$
$$
=\mathcal{K}
\bigg(\sqrt{\nu/a(\nu)}\textnormal{diag}(a(\nu)^{-2d_1},\hdots,a(\nu)^{-2d_n})
(\textnormal{vec}_{\mathcal{D}}(W_{{X}}(a(\nu)2^j)-
\mathbb{E}W_{{X}}(a(\nu)2^j)))^T\bigg)_{j=j_1,\hdots,j_m}
$$
\begin{equation}\label{eq:eq2}
\overset{d}
\rightarrow \mathcal{N}(0,\mathcal{K}\mathbf{W}\mathcal{K}^*), \quad \nu\rightarrow\infty.
\end{equation}
In \eqref{eq:eq2}, the matrices $\mathbf{W}$ and $\mathcal{K}$ are defined by \eqref{eq:covW} and \eqref{eq:KK}, respectively. By \eqref{eq:PhatP} and the Delta method,
$$
(\sqrt{\nu}\textnormal{vec}((\widehat{I}_{\nu})^{-1}-I))^T\overset{d}\rightarrow \mathcal{N}(0,\Sigma(J_1,J_2)).
$$
Since $\mathbb{E}W_{{X}}(a(\nu)2^j)=\textnormal{diag}(\mathbb{E}W_{{X}}(a(\nu)2^j)_{11},\hdots,\mathbb{E}W_{{X}}(a(\nu)2^j)_{nn})$, then
$$
(\textnormal{vec}_{\mathcal{D}}(((\widehat{I}_{\nu})^{-1}-I)\mathfrak{D}\mathbb{E}W_{{X}}(a(\nu)2^j))\mathfrak{D})^T=
$$
$$
\mathfrak{D}^2\textnormal{diag}(\mathbb{E}W_{{X}}(a(\nu)2^j)_{11},\hdots,\mathbb{E}W_{{X}}(a(\nu)2^j)_{nn})
(\textnormal{vec}_{\mathcal{D}}((\widehat{I}_{\nu})^{-1}-I))^T.
$$
Therefore,
$$
\sqrt{\nu/a(\nu)}\textnormal{diag}(a(\nu)^{-2d_1},\hdots,a(\nu)^{-2d_n})
(\textnormal{vec}_{\mathcal{D}}(((\widehat{I}_{\nu})^{-1}-I)\mathfrak{D}\mathbb{E}W_{{X}}(a(\nu)2^j)\mathfrak{D}))^T
$$
$$
=\mathfrak{D}^2\bigg(\textnormal{diag}(a(\nu)^{-2d_1},\hdots,a(\nu)^{-2d_n})\textnormal{diag}(\mathbb{E}W_{{X}}(a(\nu)2^j)_{11},
\hdots,\mathbb{E}W_{{X}}(a(\nu)2^j)_{nn})\bigg)
$$
\begin{equation}\label{eq:eq4}
\cdot\bigg((\sqrt{\nu}\textnormal{vec}_{\mathcal{D}}((\widehat{I}_{\nu})^{-1}-I))^T\bigg)\cdot \frac{1}{\sqrt{a(\nu)}}= O_P\bigg(\frac{1}{\sqrt{a(\nu)}}\bigg).
\end{equation}
Similarly,
$$
\sqrt{\nu/a(\nu)}\textnormal{diag}(a(\nu)^{-2d_1},\hdots,a(\nu)^{-2d_n})(\textnormal{vec}_{\mathcal{D}}(\mathfrak{D}\mathbb{E}W_{{X}}(a(\nu)2^j)
\mathfrak{D}((\widehat{I}_{\nu})^{-1}-I)))^T
$$
\begin{equation}\label{eq:eq5}
=O_P\bigg(\frac{1}{\sqrt{a(\nu)}}\bigg),
\end{equation}
and
$$
\sqrt{\nu/a(\nu)}\textnormal{diag}(a(\nu)^{-2d_1},\hdots,a(\nu)^{-2d_n})
$$
\begin{equation}\label{eq:eq6}
\cdot(\textnormal{vec}_{\mathcal{D}}
((\widehat{I}_{\nu})^{-1}-I)\mathfrak{D}\mathbb{E}W_{{X}}(a(\nu)2^j)\mathfrak{D}
(((\widehat{I}_{\nu})^{-1})^*-I))^T= O_P\bigg(\frac{1}{\sqrt{\nu}}\bigg).
\end{equation}
Consequently, by \eqref{eq:eq1}-\eqref{eq:eq6} and Slutsky's theorem, the limiting distribution of
$$
\bigg(\sqrt{\frac{\nu}{a(\nu)}}\textnormal{diag}(a(\nu)^{-2d_1},\hdots,a(\nu)^{-2d_n})(\textnormal{vec}_{\mathcal{D}}(W_{{\widehat{X}}}(a(\nu)2^j)-
\mathfrak{D}\mathbb{E}W_{{X}}(a(\nu)2^j)\mathfrak{D}))^T\bigg)_{j=j_1,\hdots,j_m}
$$
is equal to the limiting distribution of
$$
\mathcal{K}
\bigg(\sqrt{\frac{\nu}{a(\nu)}}\textnormal{diag}(a(\nu)^{-2d_1},\hdots,a(\nu)^{-2d_n})
(\textnormal{vec}_{\mathcal{D}}(W_{{X}}(a(\nu)2^j)-
\mathbb{E}W_{{X}}(a(\nu)2^j)))^T\bigg)_{j=j_1,\hdots,j_m},
$$
as claimed. $\Box$\\

The next proposition provides a bound on the difference between the wavelet variance of the entrywise process $X_i$ and the scaling factor $2^{j2d_i} |g_i(0)|^2K(d_i)$, $i=1,\hdots,n$. This bound is useful because of the general absence of exact self-similarity in \eqref{eq:Xhi}, and it is applied in the proof of Theorem \ref{t:hurstestimator}.
\begin{proposition}\label{p:sigma-2jh}
For $i=1,\hdots,n$, let $\mathbb{E}W_{{X}}(\cdot)_{ii'}$ be defined by \eqref{eq:sigmahat}. Then,
\begin{equation}\label{eq:diffsigma2}
|\mathbb{E}W_{{X}}(2^j)_{ii}-2^{j2d_i}|g_i(0)|^2K(d_i)|\leq C 2^{j(2d_i-\beta)},
\end{equation}
where $K(h)=\int_\mathbb{R}|\widehat{\psi}(x)|^2|x|^{-2d}dx$.
\end{proposition}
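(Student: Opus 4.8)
The plan is to specialize the wavelet spectrum formula \eqref{eq:EW(2j)} to the univariate $i$-th component, which gives
$$
\mathbb{E}W_{X}(2^j)_{ii} = 2^{2jd_i}\int_{\mathbb{R}}|\widehat{\psi}(x)|^2|x|^{-2d_i}\Big|g_i^*\Big(\frac{x}{2^j}\Big)\Big|^2 dx .
$$
Since $2^{2jd_i}|g_i(0)|^2 K(d_i) = 2^{2jd_i}|g_i(0)|^2\int_{\mathbb{R}}|\widehat{\psi}(x)|^2|x|^{-2d_i}dx$, the left-hand side of \eqref{eq:diffsigma2} equals $2^{2jd_i}$ times the absolute value of
$$
I_j := \int_{\mathbb{R}}|\widehat{\psi}(x)|^2|x|^{-2d_i}\Big(\Big|g_i^*\Big(\frac{x}{2^j}\Big)\Big|^2 - |g_i(0)|^2\Big)dx ,
$$
so it suffices to prove $|I_j| \le C 2^{-j\beta}$. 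I would split $I_j$ over the regions $|x| < \delta 2^j$ and $|x| \ge \delta 2^j$, with $\delta$ as in assumption ($A$3).

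On the inner region the integrand is controlled by the local Hölder-type bound \eqref{eq:assumptionA3}. When $d_i \ge 1/2$ (so $g_i^* = g_i$) this gives $\big||g_i(x/2^j)|^2 - |g_i(0)|^2\big| \le L 2^{-j\beta}|x|^\beta$, and the resulting $2^{-j\beta}\int_{\mathbb{R}}|\widehat{\psi}(x)|^2|x|^{-2d_i+\beta}dx$ is finite: near the origin \eqref{eq:W1} gives $|\widehat{\psi}(x)|^2 = O(|x|^{2N_{\psi}})$ with $2N_{\psi}-2d_i+\beta>-1$, while at infinity ($W3$) gives $|\widehat{\psi}(x)|^2 = O(|x|^{-2\alpha})$ and convergence is guaranteed precisely by $\beta+1 < 2d_1+2\alpha \le 2d_i+2\alpha$ from \eqref{eq:beta}. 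When $d_i < 1/2$ the extra sinc factor must be peeled off; I would write $|g_i^*(x/2^j)|^2 - |g_i(0)|^2$ as $(|g_i(x/2^j)|^2 - |g_i(0)|^2)\,\mathrm{sinc}^2(x/2^{j+1}) + |g_i(0)|^2\big(\mathrm{sinc}^2(x/2^{j+1}) - 1\big)$, bound the first summand as above using $|\mathrm{sinc}|\le 1$, and bound the second using $|\mathrm{sinc}^2(u)-1| \le Cu^2$ for bounded $u$, producing a term comparable to $2^{-2j}\int_{|x|<\delta 2^j}|\widehat{\psi}(x)|^2|x|^{-2d_i+2}dx$; a direct estimate of this truncated integral again yields $O(2^{-j\beta})$ by \eqref{eq:beta} together with $\beta \le 2$.

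On the outer region $|x| \ge \delta 2^j$, the functions $g_i$ are bounded by ($A$3) and $|\mathrm{sinc}| \le 1$, so the integrand difference is uniformly bounded and I would use only the decay of $\widehat{\psi}$: by ($W3$), $\int_{|x|\ge\delta 2^j}|\widehat{\psi}(x)|^2|x|^{-2d_i}dx \le C\int_{|x|\ge\delta 2^j}|x|^{-2\alpha-2d_i}dx = O\big(2^{-j(2\alpha+2d_i-1)}\big)$, which is $o(2^{-j\beta})$ because $\beta < 2\alpha+2d_i-1$ by \eqref{eq:beta}. Combining the two regions yields $|I_j| \le C 2^{-j\beta}$ for all large $j$, and the finitely many remaining octaves are absorbed into the constant $C$.

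The main obstacle will be the bookkeeping in the inner region for $d_i < 1/2$: one must confirm that the sinc correction, which carries a favorable $2^{-2j}$ prefactor but an unfavorable $|x|^2$ weight over a domain growing with $j$, still decays at least as fast as $2^{-j\beta}$. This is exactly where \eqref{eq:beta} is used in its sharp form, since one must separate the convergent, logarithmically divergent, and power-divergent regimes of the truncated integral $\int_{|x|<\delta 2^j}|\widehat{\psi}(x)|^2|x|^{-2d_i+2}dx$ according to the sign of $-2\alpha-2d_i+2$; everything else reduces to routine estimates based on \eqref{eq:W1} and ($W3$).
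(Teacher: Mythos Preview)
Your proposal is correct and follows essentially the same route as the paper: write the difference as $2^{2jd_i}I_j$, split the integral at the threshold corresponding to $|x/2^j|=\delta$, use the H\"older-type bound from ($A$3) on the inner piece, and use the decay from ($W3$) on the outer piece.

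The one place where you work harder than necessary is the case $d_i<1/2$. You peel off the sinc factor and then worry about controlling $2^{-2j}\int_{|x|<\delta 2^j}|\widehat\psi(x)|^2|x|^{-2d_i+2}\,dx$, flagging it as the ``main obstacle''. The paper avoids this entirely by observing once (this is exactly \eqref{eq:betastar1}) that
\[
\bigl||g_i^*(x)|^2-|g_i(0)|^2\bigr|\le C|x|^\beta,\qquad |x|<\delta,
\]
holds for \emph{both} cases: when $d_i<1/2$ the sinc correction contributes $|g_i(0)|^2|\mathrm{sinc}^2(x/2)-1|=O(|x|^2)$, and since $\beta\le 2$ by ($A$3) this is $O(|x|^\beta)$ for small $|x|$. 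With this single inequality in hand the inner integral is bounded by $C2^{-j\beta}\int_{\mathbb R}|\widehat\psi(x)|^2|x|^{-2d_i+\beta}\,dx$ uniformly in $j$, and no truncated-integral case analysis is needed. Your argument reaches the same endpoint, just with an extra detour.
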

\begin{proof}
In fact, for $i=1,\hdots,n$,
$$
|\mathbb{E}W_{{X}}(2^j)_{ii}-2^{j2d_i}g_i^2(0)K(d)|
=2^j\int_\mathbb{R}|\widehat{\psi}(2^jx)|^2(||g^*_i(x)|^2-|g_i(0)|^2|)|x|^{-2d_i}dx $$
\begin{equation}\label{e:|EWX-scalingfactor|=<_bound}
\leq2^j\int_{|x|<\delta}|\widehat{\psi}(2^jx)|^2||g^*_i(x)|^2-|g_i(0)|^2||x|^{-2d_i}dx+2^j\int_{|x|\geq\delta}|\widehat{\psi}(2^jx)|^2||g^*_i(x)|^2-|g_i(0)|^2||x|^{-2d_i}dx.
\end{equation}
By \eqref{eq:assumptionA3}, the first sum term on the right-hand side of \eqref{e:|EWX-scalingfactor|=<_bound} is bounded by
$$
C2^j\int_{|x|<\delta}|\widehat{\psi}(2^jx)|^2|x|^{\beta}|x|^{-2d_i}dx
=C2^{j(2d_i-\beta)}\int_{|x|<2^j\delta}|\widehat{\psi}(x)|^2|x|^{-2d_i+\beta}dx
$$
\begin{equation}\label{eq:I1}
\leq C2^{j(2d_i-\beta)}\int_\mathbb{R}|\widehat{\psi}(x)|^2|x|^{-2d_i+\beta}dx \leq C2^{j(2d_i-\beta)}\int_\mathbb{R}|\widehat{\psi}(x)|^2|x|^{-2d_i+\beta}dx.
\end{equation}
By \eqref{eq:W1}, the  integrand in \eqref{eq:I1} behaves like $|x|^{2N_{\psi}-2d_i+\beta}$ around the origin. Also, by \eqref{e:psihat_is_slower_than_a_power_function}, the integrand is bounded by $|x|^{\beta-2\alpha-2d_i}$ as $|x|\rightarrow\infty$, where $\beta-2\alpha-2d_i<-1$ as a consequence of \eqref{eq:beta}. Thus, $\int_\mathbb{R}|\widehat{\psi}(x)|^2|x|^{-2d_i+\beta}dx<\infty$ and
$$
2^j\int_{|x|<\delta}|\widehat{\psi}(2^jx)|^2||g^*_i(x)|^2-|g_i(0)|^2||x|^{-2d_i}dx\leq C2^{j(2d_i-\beta)}.
$$
Moreover, by \eqref{e:psihat_is_slower_than_a_power_function} and the fact that $g^*_i(x)$ is bounded, the second sum term on the right-hand side of \eqref{e:|EWX-scalingfactor|=<_bound} is bounded by
$$
C 2^{-2j\alpha}2^j\int_{|x|\geq\delta}|x|^{-(2d_i+2\alpha)}dx \leq C2^{j(2d_i-\beta)}.
$$
The last inequality holds because $\int_{|x|>\pi}|x|^{-(2d_i+2\alpha)}dx<\infty$ and $-2\alpha<2d_i-\beta-1$.
Consequently,
$$
|\mathbb{E}W_{{X}}(2^j)_{ii}-2^{j2d_i}|g_i(0)|^2K(d)|<C2^{j(2d_i-\beta)},
$$
as claimed. $\Box$\\
\end{proof}

The proof of Theorem \ref{t:hurstestimator}, presented next, is similar to that of Proposition 3 in Moulines et al.\ \cite{moulines:roueff:taqqu:2007:Fractals}.\\

\noindent {\sc Proof of Theorem \ref{t:hurstestimator}: } Recast \eqref{eq:dist_WX} as
\begin{equation}\label{eq:sigmahat-sigma}
\sqrt{\frac{\nu}{a(\nu)}}\bigg(\left(
  \begin{array}{c}
    a(\nu)^{-2d_1}W_{\widehat{X}}(a(\nu)2^{j_1})_{11} \\
    \vdots \\
    a(\nu)^{-2d_1}W_{\widehat{X}}(a(\nu)2^{j_m})_{11} \\
    \vdots \\
    a(\nu)^{-2d_n}W_{\widehat{X}}(a(\nu)2^{j_1})_{nn} \\
    \vdots \\
    a(\nu)^{-2d_n}{W}_{\widehat{X}}(a(\nu)2^{j_m})_{nn} \\
  \end{array}
\right)-\left(
  \begin{array}{c}
    a(\nu)^{-2d_1}\mathbb{E}W_{{X}}(a(\nu)2^{j_1})_{11}\mathfrak{D}(1,1)^2 \\
    \vdots \\
    a(\nu)^{-2d_1}\mathbb{E}W_{{X}}(a(\nu)2^{j_m})_{11}\mathfrak{D}(1,1)^2 \\
    \vdots \\
    a(\nu)^{-2d_n}\mathbb{E}W_{{X}}(a(\nu)2^{j_1})_{nn}\mathfrak{D}(n,n)^2 \\
    \vdots \\
    a(\nu)^{-2d_n}\mathbb{E}W_{{X}}(a(\nu)2^{j_m})_{nn}\mathfrak{D}(n,n)^2 \\
  \end{array}
\right)\bigg)\overset{d}\rightarrow\mathcal{ N}(0,\mathcal{G}),
\end{equation}
where $W_{\widehat{X}}(\cdot)_{ii}$ and $\mathbb{E}W_{X}(\cdot)_{ii}$ are defined by \eqref{eq:sigmahat}. The limiting covariance matrix is block diagonal and can be written as $\mathcal{G}=\textnormal{diag}(\mathcal{G}_1,\hdots,\mathcal{G}_n)$. For $i=1,\hdots,n$, $\mathcal{G}_i$ is a $m\times m$ matrix whose $(k_1,k_2)$-th entry is given by
 $$ \mathcal{G}_i(k_1,k_2)=4\pi b_{j_{k_1},j_{k_2}}^{4d_i-1}2^{j_{k_1}+j_{k_2}} |g_i(0)|^4\mathfrak{D}(i,i)^2\int_\mathbb{R} x^{-4d_i}|\widehat{\psi}(2^{j_{k_1}}x/b_{j_{k_1},j_{k_2}})|^2
|\widehat{\psi}(2^{j_{k_2}}x/b_{j_{k_1},j_{k_2}}))|^2dx,
$$
where $b_{j_{k_1},j_{k_2}}=\textnormal{gcd}(2^{j_{k_1}},2^{j_{k_2}})$ for $i=1\hdots,n$, $k_1,k_2=1\hdots,m$.
However, under condition \eqref{eq:scalea}, relation \eqref{eq:diffsigma2} implies that
$$
\sqrt{\nu/a }a^{-2d_i}|\mathbb{E}W_{X}(a(\nu)2^j)_{ii}-|g_i(0)|^2K(d_i)(a2^j)^{2d_i}|
$$
\begin{equation}\label{eq:conv0}
\leq C\sqrt{\nu/a}a^{-2h_i}a^{2d_i-\beta}2^{j(2d_i-\beta)}\leq C\sqrt{\nu/a }a^{-\beta}\rightarrow 0,\quad \nu\rightarrow\infty,
\end{equation}
for $i=1,\hdots,n$. As a consequence of \eqref{eq:sigmahat-sigma} and \eqref{eq:conv0},
\begin{equation}\label{eq:sigma-2jh}
\sqrt{\frac{\nu}{a(\nu)}}\bigg(\left(
  \begin{array}{c}
    a(\nu)^{-2d_1}W_{\widehat{X}}(a(\nu)2^{j_1}_{11}) \\
    \vdots \\
    a(\nu)^{-2d_1}W_{\widehat{X}}(a(\nu)2^{j_m})_{11} \\
    \vdots \\
    a(\nu)^{-2d_n}W_{\widehat{X}}(a(\nu)2^{j_1})_{nn} \\
    \vdots \\
    a(\nu)^{-2d_n}W_{\widehat{X}}(a(\nu)2^{j_m})_{nn} \\
  \end{array}
\right)-\left(
  \begin{array}{c}
    |g_1(0)|^2K(d_1)2^{2j_1d_1}\mathfrak{D}(1,1)^2 \\
    \vdots \\
    |g_1(0)|^2K(d_1)2^{2j_md_1}\mathfrak{D}(1,1)^2 \\
    \vdots \\
    |g_n(0)|^2K(d_n)2^{2j_1d_n}\mathfrak{D}(n,n)^2\\
    \vdots \\
    |g_n(0)|^2K(d_n)2^{2j_md_n}\mathfrak{D}(n,n)^2 \\
  \end{array}
\right)\bigg)\overset{d}\rightarrow\mathcal{ N}(0,\mathcal{G}).
\end{equation}
Define
$$
f(\mathbf{x})=\bigg(\sum_{l=1}^mw^1_{l}\log(x_{1l}),\hdots,\sum_{l=1}^mw^n_{l}\log(x_{nl})\bigg)^T,
$$
for $\mathbf{x}=(x_{11},\hdots,x_{1m};\hdots;x_{n1},\hdots,x_{nm})^T\in {\mathbb{R}}_+^{nm}$ and $\mathbf{w}^i$ as in \eqref{eq:weightvector}, $i=1,\hdots,n$. Let $\mathbf{y}_\nu$ and $\mathbf{y}_0$ be the left and right vectors in the difference between parentheses on the left-hand side of \eqref{eq:sigma-2jh}.
Then, $f(\mathbf{y}_{\nu})=(\widehat{d}_1,\hdots,\widehat{d}_n)$ and $f(\mathbf{y}_0)=(d_1,\hdots,d_n)$. By \eqref{eq:sigma-2jh} and the Delta method,
$$
\sqrt{\nu/a(\nu)}\bigg[\left(
                                                                                         \begin{array}{c}
                                                                                           \widehat{d}_1 \\
                                                                                           \vdots \\
                                                                                          \widehat{ d}_n \\
                                                                                         \end{array}
                                                                                       \right)-\left(
                                                                                         \begin{array}{c}
                                                                                           d_1 \\
                                                                                           \vdots \\
                                                                                           d_n \\
                                                                                         \end{array}
                                                                                       \right)
\bigg]\overset{d}\rightarrow \mathcal{N}(\mathbf{0},\nabla f(\mathbf{y}_0)\mathcal{G}\nabla f(\mathbf{y}_0)^T),
$$
where
$$
\nabla f(\mathbf{y}_0)=\textnormal{diag}(\mathcal{A}_1,\hdots,\mathcal{A}_n),
$$
and
$$
\mathcal{A}_i=\bigg(\frac{w^i_1}{|g_i(0)|^2K(d_i)2^{2j_1d_i}\mathfrak{D}(i,i)^2},\hdots,\frac{w^i_m}{|g_i(0)|^2K(d_i)2^{2j_md_i}\mathfrak{D}(i,i)^2}\bigg),\quad i=1,\hdots,n.
$$
This establishes \eqref{eq:convOfH1}. $\Box$\\

\noindent {\sc Proof of Corollary \ref{c:equalparameter}: }  Note that Proposition \ref{p:wave} and Theorem \ref{t:distributionOfW} also hold under assumptions ($A1'$), (A2) and ($A3'$). In addition, condition \eqref{eq:eigcondition} follows from (A3$'$), so, by the same arguments for the proofs of Theorems \ref{t:distributionOfW}, \ref{t:distributionOfW} and \ref{t:hurstestimator}, the claim holds. $\Box$\\

\section{Proofs and auxiliary results: Section \ref{s:discrete_time}}

As a consequence of applying \eqref{eq:vanish} and doing a direct computation, the integral representation of the wavelet covariance in discrete time is provided in the following proposition.
\begin{proposition}\label{p:covdis}
Let $\{Y(k)\}_{k\in \mathbb{Z}}$ be the sequence \eqref{eq:discreteY}. For all $j,j'\geq0$ and $k,k'\in \mathbb{Z}$,
$$
\textnormal{Cov}(\widetilde{D}(2^j,k),\widetilde{D}(2^{j'},k'))=\int_\mathbb{R} H_j(x)\overline{H_{j'}(x)}e^{\mathbf{i}x(2^jk-2^{j'}k')}{|x|^{-D}G(x)|x|^{-D^*}}dx,
$$
where
\begin{equation}\label{eq:Hj}
H_j(x)=2^{-j/2}\int_\mathbb{R} \sum_{l\in \mathbb{Z}}\psi(2^{-j}s)\varphi(s+l)e^{-\mathbf{i}xl}ds.
\end{equation}
\end{proposition}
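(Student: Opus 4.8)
The plan is to reduce the statement to a direct computation resting on two ingredients: an explicit expression of the discrete wavelet coefficient as a linear functional of the samples $\{Y(m)\}$, and the harmonizable representations \eqref{eq:Xhi}--\eqref{eq:XhiN=0} of the components of $X$. First I would unwind Mallat's cascade. Iterating the two recursions defining $\widetilde{a}_{j,k}$ and $\widetilde{d}_{j,k}$ and using the refinement equations for $\varphi$ and $\psi$ (induction on $j$), one obtains for each fixed $j$ an equivalent analysis filter, namely
\[
\widetilde{D}(2^j,k) = 2^{-j/2}\sum_{l\in\mathbb{Z}}\gamma_j(l)\,Y(2^jk - l), \qquad \gamma_j(l) := \int_{\mathbb{R}}\psi(2^{-j}s)\varphi(s+l)\,ds.
\]
Since $\varphi$ and $\psi$ are compactly supported (assumption $(W2')$), for each $j$ only finitely many $\gamma_j(l)$ are nonzero, so every sum below is finite and interchanges with the spectral integral are immediate. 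Comparing with \eqref{eq:Hj}, the key structural observation is that $H_j$ is exactly the (scaled) discrete-time Fourier transform of this filter, $H_j(x) = 2^{-j/2}\sum_l\gamma_j(l)e^{-\mathbf{i}lx}$.

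Next I would compute the covariance entrywise. Writing $Y=PX$ with $X$ having independent components, $\mathbb{E}[Y(p)Y(q)^*] = P\,\textnormal{diag}(\mathbb{E}[X_i(p)\overline{X_i(q)}])_{i=1}^n\,P^*$, so it suffices to treat a single component through \eqref{eq:Xhi} or \eqref{eq:XhiN=0}. Substituting the representation, the double sum over $(l,l')$ becomes, for component $i$, an integral against $|x|^{-2(d_i-N_i)}|g_i(x)|^2$ of the product of $\sum_l\gamma_j(l)K^{(i)}_{2^jk-l}(x)$ and the primed conjugate, where $K^{(i)}_p$ is the harmonizable kernel at time $p$.

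The crux, and the step I expect to be the main obstacle, is showing that the polynomial corrections in the kernels drop out. The kernel in \eqref{eq:Xhi} carries the subtraction $\sum_{m=0}^{N_i-1}\frac{1}{m!}(\mathbf{i}px)^m$, a polynomial in $p = 2^jk - l$, hence in $l$, of degree at most $N_i-1 < N_\psi$. Now \eqref{eq:vanish} is precisely the identity $\sum_l\gamma_j(l)\,l^m = 0$ for $m=0,\dots,N_\psi-1$ (expand the left side of \eqref{eq:vanish} and recognize $\sum_l l^m\gamma_j(l)$), so summing the polynomial part against $\gamma_j(l)$ yields zero. Consequently $\sum_l\gamma_j(l)K^{(i)}_{2^jk-l}(x) = (\mathbf{i}x)^{-N_i}e^{\mathbf{i}2^jkx}2^{j/2}H_j(x)$, and likewise for the primed sum. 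Here care must be taken with the factors $(\mathbf{i}x)^{-N_i}\overline{(\mathbf{i}x)^{-N_i}} = |x|^{-2N_i}$, which combine with $|x|^{-2(d_i-N_i)}$ to restore the clean power law $|x|^{-2d_i}$; one must also verify integrability near $x=0$, which holds because $H_j$ vanishes to order $N_\psi$ at the origin (by the $\widehat{\psi}$ bounds \eqref{eq:W1}) and thereby dominates the singularity $|x|^{-2d_i}$. In the stationary case \eqref{eq:XhiN=0} no subtraction is present and the factor $\frac{e^{\mathbf{i}x}-1}{\mathbf{i}x}$ is absorbed directly.

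Finally I would reassemble. The two $2^{\pm(j+j')/2}$ normalizations cancel, the scalar factors combine into $e^{\mathbf{i}(2^jk-2^{j'}k')x}H_j(x)\overline{H_{j'}(x)}$, and the componentwise power laws reassemble, via $|x|^{-D}=P\,\textnormal{diag}(|x|^{-d_i})P^{-1}$ and the definition \eqref{eq:G} of $g_i^*$, as $P\,\textnormal{diag}(|x|^{-2d_i}|g_i^*(x)|^2)\,P^* = |x|^{-D}G(x)|x|^{-D^*}$. This yields the asserted integral representation and completes the proof.
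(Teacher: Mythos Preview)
Your proposal is correct and follows essentially the same route as the paper. The paper introduces the auxiliary interpolant $\widetilde{Y}_t=\sum_l Y_l\varphi(t-l)$ and writes $\widetilde{D}(2^j,k)=2^{-j/2}\int\widetilde{Y}_t\psi(2^{-j}t-k)\,dt$, which after a change of variables is exactly your filter representation $\widetilde{D}(2^j,k)=2^{-j/2}\sum_l\gamma_j(l)Y(2^jk-l)$; it then invokes \eqref{eq:Xhi}, \eqref{eq:XhiN=0} and \eqref{eq:vanish} in one line to reach \eqref{eq:covdis}, whereas you spell out explicitly how \eqref{eq:vanish} annihilates the Taylor polynomial in the harmonizable kernel and how the $(\mathbf{i}x)^{-N_i}$ factors recombine with the power law---but the mechanism is identical.
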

\begin{proof}
Let $\widetilde{Y}_t=\sum_{l=-\infty}^{\infty}Y_l\varphi(t-l)$. Then, $\widetilde{D}(2^j,k)=2^{-j/2}\int_\mathbb{R} \widetilde{Y}_t\psi(2^{-j}t-k)dt$. Therefore,
$$
\textnormal{Cov}(\widetilde{D}(2^j,k),\widetilde{D}(2^{j'},k'))
$$
$$=2^{-j}\mathbb{E}\int_\mathbb{R}\int_\mathbb{R} \sum_{l=-\infty}^{\infty}\sum_{l'=-\infty}^{\infty}\psi(2^{-j}t-k)\psi(2^{-j'}t'-k')\varphi(t-l)\varphi(t'-l')Y_lY_{l'}^*dtdt'
$$
\begin{equation}\label{e:Cov(Dtilde,Dtilde)}
=2^{-j}\mathbb{E}\int_\mathbb{R}\int_\mathbb{R}\sum_{l=-\infty}^{\infty}\sum_{l'=-\infty}^{\infty} \psi(2^{-j}t)\psi(2^{-j'}t')\varphi(t+l)\varphi(t'+l')Y_{2^jk-l}Y_{2^{j'}k'-l'}^*dtdt'.
\end{equation}
By \eqref{eq:Xhi}, \eqref{eq:XhiN=0} and \eqref{eq:vanish}, we can reexpress \eqref{e:Cov(Dtilde,Dtilde)} as
$$
2^{-j}\int_\mathbb{R}\int_\mathbb{R}\int_\mathbb{R} \sum_{l=-\infty}^{\infty}\sum_{l'=-\infty}^{\infty}\psi(2^{-j}t)\psi(2^{-j'}t')\varphi(t+l)\varphi(t'+l')
$$
$$
e^{\textbf{i}(2^jk-l)x}e^{-\textbf{i}(2^{j'}k'-l')x}|x|^{-D}G(x)|x|^{-D^*}dtdt'dx,
$$
\begin{equation}\label{eq:covdis}
=\int_\mathbb{R} H_j(x)\overline{H_{j'}(x)}e^{\textbf{i}x(2^jk-2^{j'}k')}{|x|^{-D}G(x)|x|^{-D^*}}dx,
\end{equation}
where $G(x)$  and $H_j(x)$ are defined by \eqref{eq:G} and \eqref{eq:Hj}, respectively. Note that, by Proposition 3 in Moulines et al.\ \cite{moulines:roueff:taqqu:2007:JTSA},
\begin{equation}\label{eq:HjN}
|H_j(x)|=O(|x|^{N_{\psi}}),\quad x\rightarrow0,
\end{equation}
and
\begin{equation}\label{eq:Hjalpha}
|H_j(x)|\leq C,\quad x\in \bbR,
\end{equation}
so the integral on the right-hand side of \eqref{eq:covdis} is finite. $\Box$\\
\end{proof}

The next result is the discrete time analogue of Proposition \ref{p:4th_moments_wavecoef}.

\begin{proposition}\label{p:4th_moments_wavecoef_dis}
Let $\{Y(k)\}_{k\in \mathbb{Z}}$ be the sequence \eqref{eq:discreteY}. For every pair of octaves $j,j'\geq$ 0,
\begin{itemize}
\item [$(i)$]
$$
\sqrt{K_j}\sqrt{K_{j'}}\frac{1}{K_j}\frac{1}{K_{j'}} \sum^{K_j}_{k=1}\sum^{K_{j'}}_{k'=1}
{ \mathbb{E}}\widetilde{D}(2^j,k)\widetilde{D}(2^{j'},k')^* \otimes { \mathbb{E}}\widetilde{D}(2^j ,k)\widetilde{D}(2^{j'},k')^*
$$
\begin{equation}\label{e:limiting_kron_dis}
\rightarrow 2^{(j+j')/2} \gcd(2^{j},2^{j'}) \sum^{\infty}_{z= - \infty} \widetilde{\Phi}_{z\hspace{0.5mm} \textnormal{gcd}(2^j,2^{j'})}
\otimes \widetilde{\Phi}_{z\hspace{0.5mm} \textnormal{gcd}(2^j,2^{j'})},\quad\nu\rightarrow\infty,
\end{equation}
where
\begin{equation}\label{e:Phi_q_dis}
\widetilde{\Phi}_{z} :=\int_\mathbb{R}\overline{H_{j'}(x)}H_j(x)e^{-\mathbf{i}zx}|x|^{-D}G(x)|x|^{-D^*}dx;
\end{equation}

\item [$(ii)$] there is a matrix $\widetilde{G}_{jj'} \in M(n(n+1)/2,\mathbb{R})$, not necessarily symmetric, such that
\begin{equation}\label{e:Cov_converges_to_Gjj'_dis}
\sqrt{K_j}\sqrt{K_{j'}}\hspace{1mm}\textnormal{Cov}(\textnormal{vec}_{{\mathcal{S}}}\widetilde{W}(2^j),\textnormal{vec}_{{\mathcal{S}}}\widetilde{W}(2^{j'})) \rightarrow \widetilde{G}_{jj'},\quad\nu\rightarrow\infty,
\end{equation}
where the entries of $\widetilde{G}_{jj'}$ can be retrieved from \eqref{e:limiting_kron_dis} by means of \eqref{e:Isserlis_theorem} (see \eqref{e:vec_definitions} on the notation $\textnormal{vec}_{{\mathcal S}}$).
\end{itemize}
\end{proposition}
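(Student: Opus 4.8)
The plan is to replicate, \emph{mutatis mutandis}, the continuous-time argument of Proposition \ref{p:4th_moments_wavecoef}, with the Fourier weight $\overline{\widehat{\psi}(2^{j'}x)}\widehat{\psi}(2^{j}x)$ replaced throughout by its discrete-time analogue $\overline{H_{j'}(x)}H_j(x)$ furnished by Proposition \ref{p:covdis}. As in the continuous case, statement $(ii)$ follows from $(i)$ by applying the Isserlis theorem \eqref{e:Isserlis_theorem} entrywise to the components of $\textnormal{vec}_{\mathcal{S}}\widetilde{W}(2^j)$ and reading off $\widetilde{G}_{jj'}$ from the limit \eqref{e:limiting_kron_dis}; hence only $(i)$ requires proof. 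First I would pass to the subsequence $\nu = 2^{j+j'}\nu_*$, $\nu_* \to \infty$, so that $K_j = 2^{j'}\nu_*$, $K_{j'} = 2^{j}\nu_*$, and the normalizing constant $\sqrt{K_jK_{j'}}/(K_jK_{j'})$ equals $2^{-(j+j')/2}/\nu_*$. By Proposition \ref{p:covdis}, the cross-covariance of wavelet coefficients is $\mathbb{E}\widetilde{D}(2^j,k)\widetilde{D}(2^{j'},k')^* = \widetilde{\Phi}_{2^jk - 2^{j'}k'}$, with $\widetilde{\Phi}_z$ as in \eqref{e:Phi_q_dis}; in particular it depends on $(k,k')$ only through the combination $2^jk - 2^{j'}k'$.

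Next I would invoke Theorem 1.8 of Jones and Jones \cite{jones:jones:1998} to conclude that, as $(k,k')$ range over the summation domain, the index $2^jk - 2^{j'}k'$ sweeps the arithmetic set $\mathbb{Z}\,\gcd(2^j,2^{j'})$. This identifies the left-hand side of \eqref{e:limiting_kron_dis}, up to the combinatorial counting of how often each value of $2^jk-2^{j'}k'$ is attained inside the summation region, with a Riemann-type sum of the terms $\widetilde{\Phi}_{z\,\gcd(2^j,2^{j'})}\otimes\widetilde{\Phi}_{z\,\gcd(2^j,2^{j'})}$. The very same counting lemma used in the continuous-time proof (Lemma \ref{l:gcd}) then produces both the prefactor $2^{(j+j')/2}\gcd(2^j,2^{j'})$ and the stated series, provided the latter converges absolutely. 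Since $\|\widetilde{\Phi}_z\otimes\widetilde{\Phi}_z\|_{l_1} \le \|\widetilde{\Phi}_z\|_{l_1}^2$, it suffices to verify the summability $\sum_{z=-\infty}^{\infty}\|\widetilde{\Phi}_z\|^2 < \infty$.

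The crux is therefore this $L^2$-type summability. Writing $G(x) = P\,\textnormal{diag}(|g_1^*(x)|^2,\dots,|g_n^*(x)|^2)P^*$ from \eqref{eq:G} and pulling the constant factors $P$, $P^*$ outside, the claim reduces to the scalar statement that each function $x \mapsto \overline{H_{j'}(x)}H_j(x)\,|x|^{-2d_i}|g_i^*(x)|^2$ belongs to $L^2(\mathbb{R})$, after which Parseval's theorem bounds $\sum_z \big|\int_{\mathbb{R}} e^{\mathbf{i}zx}\,\overline{H_{j'}(x)}H_j(x)|x|^{-2d_i}|g_i^*(x)|^2\,dx\big|^2$ by $2\pi$ times the squared $L^2$-norm. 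I expect this verification to be the main obstacle, and I would carry it out by splitting $\mathbb{R}$ at the origin and at infinity. Near $0$, the bound $|H_j(x)| = O(|x|^{N_\psi})$ of \eqref{eq:HjN} makes the squared modulus of the integrand of order $|x|^{4N_\psi - 4d_i}$, which is integrable because $N_\psi \ge N_n + 1 > d_i$ (see \eqref{e:N_psi}). The genuinely discrete-time difficulty appears at infinity: unlike $\widehat{\psi}$ in the continuous case, the periodic factor $H_j$ only satisfies the uniform bound $|H_j(x)| \le C$ of \eqref{eq:Hjalpha} and does \emph{not} decay. The needed tail decay must instead come from $g_i^*$ itself: for $d_i < 1/2$ the factor $\sin(x/2)/(x/2)$ built into $g_i^*$ contributes $O(|x|^{-1})$, so that the squared integrand is $O(|x|^{-4d_i-4})$, which is integrable since $d_i \ge -1/2 > -3/4$; for $d_i \ge 1/2$ one has $g_i^* = g_i$ bounded and $|x|^{-4d_i}$ integrable at infinity as $d_i > 1/4$. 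This establishes $\sum_z\|\widetilde{\Phi}_z\|^2 < \infty$, and the boundary regime $d_i < -1/2$ is handled by the support restriction $\textnormal{supp}\,g_i = [-\pi,\pi)$ of Remark \ref{r:d<-1/2}. Combining these estimates yields $(i)$, and passing from the subsequence to the full limit together with the Isserlis expansion then gives the matrix $\widetilde{G}_{jj'}$ of $(ii)$, completing the argument.
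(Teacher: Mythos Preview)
Your proposal is correct and follows essentially the same route as the paper's own proof: reduce to the continuous-time argument of Proposition \ref{p:4th_moments_wavecoef}, isolate the summability condition $\sum_z\|\widetilde{\Phi}_z\|^2<\infty$, diagonalize via $P$, and verify scalar $L^2$-membership by Parseval. One point worth noting: your treatment of the tail behavior is in fact more careful than the paper's. The paper simply invokes \eqref{eq:HjN} and \eqref{eq:Hjalpha} and asserts $\overline{H_{j'}(x)}H_j(x)|x|^{-2d_i}\in L^2(\mathbb{R})$, but as you correctly observe, $H_j$ is a trigonometric polynomial (since $\psi,\varphi$ are compactly supported) and hence merely bounded, not decaying; the integrability at infinity really does rely on the factor $|g_i^*(x)|^2$, which the paper's displayed bounds appear to drop. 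Your case split (the $\sin(x/2)/(x/2)$ factor for $d_i<1/2$, the power $|x|^{-4d_i}$ for $d_i\ge 1/2$) supplies exactly the missing decay and is the right way to close the argument.
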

\begin{proof}
Following the same argument as in the proof of Proposition \ref{p:4th_moments_wavecoef}, we only need to show that $\|\widetilde{\Phi}_z\|^2$ is summable, where
$$
\widetilde{\Phi}_z:=\int_\mathbb{R} H_j(x)\overline{H_{j'}(x)}e^{\textbf{i}xz}{|x|^{-D}G(x)|x|^{-D^*}}dx.
$$
Since
$$
\|\widetilde{\Phi}_z\|=\bigg\|P\int_\mathbb{R}e^{\textbf{i}zx}\overline{H_{j'}(x)}H_j(x)\textnormal{diag}(|x|^{-2d_1}|g^*_1(x)|^2,\hdots,|x|^{-2d_n}|g^*_n(x)|^2)dxP^*\bigg\|^2
$$
$$
\leq C \|P\|^4\max_{1\leq i\leq n}\bigg|\int_\mathbb{R}e^{\textbf{i}zx}\overline{H_{j'}(x)}H_j(x)(2^{j'}x)|x|^{-2d_i}dx\bigg|^2.
$$
Moreover, for any $1\leq i\leq n$, by \eqref{eq:HjN} and \eqref{eq:Hjalpha}, $\overline{H_{j'}(x)}H_j(x)x^{-2d_i}\in L^2(\mathbb{R})$. Thus, by Parseval's theorem,
$$
\sum_{z=-\infty}^{\infty}\bigg|\int_\mathbb{R}e^{\textbf{i}zx}\overline{H_{j'}(x)}H_j(x)|x|^{-2d_i}dx\bigg|^2
=\int_\mathbb{R}\bigg|\overline{H_{j'}(x)}H_j(x)|x|^{-2d_i}\bigg|^2dx<\infty.
$$
Hence, $\|\widetilde{\Phi}_z\|^2$ is summable. Therefore, \eqref{e:limiting_kron_dis} and \eqref{e:Cov_converges_to_Gjj'_dis} hold. $\Box$\\
\end{proof}

Define the matrices $\widetilde{I}_{\nu}=\widetilde{B}_{\nu}P\widetilde{\Lambda}_{J_1}^{1/2}\Pi$ and
\begin{equation}\label{e:Dtilde}
\widetilde{\mathfrak{D}}=\Pi\widetilde{\Lambda}_{J_1}^{-1/2}.
\end{equation}
Then, we can reexpress the demixed process \eqref{eq:dimixedX_dis} as
\begin{equation}\label{eq:dimixedX_dis}
\widetilde{X}:=\widetilde{I}_{\nu}\widetilde{\mathfrak{D}}X.
\end{equation}
The following proposition gives the asymptotic distribution of the main diagonal entries of the sample wavelet variance $\widetilde{W}_{\widetilde{X}}$ of the demixed process $\widetilde{X}$. Note that there is a distinction between $\widetilde{W}_{\widetilde{X}}$ and $\widetilde{W}_{X}$ in the proof: the latter denotes the sample wavelet variance of $X$.
\begin{proposition}\label{p:xhattox_dis}
For $j = j_1,\hdots, j_m$, let $\widetilde{X}$ be the demixed process \eqref{eq:dimixedX_dis}, let $\widetilde{W}_{{\widetilde{X}}}(a(\nu) 2^j)$ be the sample wavelet variance
 for $\widetilde{X}$, $\mathbb{E}\widetilde{W}_{{X}}(a(\nu) 2^j)$ be the wavelet variance of the hidden process $X$. Then,
 $$
\bigg(\sqrt{\frac{\nu}{a(\nu)}}\textnormal{diag}(a(\nu)^{-2d_1},\hdots,a(\nu)^{-2d_n})(\textnormal{vec}_{\mathcal{D}}(\widetilde{W}_{{\widetilde{X}}}(a(\nu)2^j)-
\widetilde{\mathfrak{D}}\mathbb{E}\widetilde{W}_{{X}}(a(\nu)2^j)\widetilde{\mathfrak{D}}))^T\bigg)_{j=j_1,\hdots,j_m}
$$
\begin{equation}\label{eq:dist_WX_dis}
\overset{d}\rightarrow \mathcal{N}(0,\widetilde{\mathcal{K}}\widetilde{\mathbf{W}}\widetilde{\mathcal{K}}^*),
\end{equation}
as $\nu\rightarrow\infty$ (see \eqref{e:vec_definitions} on the notation $\textnormal{vec}_{\mathcal{D}}$). In \eqref{eq:dist_WX_dis},
$\widetilde{\mathcal{K}}=\textnormal{diag}(\underbrace{\widetilde{\mathfrak{D}}^2,\hdots,\widetilde{\mathfrak{D}}^2}_m)$ and $\widetilde{\mathfrak{D}}$ is given by \eqref{e:Dtilde}. The $(k_1,k_2)$-th entry of the limiting covariance matrix is given by
$$
\widetilde{\mathbf{W}}(k_1,k_2)=\left\{
             \begin{array}{ll}
              \widetilde{w}_{l,v,i}, & k_1=(l-1)n+i,k_2=(v-1)n+i; \\
               0, & \textnormal{otherwise},
             \end{array}
           \right.
$$
where $\widetilde{w}_{l,v,i}=4\pi|g_i(0)|^42^{4d_i\max(j_l,j_v)+\min(j_l,j_v)}\int_{-\pi}^{\pi}|D_{|j_l-j_v|}(x;d_i)|^2dx$, for $l,v=1,\hdots,m$, $i=1,\hdots,n$, \begin{equation}\label{eq:D}
D_{u}(x,d)=\sum_{k\in \mathbb{Z}}|x+2k\pi|^{-2d}\mathbf{e}_u(x+2k\pi)\overline{\widehat{\psi}(x+2k\pi)}\widehat{\psi}(2^{-u}(x+2k\pi))
\end{equation}
and, for all $u\geq0$,
$$
\mathbf{e}_u(x)=2^{-u/2}(1,e^{\mathbf{i}2^{-u}x},\hdots,e^{-\mathbf{i}(2^u-1)2^{-u}x})^T,\quad x\in \mathbb{R}.
$$
\end{proposition}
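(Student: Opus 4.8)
The plan is to reproduce, in discrete time, the argument used for Proposition \ref{p:xhattox}, replacing the continuous-time ingredients by their Mallat-algorithm counterparts. Writing $\widetilde{X} = \widetilde{I}_{\nu}\widetilde{\mathfrak{D}}X$ as in \eqref{eq:dimixedX_dis}, the sample wavelet variance of the demixed process factors as $\widetilde{W}_{\widetilde{X}}(a(\nu)2^j) = \widetilde{I}_{\nu}\widetilde{\mathfrak{D}}\widetilde{W}_{X}(a(\nu)2^j)\widetilde{\mathfrak{D}}\widetilde{I}_{\nu}^{*}$, so that the centered matrix $\widetilde{W}_{\widetilde{X}}(a(\nu)2^j) - \widetilde{\mathfrak{D}}\mathbb{E}\widetilde{W}_{X}(a(\nu)2^j)\widetilde{\mathfrak{D}}$ admits exactly the four-term expansion displayed in \eqref{eq:eq1}: one \emph{main} term $\widetilde{I}_{\nu}[\widetilde{\mathfrak{D}}(\widetilde{W}_{X}(a(\nu)2^j) - \mathbb{E}\widetilde{W}_{X}(a(\nu)2^j))\widetilde{\mathfrak{D}}]\widetilde{I}_{\nu}^{*}$ carrying the fluctuations of the hidden process, and three \emph{remainder} terms each containing a factor $\widetilde{I}_{\nu}^{-1}-I$. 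The goal is to show that, after the scaling $\sqrt{\nu/a(\nu)}\,\textnormal{diag}(a(\nu)^{-2d_1},\hdots,a(\nu)^{-2d_n})$ and the extraction of diagonal entries via $\textnormal{vec}_{\mathcal{D}}$, only the main term survives in the limit, and that it is asymptotically Gaussian with the stated covariance.

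First I would establish the discrete analogue of Lemma \ref{r:normalizedP}. Since $\widetilde{I}_{\nu} = \widetilde{B}_{\nu}P\widetilde{\Lambda}_{J_1}^{1/2}\Pi$ with $\Pi^2 = I$, Theorem \ref{t:Phatinv_asymp_norm_discrete_time} gives $\widetilde{B}_{\nu}\stackrel{P}{\to}\Pi\widetilde{\Lambda}_{J_1}^{-1/2}P^{-1}$, whence $\widetilde{I}_{\nu}\stackrel{P}{\to}I$ (diagonal factors commute), and the delta method applied to the linear map $\widetilde{B}_{\nu}\mapsto\widetilde{B}_{\nu}P\widetilde{\Lambda}_{J_1}^{1/2}\Pi$ yields $\sqrt{\nu}(\textnormal{vec}(\widetilde{I}_{\nu}-I))^{T}\stackrel{d}{\to}\mathcal{N}(\mathbf{0},\widetilde{\Sigma}(J_1,J_2))$ for a suitable $\widetilde{\Sigma}(J_1,J_2)$. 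In particular $\widetilde{I}_{\nu}^{-1}-I = O_P(1/\sqrt{\nu})$.

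The substantive step — and the main obstacle — is the discrete-time univariate wavelet-variance central limit theorem needed for the main term, i.e.\ the discrete analogue of Theorem \ref{t:varianceuni}. Because the diagonal entries $\widetilde{W}_{X}(a(\nu)2^j)_{ii}$ are the wavelet variances of the mutually independent scalar components $X_i$, it suffices to prove the scalar statement and assemble the joint limit using independence across $i$. The scalar limit follows from Proposition \ref{p:covdis}: expanding $\textnormal{Cov}(\widetilde{w}(a(\nu)2^{j_l}),\widetilde{w}(a(\nu)2^{j_v}))$ by Isserlis' theorem \eqref{e:Isserlis_theorem} into a double sum of squared covariances, each an integral against $H_{j}(x)\overline{H_{j'}(x)}|x|^{-2d_i}|g_i^{*}(x)|^{2}$. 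The passage to the coarse-scale limit has two ingredients: replacing $|g_i^{*}(x)|^{2}$ by $|g_i(0)|^{2}$ at negligible cost, the discrete counterpart of Lemma \ref{l:bound}, which rests on assumption ($A$3) together with the decay bounds \eqref{eq:HjN}--\eqref{eq:Hjalpha} for $H_j$; and folding the resulting integral over $\mathbb{R}$ onto $[-\pi,\pi)$ by Parseval's theorem. The period-$2\pi$ aliasing inherent to the pyramidal filters is precisely what produces the aliased spectral kernel $D_{u}(x,d)$ of \eqref{eq:D} and the factor $2^{4d_i\max(j_l,j_v)+\min(j_l,j_v)}$, yielding the limiting constants $\widetilde{w}_{l,v,i}$. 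This aliasing bookkeeping, absent in continuous time, is the genuinely new technical content and is where the argument parallels the discrete framework of Moulines et al.\ \cite{moulines:roueff:taqqu:2007:JTSA,moulines:roueff:taqqu:2008}.

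Finally I would collect the pieces. Combining the scalar limit entrywise with independence across components gives the main-term limit $\mathcal{N}(\mathbf{0},\widetilde{\mathcal{K}}\widetilde{\mathbf{W}}\widetilde{\mathcal{K}}^{*})$, with $\widetilde{\mathcal{K}}$ accounting for the diagonal conjugation by $\widetilde{\mathfrak{D}}^{2}$. For the remainder terms, a discrete analogue of Proposition \ref{p:sigma-2jh} gives $\mathbb{E}\widetilde{W}_{X}(a(\nu)2^j)_{ii}\sim|g_i(0)|^2 K(d_i)(a(\nu)2^j)^{2d_i}$, so that after normalization the two terms linear in $\widetilde{I}_{\nu}^{-1}-I$ are $O_P(1/\sqrt{a(\nu)})$ and the quadratic term is $O_P(1/\sqrt{\nu})$, exactly as in \eqref{eq:eq4}--\eqref{eq:eq6}; both vanish under \eqref{eq:scalea}. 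Since $\widetilde{I}_{\nu}\stackrel{P}{\to}I$, Slutsky's theorem then transfers the limit of the main term to the full centered quantity, establishing \eqref{eq:dist_WX_dis}.
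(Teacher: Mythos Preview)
Your overall architecture is correct and matches the paper: the four-term expansion of $\widetilde{W}_{\widetilde{X}}-\widetilde{\mathfrak{D}}\mathbb{E}\widetilde{W}_{X}\widetilde{\mathfrak{D}}$, the $O_P(1/\sqrt{\nu})$ control of $\widetilde{I}_\nu^{-1}-I$ via Theorem \ref{t:Phatinv_asymp_norm_discrete_time}, the remainder bounds \eqref{eq:eq4}--\eqref{eq:eq6}, and the Slutsky conclusion are all exactly what the paper does.

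The difference lies in how the core univariate CLT is obtained. You propose to reprove it from scratch: Isserlis expansion, a discrete analogue of Lemma \ref{l:bound}, and explicit aliasing bookkeeping to produce $D_u(x,d)$. The paper takes a shorter route: it computes the \emph{generalized spectral density} of each scalar component $X_i$ and shows it factors as $f_i(x)=|1-e^{-\mathbf{i}x}|^{-2d_i}f_i^*(x)$ with $f_i^*$ satisfying the H\"older condition $|f_i^*(x)-f_i^*(0)|\leq C|x|^{\beta_*}$ (this is where the exponent $\beta_*$ of \eqref{e:beta*} arises, via the three-term bound $O(|x|^2)+O(|x|^\beta)+O(|x|^{2d_i+2})$ or $O(|x|^{2d_i})$ depending on the range of $d_i$). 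Once this is checked, the scalar CLT --- including the exact form of the limiting covariance with $D_{|j_l-j_{l'}|}(x,d_i)$ and the factor $2^{4d_i\max(j_l,j_v)+\min(j_l,j_v)}$ --- is a direct citation of Theorem 2 in Moulines et al.\ \cite{moulines:roueff:taqqu:2007:Fractals}. Your approach would work and is more self-contained, but the paper's is considerably more economical; the only genuinely new computation it performs is the spectral-density verification, not the aliasing asymptotics themselves.
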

\begin{proof} In the argument for proving Proposition \ref{p:xhattox}, replace $\widehat{X}$ with $\widetilde{X}$. Then, the limiting distribution of
$$
\bigg(\sqrt{\frac{\nu}{a(\nu)}}\textnormal{diag}(a(\nu)^{-2h_1},\hdots,a(\nu)^{-2h_n})(\textnormal{vec}_{\mathcal{D}}(\widetilde{W}_{{\widetilde{X}}}(a(\nu)2^j)-
\widetilde{\mathfrak{D}}\mathbb{E}\widetilde{W}_{{X}}(a(\nu)2^j)\widetilde{\mathfrak{D}}))^T\bigg)_{j=j_1,\hdots,j_m}
$$
is equal to the limiting distribution of
$$
\widetilde{\mathcal{K}}\bigg(\sqrt{\frac{\nu}{a(\nu)}}\textnormal{diag}(a(\nu)^{-2h_1},\hdots,a(\nu)^{-2h_n})\textnormal{vec}_{\mathcal{D}}(\widetilde{W}_{{{X}}}(a(\nu)2^j)-
\mathbb{E}\widetilde{W}_{{X}}(a(\nu)2^j))\bigg)_{j=j_1,\hdots,j_m},
$$
which only involves main diagonal entries. So, fix $i = 1,\hdots, n$. By \eqref{eq:Xhi}, the generalized spectral density (Yaglom \cite{yaglom1958}) of the $i$-th component of $X$ is
$$
f_i(x)=|e^{\mathbf{i}x}-1|^2\sum_{l=-\infty}^{\infty}|x+2l\pi|^{-2d_i-2}|g_i(x+2l\pi)|^2,\quad{x\in[-\pi,\pi)}
$$
for $-1/2\leq d_i<1/2$, and
$$
f_i(x)=\sum_{l=-\infty}^{\infty}|x+2l\pi|^{-2d_i}|g_i(x+2l\pi)|^2,\quad{x\in[-\pi,\pi)}
$$
for $d_i\geq1/2$. Reexpress $f_i$ as
$$
f_i(x)=|1-e^{-\mathbf{i}x}|^{-2d_i}f_i^*(x),
$$
where
\begin{equation}\label{eq:fstar}
f_i^*(x)=\bigg|\frac{2\sin(x/2)}{x}\bigg|^{2d_i+2}|g_i(x)|^2+|2\sin(x/2)|^{2d_i+2}\sum_{l \neq0}|x+2l\pi|^{-2d_i-2}|g_i(x+2l\pi)|^2,
\end{equation}
for $-1/2\leq d_i<1/2$, and
\begin{equation}\label{eq:fstar2}
f_i^*(x)=\bigg|\frac{2\sin(x/2)}{x}\bigg|^{2d_i}|g_i(x)|^2+|2\sin(x/2)|^{2d_i}\sum_{l \neq0}|x+2l\pi|^{-2d_i}|g_i(x+2l\pi)|^2,
\end{equation}
for $d_i\geq1/2$. Then, $f^*_i(0)=|g_i(0)|^2$, and when $-1/2\leq d_i<1/2$
$$
|f_i^*(x)-f_i^*(0)|
$$
\begin{equation}\label{e:triplesummation}
\leq |g_i(x)|^2\bigg|\bigg|\frac{2\sin(x/2)}{x}\bigg|^{2d_i+2}-1\bigg|+\bigg||g_i(x)|^2-|g_i(0)|^2\bigg|+
\bigg|2\sin(x/2)^{2d_i+2}\sum_{l\neq0}|x+2l\pi|^{-2d_i-2}|g_i(x+2l\pi)|^2\bigg|
\end{equation}
$$
=O(|x|^2)+O(|x|^{\beta})+O(|x|^{2d_i+2}),\quad x\rightarrow0. $$
Similarly, when $d_i\geq1/2$,
$$
|f_i^*(x)-f_i^*(0)|=O(|x|^2)+O(|x|^{\beta})+O(|x|^{2d_i}),\quad x\rightarrow0.
$$
So, $|f_i^*(x)-f_i^*(0)|<C|x|^{\beta_*}$ {for $x \in [-\pi,\pi)$}, where
$$
\beta_*=\min\{\beta,2d_1+2\},\quad d_1<1/2,
$$
and
$$
\beta_*=\min\{\beta,2d_1\}, \quad d_1\geq1/2.
$$
Thus, by Theorem 2 in Moulines et al.\ \cite{moulines:roueff:taqqu:2007:Fractals},
$$
\bigg(\sqrt{\frac{\nu}{a(\nu)}}a(\nu)^{2d_i}(\widetilde{W}_{{{X}}}(a(\nu)2^j)_{ii}-
\mathbb{E}\widetilde{W}_{{X}}(a(\nu)2^j)_{ii})\bigg)_{j=j_1,\hdots,j_m}\overset{d}\rightarrow \mathcal{N}(\mathbf{0},W(d_i)), \quad i = 1,\hdots,n.
$$
The $(l,l')$-th entry of the limiting covariance matrix is given by
$$
W_{l,l'}(d_i)=4\pi|g_i(0)|^42^{4d_i\max(j_l,j_{l'})+\min(j_l,j_{l'})}\int_{-\pi}^{\pi}|D_{|j_l-j_{l'}|}(x,d_i)|^2dx,\quad l,l'=1,\hdots,m,
$$
where $D_{|j_l-j_{l'}|}(x;d_i)$ is defined in \eqref{eq:D}. Moreover, the entries $X_i$, $i = 1,\hdots,n$, of $X$ are independent, thus \eqref{eq:dist_WX_dis} holds. $\Box$\\
\end{proof}

The following proposition justifies the claim made in Remark \ref{r:pairwise_distinct_eigenvalue_entries_discrete}.
\begin{proposition}\label{p:diff_lambda_dis}
Let $\widetilde{\Lambda}_j$ be defined in \eqref{eq:lambda}. Then, for large enough $J_1$ and $J_2$, $J_1<J_2$ the matrix $\widetilde{\Lambda}_{J_2}\widetilde{\Lambda}_{J_1}^{-1}$ has pairwise distinct diagonal entries.
\end{proposition}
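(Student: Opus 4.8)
The plan is to mimic the continuous-time argument recorded in Remark \ref{r:pairwise_distinct_eigenvalue_entries}, the only new ingredient being a discrete-time replacement for the convergence $\mathcal{E}(2^{J_1})^{-1}\mathcal{E}(2^{J_2})\to I$. Writing the $i$-th diagonal entry of $\widetilde{\Lambda}_j$ (see \eqref{eq:lambda}) as $\lambda_{j,i}=\int_\mathbb{R}|H_j(x)|^2|x|^{-2d_i}|g^*_i(x)|^2dx$, I would set $\widetilde{\mathcal{E}}(2^j):=\textnormal{diag}(2^{-2jd_1}\lambda_{j,1},\hdots,2^{-2jd_n}\lambda_{j,n})$. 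Since diagonal matrices commute,
\begin{equation*}
\widetilde{\Lambda}_{J_2}\widetilde{\Lambda}_{J_1}^{-1}=\textnormal{diag}(2^{2(J_2-J_1)d_1},\hdots,2^{2(J_2-J_1)d_n})\,\widetilde{\mathcal{E}}(2^{J_2})\widetilde{\mathcal{E}}(2^{J_1})^{-1}.
\end{equation*}
It therefore suffices to show that $\widetilde{\mathcal{E}}(2^j)$ converges, as $j\to\infty$, to the nonsingular diagonal matrix $\textnormal{diag}(|g_1(0)|^2K(d_1),\hdots,|g_n(0)|^2K(d_n))$, where $K(d)=\int_\mathbb{R}|\widehat{\psi}(x)|^2|x|^{-2d}dx>0$; granting this, $\widetilde{\mathcal{E}}(2^{J_2})\widetilde{\mathcal{E}}(2^{J_1})^{-1}\to I$ as $J_1,J_2\to\infty$, exactly as in the continuous-time case.

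The heart of the matter is the asymptotic relation $2^{-2jd_i}\lambda_{j,i}\to|g_i(0)|^2K(d_i)$, the discrete-time counterpart of Proposition \ref{p:sigma-2jh}. First I would change variables $x=y2^{-j}$ to obtain
\begin{equation*}
2^{-2jd_i}\lambda_{j,i}=\int_\mathbb{R}2^{-j}|H_j(y2^{-j})|^2\,|y|^{-2d_i}\,|g^*_i(y2^{-j})|^2\,dy.
\end{equation*}
Poisson summation applied to \eqref{eq:Hj} yields the equivalent form $H_j(x)=2^{j/2}\sum_{k\in\mathbb{Z}}\widehat{\psi}(2^j(2\pi k-x))\widehat{\varphi}(x-2\pi k)$, whence $2^{-j/2}H_j(y2^{-j})=\sum_{k\in\mathbb{Z}}\widehat{\psi}(2^j2\pi k-y)\widehat{\varphi}(y2^{-j}-2\pi k)$. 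The $k=0$ summand tends to $\widehat{\psi}(-y)\widehat{\varphi}(0)=\widehat{\psi}(-y)$ by $(W2')$, while each $k\neq0$ summand tends to $0$ because its $\widehat{\psi}$-argument diverges; hence $2^{-j}|H_j(y2^{-j})|^2\to|\widehat{\psi}(y)|^2$ pointwise. Since $g^*_i(y2^{-j})\to g^*_i(0)=g_i(0)$ by \eqref{eq:assumptionA3} and the definition of $g^*_i$, the integrand converges pointwise to $|\widehat{\psi}(y)|^2|y|^{-2d_i}|g_i(0)|^2$, whose integral equals $|g_i(0)|^2K(d_i)$. A dominated-convergence argument then gives the claim, the dominating function being built from the uniform bounds \eqref{eq:HjN} and \eqref{eq:Hjalpha} on $H_j$ together with the vanishing \eqref{eq:W1} of $\widehat{\psi}$ at the origin and its decay \eqref{e:psihat_is_slower_than_a_power_function}; alternatively, the same limit may be read off from the spectral-density analysis of $f^*_i$ in the proof of Proposition \ref{p:xhattox_dis} and the Moulines et al.\ theory invoked there.

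With this convergence in place, the diagonal entries of $\widetilde{\Lambda}_{J_2}\widetilde{\Lambda}_{J_1}^{-1}$ are $\mu_i=2^{2(J_2-J_1)d_i}\rho_i$ with $\rho_i=\widetilde{\mathcal{E}}(2^{J_2})_{ii}/\widetilde{\mathcal{E}}(2^{J_1})_{ii}\to1$ as $J_1,J_2\to\infty$, uniformly in $J_2>J_1$ (because each limit $|g_i(0)|^2K(d_i)$ is strictly positive). Since $d_1<\hdots<d_n$ (see \eqref{e:eigen-assumption}) and $J_2-J_1\geq1$, the leading factors satisfy $2^{2(J_2-J_1)d_{i+1}}/2^{2(J_2-J_1)d_i}\geq2^{2(d_{i+1}-d_i)}>1$, while the correcting ratios $\rho_{i+1}/\rho_i\to1$; hence for all sufficiently large $J_1$ and any $J_2>J_1$ one has $\mu_1<\hdots<\mu_n$, so the diagonal entries are pairwise distinct. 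I expect the main obstacle to be the justification of the dominated convergence underlying $2^{-2jd_i}\lambda_{j,i}\to|g_i(0)|^2K(d_i)$: one must uniformly control the aliasing terms $k\neq0$ in $H_j$ and, in the range $d_i\leq0$, counteract the growth of $|y|^{-2d_i}$ at infinity, which is precisely where the decay hypothesis $(W3)$ and, for $d_i<1/2$, the additional $\sin(x/2)/(x/2)$ factor encoded in $g^*_i$ become essential.
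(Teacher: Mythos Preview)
Your proposal is correct and follows essentially the same route as the paper: factor $\widetilde{\Lambda}_{J_2}\widetilde{\Lambda}_{J_1}^{-1}$ into the power-law part $\textnormal{diag}(2^{2(J_2-J_1)d_i})$ times a ratio that tends to $I$, then invoke \eqref{e:eigen-assumption}. The paper obtains the key limit $2^{-j}\int|H_j(x/2^j)|^2|x|^{-2d_i}|g^*_i(x/2^j)|^2\,dx\to|g_i(0)|^2K(d_i)$ by a direct citation of Theorem~1(a) in Moulines et al.\ \cite{moulines:roueff:taqqu:2007:JTSA}, which you also mention as an alternative and which cleanly absorbs the dominated-convergence bookkeeping you flag as the main obstacle.
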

\noindent {\sc Proof of Proposition \ref{p:diff_lambda_dis}: }Reexpressing $\widetilde{\Lambda}_{J_2}\widetilde{\Lambda}_{J_1}^{-1}$
$$
\widetilde{\Lambda}_{J_2}\widetilde{\Lambda}_{J_1}^{-1}=\textnormal{diag}(2^{2(J_2-J_1)d_1},\hdots,2^{2(J_2-J_1)d_n})
$$
$$
\cdot\textnormal{diag}\bigg(2^{-J_2}\int_\mathbb{R}|H_{J_2}(x/2^{J_2})|^2|x|^{-2d_1}|g^*_1(x/2^{J_2})|^2dx\bigg/2^{-J_1}\int_\bbR H_{J_1}(x/2^{J_1})|^2|x|^{-2d_1}|g^*_1(x/2^{J_1})|^2dx,
\hdots,
$$
$$
2^{-J_2}\int_\mathbb{R}|H_{J_2}(x/2^{J_2})|^2|x|^{-2d_n}|g^*_n(x/2^{J_2})|^2dx\bigg/2^{-J_1}\int_\bbR H_{J_1}(x/2^{J_1})|^2|x|^{-2d_n}|g^*_n(x/2^{J_1})|^2dx\bigg).
$$
By Theorem 1 (a) of Moulines et al.\ \cite{moulines:roueff:taqqu:2007:JTSA},
$$
2^{-j}\int_\mathbb{R}|H_{j}(x/2^{j})|^2|x|^{-2d_i}|g^*_i(x/2^{j})|^2dx\rightarrow \int_\bbR |x|^{-2d_i}|\widehat{\psi}(x)|^2|g_i(0)|^2dx,\quad j\rightarrow\infty.
$$
Thus, for $i=1,\ldots,n$,
$$
\int_\mathbb{R}|H_{J_2}(x/2^{J_2})|^2|x|^{-2d_i}|g^*_i(x/2^{J_2})|^2dx\bigg/\int_\mathbb{R}|H_{J_1}(x/2^{J_1})|^2|x|^{-2d_i}|g^*_i(x/2^{J_1})|^2dx\rightarrow 1, \quad J_1,J_2\rightarrow\infty.
$$
The claim holds as a consequence of condition \eqref{e:eigen-assumption}.$\Box$

\noindent {\sc Proof of Theorem \ref{t:hurstestimator_dis}: }The proof can be written as a direct adaptation of Theorem \ref{t:hurstestimator} by using Theorem 1 in Moulines et al.\ \cite{moulines:roueff:taqqu:2007:JTSA} as the counterpart of Proposition \ref{p:sigma-2jh}. $\Box$

\section{Proofs and auxiliary results: Section \ref{s:application}}
\noindent {\sc Proof of Theorem \ref{t:weaklimit_eigenvec_eigenvalues_wavelet_transf}}: For any matrix $S\in \mathcal{S}_+(n,\mathbb{R})$, define the vector-valued function
\begin{equation}\label{e:f:vecS(A)->xi1,...,xin,vecUP}
f: \textnormal{vec}_{{\mathcal S}} (S)\rightarrow (\xi_1,\hdots,\xi_n,\textnormal{vec}({\mathcal O}))
\end{equation}
such that $S={\mathcal O}\text{diag}(\xi_1,\hdots,\xi_n){\mathcal O}^*$, ${\mathcal O}\in O(n)$, $\xi_1 < \hdots < \xi_n$, is the spectral decomposition of $S$, and ${\mathcal O} = (o_{i_1 i_2} )_{i_1,i_2=1,\hdots,n}$ satisfies
$o_{1i} \geq 0$, $i=1,\hdots,n$ (cf.\ \eqref{e:W(2j),EW(2j)_eigenvalues}). Since ${\Bbb E}W(2^j)$ has pairwise distinct eigenvalues, Theorem \ref{t:eigen} implies that $f$ is infinitely differentiable on a neighborhood of ${\Bbb E}W(2^j)$. Moreover, the Jacobian matrix $\mathcal{J}_j$ of $f$ at the point $\mathbb{E}W(2^j)$ is given by \eqref{e:jacobi} with $S = {\Bbb E}W(2^j)$. So, let $J=\textnormal{diag}(\mathcal{J}_1,\hdots,\mathcal{J}_m)$. Recall the notation \eqref{e:block_diagonal_matrices} for block-diagonal matrices. The Delta method and Theorem \ref{t:distributionOfW}, imply that
$$
(\sqrt{K_j}(\textnormal{vec}_{{\mathcal D}}(L_j - \Lambda_j)),\sqrt{K_j}(\textnormal{vec}(\widehat{O}_j- O_j)))^T_{j=j_1,\dots,j_m}
$$
\begin{equation}
= (\sqrt{K_j}(f(\textnormal{vec}_{{\mathcal S}} (W(2^j))-f(\textnormal{vec}_{{\mathcal S}} (\mathbb{E}W(2^j))))^T_{j=j_1,\dots,j_m}\overset{d}\rightarrow\mathcal{N}_{mn(n+1)}(\mathbf{0},JFJ^*),
\end{equation}
as claimed. $\Box$\\

\section{Useful results}

\begin{lemma}\label{le:moulines1}(Moulines et al.\ \cite{moulines:roueff:taqqu:2007:Fractals}, Lemma 4)
Let $\{\xi_\nu,\nu\geq1\}$ be a sequence of centered Gaussian vectors and let $\Gamma_\nu$ be the covariance matrix of $\xi_\nu$. Let $(A_\nu)_{\nu\geq1}$ be a sequence of deterministic matrices with adapted dimensions such that
$$
\lim_{\nu\rightarrow\infty}\textnormal{Var}(\xi_\nu^TA_\nu\xi_\nu)=\sigma^2\in[0,\infty].
$$
Assume that
$$
\lim_{\nu\rightarrow\infty}\rho(A_\nu)\rho(\Gamma_\nu)=0,
$$
where $\rho(\cdot)$ denotes the spectral radius. Then
$$
\xi_\nu^TA_\nu\xi_\nu-E(\xi_\nu^TA_\nu\xi_\nu)\overset{\mathcal{L}}{\rightarrow}\mathcal{N}(0,\sigma^2).
$$
\end{lemma}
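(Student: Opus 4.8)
The statement is a central limit theorem for a triangular array of Gaussian quadratic forms, and the natural route is the method of cumulants. The plan is first to reduce to the case where each $A_\nu$ is symmetric: since $\xi_\nu^T A_\nu \xi_\nu = \xi_\nu^T \widetilde{A}_\nu \xi_\nu$ with $\widetilde{A}_\nu = (A_\nu + A_\nu^T)/2$, only the symmetric part enters the quadratic form (and in the applications of this lemma, e.g.\ $A_\nu = \sqrt{\nu}\,D_\nu$ in the proof of Theorem \ref{t:distributionOfW}, the relevant matrices are already symmetric). Writing $B_\nu := \Gamma_\nu^{1/2}\widetilde{A}_\nu \Gamma_\nu^{1/2}$, which is symmetric, I would reduce every computation to traces of powers of $B_\nu$, using that $\widetilde{A}_\nu \Gamma_\nu$ and $B_\nu$ are similar and hence share the same eigenvalues.

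Next I would compute the cumulants of $Q_\nu := \xi_\nu^T \widetilde{A}_\nu \xi_\nu$ explicitly. From the Gaussian moment generating function $\mathbb{E} e^{t Q_\nu} = \det(I - 2t\widetilde{A}_\nu\Gamma_\nu)^{-1/2}$, expanding $-\tfrac12\,\textnormal{tr}\log(I - 2t\widetilde{A}_\nu\Gamma_\nu)$ as a power series in $t$ yields the closed form
\begin{equation*}
\kappa_p(Q_\nu) = 2^{p-1}(p-1)!\,\textnormal{tr}(B_\nu^p), \quad p \ge 1.
\end{equation*}
In particular $\kappa_1(Q_\nu) = \textnormal{tr}(B_\nu) = \mathbb{E} Q_\nu$, which the centering removes, and $\kappa_2(Q_\nu) = 2\,\textnormal{tr}(B_\nu^2) = \textnormal{Var}(Q_\nu) \to \sigma^2$ by hypothesis; so the first two cumulants of $Q_\nu - \mathbb{E}Q_\nu$ already match those of $\mathcal{N}(0,\sigma^2)$.

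The crux is then to show that every cumulant of order $p \ge 3$ tends to zero. Diagonalizing the symmetric matrix $B_\nu$ with eigenvalues $\lambda_{\nu,i}$, I would use
\begin{equation*}
|\textnormal{tr}(B_\nu^p)| = \Big|\sum_i \lambda_{\nu,i}^p\Big| \le \rho(B_\nu)^{p-2}\sum_i \lambda_{\nu,i}^2 = \rho(B_\nu)^{p-2}\,\textnormal{tr}(B_\nu^2),
\end{equation*}
together with submultiplicativity of the operator norm for the symmetric factors, $\rho(B_\nu) \le \|\widetilde{A}_\nu\|_2\,\|\Gamma_\nu\|_2 = \rho(\widetilde{A}_\nu)\rho(\Gamma_\nu) \to 0$ (here $\|\cdot\|_2$ equals spectral radius because both matrices are symmetric). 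Since $\textnormal{tr}(B_\nu^2) = \tfrac12\textnormal{Var}(Q_\nu)$ stays bounded and $\rho(B_\nu)^{p-2}\to 0$ for $p \ge 3$, each $\kappa_p(Q_\nu) \to 0$. Hence all cumulants of $Q_\nu - \mathbb{E} Q_\nu$ converge to those of $\mathcal{N}(0,\sigma^2)$.

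Finally, I would upgrade convergence of cumulants to convergence in distribution. Because cumulants and moments are related by fixed polynomial identities, convergence of all cumulants yields convergence of all moments to the Gaussian moments; and since $\mathcal{N}(0,\sigma^2)$ is determined by its moments (Carleman's condition), the moment convergence theorem gives $Q_\nu - \mathbb{E}Q_\nu \overset{\mathcal{L}}\to \mathcal{N}(0,\sigma^2)$. The degenerate case $\sigma^2 = 0$ follows directly from Chebyshev's inequality. I expect the main obstacle to be, beyond the bookkeeping in the trace bound, the reduction to symmetric $A_\nu$: one must verify that controlling $\rho(\widetilde{A}_\nu)$ rather than $\rho(A_\nu)$ is harmless in each application of the lemma, which holds here because the matrices to which it is applied are symmetric, so $\widetilde{A}_\nu = A_\nu$ and no loss occurs.
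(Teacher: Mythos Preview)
The paper does not give its own proof of this lemma; it is quoted verbatim as Lemma 4 of Moulines, Roueff and Taqqu (2007) in the ``Useful results'' appendix and invoked as a black box in the proof of Theorem \ref{t:distributionOfW}. Your cumulant argument is the standard route to such Gaussian quadratic-form CLTs and is correct: the closed form $\kappa_p(Q_\nu)=2^{p-1}(p-1)!\,\textnormal{tr}(B_\nu^p)$, the trace bound $|\textnormal{tr}(B_\nu^p)|\le \rho(B_\nu)^{p-2}\textnormal{tr}(B_\nu^2)$, and the conclusion via the method of moments are all sound.

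The one point you flag---that the hypothesis controls $\rho(A_\nu)$ while the proof needs $\rho(\widetilde A_\nu)$---is genuine for non-normal $A_\nu$, since the spectral radius of the symmetric part can exceed that of $A_\nu$. You resolve it correctly for this paper: in the only application (proof of Theorem \ref{t:distributionOfW}) the matrix $\sqrt{\nu}\,D_\nu$ is block-diagonal with symmetric blocks $\Omega_j$, hence symmetric, so $\widetilde A_\nu=A_\nu$ and no gap arises. One minor remark: the statement allows $\sigma^2\in[0,\infty]$, but your bound on the higher cumulants uses that $\textnormal{tr}(B_\nu^2)=\tfrac12\textnormal{Var}(Q_\nu)$ stays bounded, i.e.\ $\sigma^2<\infty$; the case $\sigma^2=\infty$ is presumably a typo in the quoted statement, as the conclusion $\mathcal N(0,\infty)$ is not meaningful.
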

\begin{lemma}\label{lemma:moulines6}(Moulines et al.\ \cite{moulines:roueff:taqqu:2007:Fractals}, Lemma 6)
Let $m\geq2$ be an integer and $\Gamma$ be a $m\times m$ covariance matrix. Let $p$ be an integer between 1 and $m-1$. let $\Gamma_1$ be the top left submatrix with size $p\times p$ and $\Gamma_2$ the bottom right submatrix with size $(m-p)\times(m-p)$. Then
$$
\rho(\Gamma)\leq\rho(\Gamma_1)+\rho(\Gamma_2).
$$
\end{lemma}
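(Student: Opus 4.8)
The plan is to exploit the fact that a covariance matrix is symmetric and positive semidefinite, so that its spectral radius coincides with its largest eigenvalue and admits the Rayleigh variational characterization $\rho(\Gamma)=\max_{\|x\|=1}x^{*}\Gamma x$. First I would realize $\Gamma$ as the second-moment matrix $\Gamma=\mathbb{E}[\xi\xi^{*}]$ of a centered random vector $\xi=((\xi^{(1)})^{T},(\xi^{(2)})^{T})^{T}$ partitioned conformally with the block decomposition, so that $\Gamma_{1}=\mathbb{E}[\xi^{(1)}(\xi^{(1)})^{*}]$ and $\Gamma_{2}=\mathbb{E}[\xi^{(2)}(\xi^{(2)})^{*}]$ are precisely the covariance matrices of the two sub-vectors (this is legitimate since $\Gamma$, being a covariance matrix by hypothesis, is positive semidefinite).

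Next, for an arbitrary unit vector $x=(u^{T},v^{T})^{T}$ with $u\in\mathbb{R}^{p}$ and $v\in\mathbb{R}^{m-p}$, I would write the quadratic form as $x^{*}\Gamma x=\mathbb{E}|u^{*}\xi^{(1)}+v^{*}\xi^{(2)}|^{2}$, expand the square, and control the cross-covariance term by the Cauchy--Schwarz inequality for random variables, namely $|\mathbb{E}(u^{*}\xi^{(1)})\overline{(v^{*}\xi^{(2)})}|\le(u^{*}\Gamma_{1}u)^{1/2}(v^{*}\Gamma_{2}v)^{1/2}$. This yields the bound $x^{*}\Gamma x\le\big((u^{*}\Gamma_{1}u)^{1/2}+(v^{*}\Gamma_{2}v)^{1/2}\big)^{2}$, which is the crucial step: it replaces the full block matrix by its two diagonal blocks while absorbing the off-diagonal coupling.

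Finally I would bound each diagonal term by its own spectral radius, $u^{*}\Gamma_{1}u\le\rho(\Gamma_{1})\|u\|^{2}$ and $v^{*}\Gamma_{2}v\le\rho(\Gamma_{2})\|v\|^{2}$, and then apply a second Cauchy--Schwarz, to the vectors $(\sqrt{\rho(\Gamma_{1})},\sqrt{\rho(\Gamma_{2})})$ and $(\|u\|,\|v\|)$, to obtain $x^{*}\Gamma x\le(\rho(\Gamma_{1})+\rho(\Gamma_{2}))(\|u\|^{2}+\|v\|^{2})=\rho(\Gamma_{1})+\rho(\Gamma_{2})$. Since this holds for every unit $x$, taking the supremum gives $\rho(\Gamma)=\max_{\|x\|=1}x^{*}\Gamma x\le\rho(\Gamma_{1})+\rho(\Gamma_{2})$, as claimed.

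The computation is elementary and I do not anticipate a genuine obstacle; the only point requiring care is the treatment of the off-diagonal coupling. The inequality would be false for an arbitrary symmetric matrix, and it is exactly the positive semidefiniteness of $\Gamma$ (inherited from its being a covariance matrix) that renders the cross term controllable through Cauchy--Schwarz. Without invoking that structure at this single step, a crude estimate such as $|a+b|^{2}\le 2|a|^{2}+2|b|^{2}$ would produce a spurious factor of $2$; hence I would be deliberate about using the PSD property precisely where the two sub-blocks are recombined.
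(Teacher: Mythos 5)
Your proof is correct: the Rayleigh-quotient characterization $\rho(\Gamma)=\max_{\|x\|=1}x^{*}\Gamma x$, the Cauchy--Schwarz bound $|\mathbb{E}(u^{*}\xi^{(1)})\overline{(v^{*}\xi^{(2)})}|\le(u^{*}\Gamma_{1}u)^{1/2}(v^{*}\Gamma_{2}v)^{1/2}$ on the cross term, and the second Cauchy--Schwarz recombining $(\sqrt{\rho(\Gamma_{1})},\sqrt{\rho(\Gamma_{2})})$ with $(\|u\|,\|v\|)$ are all sound, and this is essentially the standard argument of Moulines et al.\ (2007), Lemma 6, which the present paper cites without reproving. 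The only cosmetic remark is that the probabilistic realization $\Gamma=\mathbb{E}[\xi\xi^{*}]$ is dispensable: positive semidefiniteness of $\Gamma$ applied to vectors of the form $(tu^{T},sv^{T})^{T}$ yields the same bound $|u^{*}Cv|\le(u^{*}\Gamma_{1}u)^{1/2}(v^{*}\Gamma_{2}v)^{1/2}$ on the off-diagonal block $C$ directly.
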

\begin{lemma}\label{lemma:moulines5}(Moulines et al.\ \cite{moulines:roueff:taqqu:2007:Fractals}, Lemma 5)
 Let $\{\xi_k,k\in \mathbb{Z}\}$ be a stationary process with spectral density function $f$ and let $\Gamma_{\nu}$ be the covariance matrix of $(\xi_1,\hdots,\xi_\nu)$. Then, $\rho(\Gamma_\nu)\leq2\pi\parallel f\parallel_{\infty}.$
\end{lemma}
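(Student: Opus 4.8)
The plan is to exploit the Toeplitz structure of $\Gamma_\nu$ together with the nonnegativity of the spectral density, thereby reducing the spectral-radius bound to a one-line frequency-domain estimate. Since $\Gamma_\nu = (\gamma(k-k'))_{k,k'=1,\hdots,\nu}$, where $\gamma(h)=\int_{-\pi}^{\pi}e^{\mathbf{i}hx}f(x)\,dx$, is a real symmetric positive semidefinite matrix, its spectral radius coincides with its largest eigenvalue, and by the Rayleigh--Ritz variational characterization
$$
\rho(\Gamma_\nu)=\max_{\mathbf{a}\in\bbC^{\nu},\ \|\mathbf{a}\|_2=1}\mathbf{a}^{*}\Gamma_\nu\mathbf{a}.
$$
First I would fix such a unit vector $\mathbf{a}=(a_1,\hdots,a_\nu)^{*}$ and write the quadratic form out entrywise as a double sum $\sum_{k,k'}\overline{a_k}\,\gamma(k-k')\,a_{k'}$.

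Second, I would substitute the integral representation of $\gamma(\cdot)$ into this finite double sum and interchange summation and integration (no convergence issue, the sum being finite), collecting the exponentials into a squared modulus. This yields the key identity
$$
\mathbf{a}^{*}\Gamma_\nu\mathbf{a}=\int_{-\pi}^{\pi}f(x)\,\Big|\sum_{k=1}^{\nu}a_k e^{-\mathbf{i}kx}\Big|^{2}\,dx,
$$
i.e. the quadratic form is the integral of $f$ against the nonnegative squared modulus of a trigonometric polynomial whose coefficients are the entries of $\mathbf{a}$.

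Third, using that $f\geq 0$ together with the pointwise bound $f(x)\leq\|f\|_{\infty}$, I would pull the constant out and evaluate the remaining integral by the orthogonality of the exponentials $\{e^{-\mathbf{i}kx}\}$ on $[-\pi,\pi]$ (Parseval), giving $\int_{-\pi}^{\pi}|\sum_k a_k e^{-\mathbf{i}kx}|^{2}\,dx=2\pi\sum_{k}|a_k|^{2}=2\pi$. Hence $\mathbf{a}^{*}\Gamma_\nu\mathbf{a}\leq 2\pi\|f\|_{\infty}$ for every unit vector $\mathbf{a}$, and taking the supremum over $\mathbf{a}$ establishes the claim.

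The argument has essentially no genuine obstacle; the only point requiring care is bookkeeping of the Fourier convention. The factor $2\pi$ in the bound is dictated precisely by the normalization $\gamma(h)=\int_{-\pi}^{\pi}e^{\mathbf{i}hx}f(x)\,dx$ employed in the surrounding sections (cf.\ the periodized spectral density in \eqref{eq:spectral}, which is obtained by folding the line integral in the wavelet covariance onto $[-\pi,\pi)$); under this convention the exponentials have squared $L^{2}[-\pi,\pi]$ norm equal to $2\pi$, which is exactly the constant appearing in the statement. One should also note that the nonnegativity of $f$ is what licenses the one-sided substitution $f\leq\|f\|_{\infty}$ inside the integral without absolute values, and this holds because $f$ is a spectral density.
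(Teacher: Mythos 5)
Your proof is correct and follows essentially the same route as the source: the paper states this lemma without proof, citing Moulines et al.\ (2007, Lemma 5), where the argument is precisely your Rayleigh--Ritz reduction of $\rho(\Gamma_\nu)$ to the quadratic form $\mathbf{a}^{*}\Gamma_\nu\mathbf{a}=\int_{-\pi}^{\pi}f(x)\bigl|\sum_{k=1}^{\nu}a_k e^{-\mathbf{i}kx}\bigr|^{2}dx$, bounded by $2\pi\|f\|_{\infty}$ via $f\geq 0$ and Parseval. Your bookkeeping of the Fourier convention is also right: the normalization $\gamma(h)=\int_{-\pi}^{\pi}e^{\mathbf{i}hx}f(x)\,dx$ is the one implicit in \eqref{eq:spectral}, which is exactly what produces the constant $2\pi$.
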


The following theorem provides the partial derivatives of the eigenvalues and eigenvectors of a symmetric matrix with respect to the latter.
\begin{theorem}\label{t:eigen}
(Magnus \cite{magnus:1985}, Theorem 1) Let $S_0 \in {\mathcal S}(n,\mathbb{R})$, and let $u_0$ be a normalized eigenvector associated with a simple eigenvalue $\lambda_0$ of $S_0$. Then, we can define a real-valued and a vector function $\lambda$ and $u$, respectively, for all symmetric matrix $S$ in some neighborhood $N(S_0)\in {\mathcal S}(n,\mathbb{R})$ of $S_0$, where
$$
\lambda(S_0)=\lambda_0,\quad\quad u(S_0)=u_0,
$$
and
$$
Su=\lambda u,\quad\quad u^Tu=1,\quad\quad S\in {\mathcal S}(n,\mathbb{R}).
$$
Moreover, the functions $\lambda$ and $u$ are infinitely differentiable on $N(S_0)$, and their differentials at $S_0$ are given by
\begin{equation}\label{diffeigen}
\frac{\partial\lambda}{\partial [\textnormal{vec}_{{\mathcal S}}(S)]}=(u_0^T\otimes u_0^T)\mathbf{D},\quad \frac{\partial u}{\partial [\textnormal{vec}_{{\mathcal S}}(S)]}=[u_0^T\otimes(\lambda_0 I_n-S_0)^+]\mathbf{D}.
\end{equation}
In \eqref{diffeigen}, the symbol $\otimes$ and the superscript $+$ denote the Kronecker product and the Moore-Penrose inverse, respectively, and $\mathbf{D}$ is the duplication matrix defined by \eqref{e:duplication}.
\end{theorem}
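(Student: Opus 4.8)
The statement is Theorem~1 of Magnus \cite{magnus:1985}, and the plan is to reproduce its proof, which rests on the implicit function theorem applied to a system encoding the eigenpair relations. First I would encode the assertion ``$(\lambda,u)$ is a normalized eigenpair of $S$'' as the vanishing of the map $F:{\mathcal S}(n,\mathbb{R})\times\mathbb{R}\times\mathbb{R}^n\to\mathbb{R}^n\times\mathbb{R}$ defined by
\[
F(S,\lambda,u)=\Big((S-\lambda I_n)u,\ \tfrac12(u^Tu-1)\Big).
\]
This map is polynomial, hence $C^\infty$, and by hypothesis $F(S_0,\lambda_0,u_0)=0$. The partial Jacobian of $F$ in $(\lambda,u)$, evaluated at $(S_0,\lambda_0,u_0)$, is the $(n+1)\times(n+1)$ block matrix
\[
J_0=\begin{pmatrix} -u_0 & S_0-\lambda_0 I_n \\ 0 & u_0^T \end{pmatrix},
\]
whose first block column has width one.

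The crux of the existence and smoothness claim is that $J_0$ is nonsingular, and this is exactly where simplicity of $\lambda_0$ enters. Suppose $J_0\begin{pmatrix}a\\ v\end{pmatrix}=0$ with $a\in\mathbb{R}$, $v\in\mathbb{R}^n$; then $(S_0-\lambda_0 I_n)v=a\,u_0$ and $u_0^Tv=0$. Premultiplying the first relation by $u_0^T$ and using $u_0^T(S_0-\lambda_0 I_n)=0$ (because $S_0$ is symmetric and $S_0u_0=\lambda_0u_0$) together with $u_0^Tu_0=1$ forces $a=0$; then $v\in\ker(S_0-\lambda_0 I_n)=\operatorname{span}(u_0)$, and $u_0^Tv=0$ forces $v=0$. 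Hence $J_0$ is invertible, and the implicit function theorem produces a neighborhood $N(S_0)\subset{\mathcal S}(n,\mathbb{R})$ and unique $C^\infty$ maps $\lambda(\cdot),u(\cdot)$ with $\lambda(S_0)=\lambda_0$, $u(S_0)=u_0$ and $F(S,\lambda(S),u(S))\equiv0$ on $N(S_0)$, which is the asserted existence, uniqueness and infinite differentiability.

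To obtain the differentials \eqref{diffeigen}, I would differentiate the identities $Su=\lambda u$ and $u^Tu=1$ at $S_0$. Writing $dS$ for a symmetric increment, this gives $(dS)u_0+(S_0-\lambda_0 I_n)\,du=(d\lambda)\,u_0$ and $u_0^T\,du=0$. Premultiplying the first identity by $u_0^T$ and again invoking $u_0^T(S_0-\lambda_0 I_n)=0$ isolates $d\lambda=u_0^T(dS)u_0$. For $du$, I would apply $(\lambda_0 I_n-S_0)^+$ to the first identity: since $S_0-\lambda_0 I_n$ is symmetric with kernel $\operatorname{span}(u_0)$, one has $(S_0-\lambda_0 I_n)^+(S_0-\lambda_0 I_n)=I_n-u_0u_0^T$, so the left-hand side collapses to $-\,du$ by $u_0^T\,du=0$, while $(\lambda_0 I_n-S_0)^+u_0=0$ annihilates the $d\lambda$ term; this leaves $du=(\lambda_0 I_n-S_0)^+(dS)u_0$. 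Finally I would vectorize, using $(dS)u_0=(u_0^T\otimes I_n)\vecoper(dS)$ and the Kronecker identity $A(u_0^T\otimes I_n)=u_0^T\otimes A$; since the Jacobian of $\vecoper(S)$ with respect to the independent symmetric coordinates $\textnormal{vec}_{{\mathcal S}}(S)$ is the duplication matrix $\mathbf{D}$ (cf.\ \eqref{e:duplication}), the chain rule then delivers precisely the factors in \eqref{diffeigen}: $(u_0^T\otimes u_0^T)\mathbf{D}$ for $\lambda$ and $[u_0^T\otimes(\lambda_0 I_n-S_0)^+]\mathbf{D}$ for $u$.

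The steps are essentially bookkeeping rather than a deep obstacle; the points requiring care are the identification $(S_0-\lambda_0 I_n)^+(S_0-\lambda_0 I_n)=I_n-u_0u_0^T$ (valid only because $S_0-\lambda_0 I_n$ is symmetric with one-dimensional kernel, i.e.\ $\lambda_0$ simple), the sign ambiguity of the eigenvector --- resolved automatically by the implicit function theorem, which selects the smooth branch through $u_0$ --- and the passage from $\vecoper$ to $\textnormal{vec}_{{\mathcal S}}$ via $\mathbf{D}$, so that the differentials are taken with respect to the $n(n+1)/2$ independent symmetric coordinates rather than all $n^2$ entries.
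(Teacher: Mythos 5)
Your proposal is correct, but note that the paper itself contains no proof of this statement at all --- it is imported verbatim from Magnus (1985), Theorem 1, as an auxiliary tool in the appendix, so there is no internal argument to compare against. Your reconstruction via the implicit function theorem (nonsingularity of the bordered Jacobian $J_0$ forced by the simplicity of $\lambda_0$, then the differentials $d\lambda=u_0^T(dS)u_0$ and $du=(\lambda_0 I_n-S_0)^+(dS)u_0$ obtained from the identities $Su=\lambda u$, $u^Tu=1$ together with $(S_0-\lambda_0 I_n)^+(S_0-\lambda_0 I_n)=I_n-u_0u_0^T$, and the passage from $\vecoper$ to $\textnormal{vec}_{{\mathcal S}}$ through the duplication matrix $\mathbf{D}$) is sound at every step and faithful to Magnus's original argument, so nothing further is needed.
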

\begin{lemma}\label{l:gcd}(Abry and Didier \cite{abry:didier:2017}, Lemma B.3)
Let $\{\phi_.\}\in \mathbb{R}$ be a sequence such that $\sum_{z=-\infty}^{\infty}|\phi_{z\textnormal{gcd}(a_j,a_{j'})}|<\infty$. Then,
$$
\frac{1}{\nu}\sum_{k=1}^{a_{j'}\nu}\sum_{k'=1}^{a_j\nu}\phi_{a_jk-a_{j'}k'}\rightarrow\textnormal{gcd}(a_j,a_{j'})\sum_{z=-\infty}^{\infty}
\phi_{z\textnormal{gcd}(a_j,a_{j'})},\quad \nu\rightarrow\infty.
$$
\end{lemma}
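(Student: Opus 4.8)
The plan is to reduce the double sum to a single sum weighted by a lattice-point count, establish the pointwise limit of the normalized count, and then interchange the limit with the summation by dominated convergence. Write $b := \textnormal{gcd}(a_j,a_{j'})$ and $a_j = b\alpha$, $a_{j'} = b\beta$ with $\textnormal{gcd}(\alpha,\beta)=1$. By B\'ezout's identity the set of attainable values $\{a_j k - a_{j'}k' : k,k'\in\bbZ\}$ equals $b\bbZ$ (this is precisely the range fact invoked via Jones and Jones elsewhere in the paper), so every index $a_j k - a_{j'}k'$ appearing in the sum is of the form $bz$ for a unique $z\in\bbZ$. Introducing the count
$$
\xi_z(\nu) := \#\{(k,k') : 1\le k\le a_{j'}\nu,\ 1\le k'\le a_j\nu,\ a_j k - a_{j'}k' = bz\},
$$
I would rewrite the left-hand side exactly as $\frac{1}{\nu}\sum_{z\in\bbZ}\xi_z(\nu)\phi_{bz}$, which recasts the target as a statement about the asymptotics of $\xi_z(\nu)/\nu$.

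Next I would analyze $\xi_z(\nu)$ through the standard parametrization of the solutions of the linear Diophantine equation $a_j k - a_{j'}k' = bz$, equivalently $\alpha k - \beta k' = z$. Fixing one particular solution $(k_0,k_0')$ (which depends only on $z,\alpha,\beta$, not on $\nu$), every solution has the form $k = k_0 + \beta t$, $k' = k_0' + \alpha t$, $t\in\bbZ$; thus $\xi_z(\nu)$ equals the number of integers $t$ lying simultaneously in the interval forced by $1\le k\le a_{j'}\nu$, of length $(a_{j'}\nu-1)/\beta = b\nu - 1/\beta$, and the interval forced by $1\le k'\le a_j\nu$, of length $(a_j\nu-1)/\alpha = b\nu - 1/\alpha$. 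Each of these intervals contains at most $b\nu+1$ integers, so $\xi_z(\nu)\le b\nu+1$ for every $z$ and every $\nu\ge1$, giving the uniform bound $\xi_z(\nu)/\nu\le b+1=:C$. For the pointwise limit, for fixed $z$ the left endpoints of both $t$-intervals are $O(1)$ and the right endpoints are $b\nu + O(1)$, with all $O(1)$ terms independent of $\nu$; hence their intersection contains $b\nu + O(1)$ integers, and dividing by $\nu$ yields $\xi_z(\nu)/\nu \to b = \textnormal{gcd}(a_j,a_{j'})$.

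Finally I would pass to the limit term by term. Since $|\xi_z(\nu)\phi_{bz}/\nu| \le C|\phi_{bz}|$ and $\sum_{z}|\phi_{bz}| = \sum_z |\phi_{z\,\textnormal{gcd}(a_j,a_{j'})}| < \infty$ by hypothesis, the dominated convergence theorem for series applies and permits exchanging $\lim_\nu$ with $\sum_z$:
$$
\frac{1}{\nu}\sum_{z\in\bbZ}\xi_z(\nu)\phi_{bz} \longrightarrow \sum_{z\in\bbZ} b\,\phi_{bz} = \textnormal{gcd}(a_j,a_{j'})\sum_{z=-\infty}^{\infty}\phi_{z\,\textnormal{gcd}(a_j,a_{j'})},
$$
which is exactly the claimed limit. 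The only delicate point, and the place requiring genuine care, is the lattice-point bookkeeping in the second step: one must verify that the $O(1)$ offsets arising from the particular solution $(k_0,k_0')$ and from the floor/ceiling rounding of the interval endpoints do not survive after dividing by $\nu$. This is where $\textnormal{gcd}(\alpha,\beta)=1$ (ensuring the full interval of integers $t$ is swept out, with no missing residues) and the fixedness of $z$ (keeping $k_0,k_0'$ bounded as $\nu\to\infty$) are used. Everything else is routine.
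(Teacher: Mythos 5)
Your proof is correct: the reduction to $\frac{1}{\nu}\sum_z \xi_z(\nu)\phi_{bz}$, the Diophantine parametrization $k=k_0+\beta t$, $k'=k_0'+\alpha t$ giving $\xi_z(\nu)=b\nu+O(1)$ pointwise with the uniform bound $\xi_z(\nu)\le b\nu+1$, and the dominated convergence step are all sound. Note that the paper itself offers no proof of this lemma---it is imported verbatim from Abry and Didier (2017), Lemma B.3---and your counting argument is precisely the standard one consistent with how the paper deploys the analogous counts $\xi_r(\nu_*)$ (with $\xi_r(\nu_*)/\nu_*\rightarrow\gcd(2^j,2^{j'})$) and the Jones--Jones fact that the index range is $\mathbb{Z}\,\textnormal{gcd}(a_j,a_{j'})$, so there is no divergence of approach to report.
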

\section{Repeated eigenvalues}\label{s:auxiliary_results}

Following up on the discussion in Remark \ref{r:repeated_eigenvalues}, the next proposition describes the limiting distribution for the eigenvalues of $W(2^j)$ for a special case where ${\Bbb E}W(2^j)$ has one repeated eigenvalue. In its statement, we use the multivariate gamma function $\Gamma_q(\cdot)$, which is defined by
$$
\Gamma_q(t)=\pi^{q(q-1)/4}\prod_{i=1}^q \Big(t-\frac{1}{2}(i-1)\Big).
$$
Moreover, we replace ($A$1) with the following assumption.

\noindent {\sc Assumption ($A1'$)}: the observed process has the mixed form $Y=PX$, where $P$ is nonsingular, $X$ is defined in \eqref{eq:Xt} and satisfy
\begin{equation}\label{e:h1=...=hn}
d_1=d_2=\hdots=d_n =: d, \quad d>1/2,
\end{equation}
and the high frequency functions $g_i(x)$, $i=1,\hdots,n$ are constants, i.e.,
$$
g_1(x)=g_1,\hdots,g_n(x)=g_n.
$$

\begin{proposition}\label{p:h1=...=hn}
Suppose the assumptions ($A1'$--$A2$) hold. Let
\begin{equation}\label{e:EW(2j)=OLambdaO*_W(2j)=OLambdaO*}
{ \mathbb{E}}W(2^j)=O\Lambda O^*, \quad W(2^j)=\widehat{O}L\widehat{O}^* ,
\end{equation}
be the matrix spectral decompositions of the wavelet and sample wavelet variance matrices, respectively. Assume the diagonal matrix $\Lambda$ has the form
\begin{equation}\label{e:Lambda_with_repeated_eigens}
\Lambda=\left(
          \begin{array}{cc}
            \Lambda_1 & \mathbf{0} \\
            \mathbf{0} & \lambda_*I_q \\
          \end{array}
        \right)
\end{equation}
for some $1 < q <n$, where the main diagonal entries of the matrix $\Lambda_1$ are pairwise distinct and less than $\lambda_*$. Let
\begin{equation}\label{e:L_and_Lambda}
L=\textnormal{diag}(l_1,\hdots,l_n), \quad \Lambda_1=\textnormal{diag}(\lambda_1,\hdots,\lambda_{n-q}).
\end{equation}
Then, as $\nu \rightarrow \infty$,
\begin{equation}\label{e:limit_eigenvalues_repeated}
\sqrt{K_j}\big((l_1 - \lambda_1,\hdots,l_{n-q} - \lambda_{n-q}),(l_{n-q+1}-\lambda_*,\hdots,l_n-\lambda_*)\big)^T \stackrel{d}\rightarrow
({\mathcal L}^T_1,{\mathcal L}^T_2)^T,
\end{equation}
where ${\mathcal L}_1$ and ${\mathcal L}_2$ are independent random vectors. Moreover,
\begin{equation}\label{e:distribution_L1}
{\mathcal L}_1 \sim \mathcal{N}(0,2b \hspace{1mm}\textnormal{diag}(\lambda_1^2,\hdots,\lambda_{n-q}^2)),
\end{equation}
where
\begin{equation}\label{e:def_m}
b := \sum^{\infty}_{z= - \infty}\Big\{  \int_{\mathbb{R}} |\widehat{\psi}(2^jx)|^2e^{-\mathbf{i}2^jzx}|x|^{-2d}dx\Big/ \int_{\mathbb{R}} |\widehat{\psi}(2^jx)|^2|x|^{-2d}dx \Big\}^2,
\end{equation}
and ${\mathcal L}_2$ has density
\begin{equation}\label{e:density_L2}
 2^{-\frac{1}{2}q}(\sqrt{b}\lambda_*\pi)^{q(q-1)/4}\Gamma_q^{-\frac{1}{2}}\Big(\frac{q}{2}\Big) \exp\Big\{-\frac{1}{2\sqrt{b}\lambda_*}\sum^n_{i=n-q+1}a_i^2 \Big\}\prod_{l<i}(a_i-a_l),
\end{equation}
where
\begin{equation}\label{e:di_repeated_eigenvalues}
a_i=l_i-\lambda_*, \quad i=n-q+1,\hdots,n.
\end{equation}
\end{proposition}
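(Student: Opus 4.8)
\noindent {\sc Proof of Proposition \ref{p:h1=...=hn} (plan)}:
The plan is to reduce the problem, via assumption ($A1'$), to a perturbation of the fixed matrix $\mathbb{E}W(2^j)$ by an asymptotically Gaussian symmetric matrix whose restriction to the $\lambda_*$-eigenspace is a Gaussian orthogonal ensemble (GOE), and then to read off the eigenvalue asymptotics from degenerate first-order perturbation theory. First I would exploit the rigidity forced by ($A1'$). Since $d_1=\hdots=d_n=d$, the memory matrix $D$ in \eqref{eq:EW(2j)} equals $dI_n$; since the $g_i$ are constant and $d>1/2$, so that $g_i^*\equiv g_i$, the matrix $G$ in \eqref{eq:G} equals $PD_gP^*$, $D_g:=\textnormal{diag}(|g_1|^2,\hdots,|g_n|^2)$, independent of $x$. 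Consequently every lag-covariance matrix \eqref{e:Phi_q} collapses to a scalar multiple of one and the same matrix,
\[
\Phi_z=\frac{c_z}{c_0}\,\mathbb{E}W(2^j)=\frac{c_z}{c_0}\,O\Lambda O^*,\qquad c_z:=\int_\mathbb{R}|\widehat{\psi}(2^jx)|^2 e^{-\mathbf{i}zx}|x|^{-2d}\,dx .
\]
Conjugating by $O$, the rotated coefficients $\widetilde{D}_k:=O^*D(2^j,k)$ then have diagonal cross-covariances $\mathbb{E}\widetilde{D}_k\widetilde{D}_{k'}^*=(c_{2^j(k-k')}/c_0)\Lambda$; equivalently, their $n$ components are mutually uncorrelated, each a univariate stationary Gaussian sequence with lag-$0$ variance $\lambda_i$ and common normalized autocorrelation $c_{2^jh}/c_0$. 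Since $O^*W(2^j)O$ and $W(2^j)$ share the same eigenvalues, I would work throughout in this eigenbasis.

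Next I would identify the limiting fluctuation matrix. By Theorem \ref{t:distributionOfW} with $m=1$ (equivalently Proposition \ref{p:4th_moments_wavecoef}), $\sqrt{K_j}\,O^*(W(2^j)-\mathbb{E}W(2^j))O\stackrel{d}\rightarrow\mathcal{G}$, a centered Gaussian symmetric matrix. Feeding the diagonal lag-covariances above into the Isserlis identity \eqref{e:Isserlis_theorem} and summing over lags yields
\[
\textnormal{Cov}(\mathcal{G}_{ab},\mathcal{G}_{cd})=b\,\lambda_a\lambda_b\,(\delta_{ac}\delta_{bd}+\delta_{ad}\delta_{bc}),
\]
with $b$ as in \eqref{e:def_m}. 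Thus distinct entries of $\mathcal{G}$ are independent, the diagonal entries have variance $2b\lambda_a^2$ and the off-diagonal entries variance $b\lambda_a\lambda_b$. Partitioning $\mathcal{G}$ according to \eqref{e:Lambda_with_repeated_eigens} into the top-left block (indices $1,\hdots,n-q$) and the bottom-right $q\times q$ block $\mathcal{G}_{22}$ associated with $\lambda_*$, the block $\mathcal{G}_{22}$ is exactly a GOE with off-diagonal standard deviation $\sqrt{b}\,\lambda_*$, and it is independent of the diagonal entries of the top-left block.

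I would then pass from $\mathcal{G}$ to the eigenvalues by perturbation theory, treating the simple and the repeated eigenvalues separately. For the $n-q$ simple eigenvalues, Theorem \ref{t:eigen} and the Delta method give $\sqrt{K_j}(l_i-\lambda_i)\to\mathcal{G}_{ii}$, so ${\mathcal L}_1$ is Gaussian with the independent-coordinate covariance $2b\,\textnormal{diag}(\lambda_1^2,\hdots,\lambda_{n-q}^2)$ of \eqref{e:distribution_L1}. For the eigenvalue $\lambda_*$ of multiplicity $q$ the Delta method is unavailable, since the eigenvalue map fails to be differentiable at the degenerate point $\Lambda$; this is the crux of the argument. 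Here I would instead use the spectral gap between $\Lambda_1$ and $\lambda_*$: writing $O^*W(2^j)O=\Lambda+K_j^{-1/2}\mathcal{G}_\nu$ in block form and eliminating the off-diagonal blocks by a Schur-complement (or resolvent/spectral-projection) argument, the coupling between the two blocks contributes only at order $K_j^{-1}$, so the $q$ eigenvalues near $\lambda_*$ equal $\lambda_*+K_j^{-1/2}\times(\textnormal{eigenvalues of the bottom-right block of }\mathcal{G}_\nu)+O_P(K_j^{-1})$. Weyl's inequality controls the remainder uniformly, and the continuous mapping theorem then gives $\sqrt{K_j}(l_i-\lambda_*)\to$ ordered eigenvalues of $\mathcal{G}_{22}$, i.e.\ ${\mathcal L}_2$. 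Independence of ${\mathcal L}_1$ and ${\mathcal L}_2$ follows because they are measurable functions of the independent families $\{\mathcal{G}_{ii}\}_{i\leq n-q}$ and $\mathcal{G}_{22}$.

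Finally, the law of ${\mathcal L}_2$ is the ordered-eigenvalue density of the GOE $\mathcal{G}_{22}$: the classical Vandermonde-weighted Gaussian, whose normalizing constant is expressed through the multivariate gamma function $\Gamma_q$, yielding \eqref{e:density_L2} after the substitution $a_i=l_i-\lambda_*$ of \eqref{e:di_repeated_eigenvalues}. The main obstacle throughout is the degenerate perturbation step: establishing that, at the scaling $K_j^{-1/2}$, the splitting of the $q$-fold eigenvalue is governed exactly by the compression of $\mathcal{G}$ to the $\lambda_*$-eigenspace and that the off-eigenspace coupling is asymptotically negligible. $\Box$
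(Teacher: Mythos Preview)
Your proposal is correct and follows essentially the same route as the paper. Both arguments rotate to the eigenbasis of $\mathbb{E}W(2^j)$, use the collapse $\Phi_z=(c_z/c_0)\,O\Lambda O^*$ forced by ($A1'$) to obtain the limiting Gaussian symmetric matrix with independent entries and covariance structure $\textnormal{Cov}(\mathcal{G}_{ab},\mathcal{G}_{cd})=b\lambda_a\lambda_b(\delta_{ac}\delta_{bd}+\delta_{ad}\delta_{bc})$, treat the simple eigenvalues by first-order perturbation, and show that the $q$ eigenvalues near $\lambda_*$ are governed by the $q\times q$ bottom-right block with the cross-block coupling entering only at order $K_j^{-1}$. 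The only difference is presentational: where you invoke a Schur-complement/resolvent argument and the GOE eigenvalue density directly, the paper carries out the same block computation explicitly (writing $Y=O^*\widehat{O}$, partitioning, and expanding $T=YLY^*$ and $YY^*=I$) and then defers to Anderson (2003), Sections~13.5.1--13.5.2, for the final identification of the limits.
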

\begin{proof} Let $O$ and $\widehat{O}$ be as in expression \eqref{e:EW(2j)=OLambdaO*_W(2j)=OLambdaO*}, and define
\begin{equation}\label{e:TU}
T=O^*W(2^j)O,\quad U=\sqrt{K_j}(T-\Lambda),
 \end{equation}
 where $O$ is the orthogonal matrix in the expression \eqref{e:EW(2j)=OLambdaO*_W(2j)=OLambdaO*}. Then, we can write
 \begin{equation}\label{YLY*def}
 T=YLY^*, \quad Y=O^*\widehat{O}\in O(n),
 \end{equation}
and thus
\begin{equation}\label{e:U_rewritten}
U=O^*\sqrt{K_j}(W(2^j)-{\Bbb E}W(2^j))O.
\end{equation}
Let $d$ be as in \eqref{e:h1=...=hn}. From \eqref{e:EW(2j)=PEP*}, we obtain
$$
\Lambda = 2^j O^*P\textnormal{diag}(g_1^2,\hdots,g_n^2)P^*O\int_{\mathbb{R}}|\widehat{\psi}(2^jx)|^2|x|^{-2d}dx.
$$
For $z \in \mathbb{Z}$, let $\Phi_z$ be as in \eqref{e:Phi_q} (for $j = j'$). Under the condition \eqref{e:h1=...=hn},
$$
O^*\Phi_{z}O=O^*P\textnormal{diag}(g_1^2,\hdots,g_n^2)P^*O\ \int_{\mathbb{R}} |\widehat{\psi}(2^jx)|^2e^{\textbf{i}zx}|x|^{-2d}dx
$$
$$
= 2^{-j}\Lambda  \Big\{ \int_\mathbb{R}|\widehat{\psi}(2^jx)|^2e^{-\textbf{i}zx}|x|^{-2d}dx \Big/
\int_\mathbb{R}|\widehat{\psi}(2^jx)|^2|x|^{-2d}dx\Big\}.
$$
By (\ref{e:limiting_kron}) (which also holds under \eqref{e:h1=...=hn}),
$$
\sqrt{K_j}\sqrt{K_{j}}\frac{1}{K_j}\frac{1}{K_{j}} \sum^{K_j}_{k=1}\sum^{K_{j}}_{k'=1}
O^*{ \mathbb{E}}D(2^j,k)D(2^{j},k')^* O\otimes O^*{ \mathbb{E}}D(2^j ,k)D(2^{j},k')^*O
$$
\begin{equation}\label{e:covlimit}
\rightarrow 2^{2j}\sum^{\infty}_{z= - \infty} O^*\Phi_{z2^j}O\otimes O^*\Phi_{z2^j}O=b(\Lambda\otimes\Lambda), \quad \nu \rightarrow \infty,
\end{equation}
where the scalar $b$ is given by \eqref{e:def_m}. Thus, from \eqref{e:U_rewritten},
\begin{equation}\label{e:U_limiting_distribution}
U \stackrel{d}\rightarrow
\mathcal{U}=\{u_{i_1 i_2}\}_{i_1,i_2=1,\hdots,n},
\end{equation}
where $(\textnormal{vec}_{{\mathcal S}}(\mathcal{U}))^T\sim\mathcal{N}_{\frac{n(n+1)}{2}}(\mathbf{0},\Omega)$ and $\Omega$ can be retrieved from (\ref{e:covlimit}) by means of \eqref{e:Isserlis_theorem}. In particular, all entries of $(\textnormal{vec}_{{\mathcal S}}(\mathcal{U}))^T$ are independent. Moreover, for $\lambda_{\bullet}$ as in \eqref{e:L_and_Lambda},
\begin{equation}\label{e:U_11}
\textnormal{Var}(u_{i_1 i_1})=2b\hspace{0.5mm}\lambda^2_{i_1},\quad \textnormal{Var}(u_{i_1 i_2})=b\hspace{0.5mm}\lambda_{i_1}\lambda_{i_2},\quad 1\leq i_1,i_2\leq n-q,
\end{equation}
\begin{equation}\label{e:U_22}
\textnormal{Var}(u_{i_1 i_1})=2b\hspace{0.5mm}\lambda_*^2,\quad \textnormal{Var}(u_{i_1 i_2})=b\hspace{0.5mm}\lambda_*^2,\quad n-q+1\leq i_1,i_2\leq n
\end{equation}
(the remaining entries of ${\mathcal U}$ will not play a role in the ensuing development). It now suffices to follow the same arguments as in Sections 13.5.1 and 13.5.2 of Anderson \cite{anderson:2003}. For the reader's convenience, we lay out the main steps. Recast the random matrices $T$, $Y$, $U$ and $L$ in \eqref{e:TU} and \eqref{YLY*def} as
\begin{equation}\label{e:T,Y,U,L}
T=\left(
    \begin{array}{cc}
      T_{11} & T_{12} \\
      T_{21} & T_{22} \\
    \end{array}
  \right),\quad Y=\left(
                    \begin{array}{cc}
                      Y_{11} & Y_{12} \\
                      Y_{21} & Y_{22} \\
                    \end{array}
                  \right),\quad U=\left(
                                    \begin{array}{cc}
                                      U_{11} & U_{12} \\
                                      U_{21} & U_{22} \\
                                    \end{array}
                                  \right), \quad L=\textnormal{diag}(L_1, L_2),
\end{equation}
where $T_{11}, Y_{11}, U_{11}, L_{1} \in M(n-q,\mathbb{R})$, and let
$$
\quad A= \sqrt{K_j}(L-\Lambda)=\textnormal{diag}(A_1, A_2).
$$
Define
\begin{equation}\label{e:Y22=EJF,C2=EF}
Y_{22} = EJF, \quad C_2= E F \in O(q),
\end{equation}
where the first relation is a singular value decomposition, $J$ is diagonal and $E,F \in O(q)$ are orthogonal. Also let
\begin{equation}\label{e:W11_W12_W21_W22}
W_{11}=\sqrt{K_j}(Y_{11}-I), \quad W_{12}=\sqrt{K_j}Y_{12}, \quad W_{21}=\sqrt{K_j}Y_{21}, \quad W_{22}=\sqrt{K_j}(Y_{22}-C_2).
\end{equation}
Based on \eqref{e:T,Y,U,L} and \eqref{e:W11_W12_W21_W22}, we can reexpress the system of equalities $T=\Lambda+ \frac{1}{\sqrt{K_j}}U =YLY^*$ as
$$
T = \left(
  \begin{array}{cc}
    \Lambda_1 &  \\
     & \lambda_*I_q \\
  \end{array}
\right)+\frac{1}{\sqrt{K_j}}\left(
                                    \begin{array}{cc}
                                      U_{11} & U_{12} \\
                                      U_{21} & U_{22} \\
                                    \end{array}
                                  \right)=\bigg[\left(
                                             \begin{array}{cc}
                                               I_{n-q} &  \\
                                                & C_2 \\
                                             \end{array}
                                           \right)+\frac{1}{\sqrt{K_j}}\left(
                                                                         \begin{array}{cc}
                                                                           W_{11} & W_{12} \\
                                                                           W_{21} & W_{22} \\
                                                                         \end{array}
                                                                       \right)\bigg]
$$
$$
\cdot\bigg[\left(
             \begin{array}{cc}
               \Lambda_1 &  \\
                & \lambda_*I_q \\
             \end{array}
           \right)+\frac{1}{\sqrt{K_j}}\left(
                                         \begin{array}{cc}
                                           A_1 &  \\
                                            & A_2 \\
                                         \end{array}
                                       \right)\bigg]\cdot\bigg[\left(
                                             \begin{array}{cc}
                                               I_{n-q} &  \\
                                                & C^*_2 \\
                                             \end{array}
                                           \right)+\frac{1}{\sqrt{K_j}}\left(
                                                                         \begin{array}{cc}
                                                                           W_{11}^* & W_{21}^* \\
                                                                           W_{12}^* & W_{22}^* \\
                                                                         \end{array}
                                                                       \right)\bigg]
$$
$$
=\left(
             \begin{array}{cc}
               \Lambda_1 &  \\
                & \lambda_*I_q \\
             \end{array}
           \right)+\frac{1}{\sqrt{K_j}}\bigg[\left(
                                             \begin{array}{cc}
                                               A_1 &  \\
                                                & C_2A_2C_2^*\\
                                             \end{array}
                                           \right)+\left(
                                                     \begin{array}{cc}
                                                       W_{11}\Lambda_1 & \lambda_*W_{12}C_2^* \\
                                                       W_{21}\Lambda_1 &\lambda_*W_{22}C_2^*  \\
                                                     \end{array}
                                                   \right)
$$
\begin{equation}\label{e:T=YLY*}
+\left(
                                                             \begin{array}{cc}
                                                               \Lambda_1W_{11}^* & \Lambda_1W_{21}^* \\
                                                               \lambda_*C_2W_{12}^*& \lambda_*C_2W_{22}^* \\
                                                             \end{array}
                                                           \right)\bigg]+O_P\bigg({\frac{1}{K_j}}\bigg).
\end{equation}
On the other hand, $I=YY^*$ and the relations \eqref{e:W11_W12_W21_W22} yield
\begin{equation}\label{e:YY*=I}
I_n=\left(
      \begin{array}{cc}
        I_{n-q} &  \\
         & I_q \\
      \end{array}
    \right)+\frac{1}{\sqrt{K_j}}\bigg[\left(
                                        \begin{array}{cc}
                                          W_{11} & W_{12}C_2^* \\
                                          W_{21} & W_{22}C_2^* \\
                                        \end{array}
                                      \right)+\left(
                                                \begin{array}{cc}
                                                  W_{11}^* & W_{21}^* \\
                                                  C_2W_{12}^* & C_2W_{22}^* \\
                                                \end{array}
                                              \right)\bigg]+O_P\bigg(\frac{1}{K_j}\bigg).
\end{equation}
From \eqref{e:T=YLY*} and \eqref{e:YY*=I}, we obtain the system of equations
\begin{equation}\label{e:U11_W11}
U_{11}=W_{11}\Lambda_1+A_1+\Lambda_1W_{11}^*+O_P\bigg(\frac{1}{\sqrt{K_j}}\bigg), \quad \mathbf{0}=W_{11}+W_{11}^*+O_P\bigg(\frac{1}{\sqrt{K_j}}\bigg),
\end{equation}
\begin{equation}\label{e:U22}
U_{22}=C_2A_2C_2^*+O_P\bigg(\frac{1}{\sqrt{K_j}}\bigg).
\end{equation}
Recall that the limiting joint distribution of $(U_{11},U_{22})$ is given by ${\mathcal U}_{11} := \{u_{i_1 i_2}\}_{i_1,i_2=1,\hdots,n-q}$ and ${\mathcal U}_{22} := \{u_{i_1 i_2}\}_{i_1,i_2=n-q+1,\hdots,n}$ from expression \eqref{e:U_limiting_distribution}, where
\begin{equation}\label{e:U11_U22_are_independent}
\textnormal{${\mathcal U}_{11}$ and ${\mathcal U }_{22}$ are independent}.
\end{equation}
By following the same argument as on pp.\ 546 and 547 in Anderson \cite{anderson:2003}, expressions \eqref{e:U11_W11} can be used to show that the limiting distribution of the diagonal entries of $D_1$ is \eqref{e:distribution_L1}. Next note that $A_2$ and $Y_{22}$ are functions of $U$ depending on $\nu$ (see \eqref {e:TU} and \eqref{YLY*def}), and $C_2$, in turn, is a function of $Y_{22}$ depending on $\nu$ (see \eqref{e:Y22=EJF,C2=EF}). Therefore, by the same argument as in Anderson \cite{anderson:2003}, p.\ 549, the limiting distribution of $A_2$ and $C_2$ is the distribution of ${\mathcal A}_2$ and ${\mathcal Y}_{22}$ defined by the expression
$$
{\mathcal U}_{22} = {\mathcal Y}_{22} {\mathcal A}_2{\mathcal Y}^*_{22}.
$$
In particular, the limiting distribution of the diagonal entries of $A_2$ is \eqref{e:density_L2}. In view of \eqref{e:U11_U22_are_independent}, the established limiting distributions for the diagonal entries of $A_1$ and $A_2$ yield \eqref{e:limit_eigenvalues_repeated}.
\end{proof}
\begin{example}\label{ex:weak_limit_equal_eigenvalues}
For $n=3$, consider the OFBM for which $d_1=d_2=d_3=:d$, $P\in O(3)$, and $0<g_1 < g_2=g_3$. Then, by \eqref{e:EW(2j)=PEP*}, the eigenvalues of $\Bbb{E}(2^j)$ are $\lambda_1 = 2^{2jd}g_1^2\int_\mathbb{R}|\widehat{\psi}(y)|^2|y|^{-2d}dy< \lambda_* = 2^{2jd}g_2^2\int_\mathbb{R}|\widehat{\psi}(y)|^2|y|^{-2d}dy$, where the latter has multiplicity 2. Now let $l_1\leq l_2\leq l_3$ be the ordered eigenvalues of the sample wavelet variance $W(2^j)$ (cf.\ \eqref{e:W(2j),EW(2j)_eigenvalues}). Then, by Proposition \ref{p:h1=...=hn},
\begin{equation}\label{e:repeated_eigenvalues_example}
\sqrt{K_j}\big(l_1-\lambda_1,l_2-\lambda_*,l_3-\lambda_*\big)^T\stackrel{d}\rightarrow (\mathcal{L}^T_1,\mathcal{L}_2^T)^T, \quad \nu \rightarrow \infty.
\end{equation}
In \eqref{e:repeated_eigenvalues_example}, $\mathcal{L}_1$ is independent of $\mathcal{L}_2$, $\mathcal{L}_1\sim \mathcal{N}(0,2b\hspace{0.5mm}\lambda_1^2)$ and $\mathcal{L}_2$ has density
$$
 \frac{1}{2}(\sqrt{b}\lambda_*\pi)^{1/2}\Gamma_2^{-\frac{1}{2}}(1) \exp\Big\{-\frac{1}{2\sqrt{b}\lambda_*}(a_2^2+a_3^2) \Big\}(a_3-a_2),
$$
where $a_i=l_i-\lambda_*$, $i=2,3$, and $b$ is given by \eqref{e:def_m}.
\end{example}

\bibliography{mlrd}
\bigskip
\noindent \begin{tabular}{lcl}
Patrice Abry & & Gustavo Didier and Hui Li \\
Univ Lyon, ENS de Lyon,  & & Mathematics Department \\
Univ Claude Bernard, CNRS, & & Tulane University \\
Laboratoire de Physique, & & 6823 St.\ Charles Avenue \\
 F-69342 Lyon, & & New Orleans, LA 70118 \\
France  &  & USA  \\
\texttt{patrice.abry@ens-lyon.fr} & & \texttt{gdidier@tulane.edu}\\
    & & \texttt{hli15@tulane.edu} \\
\end{tabular}\\

\end{document}